\numberwithin{equation}{section}
\newtheorem{thm}{Theorem}
\newtheorem{lem}[thm]{Lemma}
\newtheorem{prop}[thm]{Proposition}
\theoremstyle{definition}
\newtheorem{rem}{Remark}
\newcommand{\ee}{\varepsilon}
\newcommand{\ph}{\varphi}
\newcommand{\si}{\Sigma}
\newcommand{\sis}{\widetilde{\Sigma}}
\newcommand{\rrr}{\mathcal{R}}
\newcommand{\mi}{\mathring{\mu}}
\newcommand{\nnn}{\mathcal{N}}
\newcommand{\so}{\bar{S}}
\newcommand{\aaa}{\alpha}
\newcommand{\cala}{\mathcal{A}}
\newcommand{\calc}{\mathcal{C}}
\newcommand{\meg}{\geqslant}
\newcommand{\mik}{\leqslant}
\begin{document}
\title{Asymptotic blow-up for a class of semilinear wave equations on extremal Reissner--Nordstr\"{o}m spacetimes}

\author{Y. Angelopoulos, S. Aretakis, and D. Gajic}

\date{February 15, 2018}

\maketitle

\begin{abstract}

We prove small data global existence for a class of semilinear wave equations satisfying the null condition on extremal Reissner--Nordstr\"om black hole backgrounds with nonlinear terms that degenerate at the event horizon. We impose no symmetry assumptions.  The study of such equations is motivated by their covariance properties under the Couch--Torrence conformal isometry. We show decay, non-decay and asymptotic blow-up results analogous to those in the linear case.

\end{abstract}
\tableofcontents

\section{Introduction}
We consider equations of the following form:
\begin{equation}\label{nwg}
\left\{\begin{aligned}
       \Box_g \psi = \sqrt{D} \cdot A(\psi ) g^{\alpha \beta} \cdot  \partial_{\alpha}  \psi  \cdot \partial_{\beta} \psi ,\\
       \psi |_{\widetilde{\Sigma}_0 } = \ee f , \quad n_{\widetilde{\Sigma}_0} \psi |_{\widetilde{\Sigma}_0}  = \ee g, \
       \end{aligned} \right.
\end{equation}
where $g = -D dt^2 + D^{-1} dr^2 + r^2 \gamma_{\mathbb{S}^2}$ with $D = \left( \frac{r-M}{r} \right)^2$ is the metric of the extremal Reissner--Nordstr\"{o}m spacetime and $\Box_g$ is the d'Alembertian operator with respect to the metric $g$. Furthermore, $\widetilde{\si}_{0}$ is a spacelike hypersurface in the domain of outer communications of the spacetimes that crosses the event horizon. We assume that $A$ is a function that depends on both $\psi$ and the spacetime metric and it is bounded along with its derivatives, i.e.
$$ |A^{(k)} | \leq a_k \mbox{  for all $k \in \mathbb{N}$.} $$
These equations are typical nonlinear wave equations satisfying the null condition, but they have an additional $\sqrt{D}$ weight that degenerates at the event horizon of the extremal Reissner--Nordstr\"{o}m black hole spacetimes. They arise naturally when studying the transformation of nonlinear wave equations satisfying null condition (introduced by Klainerman in \cite{sergiunull}) in the far-away region under the Couch--Torrence conformal isometry, which was introduced in \cite{couch}. See Section \ref{motivation} for more details.

We study the problem of global well-posedness for small and smooth data for equations of the form \eqref{nwg}. The present paper sets the stage for addressing the more challenging problem of proving  small data global well-posedness for classical nonlinear wave equations satisfying the null condition \emph{without} the additional $\sqrt{D}$ weight. The latter problem is studied in our upcoming \cite{rnnonlin3}. Our work is motivated by the stability problem for extremal black holes. 

\section{Linear and nonlinear waves on extremal Reissner--Nordstr\"{o}m spacetimes}
\label{sec:linnonlin}
The Reissner--Nordstr\"{o}m family of metrics is the unique 2-parameter family of spherically symmetric and asymptotically flat solutions to the Einstein--Maxwell equations. The two parameters are $(e,M)$ where $-M\leq e \leq M$ denotes the electromagnetic charge and $M>0$ denotes the mass of the spacetime. The metric is given by: $$g = -Ddt^2 + D^{-1} dr^2 + r^2 \gamma_{\mathbb{S}^2},$$ with $t\in \mathbb{R}$, $r\in (M,\infty)$ and $\gamma_{\mathbb{S}^2}$ the standard metric on the round unit 2-sphere, where $$D = 1-\frac{2M}{r} + \frac{e^2}{r^2}.$$ In ingoing Eddington--Finkelstein coordinates (that we will use throughout this article) the metric can be expressed as: $$g = -Ddv^2 + 2dvdr + r^2 \gamma_{\mathbb{S}^2}$$ for $v = t+r^{*}$ where $\frac{dr^{*}}{dr} = \frac{1}{D}$ and we can take $r \in [M,\infty)$. We will denote from now on $T = \partial_v$ and $Y = \partial_r$ in the ingoing Eddington--Finkelstein coordinate system.

Extremal Reissner--Nordstr\"{o}m solutions make up a 1-parameter subfamily of Reissner--Nordstr\"{o}m  satisfying $|e| = M$. In the extremal case we can compute that on the event horizon $\mathcal{H}^{+} = \{ r=M \}$ where $T$ is Killing, null and tangential (hence there exists some smooth function $\kappa$ called \textit{surface gravity} such that $\left. \nabla_T T \right|_{\mathcal{H}^{+}} = \left. \kappa T \right|_{\mathcal{H}^{+}}$) we have that $\left. \nabla_T T \right|_{\mathcal{H}^{+}} = \left. \frac{D'}{2} T \right|_{\mathcal{H}^{+}} = 0$, as $D = \left( \frac{r-M}{r} \right)^2$. So in the $|e| = M$ case, the event horizon is a \textit{degenerate horizon} (as its surface gravity vanishes), which is a geometric defining property of extremal black hole spacetimes. 

We note here that on extremal Reissner--Nordstr\"{o}m spacetimes we will work with the spacelike-null foliation $\Sigma_{\tau}$ that covers the the domain of outer communications $\{ r \meg M \}$, which is defined as $\Sigma_{\tau} = S_{\tau} \cup N_{\tau}$ for $S_{\tau} = \{ t^{*} = \tau \} \cap \{ r \mik R_0 \}$ for some large enough $R_0 > 2M$ (note that the photon sphere is situated at $r = 2M$) and for $t^{*} = v-r$, and $N_{\tau} = \{ u = u_{\tau} , v \meg v_{\tau} \} \cap \{ r \meg R_0 \}$ for $u = t-r^{*}$ and $v = t+r^{*}$ (where $(u,v)$ are Eddington--Finkelstein \textit{double null} coordinates, that we will use occasionally), and with the spacelike foliation of the domain of outer communications $\widetilde{\Sigma}_{\tau} = \{ t^{*} = \tau \}$ in the case of the local theory.

In the works \cite{A1} and \cite{A2} of the second author, several instability results were proved for the linear wave equation on extremal Reissner--Nordstr\"{o}m spacetimes. Specifically, it was shown that there are conservation laws on the event horizon  which imply non-decay results for first-order derivatives and blow-up (asymptotically along $\mathcal{H}^{+}$) for  higher derivatives. A general theory of conservation laws on null hypersurfaces was presented in \cite{aretakis4} and \cite{aretakisglue}.

Additionally, \cite{aretakis2013} illustrates how these instabilities can be used to prove \textit{finite time blow-up} on $\mathcal{H}^{+}$ for a certain class of semilinear wave equations (it should be noted that in the subextremal case, the nonlinear wave equations of the type studied in \cite{aretakis2013} do \emph{not} exhibit any type of blow-up for small data). 

In \cite{rnnonlin1}, the first author showed that if the nonlinearity satisfies the so called \textit{null condition} and the initial data are spherically symmetric, then  there is no finite time blow-up for small data, yet on $\mathcal{H}^{+}$ there exists a quantity that is \emph{almost} conserved (so in particular there is no decay for the derivative $Y\psi$, which is transversal to the horizon). As a consequence, it was shown that all the \emph{asymptotic} blow-up phenomena from the linear case do occur. A major difficulty in all nonlinear applications arises from the fact that close to the horizon one is forced to work with energies with degenerate weights, since as it was shown in \cite{trapping}, the integrated energy without any degenerate weights on the horizon generically blows up.

The precise influence of conservation laws along the event horizon on the late-time asymptotics of fixed frequency solutions to the linear wave equation in extremal Reissner--Nordstr\"om was investigated numerically in \cite{lmrtprice}; see also the related heuristics in \cite{ori2013,sela}.

Decay estimates and conservations laws have also been obtained in extremal Kerr for axisymmetric solutions to the linear wave equation \cite{aretakis3,aretakis4,aretakis2012}. See also \cite{hj2012,murata2012} for generalisations of the above conservation laws along the event horizon to higher-spin equations and higher-dimensional spacetimes, respectively.

Precise late-time asymptotics and decay estimates for the linear wave equation in the domain of outer communication of extremal Reissner--Nordstr\"om and Kerr play an important role in the study of the regularity and boundedness properties for the linear wave equation in the corresponding black hole interior regions, as was shown by the third author in \cite{gajic,gajic2}. Regularity properties in extremal black hole interiors were first investigated numerically in \cite{harvey2013} in the context of the nonlinear spherically symmetric Einstein--Maxwell-scalar field system of equations.

\section{Motivation for our model}
\label{motivation}

By considering equations of the form \eqref{nwg}, we are incorporating a degeneracy at the event horizon into our nonlinearity. As we will see below, the leading-order terms in the nonlinearity remain \emph{covariant} under a conformal isometry that is special to extremal Reissner--Nordstr\"om: the \textit{Couch--Torrence conformal isometry}. This discrete conformal isometry, denoted by $\Phi$, is given in $(t,r,\theta , \varphi)$ coordinates by the transformation
$$ (t,r,\theta , \phi) \xrightarrow{\Phi} \left( t , r' = M+\frac{M^2}{r-M}, \theta, \phi \right) , $$
and in ingoing Eddington--Finkelstein coordinates
$$ (v,r, \theta , \phi) \xrightarrow{\Phi} \left( u =v , r' = M +\frac{M^2}{r-M}, \theta, \phi \right) , $$ 
so in particular, $\mathcal{H}^{+}$ is mapped to $\mathcal{I}^{+}$ and vice versa (where $\mathcal{I}^{+} = \{ (u, \infty ) \}$ denotes future null infinity). This conformal isometry was introduced in \cite{couch} and further explored in the context of the linear wave equation in \cite{lmrtprice, bizon2012,ori2013,sela}.

In \cite{bizon2016} the authors used this transformation in order to study the Yang--Mills equation within spherical symmetry on an extremal Reissner--Nordstr\"om background. Note that the Yang--Mills equation is conformally invariant. Here we pose a more general problem: consider a nonlinear wave equation that satisfies the classical null condition at the infinity of an asymptotically flat spacetime, what is the equation that we get after applying the transformation, close to the horizon of an extremal Reissner--Nordstr\"{o}m spacetime?

A simple computation shows that the equation 
$$ \Box_g \psi = g^{\alpha \beta} \cdot  \partial_{\alpha}  \psi  \cdot \partial_{\beta} \psi , $$
close to $\mathcal{I}^{+}$ (i.e.\ for $r>R$, with $R>0$ sufficiently large), roughly corresponds to the equation
$$ \Box_g \widetilde{\psi} = \sqrt{D} \cdot g^{\alpha \beta} \cdot  \partial_{\alpha}  \widetilde{\psi}  \cdot \partial_{\beta} \widetilde{\psi} , $$
close to $\mathcal{H}^{+}$, and vice versa. More specifically after setting 
$$ \Omega = \frac{M}{r'-M} = \frac{r-M}{M} , $$
for $r'$ the radial variable near infinity, and $r$ the radial variable close to $\mathcal{H}^{+}$, due to the fact the extremal Reissner--Nordstr\"{o}m spacetime has vanishing scalar curvature, we have that for a wave $\psi_{O}$ near null infinity, we obtain another wave near the horizon 
$\psi_{I} \doteq \Omega^{-1} \psi_O$ 
through $\Phi$ by the equation  
\begin{equation}\label{eq:conf_omega}
\Box_{g_I} \psi_I = \Omega^{-3} \Box_{g_O} \psi_O , 
\end{equation}
where $g_I$, $g_O$ is the metric close to the horizon and infinity respectively, which are related by
$$ g_I = \Omega^2 g_O . $$
Note that in \textit{ingoing} Eddington--Finkelstein coordinates, $g_I$ has the standard form $g_I = -D dv^2 + 2dvdr + r^2 \gamma_{\mathbb{S}^2}$ and $g_O$ has the form $g_O = -D du^2 - 2dudr' + (r')^2 \gamma_{\mathbb{S}^2}$ where $(u,r,\theta,\phi)$ are called \textit{outgoing} Eddington--Finkelstein coordinates. If $\psi_O$ solves a nonlinear wave equation where the nonlinearity satisfies the standard null condition, we then have from \eqref{eq:conf_omega} that $\psi_I$ satisfies the following equation
\begin{equation}\label{eq:conf_hor}
\Box_{g_I} \psi_I = \Omega \cdot g^{\mu \nu}_I \cdot \partial_{\mu} \psi_I \cdot \partial_{\nu} \psi_I + \frac{2}{M}\cdot  T\psi_I \cdot \psi_I + \frac{2D}{M} \cdot Y\psi_I \cdot \psi_I - \frac{\sqrt{D}}{M} \psi_I^2 .  
\end{equation}
Since we have that $\Omega = \frac{r-M}{M}$, and $\sqrt{D} = \frac{r-M}{r}$, we can easily see that up to some extra terms (that present no additional difficulties for small data global well-posedness questions), equation \eqref{eq:conf_hor} is of the same form as \eqref{nwg}, demonstrating therefore the covariance of \eqref{nwg}.
 
This explains on a heuristic level why the analysis of nonlinear wave equations satisfying the null condition \textit{without} any weights on the horizon cannot be performed using solely classical methods as the following difficulties have to be dealt with:
\begin{itemize}
\item The null condition has to be exploited not only at infinity but also at the horizon, where we also need a combination of energy estimates and of $L^1$ estimates provided by the method of characteristics (as it was done in \cite{rnnonlin1}).
\item Without the assumption of spherical symmetry (as in the upcoming \cite{rnnonlin3}) we need extra decay at the linear level (see the upcoming \cite{improvedrn}). Moreover we need to separate the wave into its spherically symmetric part and its non-spherically symmetric part, with a different analysis for each case.
\item Some of the above additional difficulties of the analysis cannot be removed if we increase the degree of the nonlinearity (see \cite{rnnonlinthesis} where nonlinear wave equations were studied with nonlinearities of quartic degree satisfying the null condition and without any weights on the horizon).
\end{itemize}

\begin{rem}
It is also interesting to perform a computation similar to the derivation of \eqref{eq:conf_hor} but in the opposite direction. Let us consider a wave $\psi_I$ that satisfies a nonlinear wave equation with the classical null condition close to the horizon. Since $\Phi = \Phi^{-1}$, we now set 
$$ \Omega = \frac{M}{r-M} = \frac{r' - M}{M} , $$
and we relate again the metric $g_I$ close to the horizon to the metric $g_O$ close to infinity by
$$ g_O = \Omega^2 g_I .$$
The new wave $\psi_O = \Omega^{-1} \psi_I$ now can be seen to satisfy the equation
\begin{equation}\label{eq:conf_inf}
\begin{split}
\Box_{g_O} \psi_O = \Omega^{-3} & \Box_{g_I} \psi_I \Rightarrow  \\ &\Box_{g_O} \psi_O = \Omega \cdot g^{\mu \nu}_O \cdot \partial_{\mu} \psi_O \cdot \partial_{\nu} \psi_O - \frac{2}{M} \partial_u \psi_O \cdot \psi_O - \frac{2D}{M} \partial_{r'} \psi_O \cdot \psi_O - \frac{r' - M}{(r')^2} \psi_O^2 .
\end{split}
\end{equation}
When we performed the opposite transformation and arrived at equation \eqref{eq:conf_hor}, we were interested in identifying the extra factor of $r-M$ in front of the nonlinearity. In equation \eqref{eq:conf_inf} we are interested in identifying the decay with respect to powers of $r'$ of the nonlinearity. With the classical null condition we expect the following behaviour in $r'$ (using the estimates provided by the linear theory)
$$ g^{\mu \nu}_O \cdot \partial_{\mu} \psi_O  \cdot \partial_{\nu} \psi_O \sim \frac{1}{(r')^3} . $$
This can be seen more clearly after writing the null form in terms of $\varphi_O \doteq r' \psi_O$ instead of $\psi_O$ where we have that
\begin{equation*}
\begin{split}
g^{\mu \nu}_O \partial_{\mu} \psi_O  \cdot \partial_{\nu} \psi_O = \frac{1}{(r')^2} D \cdot (\partial_{r'} \varphi_O )^2 - & \frac{2}{(r')^2}  \partial_u \varphi_O \cdot \partial_{r'} \varphi_O +  \frac{1}{(r')^2} | \slashed{\nabla} \varphi_O |^2  \\ & + \frac{2D}{(r')^2} \partial_{r'} \varphi_O \cdot \varphi_O + \frac{2}{(r')^3} \partial_u \varphi_O \cdot \varphi_O - \frac{D}{(r')^4} \varphi_O^4 . 
\end{split}
\end{equation*}
By the following expected behaviour in $r'$ towards infinity: $\varphi_O \sim 1$, $\partial_u \varphi_O \sim 1$ and $\partial_{r'} \varphi_O \sim \frac{1}{(r')^2}$, we can see that the term that imposes the $\frac{1}{(r')^3}$ decay for the classical null condition is the term
$$  \frac{2}{(r')^3} \partial_u \varphi_O \cdot \varphi_O . $$
The rest are of the order $\frac{1}{(r')^4}$ and better. 

Using the asymptotics of the classical null condition described above, since $\Omega = \frac{r'-M}{M} \approx r'$  close to infinity, the nonlinearity of \eqref{eq:conf_inf} \emph{seems} at first glance to be of order $\frac{1}{(r')^2}$. 

However, after rewriting the nonlinearity in terms of $\varphi_O$ we notice that
\begin{equation*}
\begin{split}
 \Omega \cdot g^{\mu \nu}_O \cdot & \partial_{\mu} \psi_O  \cdot \partial_{\nu} \psi_O - \frac{2}{M} \partial_u \psi_O \cdot \psi_O -  \frac{2D}{M} \partial_{r'} \psi_O \cdot \psi_O - \frac{r' - M}{(r')^2} \psi_O^2 \\ = & \frac{\Omega}{(r')^2} D \cdot (\partial_{r'} \varphi_O )^2 -  \frac{2\Omega}{(r')^2}  \partial_u \varphi_O \cdot \partial_{r'} \varphi_O +  \frac{\Omega}{(r')^2} | \slashed{\nabla} \varphi_O |^2  \\ & + \frac{2 D}{(r')^2} \left( \Omega + \frac{1}{M} \right) \partial_{r'} \varphi_O \cdot \varphi_O + \left( \frac{2\Omega}{(r')^3} - \frac{2}{M (r')^2} \right) \partial_u \varphi_O \cdot \varphi_O - \left( \frac{\Omega \cdot D}{(r')^4} + \frac{2D}{M (r' )^3} + \frac{r'-M}{(r')^4} \right) \varphi_O^4 .
 \end{split}
 \end{equation*}
Again the term that seems to impose worst decay of order $\frac{1}{(r')^2}$ is the one involving the term $\partial_u \phi_O \cdot \phi_O$, but now we notice that
$$ \left( \frac{2\Omega}{(r')^3} - \frac{2}{M (r')^2} \right) \partial_u \varphi_O \cdot \varphi_O = - \frac{2}{(r')^3} \partial_u \varphi_O \cdot \varphi_O , $$
so we can conclude that the nonlinearity of equation \eqref{eq:conf_inf} should in fact be of order $\frac{1}{(r')^3}$, which is consistent with the observation above in the case where the nonlinearity satisfies the standard null condition. Still though, there is the difference that in the nonlinearity of \eqref{eq:conf_inf} many terms that in the classical null form are of order $\frac{1}{(r')^4}$ are now of order $\frac{1}{(r')^3}$ and this poses several difficulties in dealing with questions of small data global well-posedness with equations of the form \eqref{eq:conf_inf} on a spherically symmetric and asymptotically flat spacetime. Such equations, along with more general nonlinear wave equations where the nonlinearity satisfies the null condition but carries also growing weights in the radial variable will be investigated in future work.
\end{rem}

\section{The main theorem}
The main results of this article are summarized in the following theorem.
\begin{thm}[\textbf{Main theorem}]
There exists an $\epsilon > 0$ such that if $0 \mik \epsilon' \mik \epsilon$, then for all compactly supported data $(\epsilon' f , \epsilon' g)$ on a spacelike hypersurface $\widetilde{\Sigma}_0$ that crosses the event horizon and that ends at spacelike infinity that is of size $\epsilon'$ in the sense that
$$ \| ( \epsilon' f , \epsilon' g \|_{H^s (\widetilde{\Sigma}_0 ) \times H^{s-1} ( \widetilde{\Sigma}_0 ) } \mik \sqrt{E_0} \epsilon' , $$
for some $s > 20$, $s \in \mathbb{N}$, and for $E_0$ defined by
\begin{equation*}
\begin{split}
 E_0 = \sum_{k \mik 5, l \mik 4} &\left[ \int_{S_0} \left(  (\Omega^k T^{l+1} \psi )^2 +  (\Omega^k T^l Y\psi )^2 + | \Omega^k T^l \slashed{\nabla} \psi |^2 \right)  d\mi_{g_S} \right]  \\ & + \sum_{k \mik 5, l \mik 2} \left[ \int_{S_0} \left( (\Omega^k T^{l+1} Y \psi )^2 +  (\Omega^k T^l Y^2\psi )^2 + | \Omega^k T^l \slashed{\nabla} Y\psi |^2 \right) d\mi_{g_S} \right]  \\ & + \sum_{k \mik 5, l \mik 4} \left[ \int_{N_0} \left( (\Omega^k T^{l+1} Y \psi )^2 +  (\Omega^k T^l Y^2\psi )^2 + | \Omega^k T^l \slashed{\nabla} \psi |^2 \right) d\mi_{g_N} \right] ,
 \end{split}
\end{equation*} 
there exists a unique global solution $\psi$ of \eqref{nwg} that is in $H^s (\widetilde{\si}_{\tau} ) \times H^{s-1} (\widetilde{\si}_{\tau} )$ for any $\tau \meg 0$ in the domain of outer communications up to and including the event horizon of an extremal Reissner--Nordstr\"{o}m spacetime. 

The solution has the following asymptotic behaviour.

1) (\textbf{Decay estimates}). For any $\tau > 0$ we have that
$$ \| \psi \|_{L^{\infty} (\Sigma_{\tau} )} \lesssim \frac{\sqrt{E_0} \epsilon'}{(1+\tau )^{1/2}} ,  \quad \quad  \| \slashed{\nabla} \psi \|_{L^{\infty} (\Sigma_{\tau} )} \lesssim \frac{\sqrt{E_0} \epsilon'}{(1+\tau )^{1/2}}, $$ $$ \| T\psi \|_{L^{\infty} (\Sigma_{\tau} )} \lesssim \frac{\sqrt{E_0} \epsilon'}{(1+\tau )^{1/2}}, \quad \quad \| \sqrt{D} Y\psi \|_{L^{\infty} (\Sigma_{\tau} )} \lesssim \frac{\sqrt{E_0} \epsilon'}{(1+\tau )^{1/4}} . $$

2) (\textbf{Asymptotic instabilities on $\mathcal{H}^{+}$}). On the event horizon $\mathcal{H}^{+}$ the quantity
$$ \int_{\mathbb{S}^2} \left( Y \psi (v,r=M) + \frac{1}{M} \psi (v, r=M) \right) \, d\omega $$
is conserved, while if initially we have that
$$ \int_{\mathbb{S}^2} \psi (0 , r=M) \, d\omega > 0 \mbox{  and  }  \int_{\mathbb{S}^2} Y\psi (0 , r=M) \, d\omega > 0 , $$
then we observe the following asymptotic blow-up phenomenon
$$ \int_{\mathbb{S}^2} Y^k \psi (v , r=M) \, d\omega \xrightarrow{v \rightarrow \infty} \infty \mbox{  for all $k \meg 2$ along $\mathcal{H}^{+}$.} $$
\end{thm}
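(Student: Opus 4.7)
The plan is a bootstrap/continuity argument adapted to the degenerate horizon geometry, combined with a horizon-restricted transport analysis for the instability statements. Standard local existence gives a smooth solution on some maximal interval $[0,\tau_\ast)$, unaffected by the $\sqrt{D}$ degeneracy; one then posits bootstrap assumptions consisting of (i) pointwise bounds on $\psi$, $T\psi$, $\slashed{\nabla}\psi$ of size $C\epsilon'(1+\tau)^{-1/2}$ and on $\sqrt{D}Y\psi$ of size $C\epsilon'(1+\tau)^{-1/4}$, and (ii) a hierarchy of commuted energy estimates on $\Sigma_\tau$ built from the Killing field $T=\partial_v$, the angular momenta $\Omega_i$ on $\mathbb{S}^2$, and up to a finite number of transversal commutations with $Y=\partial_r$. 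Commutations with $T$ and $\Omega_i$ are essentially harmless, while each $Y$-commutation costs one factor of $\sqrt{D}$ in the available weights; this is the origin of the asymmetric count between $Y$ and $T$ powers in $E_0$ and forces the regularity threshold $s>20$. The aim is to improve each bootstrap bound by a factor $1/2$, thereby proving $\tau_\ast=\infty$.

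The energy hierarchy itself is closed by combining (i) the degenerate redshift multiplier of \cite{A1,A2} together with the $T$-energy in a neighbourhood of $\ho$, (ii) the $r^p$-hierarchy of Dafermos--Rodnianski applied on the outgoing null portions $N_\tau$ to convert flux decay into $\tau$-decay, and (iii) a Morawetz/integrated local energy decay statement accommodating the trapping at the photon sphere $r=2M$. Applied to the equations obtained by commuting \eqref{nwg} with $T$, $\Omega_i$, $Y$, these linear tools reproduce the linear asymptotic profile at each order; the nonlinear right-hand side is handled by decomposing $g^{\alpha\beta}\partial_\alpha\psi\partial_\beta\psi$ as a null form in the coordinates adapted to $\ho$ and $\mathcal{I}^+$, so that at least one factor in every product is a ``good'' derivative ($T\psi$, $\slashed{\nabla}\psi$) supplying the extra $(1+\tau)^{-1/2}$; the prefactor $\sqrt{D}$ in \eqref{nwg} is precisely what is needed to absorb the single ``bad'' factor $Y\psi$ against the degenerate energy, rendering the resulting spacetime error integrable in $\tau$. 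Pointwise bounds then follow by Klainerman--Sobolev-type embedding on $\Sigma_\tau$ applied to the commuted fields.

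For the instability statements the key observation is that $\sqrt{D}|_{\ho}=0$: the entire right-hand side of \eqref{nwg} vanishes identically on $\ho$, so $\Box_g\psi|_{\ho}=0$ and the Aretakis computation of \cite{A1} applies verbatim to the nonlinear solution. Expanding $\Box_g\psi=0$ at $r=M$ in ingoing coordinates and integrating over $\mathbb{S}^2$ yields
$$ T\!\left(\int_{\mathbb{S}^2}\!\left(Y\psi+\tfrac{1}{M}\psi\right)(v,M,\omega)\,d\omega\right)=0, $$
which is the asserted conservation law. Commuting once more with $Y$ produces nonlinear source terms proportional to $Y\sqrt{D}|_{\ho}\neq 0$, but these are quadratic in $\psi$ and $Y\psi$ on $\ho$ and, by the $T$-decay proved above, integrable in $v$; hence the transport equation along $\ho$ for the spherical mean of $Y^2\psi$ has a linear driving term $-\tfrac{2}{M}\cdot\overline{Y\psi}$ which, thanks to the initial positivity assumption and the conservation law, is bounded below by a positive constant, forcing linear-in-$v$ growth. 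A straightforward induction on $k\meg 2$ then yields the claimed asymptotic blow-up of $\int_{\mathbb{S}^2}Y^k\psi(v,M,\omega)\,d\omega$.

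The hardest step is expected to be closing the top-order energy estimates near $\ho$ without any symmetry reduction: one must commute with the angular momenta $\Omega_i$ while simultaneously tracking how many $\sqrt{D}$ weights survive after each $Y$-commutation, and verify that the null structure of the commuted nonlinearities is preserved up to error terms of acceptable schematic form. The $\sqrt{D}$ prefactor in \eqref{nwg} is what makes this balance work by exactly one margin; the companion paper \cite{rnnonlin3} removes it at the cost of a far more delicate analysis that separates the spherically symmetric and non-symmetric parts of $\psi$ and relies on the sharper linear decay of \cite{improvedrn}.
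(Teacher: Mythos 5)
Your proposal follows the paper's overall strategy quite closely: commuted energy estimates built from $T$, the angular momenta, and one transversal $Y$; the $T$--$P$--$N$ degenerate-energy hierarchy near $\ho$; the $r^p$-weighted hierarchy near $\mathcal{I}^+$; Morawetz estimates with a $T$-derivative loss at the photon sphere; exploitation of the null structure and the $\sqrt{D}$ prefactor; and horizon conservation/blow-up via the transport equation along $\ho$. Two discrepancies are worth pointing out.

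First, a difference in organization rather than a gap: the paper does not bootstrap on pointwise bounds of $\psi$. Its bootstrap assumptions (A1)--(E2) are spacetime and hypersurface $L^2$-bounds on the \emph{nonlinearity} $F$ and its $\Omega^k T^l(Y)$-commuted versions, which are then fed into the linear energy machinery (Propositions~\ref{ent}--\ref{enn}, the $r^p$-estimate, Proposition~\ref{d32ty}) to produce the energy-decay Lemmas~\ref{denp}--\ref{ydenp} and only then the pointwise decay of Theorems~\ref{dpsigaway}--\ref{dpsig}; closing the bootstrap means reproving (A1')--(E2') with a factor $E_0\epsilon^2$ gained. Your pointwise-bootstrap variant is also workable, but the $F$-based bootstrap is cleaner here because the continuation criterion (Proposition~\ref{cont}) only requires $L^\infty$-control of $T\psi$, $\slashed\nabla\psi$, and the degenerate quantity $\sqrt{D}\,Y\psi$ (not $Y\psi$ itself), and because all the linear estimates in Section~5 take $F$ as the inhomogeneity.

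Second, and this is a genuine error in your reasoning: in the blow-up argument the commuted nonlinear source on $\ho$ is \emph{not} integrable in $v$. From \eqref{y2} the nonlinear contribution contains the product $T\psi\cdot Y\psi$ at $r=M$, where $Y\psi_0$ does \emph{not} decay (it converges to the nonzero constant $H_0-\tfrac{1}{M}\psi_0\to H_0$ as $\psi_0\to 0$), so this term decays only like $T\psi\sim v^{-1/2}$ and its $v$-integral grows (the paper records a $\log(1+v)$ upper bound on the full nonlinear contribution, and in any case it is not bounded). What saves the argument is not integrability but the fact that the \emph{linear} transport coefficient gives $\frac{1}{M^2}\int_0^v Y\psi_0 \gtrsim H_0\, v$, which grows strictly faster than the nonlinear contribution and therefore dominates. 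The conclusion you reach is correct, but the justification should be "sublinear in $v$ and dominated by the linear driving term", not "integrable". The coefficient of the linear driving term is also $-\tfrac{1}{M^2}$ rather than $-\tfrac{2}{M}$.

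Minor terminological caveat: the $P$ and $N$ multipliers of \cite{A1,A2} are not redshift vector fields in the usual sense (there is no redshift on a degenerate horizon); they are the substitutes that produce degenerate fluxes $\sqrt{D}(Y\psi)^2$ and $(Y\psi)^2$ respectively. Likewise the regularity threshold $s>20$ reflects the full count of commutations ($5$ angular, $4$ with $T$, $1$ with $Y$) together with the Sobolev embeddings needed for the bilinear estimates, not just the $\sqrt{D}$ cost of the single $Y$-commutation.
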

Several comments are in order.

1. As in \cite{rnnonlin1}, we only use the hierarchy of energy estimates provided by \cite{A1} and \cite{A2} (which in our situation gives us only the weak pointwise decay of order $\frac{1}{\tau^{1/2}}$ close to the horizon which presents an additional difficulty in our analysis) combined with the $r^p$-weighted method at infinity as in \cite{shiwu}, with the minor refinement that we are able to close all estimates for $p=2$ without any $\alpha$ loss, after closing an extra bootstrap assumption.

2. We commute 5 times with angular derivatives (something that is standard), and four times in total with respect to $T$ and $Y$ derivatives (where the number of commutations of $Y$ is always restricted to 1). The four commutations with the $T$ derivatives take place in order to take care of the trapping effect at the photon sphere. As it is standard in such situations (see \cite{luknullcondition}), the top-order energy is allowed to grow, although in our case this is even more complicated as there are different decay rates within the terms of the form $T^k \psi$ due to the use of the $T-P-N$ hierarchy of degenerate energy estimates (see the next section for details on this hierarchy). The consistency of the all the energy decay statements depends crucially on Step 1. and our ability to close certain bootstrap estimates for $p=2$.

3. The most problematic commutation is with $Y$. The upshot in our situation is that the continuation criterion involves the global boundedness only of $\sqrt{D} \cdot Y\psi$ and not of $Y\psi$ itself (for which we have no estimates). We are able to deal with $\sqrt{D} \cdot Y\psi$ only through the use of some new degenerate weighted energy estimates, and without relying on the method of characteristics (as it was done \cite{rnnonlin1} and \cite{rnnonlinthesis}, and as it will be done in the upcoming \cite{rnnonlin3}). 

\begin{rem}
As our continuation criterion involves only $\sqrt{D} \cdot Y\psi$, we believe that our method carries over to the study of axisymmetric solutions of nonlinear wave equation of the form \eqref{nwg} on extremal Kerr backgrounds. Note that on an extremal Kerr spacetime the boundedness of $Y\psi$ itself is not even known for linear waves.
\end{rem}

\section{Energy estimates}
In this section as well as in the one that follows we prove certain $L^2$ estimates for solutions of the following inhomogeneous wave equation
\begin{equation}\label{nwf}
\Box_g \psi = F .
\end{equation}
For future reference we define the following regions for any given $\tau_1$, $\tau_2$ with $\tau_1 < \tau_2$
\begin{equation}\label{mcala}
\mathcal{A}_{\tau_1}^{\tau_2} = \rrr (\tau_1 , \tau_2 ) \cap \{ M \mik r \mik r_0 < 2M \} , 
\end{equation}
for some fixed $r_0$, and
\begin{equation}\label{mcalc}
\calc_{\tau_1}^{\tau_2} = \rrr (\tau_1 , \tau_2 ) \cap \{ M < 2M - \delta \mik r \mik 2M + \delta \} ,
\end{equation}
for some $\delta > 0$ which is sufficiently small. The volume forms that we will use are the ones that are naturally induced from $g$, we just note that between two $N_{\tau_1}$ and $N_{\tau_2}$ hypersurfaces we have that the volume form is $D r^2 d\omega dv du$ for $v$ between $v_{R_0}$ and infinity, and $u$ between $u_{\tau_1}$ and $u_{\tau_2}$.

We present here the $T-P-N$ hierarchy for $\psi$. Recall that $T$ is a Killing vector field, while the $P$ and $N$ vector fields were first introduced in \cite{A2} and \cite{A1}, and they have the form
$$ N \approx T - Y \mbox{  close to $\mathcal{H}^{+}$ and  } N \sim T \mbox{  away from $\mathcal{H}^{+}$,} $$
$$ P \sim T - \sqrt{D} \cdot Y \mbox{  close to $\mathcal{H}^{+}$ and  } P \approx T \mbox{  away from $\mathcal{H}^{+}$.} $$
For the energy momentum tensor of the wave equation
$$ T_{\mu \nu } [\psi ] = \partial_{\mu} \psi \cdot \partial_{\nu} \psi - \frac{1}{2} g_{\mu \nu} \cdot \partial^c \psi \cdot \partial_c \psi $$
we define the energy current for a vector field $V$ by 
$$ J_{\mu}^V [\psi ] \doteq T_{\mu \nu} [\psi ] \cdot V^{\nu} , $$
and we can compute its divergence as follows
$$ Div ( J^V [\psi ] ) \doteq K^V [\psi ] + \mathcal{E}^V [\psi ] \mbox{  where  } K^V [\psi ] = T_{\mu \nu} [\psi ] \cdot ( \nabla^{\mu} V )^{\nu} \mbox{  and  } \mathcal{E}^V [\psi ] = \Box_g \psi \cdot V\psi .$$
We now recall here that for $T$ we have $J^T_{\mu} [\psi ] n^{\mu}_{\si} \sim (T\psi )^2 + D (Y\psi )^2 + |\slashed{\nabla} \psi |^2$ and $K^T [\psi ] = 0$ (as $T$ is Killing), while for $P$ and $N$ we have the following estimates close to the horizon
\begin{equation}\label{defpn}
J^P_{\mu} [\psi ] n^{\mu}_{\si} \sim (T\psi )^2 + \sqrt{D} (Y\psi )^2 + |\slashed{\nabla} \psi |^2 , \quad J^N_{\mu} [\psi ] n^{\mu}_{\si} \sim (T\psi )^2 + (Y\psi )^2 + |\slashed{\nabla} \psi |^2 , 
\end{equation}
$$ K^P [\psi ] \sim J^T_{\mu} [\psi ] n^{\mu}_{\si} \sim (T\psi )^2 + D (Y\psi )^2 + |\slashed{\nabla} \psi |^2 , \quad K^N [\psi ] \sim J^P_{\mu} [\psi ] n^{\mu}_{\si} \sim (T\psi )^2 + \sqrt{D} (Y\psi )^2 + |\slashed{\nabla} \psi |^2 . $$

Let us also recall the standard Hardy inequality (see \cite{A1} for a proof) in extremal Reissner--Nordstr\"{o}m spacetimes given by
\begin{equation}\label{hardy}
\int_{\Sigma_{\tau}} \frac{1}{r^2} f^2 d\mi_{g_{\Sigma}} \lesssim \int_{\Sigma_{\tau}} J^T_{\mu} [f] n^{\mu} d \mi_{g_{\Sigma}} ,
\end{equation} 
which holds for any function $f$ that decays fast enough towards infinity.
We state here energy estimates that come from these 3 vector fields combined with integrated energy estimates for a bounded region in $r$ that passes over the photon sphere.

First we state the Morawetz estimate that implies a boundedness estimate for the degenerate energy.
\begin{prop}[\textbf{Morawetz and degenerate energy estimate}]\label{ent}
Let $\psi$ be a solution of \eqref{nwf}. Then for all $\tau_1$, $\tau_2$ with $\tau_1 < \tau_2$ and any $\eta > 0$ we have that
\begin{equation}\label{en1}
\int_{\si_{\tau_2}} J^T_{\mu} [\Omega^k T^l \psi ] n^{\mu} d\mi_{g_{\si}} + \int_{\rrr (\tau_1 , \tau_2 )} \chi_{(\calc_{\tau_1}^{\tau_2} )^c} \left( \dfrac{(\Omega^k T^l T\psi )^2 }{r^{1+\eta}} + \dfrac{D^{5/2} \cdot (\Omega^k T^l Y\psi )^2}{r^{1+\eta}} + \dfrac{\sqrt{D}|\Omega^k T^l \slashed{\nabla} \psi |^2}{r} \right)  d\mi_{g_{\rrr}} \lesssim_{R_0}
\end{equation}
$$ \lesssim_{R_0} \int_{\si_{\tau_1}} J^T_{\mu} [\Omega^k T^l \psi ] n^{\mu} d\mi_{g_{\si}} +  \int_{\tau_1}^{\tau_2} \int_{S_{\tau'}} |\Omega^k T^l F|^2 d\mi_{g_{\so}} + \int_{\tau_1}^{\tau_2} \int_{N_{\tau'}} r^{1+\eta} |\Omega^k T^l F|^2 d\mi_{g_{\nnn}} + $$ $$ + \int_{\mathcal{C}_{\tau_1}^{\tau_2}} |\Omega^k T^{l+1} F|^2 d\mi_{g_{\mathcal{C}}} + \sup_{\tau' \in [\tau_1, \tau_2]} \int_{\si_{\tau'} \cap \calc_{\tau_1}^{\tau_2}} |\Omega^k T^l F|^2 d\mi_{g_{\si_{\calc}}}, $$
for any $k \in \mathbb{N}$, $l \in \mathbb{N}$, where $\calc$ was defined in \eqref{mcalc}, and where $\chi_{(\calc_{\tau_1}^{\tau_2} )^{c}}$ is a smooth function that is equal to 1 on the complement of $\calc$ and 0 around the photon sphere.
\end{prop}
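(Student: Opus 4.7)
The plan is to prove the base case $k=l=0$ and then transfer the result to the commuted quantity $\psi_{k,l} := \Omega^k T^l \psi$, which satisfies $\Box_g \psi_{k,l} = \Omega^k T^l F$ by spherical symmetry of the background and the fact that $T$ is Killing. For the base case I would combine two multipliers. The $T$ multiplier, since $K^T=0$, immediately gives the boundedness of the degenerate energy $\int_{\Sigma_{\tau_2}} J^T_\mu[\psi]n^\mu\, d\mi_{g_\si}$ in terms of the same quantity on $\Sigma_{\tau_1}$ plus a bulk error $\int F \cdot T\psi$. The Morawetz multiplier, chosen to be of the form $X = f(r)\partial_{r^*}$ as constructed in \cite{A1} and \cite{A2}, has a nonnegative deformation tensor $K^X[\psi]$ that vanishes to second order at the photon sphere $r = 2M$ and to a high order at $\mathcal{H}^+$ (the latter being necessary because no red-shift is available at the degenerate horizon). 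Adding a large multiple of $J^T$ to $J^X$ and applying the divergence theorem on $\rrr(\tau_1,\tau_2)$ produces the Morawetz bulk on $(\calc_{\tau_1}^{\tau_2})^c$ with precisely the weights shown in \eqref{en1}; in particular, the weight $D^{5/2}$ on $(Y\psi)^2$ reflects the order of vanishing of $f$ at $r=M$ forced by the absence of red-shift. The $r^{1+\eta}$ weight on the $N_{\tau'}$ error is then produced by the Hardy inequality \eqref{hardy}, which is needed to absorb the bulk contribution $\int F \cdot T\psi$ where $T\psi$ is only controlled with an $r^{-1-\eta}$ factor at infinity.

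Once the Morawetz bulk is in hand away from the photon sphere, I would handle the $F$-error on $\calc_{\tau_1}^{\tau_2}$ separately. The natural inequality $\int_{\calc} F \cdot T\psi \lesssim \int_{\calc}|F|^2 + \int_{\calc}(T\psi)^2$ is unusable because the Morawetz bulk already degenerates on $\calc$, so the second integral is not available from Step~2. The standard way around the trapping obstruction is to integrate by parts in $t^*$ on $\calc$: writing $T\psi = \partial_{t^*}\psi$ and moving the $T$ onto $F$ produces a boundary contribution of the form $\int_{\Sigma_{\tau'}\cap\calc} F\,\psi$ at $\tau'\in\{\tau_1,\tau_2\}$, controlled by $\sup_{\tau'}\int_{\Sigma_{\tau'}\cap\calc}|F|^2$ (times the already-bounded $T$-energy via a Poincaré--Hardy argument), and a bulk contribution controlled by $\int_{\calc}|TF|^2$ plus $\int_{\calc}\psi^2$; the latter is absorbed by $\psi^2 \lesssim r^2 J^T_\mu[\psi]n^\mu$ coming again from Hardy. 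This explains the two $\calc$-terms on the right-hand side of \eqref{en1}.

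Finally, running the identical argument with $\psi$ replaced by $\psi_{k,l}$ completes the proof, the commutation being exact since $[T,\Box_g]=0$ and $[\Omega^k,\Box_g]=0$. The main obstacle throughout is the dual degeneration of the geometry: the Morawetz multiplier must simultaneously vanish fast enough at $\mathcal{H}^+$ for $K^X$ to remain nonnegative in the absence of a red-shift (fixing the $D^{5/2}$ weight), and it must vanish at the photon sphere, forcing the trapping-related loss on $\calc$ to be absorbed into $|TF|^2$ and $\sup_{\tau'}\int_{\Sigma_{\tau'}\cap\calc}|F|^2$ rather than into $|F|^2$ itself. Expressing the loss only in terms of $T$-derivatives of $F$ is what will later allow the commuted hierarchy in the nonlinear argument to close, since $TF$ gains one order of smallness via the null condition while $F$ alone does not.
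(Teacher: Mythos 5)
Your high-level outline---Morawetz multiplier degenerating at both the photon sphere and $\mathcal{H}^{+}$, $T$-energy for the boundary terms, Hardy near infinity, and integration by parts in $T$ over $\calc_{\tau_1}^{\tau_2}$ to trade $|F|^2$ for $|TF|^2$---matches the paper's strategy, and your closing remark about why the loss must land on $TF$ rather than $F$ is the right motivation. However, there is a genuine gap in the middle step. A single Morawetz current $J^X[\psi]$ with $X=f(r^*)\partial_{r^*}$ and $f$ vanishing to high order at $r=M$ produces a bulk $K^X[\psi]$ whose coefficients all degenerate at the horizon, and adding a multiple of $J^T$ does not repair this since $K^T=0$ contributes nothing to the spacetime integral. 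The direct application of Theorem~1 of \cite{A1} therefore gives only $\sqrt{D}(T\psi)^2/r^{1+\eta}$ in the bulk near $\mathcal{H}^{+}$, not the non-degenerate $(T\psi)^2/r^{1+\eta}$ claimed in \eqref{en1}. The paper's proof closes this gap with a two-stage upgrade you did not capture: it first invokes Proposition~9.2.5 of \cite{A1} (estimate \eqref{amor2}) to control $(\partial_{r^*}\psi)^2/r^{1+\eta}$ with no $\sqrt{D}$ factor---this works because $\partial_{r^*}=T+DY$ degenerates to $T$ at the horizon and thus remains a good direction there---and then applies Stokes' theorem to $J^X[\psi]$ for a cutoff $X=f(r^*)\partial_{r^*}$ supported in $\cala_{\tau_1}^{\tau_2}$ (estimate \eqref{amor3}) to convert control of $(\partial_{r^*}\psi)^2+\sqrt{D}|\slashed{\nabla}\psi|^2$ into control of $(T\psi)^2$ on $\cala_{\tau_1}^{\tau_2}$. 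Only after combining \eqref{amor2} and \eqref{amor3} to obtain \eqref{amor4}, and then merging with the $\sqrt{D}$-weighted Morawetz \eqref{amor1}, does the proposition's non-degenerate weight on $(T\psi)^2$ emerge. As written, your argument would prove a weaker estimate with an extra $\sqrt{D}$ on the $(T\psi)^2$ bulk, which would not be sufficient for the applications later in the paper (in particular for the $P$- and $N$-energy estimates that feed on this Morawetz to absorb the bulk term $K^P$, $K^N$ near the horizon).
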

\begin{proof}
We will show the proof for the case of $k=0$ and $l=0$ (nothing changes in all the other cases due to the fact that both $T$ and $\Omega$ commute with $\Box_g$). 

First from Theorem 1 of \cite{A1} we have that
\begin{equation}\label{amor1}
 \int_{\rrr (\tau_1 , \tau_2 ) \setminus \calc_{\tau_1}^{\tau_2}} \sqrt{D} \left( \dfrac{(T\psi )^2}{r^{1+\eta}} + \dfrac{D^2 (Y\psi )^2}{r^{1+\eta}} + \dfrac{|\slashed{\nabla} \psi |^2}{r} \right) d\mi_{g_{\rrr}} \lesssim \int_{\si_{\tau_1}} J^T_{\mu} [\psi ] n^{\mu} d\mi_{g_{\si}} + \sup_{\tau' \in [\tau_1, \tau_2]} \int_{\si_{\tau'} \cap \calc_{\tau_1}^{\tau_2}} | F|^2 d\mi_{g_{\si_{\calc}}} + 
\end{equation} 
  $$ + \int_{\calc_{\tau_1}^{\tau_2}} |TF|^2 d\mi_{g_{\calc}} + \int_{\rrr (\tau_1 , \tau_2 ) \setminus \calc_{\tau_1}^{\tau_2}} |F \cdot T\psi | d\mi_{g_{\rrr}} + \int_{\rrr (\tau_1 , \tau_2 ) } |F  |^2 d\mi_{g_{\rrr}}  , $$
for any $\eta > 0$, where we separated the inhomogeneous term $\int_{\rrr (\tau_1 , \tau_2 ) } F \cdot T\psi  \,  d\mi_{g_{\rrr}}$ into two terms $\int_{\rrr (\tau_1 , \tau_2 ) \setminus \calc_{\tau_1}^{\tau_2} } F \cdot T\psi   \, d\mi_{g_{\rrr}}$ and $\int_{\rrr (\tau_1 , \tau_2 ) \cap \calc_{\tau_1}^{\tau_2} } F \cdot T\psi \, d\mi_{g_{\rrr}}$ and for the second term we integrated by parts with respect to $T$ (see also the related computation in \cite{luknullcondition}).

Moreover from Proposition 9.2.5 of \cite{A1} we have that
\begin{equation}\label{amor2}
 \int_{\rrr (\tau_1 , \tau_2 ) \setminus \calc_{\tau_1}^{\tau_2}}  \left( \dfrac{(\partial_{r^{*}} \psi )^2}{r^{1+\eta}} + \dfrac{\sqrt{D}|\slashed{\nabla} \psi |^2}{r} \right) d\mi_{g_{\rrr}} \lesssim \int_{\si_{\tau_1}} J^T_{\mu} [\psi ] n^{\mu} d\mi_{g_{\si}} + \sup_{\tau' \in [\tau_1, \tau_2]} \int_{\si_{\tau'} \cap \calc_{\tau_1}^{\tau_2}} | F|^2 d\mi_{g_{\si_{\calc}}} + 
\end{equation} 
  $$ + \int_{\calc_{\tau_1}^{\tau_2}} |TF|^2 d\mi_{g_{\calc}} + \int_{\rrr (\tau_1 , \tau_2 ) \setminus \calc_{\tau_1}^{\tau_2}} |F \cdot T\psi | d\mi_{g_{\rrr}} + \int_{\rrr (\tau_1 , \tau_2 ) } |F  |^2 d\mi_{g_{\rrr}}  , $$
for any $\eta > 0$, where we recall that $\partial_{r^{*}} = T + DY$, and where we used \eqref{amor1}. 

We can use Stokes' Theorem to the energy current $J^X [\psi]$ for $X = f(r^{*}) \partial_{r^{*}}$ where $f$ is a smooth function that is 1 close to $\mathcal{H}^{+}$ (and away from the photon sphere of course -- consider the spacetime area $\cala$ defined by \eqref{mcala}) and 0 elsewhere. From the formulas given in Section 9.1.1 of \cite{A1} we have that
\begin{equation}\label{amor3}
 \int_{\cala_{\tau_1}^{\tau_2}} \left( (T\psi )^2 - (\partial_{r^{*}} \psi )^2 - \sqrt{D}|\slashed{\nabla} \psi |^2 \right) d\mi_{g_{\rrr}} \lesssim \int_{\si_{\tau_1}} J^T_{\mu} [\psi ] n^{\mu} d\mi_{g_{\si}} + \sup_{\tau' \in [\tau_1, \tau_2]} \int_{\si_{\tau'} \cap \calc_{\tau_1}^{\tau_2}} | F|^2 d\mi_{g_{\si_{\calc}}} + 
\end{equation} 
  $$ + \int_{\calc_{\tau_1}^{\tau_2}} |TF|^2 d\mi_{g_{\calc}} + \int_{\rrr (\tau_1 , \tau_2 ) \setminus \calc_{\tau_1}^{\tau_2}} |F \cdot T\psi | d\mi_{g_{\rrr}} + \int_{\rrr (\tau_1 , \tau_2 ) } |F  |^2 d\mi_{g_{\rrr}}  , $$
for any $\eta > 0$. 

Combining \eqref{amor2} and \eqref{amor3} we get that:
\begin{equation}\label{amor4}
 \int_{\cala_{\tau_1}^{\tau_2}} (T\psi )^2 d\mi_{g_{\rrr}} \lesssim \int_{\si_{\tau_1}} J^T_{\mu} [\psi ] n^{\mu} d\mi_{g_{\si}} + \sup_{\tau' \in [\tau_1, \tau_2]} \int_{\si_{\tau'} \cap \calc_{\tau_1}^{\tau_2}} | F|^2 d\mi_{g_{\si_{\calc}}} + 
\end{equation} 
  $$ + \int_{\calc_{\tau_1}^{\tau_2}} |TF|^2 d\mi_{g_{\calc}} + \int_{\rrr (\tau_1 , \tau_2 ) \setminus \calc_{\tau_1}^{\tau_2}} |F \cdot T\psi | d\mi_{g_{\rrr}} , $$
for any $\eta > 0$.

Finally combining \eqref{amor4} with \eqref{amor1} we have that
 \begin{equation}\label{amor5}
 \int_{\rrr (\tau_1 , \tau_2 ) \setminus \calc_{\tau_1}^{\tau_2}} \left( \dfrac{(T\psi )^2}{r^{1+\eta}} + \dfrac{D^{5/2} (Y\psi )^2}{r^{1+\eta}} + \dfrac{\sqrt{D} |\slashed{\nabla} \psi |^2}{r} \right) d\mi_{g_{\rrr}} \lesssim \int_{\si_{\tau_1}} J^T_{\mu} [\psi ] n^{\mu} d\mi_{g_{\si}} + \sup_{\tau' \in [\tau_1, \tau_2]} \int_{\si_{\tau'} \cap \calc_{\tau_1}^{\tau_2}} | F|^2 d\mi_{g_{\si_{\calc}}} + 
\end{equation} 
  $$ + \int_{\calc_{\tau_1}^{\tau_2}} |TF|^2 d\mi_{g_{\calc}} + \int_{\rrr (\tau_1 , \tau_2 ) \setminus \calc_{\tau_1}^{\tau_2}} |F \cdot T\psi | d\mi_{g_{\rrr}} + \int_{\rrr (\tau_1 , \tau_2 ) } |F  |^2 d\mi_{g_{\rrr}}  , $$
for any $\eta > 0$. 

The desired result now follows since we can treat the term $\int_{\rrr (\tau_1 , \tau_2 ) \setminus \calc_{\tau_1}^{\tau_2}} |F \cdot T\psi | d\mi_{g_{\rrr}}$ through the Cauchy-Schwarz inequality.

\end{proof}

Now using the previous estimates and the definitions of the $P$ and $N$ vector fields we state some basic estimates.
\begin{prop}[\textbf{Basic energy estimate for $\Omega^k T^l \psi$, $k \in \mathbb{N}$, $l \in \mathbb{N}$}]\label{3deg}
Let $\psi$ be a solution of \eqref{nwf}. Then for any $\tau_1$, $\tau_2$ with $\tau_1 < \tau_2$ and any $\eta > 0$ we have that:
\begin{equation}\label{3et1}
\int_{\si_{\tau_2}} J^T_{\mu} [\Omega^k T^l \psi ] n^{\mu} d\mi_{g_{\si}}  \lesssim_{R_0} \int_{\si_{\tau_1}} J^T_{\mu} [\Omega^k T^l \psi ] n^{\mu} d\mi_{g_{\si}} + \left( \int_{\tau_1}^{\tau_2} \left( \int_{\si_{\tau'}} |\Omega^k T^l F|^2 d\mi_{g_{\si}} \right)^{1/2} d\tau' \right)^2 ,
\end{equation}
\begin{equation}\label{3ep1}
\int_{\si_{\tau_2}} J^P_{\mu} [\Omega^k T^l \psi ] n^{\mu} d\mi_{g_{\si}}  \lesssim_{R_0} \int_{\si_{\tau_1}} J^P_{\mu} [\Omega^k T^l \psi ] n^{\mu} d\mi_{g_{\si}} +\left( \int_{\tau_1}^{\tau_2} \left( \int_{\si_{\tau'}} |\Omega^k T^l F|^2 d\mi_{g_{\si}} \right)^{1/2} d\tau' \right)^2 ,
\end{equation}
and
\begin{equation}\label{3en1}
\int_{\si_{\tau_2}} J^N_{\mu} [\Omega^k T^l \psi ] n^{\mu} d\mi_{g_{\si}}  \lesssim_{R_0} \int_{\si_{\tau_1}} J^N_{\mu} [\Omega^k T^l \psi ] n^{\mu} d\mi_{g_{\si}} + \left( \int_{\tau_1}^{\tau_2} \left( \int_{\si_{\tau'}} |\Omega^k T^l F|^2 d\mi_{g_{\si}} \right)^{1/2} d\tau' \right)^2 ,
\end{equation}
for any $k \in \mathbb{N}$, $l \in \mathbb{N}$.
\end{prop}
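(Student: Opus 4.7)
\textbf{Plan for Proposition \ref{3deg}.} The plan is to apply the divergence identity to the energy currents $J^T$, $J^P$, and $J^N$ associated with $\Omega^k T^l \psi$ on the spacetime region $\rrr(\tau_1,\tau_2)$, exploiting that $K^T = 0$ and that $K^P, K^N$ have favorable signs. Since both $T$ and $\Omega$ are Killing and commute with $\Box_g$, the function $\Omega^k T^l \psi$ satisfies $\Box_g(\Omega^k T^l \psi) = \Omega^k T^l F$, so it suffices to prove each of the three estimates in the base case $k = l = 0$.

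\textbf{The $T$-estimate.} Since $T$ is Killing, $K^T[\psi] = 0$, and the divergence theorem on $\rrr(\tau_1,\tau_2)$, after discarding the nonnegative fluxes through $\ho$ and $\cali^+$, gives
\[
\int_{\si_{\tau_2}} J^T_\mu[\psi] n^\mu\, d\mi_{g_\si} \mik \int_{\si_{\tau_1}} J^T_\mu[\psi] n^\mu\, d\mi_{g_\si} + \Big| \int_{\rrr(\tau_1,\tau_2)} F \cdot T\psi\, d\mi_{g_\rrr} \Big|.
\]
Applying Cauchy--Schwarz on each slice $\si_{\tau'}$ and using the trivial pointwise bound $(T\psi)^2 \lesssim J^T_\mu[\psi] n^\mu_\si$, I would set $E(\tau') := \int_{\si_{\tau'}} J^T_\mu n^\mu\, d\mi_{g_\si}$ and obtain
\[
E(\tau_2) \lesssim E(\tau_1) + \int_{\tau_1}^{\tau_2} \|F\|_{L^2(\si_{\tau'})} \sqrt{E(\tau')}\, d\tau'.
\]
Taking the supremum of $E$ over $[\tau_1,\tau_2]$ and applying the standard algebraic trick $X^2 \mik A + BX \Rightarrow X^2 \lesssim A + B^2$ then yields \eqref{3et1}.

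\textbf{The $P$- and $N$-estimates.} These vector fields are not Killing, but as recorded right after \eqref{defpn}, the corresponding deformation currents satisfy $K^P[\psi] \sim J^T_\mu[\psi] n^\mu_\si \meg 0$ and $K^N[\psi] \sim J^P_\mu[\psi] n^\mu_\si \meg 0$. Consequently the divergence identity places $\int_{\rrr(\tau_1,\tau_2)} K^V[\psi]$ with the correct sign on the left, where it can simply be dropped by positivity. The inhomogeneity $\int F \cdot V\psi$ is controlled by $\|F\|_{L^2(\si_{\tau'})} \|V\psi\|_{L^2(\si_{\tau'})}$ via Cauchy--Schwarz, and the pointwise bounds $|P\psi|^2 \lesssim J^P_\mu[\psi] n^\mu_\si$ and $|N\psi|^2 \lesssim J^N_\mu[\psi] n^\mu_\si$ — direct consequences of \eqref{defpn} together with the near-horizon expansions $P \sim T - \sqrt{D}\,Y$ and $N \sim T - Y$ — reduce the problem to the same Gr\"onwall-type argument as for the $T$-flux.

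\textbf{Main obstacle.} The only delicate point is the verification that $|V\psi|^2 \lesssim J^V_\mu[\psi] n^\mu_\si$ holds uniformly up to and including $\ho$ for $V \in \{P,N\}$: for $P$ this relies on the precise matching between the $\sqrt{D}$ coefficient in $P \sim T - \sqrt{D}\,Y$ and the $\sqrt{D}$-degenerate weight in front of $(Y\psi)^2$ in $J^P_\mu n^\mu_\si$. Once this compatibility is verified, the three estimates reduce to the standard energy identity for a non-Killing vector field with nonnegative deformation tensor, and no appeal to the Morawetz estimate of Proposition \ref{ent} is needed here.
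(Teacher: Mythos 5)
The $T$-estimate in your proposal is correct: since $T$ is Killing, $K^T = 0$, and the Cauchy--Schwarz plus algebraic-trick argument you describe gives \eqref{3et1} directly.

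The $P$- and $N$-estimates, however, contain a genuine gap. You write that the deformation currents $K^P$ and $K^N$ can ``simply be dropped by positivity,'' and conclude that ``no appeal to the Morawetz estimate of Proposition \ref{ent} is needed here.'' But the sign relations $K^P[\psi] \sim J^T_\mu[\psi] n^\mu_\si \geq 0$ and $K^N[\psi] \sim J^P_\mu[\psi] n^\mu_\si \geq 0$ are stated in the paper to hold only \emph{close to $\mathcal{H}^+$}; they are not global. The vector fields $P$ and $N$ interpolate smoothly from their near-horizon forms ($T - \sqrt{D}Y$, resp.\ $T - Y$) back to $T$ in a transition region $M < r_0 \leq r \leq r_1 < 2M$, and in that region the deformation tensors $K^P$, $K^N$ have no definite sign --- they involve derivatives of the cutoff coefficients and produce indefinite quadratic forms. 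This is precisely the standard obstruction in the Aretakis/Dafermos--Rodnianski redshift framework, and the paper itself flags it: in the remarks following Propositions \ref{enp} and \ref{enn} it is explained that ``the loss of the derivative around the photon sphere comes from the use of Proposition \ref{ent} in order to treat the contribution of the bulk term $K^P[\psi]$ away from the horizon.'' So some integrated-decay input is genuinely needed to control $\int_{\rrr(\tau_1,\tau_2) \cap \{r_0 \le r \le r_1\}} |K^P|$ (and the analogue for $N$); without it, absorbing the transition-region bulk via the energy flux alone produces only a Gr\"onwall inequality and hence an exponential-in-time constant, which is not what \eqref{3ep1}--\eqref{3en1} assert. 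Your argument as written never addresses the transition region, and the claim that positivity holds globally is exactly the missing step.

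A second, smaller issue: the bounds $|P\psi|^2 \lesssim J^P_\mu n^\mu_\si$ and $|N\psi|^2 \lesssim J^N_\mu n^\mu_\si$ that you invoke are again only established by \eqref{defpn} close to the horizon; away from the horizon $P, N \sim T$ and the bound is trivial, but the transition region needs a word. That part is easy to patch. The real gap is the sign of the deformation tensor in the transition region, and the corresponding need for a (possibly photon-sphere-avoiding, localised) Morawetz-type input to close the $P$ and $N$ energy estimates.
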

The proof of estimates \eqref{3et1}, \eqref{3ep1} and\eqref{3en1} are standard. A useful variation of \eqref{3et1} is the following estimate
\begin{equation}\label{3et1h}
\begin{split}
\int_{\si_{\tau_2}} J^T_{\mu} [\Omega^k T^l \psi ] n^{\mu} d\mi_{g_{\si}}  & \lesssim_{R_0} \int_{\si_{\tau_1}} J^T_{\mu} [\Omega^k T^l \psi ] n^{\mu} d\mi_{g_{\si}} + \\ & + \int_{\cala_{\tau_1}^{\tau_2} } |\Omega^k T^l F|^2 d\mi_{g_{\cala}} + \left( \int_{\tau_1}^{\tau_2} \left( \int_{\si_{\tau'} \cap ( \cala_{\tau_1}^{\tau_2} )^c } |\Omega^k T^l F|^2 d\mi_{g_{\si}} \right)^{1/2} d\tau' \right)^2 .
\end{split}
\end{equation}

Now we consider an integrated energy estimate using the same degenerate energy as in Proposition \ref{ent} which now includes a neighbourhood of the photon sphere as well. This introduces a loss of a $T$-derivative on the linear level (a loss which is mandatory as shown in \cite{janpaper}). Its proof is similar to the one of Proposition \ref{ent}.
\begin{prop}[\textbf{Morawetz estimates without degeneracy on the photon sphere}]\label{ent1}
Let $\psi$ be a solution of \eqref{nwf}. Then for all $\tau_1$, $\tau_2$ with $\tau_1 < \tau_2$ and any $\eta > 0$ we have that
\begin{equation}\label{en11}
\int_{\rrr (\tau_1 , \tau_2 )} \left( \dfrac{(\Omega^k  T^{l+1}\psi )^2 }{r^{1+\eta}} + \dfrac{D^{5/2} \cdot (\Omega^k  T^l Y\psi )^2}{r^{1+\eta}} + \dfrac{\sqrt{D}|\Omega^k T^l \slashed{\nabla} \psi |^2}{r} \right)  d\mi_{g_{\rrr}} \lesssim_{R_0}  
\end{equation}
$$  \lesssim_{R_0} \sum_{m = l}^{l+1} \left( \int_{\si_{\tau_1}} J^T_{\mu} [\Omega^k T^m \psi ] n^{\mu} d\mi_{g_{\si}} + \int_{\tau_1}^{\tau_2} \int_{S_{\tau'}} |\Omega^k T^m F|^2 d\mi_{g_{\so}} + \int_{\tau_1}^{\tau_2} \int_{N_{\tau'}} r^{1+\eta} |\Omega^k T^m F|^2 d\mi_{g_{\nnn}} \right) + $$ $$ + \int_{\mathcal{C}_{\tau_1}^{\tau_2}} |\Omega^k T^{l+2} F|^2 d\mi_{g_{\mathcal{C}}} + \sum_{m=l}^{l+1} \sup_{\tau' \in [\tau_1, \tau_2]} \int_{\si_{\tau'} \cap \calc_{\tau_1}^{\tau_2}} |\Omega^k T^{m}   F|^2 d\mi_{g_{\si_{\calc}}}  , $$
and
\begin{equation}\label{en12}
\int_{\rrr (\tau_1 , \tau_2 )} \left( \dfrac{(\Omega^k  T^{l+1}\psi )^2 }{r^{1+\eta}} + \dfrac{D^{5/2} \cdot (\Omega^k  T^l Y\psi )^2}{r^{1+\eta}} + \dfrac{\sqrt{D}|\Omega^k T^l \slashed{\nabla}  \psi |^2}{r} \right)  d\mi_{g_{\rrr}} \lesssim_{R_0}  
\end{equation}
$$  \lesssim_{R_0} \sum_{m = l}^{l+1} \int_{\si_{\tau_1}} J^T_{\mu} [\Omega^k T^m \psi ] n^{\mu} d\mi_{g_{\si}} + \int_{\tau_1}^{\tau_2} \int_{S_{\tau'}} |\Omega^k T^l F|^2 d\mi_{g_{\so}} + \int_{\tau_1}^{\tau_2} \int_{N_{\tau'}} r^{1+\eta} |\Omega^k T^l F|^2 d\mi_{g_{\nnn}}  +  $$ $$  + \left( \int_{\tau_1}^{\tau_2} \left( \int_{\si_{\tau'}} |\Omega^k T^{l+1}  F|^2 d\mi_{g_{\si}} \right)^{1/2} d\tau' \right)^2  , $$

for any $k \in \mathbb{N}$, and where $\calc$ was defined in \eqref{mcalc}.
\end{prop}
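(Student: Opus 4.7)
The plan is to derive \eqref{en11}--\eqref{en12} by applying Proposition \ref{ent} simultaneously to $\Omega^k T^l \psi$ and $\Omega^k T^{l+1}\psi$, and then patching the two resulting bulk estimates across the photon sphere region $\calc_{\tau_1}^{\tau_2}$ using a standard local Morawetz multiplier. Since $T$ and $\Omega$ commute with $\Box_g$, it will suffice to treat the case $k = l = 0$. The loss of one $T$-derivative encoded in the sum over $m \in \{l, l+1\}$ on the right-hand side is forced by the trapping at the photon sphere $\{r = 2M\}$ and is known to be sharp, cf.\ \cite{janpaper}.

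First I would apply Proposition \ref{ent} to $\psi$ and to $T\psi$, obtaining weighted bulk estimates for both, each carrying the photon-sphere degeneracy through the cutoff $\chi_{(\calc_{\tau_1}^{\tau_2})^c}$. The estimate for $T\psi$ in particular provides a non-degenerate bulk bound for $(T^2\psi)^2 / r^{1+\eta}$ on $\rrr(\tau_1,\tau_2) \setminus \calc_{\tau_1}^{\tau_2}$, and this is the crucial ingredient for filling in the degeneracy at $r = 2M$. Next, to eliminate the cutoff, I would introduce the classical Morawetz multiplier $X = f(r^*)\partial_{r^*}$, with $f$ a smooth cutoff equal to $1$ on $\calc_{\tau_1}^{\tau_2}$ and supported in a slight enlargement; a computation parallel to \eqref{amor3} shows that $K^X[\psi]$ is nonnegative on the support of $f$ but degenerates at $r = 2M$ in both the $T\psi$ and $\partial_{r^*}\psi$ directions. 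Applying Stokes' theorem to $J^X[\psi]$ produces a local integrated decay estimate on $\calc_{\tau_1}^{\tau_2}$ in which the degenerate $(T\psi)^2$ piece is absorbed by the non-degenerate bound on $(T^2\psi)^2$ just obtained (after a Cauchy--Schwarz), while the $(\partial_{r^*}\psi)^2$ and $D^{5/2}(Y\psi)^2$ pieces follow from the decomposition $\partial_{r^*} = T + DY$ together with the fact that $D$ is bounded away from $0$ on $\calc_{\tau_1}^{\tau_2}$. Combining this local estimate with Proposition \ref{ent} applied on the complement of $\calc_{\tau_1}^{\tau_2}$ then produces the full bulk norm on the left-hand side of \eqref{en11}.

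For the inhomogeneous bookkeeping, the $L^2_{S_{\tau'}}$ and weighted $L^2_{N_{\tau'}}$ norms of $T^m F$ together with the suprema $\sup_{\tau'}\int_{\si_{\tau'} \cap \calc} |T^m F|^2$ for $m \in \{0,1\}$ come from the two applications of Proposition \ref{ent}, whereas the $\int_\calc |T^2 F|^2$ contribution arises from integrating $\int_\calc F \cdot T\psi$ by parts in the $T$ direction exactly as in the proof of Proposition \ref{ent}, but now applied also after a $T$-commutation; reinstating $l$ yields the term $\int_\calc |T^{l+2}F|^2$ appearing in \eqref{en11}. To obtain the variant \eqref{en12}, I would instead treat $\int_{\rrr(\tau_1,\tau_2)} F \cdot T\psi\, d\mi_{g_\rrr}$ by Minkowski in combination with the boundedness estimate \eqref{3et1} for $T\psi$, trading the $T^{l+2}F$ term for the integrated-in-$\tau$ expression on $T^{l+1}F$. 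The main obstacle throughout is precisely the trapping at the photon sphere, which compels both the one-derivative loss and the two-commutation patching strategy; everything else is a careful but routine reduction to the ingredients already used in the proof of Proposition \ref{ent}.
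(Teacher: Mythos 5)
Your overall strategy---apply Proposition \ref{ent} to $\Omega^k T^l\psi$ and $\Omega^k T^{l+1}\psi$, then patch across the photon sphere while losing one $T$-derivative---points in the right direction, and the bookkeeping for the inhomogeneous terms (the extra $T$-integration by parts on $\calc$ producing the $\int_{\calc}|T^{l+2}F|^2$ term in \eqref{en11}, versus the \eqref{3et1}-style absorption producing the $\left(\int(\int|T^{l+1}F|^2)^{1/2}\right)^2$ term in \eqref{en12}) is consistent with the structure of the statement and the proof of Proposition \ref{ent}.

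However, the step that is supposed to remove the cutoff $\chi_{(\calc_{\tau_1}^{\tau_2})^c}$ has a genuine gap. Proposition \ref{ent} applied to $T^{l+1}\psi$ still carries the cutoff, so it controls $(T^{l+2}\psi)^2/r^{1+\eta}$ only on $\rrr(\tau_1,\tau_2)\setminus\calc_{\tau_1}^{\tau_2}$, not on $\calc_{\tau_1}^{\tau_2}$; but your absorption step (write $(T\psi)^2=-\psi\,T^2\psi+T(\psi\,T\psi)$ on $\calc$ and Cauchy--Schwarz against the $(T^2\psi)^2$ bound) requires bulk control of $(T^2\psi)^2$ \emph{inside} $\calc$, exactly where the cutoff denies it. In addition, a multiplier computation parallel to \eqref{amor3} on $\calc$ produces a signed combination of $(T\psi)^2$, $(\partial_{r^*}\psi)^2$ and $|\slashed{\nabla}\psi|^2$; to extract $(T\psi)^2$ one must add back $(\partial_{r^*}\psi)^2$ and $|\slashed{\nabla}\psi|^2$ on $\calc$, and the angular piece is precisely the trapped direction, which neither \eqref{amor2} (again cutoff) nor the identity $\partial_{r^*}=T+DY$ (a mere rewriting) supplies. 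This is the essential difference from the $\cala$-region argument in the proof of Proposition \ref{ent}: near the horizon there is no trapping obstruction to supplying the spatial bulk from \eqref{amor2}, whereas near the photon sphere there is. The proof that the paper has in mind (and what ``similar to the one of Proposition \ref{ent}'' is pointing at) rests on a linear integrated energy estimate from \cite{A1}/\cite{A2} that is already non-degenerate through the photon sphere at the cost of one $T$-commutation---obtained there by combining the multiplier near $r=2M$ with the wave equation used as an elliptic identity, or with a frequency-localized construction---and then adds the inhomogeneous terms exactly as in Proposition \ref{ent}. Re-deriving that non-degenerate input from the cutoff estimate by the multiplier-plus-Cauchy--Schwarz scheme you describe does not close.
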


We state now the energy estimate for the $P$-flux, that gives us also an integrated estimate for the $T$-flux for a region close to the horizon.
\begin{prop}[\textbf{$P$-energy estimate}]\label{enp}
Let $\psi$ be a solution of \eqref{nwf}. Then for all $\tau_1$, $\tau_2$ with $\tau_1 < \tau_2$ and any $\eta > 0$ we have that
\begin{equation}\label{en2}
\int_{\si_{\tau_2}} J^P_{\mu} [\Omega^k T^l \psi ] n^{\mu} d\mi_{g_{\si}} + \int_{\tau_1}^{\tau_2} \int_{\si_{\tau'} \cap \cala_{\tau_1}^{\tau_2}} J^T_{\mu} [\Omega^k T^l \psi] n^{\mu}_{\si} d\mi_{g_{\cala}} \lesssim_{R_0} \int_{\si_{\tau_1}} J^P_{\mu} [\Omega^k T^l \psi ] n^{\mu} d\mi_{g_{\si}} +  
\end{equation}
$$  + \int_{\tau_1}^{\tau_2} \int_{S_{\tau'}} |\Omega^k T^l F|^2 d\mi_{g_{\so}} + \int_{\tau_1}^{\tau_2} \int_{N_{\tau'}} r^{1+\eta} |\Omega^k T^l F|^2 d\mi_{g_{\nnn}} + $$ $$ + \int_{\calc_{\tau_1}^{\tau_2}} |\Omega^k T^{l+1} F|^2 d\mi_{g_{\mathcal{C}}} + \sup_{\tau' \in [\tau_1, \tau_2]} \int_{\si_{\tau'} \cap \calc_{\tau_1}^{\tau_2}} |\Omega^k T^l F|^2 d\mi_{g_{\si_{\calc}}} , $$
for any $k \in \mathbb{N}$, $l\in \mathbb{N}$, and where $\cala$, $\calc$ were defined in \eqref{mcala} and \eqref{mcalc} respectively.
\end{prop}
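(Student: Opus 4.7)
As in Proposition \ref{ent}, the plan is to reduce immediately to the case $k=l=0$. Since $T$ is Killing and $\Omega^i$ are angular rotations, both commute with $\Box_g$, so $\Omega^k T^l \psi$ satisfies $\Box_g (\Omega^k T^l \psi) = \Omega^k T^l F$, and it suffices to run the argument for $\psi$ with $F$ as source. The central tool is Stokes' theorem applied to the current $J^P_\mu[\psi]$ in $\rrr(\tau_1,\tau_2)$, producing the identity
\[
\int_{\si_{\tau_2}} J^P_\mu[\psi]\, n^\mu \, d\mi_{g_\si}
 + \int_{\ho \cap \rrr(\tau_1,\tau_2)} J^P_\mu[\psi]\, n^\mu_{\ho}\, d\mi_{\ho}
 + \int_{\mathcal{I}^+ \cap \rrr(\tau_1,\tau_2)} J^P_\mu[\psi]\, n^\mu_{\mathcal{I}^+}\, d\mi_{\mathcal{I}^+}
 = \int_{\si_{\tau_1}} J^P_\mu[\psi]\, n^\mu d\mi_{g_\si}
 - \int_{\rrr(\tau_1,\tau_2)} \bigl(K^P[\psi] + F \cdot P\psi\bigr)\, d\mi_{g_{\rrr}}.
\]
By the definition of $P$ in Section \ref{sec:linnonlin}, the horizon and null infinity fluxes are nonnegative (for appropriately decaying data at spacelike infinity, the $\mathcal{I}^+$ term is handled via a standard approximation or by truncating and sending a limit). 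The key structural input is the second identity in \eqref{defpn}: $K^P[\psi] \sim J^T_\mu[\psi]\, n^\mu_\si$ in $\cala$ and $K^P[\psi]$ is nonnegative everywhere in $\rrr(\tau_1,\tau_2)$, with the right-hand side dominating the second term on the LHS of \eqref{en2}. Thus, after moving $K^P$ to the left, the only work left is to bound $\int F\cdot P\psi$.

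For the inhomogeneous contribution I would split $\rrr(\tau_1,\tau_2)$ into three zones and treat each separately. In the far region $r \geq R_0$ (inside the $N_{\tau'}$'s), since $P \approx T$ and $J^P_\mu n^\mu \sim J^T_\mu n^\mu$ near infinity, write
\[
\Bigl|\int_{\tau_1}^{\tau_2}\!\!\int_{N_{\tau'}} F\cdot P\psi\, d\mi_{g_\nnn} d\tau'\Bigr|
\leq \eta_0 \int_{\tau_1}^{\tau_2}\!\!\int_{N_{\tau'}} r^{-1-\eta} (P\psi)^2 + \eta_0^{-1}\int_{\tau_1}^{\tau_2}\!\!\int_{N_{\tau'}} r^{1+\eta}|F|^2,
\]
with the $(P\psi)^2$ term absorbed by the degenerate integrated decay from Proposition \ref{ent} (which is already available since the $T$-flux is controlled by the $P$-flux). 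In the region away from the photon sphere but not near infinity I use Cauchy--Schwarz with respect to $\tau'$-slabs, yielding the $\int_{S_{\tau'}}|F|^2$ term through the standard argument that produces the $(\int (\int|F|^2)^{1/2}d\tau')^2$-type contributions already used in Proposition \ref{3deg}.

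The main obstacle is the photon-sphere zone $\calc_{\tau_1}^{\tau_2}$, where $P \sim T$ but the bulk $K^P$ term degenerates (indeed $K^P$ behaves like $J^T\cdot n$, which is not quite coercive enough near $r=2M$ to absorb $F\cdot T\psi$ directly without the $T$-commuted trick). Following the exact strategy of Proposition \ref{ent} (and \cite{luknullcondition}), I would split $F\cdot P\psi = F\cdot T\psi + F\cdot (\text{lower order})$ inside $\calc$ and integrate by parts in $T$:
\[
\int_{\calc_{\tau_1}^{\tau_2}} F\cdot T\psi = \bigl[\text{boundary on }\si_{\tau_1}\cap\calc,\ \si_{\tau_2}\cap\calc\bigr] - \int_{\calc_{\tau_1}^{\tau_2}} (TF)\cdot \psi,
\]
which after a Hardy-type bound \eqref{hardy} on $\psi$ (absorbable into the $T$-flux) produces the $\int_{\calc}|T F|^2$ term and the $\sup_{\tau'}\int_{\si_{\tau'}\cap\calc}|F|^2$ term on the right-hand side of \eqref{en2}. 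Combining the three zones, absorbing the small $(P\psi)^2$ pieces using Proposition \ref{ent} together with the $K^P$ bulk term, and rearranging yields \eqref{en2}. The most delicate balancing is precisely this absorption at the photon sphere, since one pays a $T$-derivative on $F$ to avoid the degenerate Morawetz weight there, but this loss is already built into the stated right-hand side.
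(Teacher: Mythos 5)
The paper gives no detailed proof of Proposition \ref{enp} — it only states that the argument follows \cite{rnnonlin1} and remarks that the $T$-derivative loss comes from invoking Proposition \ref{ent} to control the bulk term $K^P[\psi]$ \emph{away from the horizon}. Your outline has the right overall shape (Stokes' theorem for $J^P_\mu$, coercivity of $K^P$ near the horizon giving the $\cala$ integrated term, weighted Cauchy--Schwarz for the inhomogeneous bulk), but there is one genuine gap and a couple of misalignments with the stated estimate.

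The gap: you assert ``$K^P[\psi]$ is nonnegative everywhere in $\rrr(\tau_1,\tau_2)$.'' That is not available. $P$ is constructed to be $T - \sqrt{D}\,Y$ (modulo smooth factors) near $\mathcal{H}^+$ and to coincide with the Killing field $T$ outside a compact-in-$r$ neighbourhood of the horizon, so in between there is a transition region (lying inside $\{r_0 \leq r \lesssim R_0\}$, away from both $\mathcal{H}^+$ and the photon sphere) where the smooth cutoff makes $K^P$ sign-indefinite. Its absolute value there is comparable to $J^T_\mu[\psi]\,n^\mu$, and controlling it is exactly what requires the degenerate Morawetz estimate of Proposition \ref{ent}. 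That is the paper's own account of where the $T$-commutation loss enters, since Proposition \ref{ent} already carries the terms $\int_{\calc}|TF|^2$ and $\sup_{\tau'}\int_{\si_{\tau'}\cap\calc}|F|^2$ on its right-hand side. Your proposal attributes the $T$-loss instead to a direct integration by parts of $F\cdot T\psi$ in $\calc$; that can be made to produce the same right-hand-side terms, but it leaves the sign-indefinite $K^P$ bulk in the transition region unaccounted for — and once you treat it via Proposition \ref{ent}, the photon-sphere integration by parts becomes redundant.

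A smaller point: for the bounded region $r\leq R_0$ you say the $\int_{\tau_1}^{\tau_2}\int_{S_{\tau'}}|F|^2$ term arises from ``the standard argument that produces the $(\int(\int|F|^2)^{1/2}d\tau')^2$-type contributions.'' Those are two different things — the right-hand side of \eqref{en2} has the plain spacetime $L^2$ integral, not the Gr\"onwall-type term, and it should come from Cauchy--Schwarz $|F\cdot T\psi|\leq \epsilon\, r^{-1-\eta}(T\psi)^2 + \epsilon^{-1}r^{1+\eta}|F|^2$ with the $(T\psi)^2$ piece absorbed by the Morawetz estimate of Proposition \ref{ent}, not from a Gr\"onwall iteration. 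Fix the positivity claim about $K^P$ and the bulk-absorption mechanism and the rest of the outline is consistent with what the paper intends.
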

This proof follows along the lines of the related Proposition in \cite{rnnonlin1}. The loss of the derivative around the photon sphere comes from the use of Proposition \ref{ent} in order to treat the contribution of the bulk term $K^P [\psi]$ away from the horizon.

The next Proposition gives us an integrated estimate (for a region close to the horizon) for the $P$-flux and an energy estimate for the $\dot{H}^1$ norm without any degeneracy on $\mathcal{H}^{+}$.
\begin{prop}[\textbf{Non-degenerate energy estimate}]\label{enn}
Let $\psi$ be a solution of \eqref{nwf}. Then for all $\tau_1$, $\tau_2$ with $\tau_1 < \tau_2$ and any $\eta > 0$ we have that
\begin{equation}\label{en3}
\int_{\si_{\tau_2}} J^N_{\mu} [\Omega^k T^l \psi ] n^{\mu} d\mi_{g_{\si}} + \int_{\tau_1}^{\tau_2} \int_{\si_{\tau'} \cap \cala_{\tau_1}^{\tau_2}} J^P_{\mu} [\Omega^k T^l \psi] n^{\mu}_{\si} d\mi_{g_{\cala}}  \lesssim_{R_0}  
\end{equation}
$$ \lesssim_{R_0} \int_{\si_{\tau_1}} J^N_{\mu} [\Omega^k T^l \psi ] n^{\mu} d\mi_{g_{\si}} + \left( \int_{\tau_1}^{\tau_2} \left( \int_{S_{\tau'}} |\Omega^k T^l F|^2 d\mi_{g_S} \right)^{1/2} d\tau' \right)^2 + \int_{\tau_1}^{\tau_2} \int_{S_{\tau'}} |\Omega^k T^l F|^2 d\mi_{g_{\so}} + $$ $$ + \int_{\tau_1}^{\tau_2} \int_{N_{\tau'}} r^{1+\eta} |\Omega^k T^l F|^2 d\mi_{g_{\nnn}} + \int_{\calc_{\tau_1}^{\tau_2}} |\Omega^k T^{l+1} F|^2 d\mi_{g_{\mathcal{C}}} + \sup_{\tau' \in [\tau_1, \tau_2]} \int_{\si_{\tau'} \cap \calc_{\tau_1}^{\tau_2}} |\Omega^k T^l F|^2 d\mi_{g_{\si_{\calc}}}, $$
for any $k \in \mathbb{N}$, $l\in \mathbb{N}$, and where $\cala$, $\calc$ were defined in \eqref{mcala} and \eqref{mcalc} respectively.
\end{prop}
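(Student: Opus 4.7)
The plan is to apply the divergence theorem to the current $J^N[\Omega^k T^l \psi]$ over the region $\rrr(\tau_1, \tau_2)$. Since $T$ and $\Omega$ commute with $\Box_g$, I treat the case $k=l=0$ explicitly and reinstate the commuted form at the end. Stokes' theorem produces, on the left, the flux $\int_{\si_{\tau_2}} J^N_\mu n^\mu$ together with a non-negative boundary contribution along $\ho \cap \rrr(\tau_1,\tau_2)$ which I drop, and on the right the initial flux $\int_{\si_{\tau_1}} J^N_\mu n^\mu$ together with the bulk term $-\int \bigl( K^N[\psi] + F \cdot N\psi \bigr) d\mi_{g_\rrr}$ that must be estimated.

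From the identities recalled in \eqref{defpn}, $K^N[\psi] \sim J^P_\mu n^\mu_\si$ on the near-horizon region $\cala_{\tau_1}^{\tau_2}$, and this sign-definite quantity supplies precisely the integrated $P$-flux appearing on the left-hand side of \eqref{en3} (after moving it across the equality). Away from $\ho$, the vector field $N$ coincides with $T$ up to smooth cutoffs supported in a bounded $r$-interval, where $K^N$ has no definite sign. I absorb this intermediate contribution using Proposition \ref{enp} on $(\calc_{\tau_1}^{\tau_2})^c$ and, around the photon sphere, a separate application of the Morawetz estimate \ref{ent}, which forces the loss of a $T$-derivative encoded in the $|\Omega^k T^{l+1} F|^2$ bulk term on $\calc$ and in the $\sup_{\tau'}$ term on $\si_{\tau'}\cap\calc$.

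The main obstacle is the inhomogeneity $\int F \cdot N\psi$ in the near-horizon region. Since $N \sim T - Y$ there, it contains a factor $F \cdot Y\psi$ with no $\sqrt{D}$ weight, so neither the degenerate $T$-energy nor the $P$-flux suffice. I slice by $S_{\tau'}$ and apply Cauchy--Schwarz to obtain
\begin{equation*}
\int_{\tau_1}^{\tau_2} \!\!\int_{S_{\tau'}} |F \cdot N\psi| \, d\mi_{g_{\so}} \, d\tau' \mik \int_{\tau_1}^{\tau_2} \Bigl( \int_{S_{\tau'}} |F|^2 \, d\mi_{g_{\so}} \Bigr)^{1/2} \Bigl( \int_{\si_{\tau'}} J^N_\mu[\psi] n^\mu \, d\mi_{g_\si} \Bigr)^{1/2} d\tau' ,
\end{equation*}
and then invoke the square-root Gr\"onwall trick: setting $f(\tau')=\int_{\si_{\tau'}} J^N_\mu n^\mu$ and $g(\tau')=(\int_{S_{\tau'}} |F|^2)^{1/2}$, the preliminary inequality $f(\tau_2) \mik f(\tau_1) + C \int_{\tau_1}^{\tau_2} g\sqrt{f}\, d\tau' + R$ (with $R$ collecting the already-controlled remainders) upgrades via division by $\sqrt{\sup_{[\tau_1,\tau_2]} f}$ to $f(\tau_2) \lesssim f(\tau_1) + \bigl(\int_{\tau_1}^{\tau_2} g\bigr)^2 + R$, producing exactly the Duhamel-squared term $\bigl(\int (\int |F|^2)^{1/2}\bigr)^2$ in \eqref{en3}.

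The remaining inhomogeneous contributions are routine: on the far region $N_{\tau'}$ I pair $|F|$ against $|N\psi|$ using the $r^{-1-\eta}$ weight supplied by the Morawetz bulk of Proposition \ref{ent}, producing the $r^{1+\eta}|F|^2$ term; on $\calc$ I integrate by parts in $T$ exactly as in the proof of Proposition \ref{enp}, producing the $|\Omega^k T^{l+1} F|^2$ and $\sup_{\tau'}$ terms. The conceptual difficulty is thus localized: it is precisely because $J^N$ does \emph{not} degenerate on $\ho$, while $Y\psi$ admits no better $L^2$ control than $\sqrt{f(\tau')}$, that a plain $L^1_\tau L^2_x$ bound on $F$ is insufficient and one is forced into the quadratic Duhamel structure on the right-hand side of \eqref{en3}.
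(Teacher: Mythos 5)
Your reconstruction is correct and matches the route the paper indicates only by reference to \cite{rnnonlin1}. The central insight you identify is exactly the right one: near $\mathcal{H}^{+}$ we have $N \sim T - Y$, so the inhomogeneous bulk term contains $F \cdot Y\psi$ with no compensating $\sqrt{D}$ weight, while the good bulk $K^N \sim J^P_\mu n^\mu_\Sigma$ only degenerately controls $(Y\psi)^2$. Hence the plain Cauchy--Schwarz/absorption argument that works for $J^P$ (where the inhomogeneity $F\cdot P\psi$ carries a $\sqrt{D}Y\psi$ factor absorbable into $K^P$) fails, and one is forced to pay a square-root Gr\"onwall step against the \emph{flux} $J^N$ itself. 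This is precisely why \eqref{en3} has the Duhamel-squared source $\bigl(\int_{\tau_1}^{\tau_2}(\int_{S_{\tau'}}|\Omega^k T^l F|^2)^{1/2}\bigr)^2$, in contrast to \eqref{en2}. Your handling of the sign-definite $K^N$ on $\mathcal{A}$, the $T$-integration by parts on $\mathcal{C}$ producing the $|\Omega^k T^{l+1}F|^2$ and $\sup$ terms, and the $r^{1+\eta}$ weight in the far region are all in order. One small imprecision: the intermediate-region bulk $K^N$ (supported in the transition zone between $\mathcal{A}$ and where $N=T$) is estimated using the Morawetz estimate of Proposition \ref{ent}, as the paper explicitly states, not via Proposition \ref{enp}; since \ref{enp} is itself proved through \ref{ent}, invoking it is circuitous but produces the same RHS terms, so this is a matter of exposition rather than a gap.
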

This proof as well follows along the lines that were sketched in the related Proposition in the spherically symmetric case in \cite{rnnonlin1}. As before, the loss of the derivative around the photon sphere comes from the use of Proposition \ref{ent} in order to treat the contribution of the bulk term $K^N [\psi]$ away from the horizon.

We state now a hierarchy of integrated energy estimates close to infinity. They are the so-called $r^p$-weighted energy inequalities that were first introduced by Dafermos and Rodnianski in \cite{newmethod} (see \cite{A2} for a proof in the linear case), and in order to derive them we also use in our case Proposition \ref{ent} and estimate \eqref{3et1}. 

\begin{equation}\label{rwins}
\int_{N_{\tau_2}} r^p \dfrac{( \partial_v \Omega^k T^l \ph )^2}{r^2} d\mi_{g_N} + \int_{\tau_1}^{\tau_2} \int_{N_{\tau}} p r^{p-1} \dfrac{(\partial_v \Omega^k T^l \ph)^2}{r^2}  d\mi_{g_{\nnn}} +
\end{equation}
$$
+ \int_{\tau_1}^{\tau_2} \int_{N_{\tau}} \frac{r^{p-1}}{4} \left( (2-p)D - r D' \right) |\slashed{\nabla} \Omega^k T^l \psi |^2 d\mi_{g_{\nnn}} \lesssim_{p , R_0} \int_{N_{\tau_1}} r^p \dfrac{( \partial_v \Omega^k T^l \ph )^2}{r^2} d\mi_{g_N} + \int_{\Sigma_{\tau_1}} J^T_{\mu} [\Omega^k T^l \psi] n^{\mu} d\mi_{g_{\si}}  + $$ $$ + \int_{\tau_1}^{\tau_2} \int_{S_{\tau'}} |\Omega^k T^l F|^2 d\mi_{g_{\so}} +  \int_{\tau_1}^{\tau_2} \int_{N_{\tau'}} r^{1+\eta} |\Omega^k T^l F|^2 d\mi_{g_{\nnn}} + \int_{\tau_1}^{\tau_2} \int_{N_{\tau'}} r^{p+1} |\Omega^k T^l F|^2 d \mi_{g_{\nnn}} +
$$ $$ + \int_{\mathcal{C}_{\tau_1}^{\tau_2}} |\Omega^k T^{l+1}F|^2 d\mi_{g_{\mathcal{C}}} + \sup_{\tau' \in [\tau_1, \tau_2]} \int_{\si_{\tau'} \cap \calc_{\tau_1}^{\tau_2}} |\Omega^k T^l F|^2 d\mi_{g_{\si_{\calc}}}, $$
for any $k \in \mathbb{N}$, $l \in \mathbb{N}$, for any $\tau_1$, $\tau_2$ with $\tau_1 < \tau_2$ and any $\eta  > 0$. 

With the above estimate \eqref{rwins} and the results of Propositions \ref{ent}, \ref{3deg}, \ref{enp} and \ref{enn} we are able to prove several integrated estimates whose support is global in space. First we state an integrated estimate for the $T$-flux of $\psi$ and its $T$ and angular derivatives.
\begin{prop}[\textbf{Integrated estimate for the degenerate energy of $\Omega^k T^l\psi$}]\label{intt}
Let $\psi$ be a solution of \eqref{nwf}. Then for all $\tau_1$, $\tau_2$ with $\tau_1 < \tau_2$, $\ph = r\psi$, and any $\eta > 0$ we have that
\begin{equation}\label{ien1}
\int_{\tau_1}^{\tau_2} \int_{\si_{\tau'}} J^T_{\mu} [\Omega^k T^l \psi ] n^{\mu} d\mi_{g_{\rrr}} \lesssim_{R_0} \int_{\si_{\tau_1}} J^P_{\mu} [\Omega^k T^l \psi ] n^{\mu} d\mi_{g_{\si}} + \int_{\si_{\tau_1}} J^T_{\mu} [\Omega^k T^{l+1} \psi ] n^{\mu} d\mi_{g_{\si}} +  
\end{equation}
$$ + \int_{N_{\tau_1}} \dfrac{(\partial_v \Omega^k T^l \ph )^2}{r} d\mi_{g_{\si}} + \sum_{m = l}^{l+1} \int_{\tau_1}^{\tau_2} \int_{S_{\tau'}} |\Omega^k T^m F|^2 d\mi_{g_{\so}} + \int_{\tau_1}^{\tau_2} \int_{N_{\tau'}} r^{2} |\Omega^k T^m F|^2 d\mi_{g_{\nnn}} +  $$ $$   +   \int_{\mathcal{C}_{\tau_1}^{\tau_2}} |\Omega^k  T^{l+2} F|^2 d\mi_{g_{\mathcal{C}}}  + \sum_{m=l}^{l+1}\sup_{\tau' \in [\tau_1, \tau_2]} \int_{\si_{\tau'} \cap \calc_{\tau_1}^{\tau_2}} |\Omega^k  T^m F|^2 d\mi_{g_{\si_{\calc}}}, $$
for any $k \in \mathbb{N}$, $l \in \mathbb{N}$, and where $\calc$ was defined in \eqref{mcalc}.
\end{prop}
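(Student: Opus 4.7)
My approach is to split $\si_{\tau'}$ into three zones and estimate each using one of the tools already assembled in this section. The near-horizon zone $\si_{\tau'}\cap\cala_{\tau_1}^{\tau_2}$ will be handled by the $P$-energy inequality of Proposition \ref{enp}, the intermediate zone (bounded away from both $\mathcal{H}^{+}$ and $\mathcal{I}^{+}$, containing the photon sphere) by the photon-sphere Morawetz estimate of Proposition \ref{ent1}, and the far zone $N_{\tau'}$ by the $r^p$-weighted inequality \eqref{rwins} taken at $p=1$. Smooth cutoffs glue the pieces together, with cutoff errors in each zone absorbed into the adjacent ones.

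First I would observe that the LHS of \eqref{en2} already contains $\int_{\tau_1}^{\tau_2}\int_{\si_{\tau'}\cap \cala_{\tau_1}^{\tau_2}} J^T_\mu[\Omega^k T^l \psi]n^\mu$, so Proposition \ref{enp} delivers the near-horizon contribution directly; its RHS supplies the $J^P[\Omega^k T^l \psi](\tau_1)$ datum (the first term of \eqref{ien1}) together with an inhomogeneity of the form $|\Omega^k T^{l+1}F|^2$ on $\calc_{\tau_1}^{\tau_2}$. In the intermediate zone the Morawetz weights $r^{-(1+\eta)}$, $D^{5/2}\,r^{-(1+\eta)}$ and $\sqrt{D}/r$ appearing on the LHS of \eqref{en11} are uniformly comparable to $1$, so that estimate controls the full integrated $T$-flux of $\Omega^k T^l \psi$ in this zone; the cost is the $J^T[\Omega^k T^{l+1}\psi](\tau_1)$ term (the second term of \eqref{ien1}) and a $T^{l+2}F$ inhomogeneity on $\calc_{\tau_1}^{\tau_2}$, both reflecting the one-derivative loss at the photon sphere which is sharp by \cite{janpaper}.

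For the far zone I apply \eqref{rwins} with $p=1$. The resulting spacetime integral $\int_{\tau_1}^{\tau_2}\int_{N_\tau} (\partial_v \Omega^k T^l \ph)^2/r^2$, together with the angular-derivative contribution produced by the same inequality and with the Hardy inequality \eqref{hardy} (used both to convert between $\ph = r\psi$ and $\psi$ and to absorb zeroth-order terms), is comparable to $\int_{\tau_1}^{\tau_2}\int_{N_{\tau'}} J^T_\mu[\Omega^k T^l\psi]n^\mu\,d\mi_{g_{\nnn}}$. The datum term enters precisely as $\int_{N_{\tau_1}}(\partial_v \Omega^k T^l \ph)^2/r$, matching the third term on the RHS of \eqref{ien1}, and the $r^{p+1}$ weighted forcing becomes $r^2|\Omega^k T^m F|^2$ on $N_{\tau'}$, which also absorbs the weaker $r^{1+\eta}$ weight inherited from the Morawetz step provided $\eta\le 1$.

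Summing the three pieces and carefully booking the $T$-derivative counts produces \eqref{ien1}. The main obstacle is, as in the spherically symmetric treatment of \cite{rnnonlin1}, precisely this bookkeeping: the photon-sphere losses force $T^{l+2}F$ on $\calc_{\tau_1}^{\tau_2}$ and require $J^T[\Omega^k T^{l+1}\psi]$ data on the initial slice, while the $p=1$ $r^p$-estimate must be calibrated so that its $r$-weighted datum matches the $(\partial_v\ph)^2/r$ form dictated by the statement. No additional conceptual input beyond these estimates is needed; the argument is a bookkeeping exercise in which the three regional estimates fit together precisely because the RHS of \eqref{ien1} has been crafted to accommodate each.
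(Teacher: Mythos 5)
Your argument is correct and reproduces the paper's one-line proof, which combines the same three ingredients: Proposition~\ref{enp} near the horizon, the photon-sphere Morawetz estimate in the intermediate region, and the $r^p$-weighted inequality \eqref{rwins} with $p=1$ near infinity, glued across overlaps. The only cosmetic discrepancy is that you invoke Proposition~\ref{ent1} (estimate \eqref{en11}) where the paper's sketch names Proposition~\ref{ent}, but the $T^{l+2}F$ inhomogeneity on $\calc_{\tau_1}^{\tau_2}$ in \eqref{ien1} matches exactly the one on the right of \eqref{en11}, so your reading -- that it is the photon-sphere-nondegenerate form of the Morawetz estimate (itself derived, as the paper notes, by the same method as Proposition~\ref{ent}) -- is the correct one.
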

The proof follows by using estimate \eqref{rwins}, the Morawetz estimate of Proposition \ref{ent} and the integrated estimate (close to the horizon) provided by Proposition \ref{enp}.

Next we state an integrated estimate for the $P$-flux of $\psi$ and its angular derivatives.

\begin{prop}[\textbf{Integrated estimate for the $P$-energy of $\Omega^k T^l \psi$}]\label{intp}
Let $\psi$ be a solution of \eqref{nwf}. Then for all $\tau_1$, $\tau_2$ with $\tau_1 < \tau_2$, $\ph = r\psi$, and any $\eta > 0$ we have that
\begin{equation}\label{ien2}
\int_{\tau_1}^{\tau_2} \int_{\si_{\tau'}} J^P_{\mu} [\Omega^k  T^l \psi ] n^{\mu} d\mi_{g_{\rrr}} \lesssim_{R_0} \int_{\si_{\tau_1}} J^{N}_{\mu} [\Omega^k  T^l \psi ] n^{\mu} d\mi_{g_{\si}} + \int_{N_{\tau_1}} \dfrac{(\partial_v \Omega^k T^l \ph )^2}{r} d\mi_{g_{\si}} + \int_{\si_{\tau_1}} J^T_{\mu} [\Omega^k T^{l+1} \psi ] n^{\mu} d\mi_{g_{\si}} +
\end{equation}
$$ + \left( \int_{\tau_1}^{\tau_2} \left( \int_{S_{\tau'}} |\Omega^k  T^l F|^2 d\mi_{g_S} \right)^{1/2} d\tau' \right)^2 + \int_{\tau_1}^{\tau_2} \int_{S_{\tau'}} |\Omega^k T^l F|^2 d\mi_{g_{\so}} + \int_{\tau_1}^{\tau_2} \int_{N_{\tau'}} r^{2} |\Omega^k T^l F|^2 d\mi_{g_{\nnn}} + $$ $$ + \int_{\mathcal{C}_{\tau_1}^{\tau_2}} |\Omega^k T^{l+2} F|^2 d\mi_{g_{\mathcal{C}}} + \sum_{m=l}^{l+1}\sup_{\tau' \in [\tau_1, \tau_2]} \int_{\si_{\tau'} \cap \calc_{\tau_1}^{\tau_2}} |\Omega^k T^m F|^2 d\mi_{g_{\si_{\calc}}}, $$
for any $k \in \mathbb{N}$, $l \in \mathbb{N}$, and where $\calc$ was defined in \eqref{mcalc}.
\end{prop}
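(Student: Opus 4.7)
The plan is to prove Proposition \ref{intp} by a three-region decomposition and a recombination argument, mirroring the structure used for Proposition \ref{intt} but upgrading from the degenerate $T$-flux to the $P$-flux near the horizon. I split each hypersurface $\si_{\tau'}$ into (i) its intersection with the near-horizon region $\cala_{\tau_1}^{\tau_2}$, (ii) a middle strip of bounded $r$ containing the photon sphere, and (iii) the asymptotic piece $N_{\tau'}$. On each piece the quantity $J^P_\mu[\Omega^k T^l\psi] n^\mu$ has a different effective behaviour and is controlled by a different estimate.

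For piece (i), I apply Proposition \ref{enn} directly to $\Omega^k T^l \psi$. The resulting bound on $\int_{\tau_1}^{\tau_2} \int_{\si_{\tau'} \cap \cala_{\tau_1}^{\tau_2}} J^P_\mu [\Omega^k T^l\psi] n^\mu \, d\mi_{g_\cala}$ already contributes the initial $J^N$-flux term and the square-root-in-time forcing term appearing in the statement of \eqref{ien2}. For piece (ii), the relations in \eqref{defpn} give $J^P_\mu n^\mu \sim J^T_\mu n^\mu$ uniformly (since $\sqrt{D}$ is bounded below and above on this region), so
\begin{equation*}
\int_{\tau_1}^{\tau_2} \int_{\si_{\tau'} \cap \{r_0 \mik r \mik R_0\}} J^P_\mu [\Omega^k T^l \psi] n^\mu \, d\mi_{g_{\rrr}} \lesssim \int_{\tau_1}^{\tau_2} \int_{\si_{\tau'}} J^T_\mu [\Omega^k T^l \psi] n^\mu \, d\mi_{g_{\rrr}},
\end{equation*}
and I apply Proposition \ref{intt}. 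This is the step that introduces the $J^T[\Omega^k T^{l+1}\psi]$ initial-flux term and the $|\Omega^k T^{l+2}F|^2$ bulk term over $\calc_{\tau_1}^{\tau_2}$, both of which are a direct consequence of the unavoidable $T$-derivative loss at the photon sphere.

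For piece (iii), since $P \approx T$ away from the horizon, I apply the $r^p$-weighted hierarchy \eqref{rwins} to $\Omega^k T^l \psi$ with $p = 1$. The left-hand side then controls $\int_{\tau_1}^{\tau_2}\int_{N_{\tau'}} \frac{(\partial_v \Omega^k T^l \ph)^2}{r^2}\, d\mi_{g_{\nnn}}$ together with the angular-gradient contribution with coefficient $\tfrac{1}{4}\left((2-p)D - rD'\right)$, which is positive for $p=1$ and $r$ sufficiently large. Rewriting in terms of $\psi = \ph/r$ and combining with Hardy's inequality \eqref{hardy} to handle the lower-order term in $\ph^2$, this controls the far-field contribution to $J^P_\mu n^\mu$ integrated over $N_{\tau'}$, and supplies both the initial-data term $\int_{N_{\tau_1}} \frac{(\partial_v \Omega^k T^l \ph)^2}{r}\, d\mi_{g_\si}$ and the forcing weight $r^{p+1} = r^2$ on $|\Omega^k T^l F|^2$ over $N_{\tau'}$.

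The main obstacle is not any single estimate but rather the bookkeeping required to show that the bulk inhomogeneities appearing on the right-hand sides of Propositions \ref{enn} and \ref{intt}, and of \eqref{rwins}, collapse precisely into the form of \eqref{ien2}; and to check that the cut-off functions used to glue the three regions generate commutator error terms which can be absorbed into the bulk $T$-energy already controlled by the Morawetz estimates \eqref{en1} and \eqref{en11}. Conceptually, no new geometric ingredient is needed beyond the substitution of the non-degenerate energy estimate of Proposition \ref{enn} in place of the $P$-energy estimate of Proposition \ref{enp} in the near-horizon step of the proof of Proposition \ref{intt}.
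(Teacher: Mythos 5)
Your proposal correctly identifies the essential mechanism — replace Proposition \ref{enp} by Proposition \ref{enn} in the near-horizon step while retaining the $r^p$-weighted hierarchy \eqref{rwins} at infinity and a Morawetz estimate in the bounded intermediate region — and this matches the paper's one-line description ("this proof follows by using estimate \eqref{rwins}, the Morawetz estimate of Proposition \ref{ent} and the integrated estimate provided by Proposition \ref{enn}"). Your handling of pieces (i) and (iii), together with the observation that $J^P \sim J^T$ away from the horizon, is exactly right, and you correctly attribute the $T$-derivative loss to the photon sphere.

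The one structural criticism is in piece (ii). Invoking Proposition \ref{intt} there is redundant and slightly circular: \ref{intt} is itself assembled from \eqref{rwins}, the Morawetz estimate, and \ref{enp}, and its left-hand side already covers all of $\rrr(\tau_1,\tau_2)$, so you end up re-deriving bounds for the near-horizon and far-field regions that you have already controlled separately. The more economical route (and what the paper intends) is to apply the non-degenerate Morawetz estimate of Proposition \ref{ent1} (either \eqref{en11} or \eqref{en12}) directly to $\Omega^k T^l\psi$; this is what brings in the $J^T[\Omega^k T^{l+1}\psi]$ initial flux and the $\int_{\calc}|\Omega^k T^{l+2}F|^2$ bulk term without a detour through a derived proposition. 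A related bookkeeping caveat you should check before declaring the terms "collapse precisely": both the route through \ref{intt} and the direct use of \eqref{en11} produce $\sum_{m=l}^{l+1}\int\int_{S_{\tau'}}|\Omega^k T^m F|^2$ and a corresponding $\sum_{m=l}^{l+1}$ over $N_{\tau'}$, whereas the stated right-hand side of \eqref{ien2} records only $m=l$ there. You should either use the variant \eqref{en12} (which trades the $T^{l+1}F$ bulk integrals for a square-root-in-time term in $T^{l+1}F$ over $\si_{\tau'}$), argue that these extra terms are harmless in the applications, or note explicitly that your argument delivers a slightly larger right-hand side than the one stated.
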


Similarly this proof follows by using estimate \eqref{rwins}, the Morawetz estimate of Proposition \ref{ent} and the integrated estimate provided by Proposition \ref{enn}.

Below we now state alternative versions of the previous two Propositions. The difference here is that we use estimate \eqref{en12} in order to deal with the trapping effect of the photon sphere, hence losing one less derivative at the expense of dealing with a worse inhomogeneous term.

\begin{prop}[\textbf{Integrated estimate for the degenerate energy of $\Omega^k T^l \psi$}]\label{intt1}
Let $\psi$ be a solution of \eqref{nwf}. Then for all $\tau_1$, $\tau_2$ with $\tau_1 < \tau_2$, $\ph = r\psi$, and any $\eta > 0$ we have that
\begin{equation}\label{ien11}
\int_{\tau_1}^{\tau_2} \int_{\si_{\tau'}} J^T_{\mu} [\Omega^k T^l \psi ] n^{\mu} d\mi_{g_{\rrr}} \lesssim_{R_0} \int_{\si_{\tau_1}} J^P_{\mu} [\Omega^k  T^l \psi ] n^{\mu} d\mi_{g_{\si}} + \int_{\si_{\tau_1}} J^T_{\mu} [\Omega^k T^{l+1} \psi ] n^{\mu} d\mi_{g_{\si}} +  
\end{equation}
$$ + \int_{N_{\tau_1}} \dfrac{(\partial_v \Omega^k T^l \ph )^2}{r} d\mi_{g_{\si}} +  \int_{\tau_1}^{\tau_2} \int_{S_{\tau'}} |\Omega^k T^l F|^2 d\mi_{g_{\so}}  + \int_{\tau_1}^{\tau_2} \int_{N_{\tau'}} r^2 |\Omega^k T^l F|^2 d\mi_{g_{\nnn}} + $$ $$ + \int_{\mathcal{C}_{\tau_1}^{\tau_2}} |\Omega^k  T^{l+1} F|^2 d\mi_{g_{\mathcal{C}}}  + \sup_{\tau' \in [\tau_1, \tau_2]} \int_{\si_{\tau'} \cap \calc_{\tau_1}^{\tau_2}} |\Omega^k  T^l F|^2 d\mi_{g_{\si_{\calc}}} + \left( \int_{\tau_1}^{\tau_2} \left( \int_{\si_{\tau'}} |\Omega^k T^{l+1} F |^2 d\mi_{g_{\si}} \right)^{1/2} d\tau' \right)^2 , $$
for any $k \in \mathbb{N}$, $l \in \mathbb{N}$, and where $\calc$ was defined in \eqref{mcalc}.
\end{prop}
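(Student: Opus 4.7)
The plan is to follow the same three-region strategy that underlies Proposition \ref{intt}, but swap in estimate \eqref{en12} of Proposition \ref{ent1} in place of the degenerate Morawetz estimate of Proposition \ref{ent}. Estimate \eqref{en12} already controls the bulk integrand across the photon sphere without the cutoff $\chi_{(\calc)^c}$, at the cost of one $T$-derivative on the $\calc$-localized source and the additional $L^1_{\tau} L^2_{\si}$-type inhomogeneous term appearing on the right-hand side of \eqref{ien11} --- precisely the extra term appearing in the statement.

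More concretely, I would decompose $\rrr(\tau_1, \tau_2)$ into the near-horizon region $\cala_{\tau_1}^{\tau_2}$, the trapping region around $\calc_{\tau_1}^{\tau_2}$, and the far region $\{r \meg R_0\}$, and then control $\int_{\tau_1}^{\tau_2} \int_{\si_{\tau'}} J^T_{\mu}[\Omega^k T^l \psi] n^{\mu}\, d\mi_{g_{\rrr}}$ piece by piece. In the middle region $D$ and $r$ are bounded both above and below, so the pointwise quantity $(T\psi)^2 + D (Y\psi)^2 + |\slashed{\nabla}\psi|^2$ is directly dominated by the left-hand side of \eqref{en12} applied to $\Omega^k T^l \psi$. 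In $\cala_{\tau_1}^{\tau_2}$ the $D^{5/2}$ degeneration of the $(Y\psi)^2$ weight in \eqref{en12} is too weak, so there I would invoke the integrated near-horizon estimate \eqref{en2} of Proposition \ref{enp}, which produces $\int_{\tau_1}^{\tau_2} \int_{\si_{\tau'} \cap \cala} J^T_{\mu}[\Omega^k T^l \psi] n^{\mu}\, d\mi_{g_{\cala}}$ in terms of the initial $P$-energy of $\Omega^k T^l \psi$ on $\si_{\tau_1}$ together with source terms compatible with the RHS of \eqref{ien11}.

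On the far region $\{r \meg R_0\}$, I would combine the $p = 1$ case of the $r^p$-weighted inequality \eqref{rwins} applied to $\ph = r\psi$ --- which yields bulk control on $(\partial_v \Omega^k T^l \ph)^2 / r^2$ and the analogous angular-derivative term --- with the degenerate bulk control on $(\Omega^k T^{l+1}\psi)^2/r^{1+\eta}$ already supplied by \eqref{en12}. Rewriting $J^T_{\mu} n^{\mu}$ in terms of $\ph$ and using the Hardy inequality \eqref{hardy} to absorb the lower-order $\psi^2/r^2$ contributions then recovers the full non-degenerate $T$-flux of $\Omega^k T^l \psi$ integrated over $\{r \meg R_0\}$. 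Summing the three regional estimates should yield \eqref{ien11}: the initial $P$-energy contribution comes from \eqref{en2}, the initial $(\partial_v \Omega^k T^l \ph)^2 / r$ term from \eqref{rwins}, the initial degenerate $T$-energy of $\Omega^k T^{l+1}\psi$ from \eqref{en12} (which uses $\si_{\tau_1}$ data for both $m = l$ and $m = l+1$), and the remaining source terms add up as claimed, with only the single $T$-derivative loss $|\Omega^k T^{l+1} F|^2$ on $\calc_{\tau_1}^{\tau_2}$ and the $L^1_{\tau} L^2_{\si}$-type term inherited from \eqref{en12}.

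The hard part will be the careful bookkeeping of derivative counts and weights at the interfaces of the three regions --- ensuring in particular that the crude pointwise comparisons in the middle region do not introduce hidden losses near $r = R_0$, where the near-horizon integrated estimate and the $r^p$-weighted estimate must be glued without sacrificing the sharp form of the RHS. Apart from this, the only substantive change relative to the proof of Proposition \ref{intt} is the substitution of Proposition \ref{ent} by estimate \eqref{en12}, and the rest of the argument should proceed essentially verbatim.
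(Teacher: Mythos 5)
Your proposal matches the paper's approach essentially verbatim: the paper states that Proposition \ref{intt1} follows from the proof of Proposition \ref{intt} by substituting estimate \eqref{en12} for the photon-sphere Morawetz estimate (thereby trading one fewer $T$-derivative loss for the extra $L^1_{\tau}L^2_{\si}$ source term), and your three-region decomposition with Proposition \ref{enp} near the horizon, \eqref{en12} through the trapping region, and the $p=1$ case of \eqref{rwins} in the far region is precisely the combination the paper relies on for both Propositions \ref{intt} and \ref{intt1}.
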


\begin{prop}[\textbf{Integrated estimate for the $P$--energy of $\Omega^k T^l \psi$}]\label{intp1}
Let $\psi$ be a solution of \eqref{nwf}. Then for all $\tau_1$, $\tau_2$ with $\tau_1 < \tau_2$, $\ph = r\psi$, and any $\eta > 0$ we have that
\begin{equation}\label{ien21}
\int_{\tau_1}^{\tau_2} \int_{\si_{\tau'}} J^P_{\mu} [\Omega^k T^l \psi ] n^{\mu} d\mi_{g_{\rrr}} \lesssim_{R_0} \int_{\si_{\tau_1}} J^{N}_{\mu} [\Omega^k T^l \psi ] n^{\mu} d\mi_{g_{\si}} + \int_{N_{\tau_1}} \dfrac{(\partial_v \Omega^k T^l \ph )^2}{r} d\mi_{g_{\si}} + 
\end{equation}
$$ + \int_{\si_{\tau_1}} J^T_{\mu} [\Omega^k T^{l+1} \psi ] n^{\mu} d\mi_{g_{\si}} + \left( \int_{\tau_1}^{\tau_2} \left( \int_{S_{\tau'}} |\Omega^k  T^l F|^2 d\mi_{g_S} \right)^{1/2} d\tau' \right)^2 + \int_{\tau_1}^{\tau_2} \int_{S_{\tau'}} |\Omega^k  T^l F|^2 d\mi_{g_{\so}}  + $$ $$ + \int_{\tau_1}^{\tau_2} \int_{N_{\tau'}} r^2 |\Omega^k T^l F|^2 d\mi_{g_{\nnn}}  + \int_{\mathcal{C}_{\tau_1}^{\tau_2}} |\Omega^k T^{l+1} F|^2 d\mi_{g_{\mathcal{C}}} + $$ $$ + \sup_{\tau' \in [\tau_1, \tau_2]} \int_{\si_{\tau'} \cap \calc_{\tau_1}^{\tau_2}} |\Omega^k  T^l F|^2 d\mi_{g_{\si_{\calc}}} + \left( \int_{\tau_1}^{\tau_2} \left( \int_{\si_{\tau'}} |\Omega^k T^{l+1} F |^2 d\mi_{g_{\si}} \right)^{1/2} d\tau' \right)^2 , $$
for any $k \in \mathbb{N}$, $l \in \mathbb{N}$, and where $\calc$ was defined in \eqref{mcalc}.
\end{prop}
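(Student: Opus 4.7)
The plan is to reproduce the architecture of the proof of Proposition \ref{intp}, replacing exactly one input. I will decompose $\Sigma_{\tau'}$ into three regions: the near-horizon region $\cala_{\tau_1}^{\tau_2} \cap \Sigma_{\tau'}$, the photon-sphere region $\calc_{\tau_1}^{\tau_2} \cap \Sigma_{\tau'}$, and the far region $N_{\tau'}$ for $r \meg R_0$. Recall that near the horizon $J^P_\mu n^\mu$ has its genuine $P$-structure featuring $\sqrt{D}(Y\psi)^2$, while away from the horizon $P \approx T$ so that $J^P \sim J^T$. Accordingly each region will be handled by a different ingredient, and the three estimates will be summed.

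For the near-horizon piece I would apply the non-degenerate energy estimate of Proposition \ref{enn}, which directly supplies the integrated $J^P$-flux over $\Sigma_{\tau'} \cap \cala_{\tau_1}^{\tau_2}$ controlled by $\int_{\Sigma_{\tau_1}} J^N_\mu[\Omega^k T^l \psi] n^\mu$ plus inhomogeneous terms of precisely the shape appearing on the right-hand side of \eqref{ien21}. For the far region, I would work with $\varphi = r\psi$ and apply the $r^p$-weighted inequality \eqref{rwins} with $p = 1$, which produces the bulk $\int r^{p-1}(\partial_v \varphi)^2 / r^2$ together with the angular bulk; combined with the Hardy inequality \eqref{hardy} this recovers the full $J^T$-flux $\sim J^P$-flux away from the horizon, while the boundary term gives exactly $\int_{N_{\tau_1}} (\partial_v \Omega^k T^l \ph)^2 / r$ and the inhomogeneity is weighted by $r^{p+1} = r^2$ as required.

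The only nontrivial step, and the one that distinguishes Proposition \ref{intp1} from Proposition \ref{intp}, is the treatment of the photon-sphere region: rather than invoking the Morawetz estimate \eqref{en11} (which would cost two $T$-derivatives via the $|\Omega^k T^{l+2} F|^2$ bulk on $\calc$), I would invoke \eqref{en12} of Proposition \ref{ent1} at the same level $l$. This loses only one $T$-derivative at the trapped set, so that the only commuted datum appearing on the right is $\int_{\Sigma_{\tau_1}} J^T_\mu[\Omega^k T^{l+1}\psi] n^\mu$ together with the $\calc$-bulk $\int |\Omega^k T^{l+1} F|^2$; the price to pay is precisely the bootstrap-style remainder $\bigl(\int_{\tau_1}^{\tau_2} (\int_{\Sigma_{\tau'}} |\Omega^k T^{l+1} F|^2 \, d\mi_{g_{\si}})^{1/2} \, d\tau'\bigr)^2$, which is already displayed in \eqref{ien21}. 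The main (and mild) obstacle is bookkeeping: ensuring that smooth cutoffs between the three regions are handled consistently, that the commutator errors between cutoffs and $\Box_g$ are absorbed by the degenerate Morawetz bulks, and that the $J^T$-type inhomogeneities generated at the overlap of $\calc$ with the far region fit into the $r^2$-weighted term on $N_{\tau'}$ rather than the sharper $r^{1+\eta}$ weight; once these are verified, summing the three pieces yields \eqref{ien21}.
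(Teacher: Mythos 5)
Your proposal is correct and reproduces the paper's intended proof, which (exactly as for Proposition \ref{intp}) combines the $r^p$-weighted estimate \eqref{rwins} in the far region, Proposition \ref{enn} near the horizon, and a Morawetz estimate over the photon-sphere region, with the single substitution of \eqref{en12} for the cruder \eqref{en11} at that step. The only cosmetic slip is attributing the $\calc$-bulk $\int_{\calc_{\tau_1}^{\tau_2}} |\Omega^k T^{l+1} F|^2\, d\mi_{g_{\calc}}$ to \eqref{en12}, whose right-hand side carries no such $\calc$-bulk; that term is in fact supplied by Proposition \ref{enn} and by \eqref{rwins}, both already in your decomposition, so the final collection of terms in \eqref{ien21} is unaffected.
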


Finally we state a basic inequality after commuting the equation $\Box_g \psi = F$ not just with angular derivatives and $T$ derivatives, but also once with $Y$. This is the main estimate that will allow us later on to bound $\sqrt{D} \cdot Y\psi$ for the nonlinear problem.

\begin{prop}[\textbf{Energy estimate for $T^l Y \psi$ in spherical symmetry}]\label{d32ty}
Let $\psi$ be a \textbf{spherically symmetric} linear wave
\begin{equation}\label{eq:waveF}
 \Box_g \psi = F . 
 \end{equation}
Let $l \in \mathbb{N}$,  then we have for all $\tau_1 < \tau_2$, $\phi = r\psi$ and any $\eta > 0$ that
\begin{equation}\label{d32tye1}
\begin{split}
\int_{\Sigma_{\tau_2}} \left[ (T^{l+1} Y\psi )^2 + D ( T^l Y^2 \psi )^2 \right] & +  \int_{\mathcal{A}_{\tau_1}^{\tau_2}} \left[ (TY\psi)^2 + D^{3/2} (Y^2 \psi )^2 \right] \\  \lesssim & \int_{\Sigma_{\tau_1}} \left[ (T^{l+1} Y\psi )^2 + D ( T^l Y^2 \psi )^2 \right] \\  & + \int_{\Sigma_{\tau_2}} J^N [ T^l \psi ] + \int_{\Sigma_{\tau_1}} J^T [T^{l+1} \psi ] \\ & + \int_{\mathcal{A}_{\tau_1}^{\tau_2}}  | T^l YF|^2 + \int_{\mathcal{C}_{\tau_1}^{\tau_2}} | T^{l+1} YF|^2 \\ & + \sum_{m=l}^{l+1} \Bigg( \int_{\tau_1}^{\tau_2} \int_{S_{\tau'}} |T^m F |^2 + \int_{\mathcal{C}_{\tau_1}^{\tau_2}} T^{m+1} F |^2 \\ & + \sup_{\tau' \in [\tau_1 , \tau_2]} \int_{\Sigma_{\tau'} \cap \mathcal{C}} | T^m F |^2 + \int_{\tau_1}^{\tau_2} \int_{\mathcal{N}_{\tau'}} r^{1+\eta} | T^m F|^2 \Bigg) \\ & + \left( \int_{\tau_1}^{\tau_2} \left( \int_{S_{\tau'}} | T^l F |^2 d\mi_{g_S} \right)^{1/2} d\tau' \right)^2 .  
\end{split}
\end{equation}
\end{prop}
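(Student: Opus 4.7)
The plan is to commute the equation $\Box_g \psi = F$ with $T^l Y$, apply a vector-field multiplier tailored to the degeneration pattern appearing in the left-hand side, and absorb the commutator errors using the lower-order $N$- and $T$-fluxes that already appear on the right-hand side. Since $T$ is Killing, $[T^l,\Box_g]=0$, so only $[Y,\Box_g]$ needs to be understood. A direct computation from $\Box_g \psi = D Y^2\psi + (D' + 2D/r) Y\psi + 2 T Y\psi + (2/r) T\psi + r^{-2}\slashed{\Delta}\psi$ in ingoing Eddington--Finkelstein coordinates gives, in spherical symmetry,
\[
\Box_g(T^l Y\psi) = T^l YF - D'\cdot T^l Y^2\psi - \left(D''+\frac{2D'}{r}-\frac{2D}{r^2}\right)T^l Y\psi + \frac{2}{r^2}\,T^{l+1}\psi .
\]
The structural fact that makes the estimate possible is that the top-order commutator coefficient is $D'$, which satisfies $|D'|\lesssim \sqrt{D}$ near $\ho$ (since $D=(r-M)^2/r^2$), providing exactly the gain needed in the absence of the usual red-shift effect.

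Next I would construct a multiplier $V$ modelled on $P$ but carrying one additional $\sqrt{D}$ weight on its $Y$-component, schematically $V\sim T - D\cdot Y$ close to $\ho$ and smoothly cut off to $T$ outside $\{M\mik r\mik r_0\}$. With this choice the flux $J^V_\mu[T^l Y\psi]\, n^\mu$ on a spacelike slice is coercive in $(T^{l+1}Y\psi)^2 + D(T^l Y^2\psi)^2$ near the horizon, while the bulk current $K^V[T^l Y\psi]$ on $\cala_{\tau_1}^{\tau_2}$ is coercive in $(TY\psi)^2 + D^{3/2}(Y^2\psi)^2$, matching exactly the two quantities on the left-hand side of \eqref{d32tye1}. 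Stokes' theorem applied to $J^V[T^l Y\psi]$ between $\Sigma_{\tau_1}$ and $\Sigma_{\tau_2}$ then produces the desired left-hand side up to the spacetime integral of $V(T^l Y\psi)\cdot \Box_g(T^l Y\psi)$ over $\cala_{\tau_1}^{\tau_2}$ and boundary contributions from the cutoff region.

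The error analysis then proceeds in three stages. The dangerous top-order commutator term $D'\cdot T^l Y^2\psi\cdot V(T^l Y\psi)$ is split by Cauchy--Schwarz and the bound $|D'|\lesssim\sqrt{D}$ into an $\ee\cdot D^{3/2}(T^l Y^2\psi)^2$-piece, absorbed into the bulk from $V$, and an $\ee^{-1}\cdot \sqrt{D}(V(T^l Y\psi))^2$-piece, absorbed into the flux/bulk from $V$. The lower-order commutator errors involving $T^l Y\psi$ and $T^{l+1}\psi/r^2$ are pointwise bounded by $J^N_\mu[T^l\psi]n^\mu$ and $J^T_\mu[T^{l+1}\psi]n^\mu$ and integrated in $\cala$; further bounding these bulks via Propositions \ref{enn} and \ref{ent} produces the $\int_{\Sigma_{\tau_2}}J^N[T^l\psi]$ and $\int_{\Sigma_{\tau_1}}J^T[T^{l+1}\psi]$ fluxes and the $|T^m F|^2$ inhomogeneities over $S_{\tau'}$, $\nnn_{\tau'}$ and $\calc_{\tau_1}^{\tau_2}$ appearing in the statement. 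The inhomogeneous term $T^l YF\cdot V(T^l Y\psi)$ gives $\int_{\cala_{\tau_1}^{\tau_2}}|T^l YF|^2$ by Cauchy--Schwarz. Finally, the boundary contributions where $V$ is cut off live in a region crossing the photon sphere where $D$ is bounded below, so handling them via the trapping-aware Morawetz estimate \eqref{en12} costs one extra $T$-derivative and accounts for the $\int_{\calc_{\tau_1}^{\tau_2}}|T^{l+1}YF|^2$ and $\left(\int_{\tau_1}^{\tau_2}\bigl(\int_{S_{\tau'}}|T^{l+1}F|^2\bigr)^{1/2}d\tau'\right)^2$ terms.

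The main obstacle is the construction of $V$: its weights must simultaneously produce the flux weight $D(Y^2\psi)^2$, the bulk weight $D^{3/2}(Y^2\psi)^2$, and enough room to absorb $D'\cdot T^l Y^2\psi$. In subextremal geometries the Dafermos--Rodnianski red-shift closes this step with non-degenerate weights, but in extremal Reissner--Nordstr\"{o}m the positive sign of $D''(M)=2/M^2$ in the commutator acts as an anti-red-shift, forcing the estimate to degenerate exactly as written: a stronger flux weight would fail on $\ho$, and a weaker one would not match $D'$. The spherical-symmetry hypothesis is essential, since in the general case $[Y,\Box_g]\psi$ also contains an angular term $-\frac{2}{r^3}\slashed{\Delta}\psi$ whose energy-estimate contribution would require a $P$-type control of $\slashed{\nabla}\psi$ with weights matching $D^{3/2}$, which is not provided by the preceding propositions.
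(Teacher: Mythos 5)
Your proposal follows essentially the same route as the paper: commute with $T^lY$, apply a vector-field multiplier of the schematic form $f^v T - D\, Y$ supported near $\ho$, and absorb the commutator errors by Cauchy--Schwarz using $|D'|\lesssim\sqrt D$, integration by parts in $T$ (for the $Y\psi\cdot TY\psi$ cross term), and the $J^T$-Morawetz estimate. One detail worth making explicit that your ``$V\sim T-D\,Y$'' schematic elides: the paper takes $\partial_r f^v=1/\sigma$ with $\sigma$ small, so the $T$-coefficient grows in $\cala$; this is what actually produces the positive bulk $(T^{l+1}Y\psi)^2$-term and the room to absorb the $D'\,T^lY^2\psi\cdot V(T^lY\psi)$ cross term after Cauchy--Schwarz, and a constant-coefficient $T$-component would give no such bulk.
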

\begin{proof}
We will prove the above estimate for the case of $l=0$ since $T$ commutes with \eqref{eq:waveF}. We have that
\begin{equation}\label{eq:ycomm}
\Box_g ( Y\psi ) = D' (Y^2 \psi) + \frac{2}{r^2} (T\psi) - R' (Y\psi) + \frac{2}{r} ( \slashed{\Delta} \psi) + YF ,
\end{equation}
and by choosing the vector field
\begin{equation}\label{vf:dp}
L = f^v (r) T + f^r (r) Y ,
\end{equation}
where
$$ f^v >  1 , \quad \partial_r f^v = \frac{1}{\sigma}, \quad f^r = - D , \quad \partial_r f^r = -D' \mbox{  in $\mathcal{A}_{\tau_1}^{\tau_2}$,} $$
where $ \sigma > 0 $ is chosen to be small enough. On the other hand we have that
$$ f^v = f^r = 0 \mbox{  in $\left( \cup_{\tau' \in [\tau_1 , \tau_2]} \Sigma_{\tau'} \right) \cap\{ r \geq r_1 > r_0 \}$, } $$
and $f^v$, $f^r$ are smoothly extended in the region $\left( \cup_{\tau' \in [\tau_1 , \tau_2]} \Sigma_{\tau'} \right) \cap \{ r_0 \leq r \leq r_1 \}$.

We have that
\begin{equation*}
\begin{split}
K^{L} [Y \psi ] + \mathcal{E}^{L} [ Y\psi ] & =  H_1 (TY\psi )^2 + H_2 (Y^2 \psi )^2 + H_3 | \slashed{\nabla} Y \psi |^2 \\ + & H_4 (TY\psi ) \cdot (T\psi) + H_5 (TY \psi ) \cdot (Y\psi ) + H_6 (Y^2 \psi ) \cdot (T\psi ) \\ + & H_7 (TY \psi ) \cdot (\slashed{\Delta} \psi ) + H_8 (Y^2 \psi ) \cdot ( \slashed{\Delta} \psi ) + H_9 (Y^2 \psi ) \cdot (Y\psi ) + L [ Y \psi ] \cdot YF .
\end{split}
\end{equation*}
We then have that
\begin{equation*}
\begin{split}
H_1 = \partial_r f^v & = \frac{1}{\sigma} , H_2 = \frac{D \cdot \partial_r f^r}{2} - \frac{D \cdot f^r}{r} - \frac{3D' \cdot f^r}{2} =  D \cdot D' + \frac{D^{2}}{r} \\ & \Rightarrow H_2 \simeq \frac{2M}{r^2} D^{3/2} ,  H_3 = -\frac{1}{2} \partial_r f^r = \frac{1}{2}  D' = \frac{M}{r^2} D^{1/2} , \\ & H_4 = \frac{2f^v}{r^2}  , \quad H_5 = f^v \cdot R' , \quad  H_6 = \frac{2f^r}{r^2} = - \frac{2D}{r^2} ,  \\ & H_7 = \frac{2f^v}{r}  , \quad H_8 = \frac{2f^r}{r} = - \frac{2D}{r}, \\ & H_9 = D \cdot \partial_r f^v - D' \cdot f^v_p - \frac{2f^r}{r} \simeq \frac{2D}{r} , \\ & H_{10} = - f^r \cdot R' = D \cdot R'  . 
\end{split}
\end{equation*}
Note that due to the fact that $\psi$ is spherically symmetric  away from the horizon everything can be bounded by $ \int_{\mathcal{A}_{\tau_1}^{\tau_2}} J^T [ T\psi ] $. For the inhomogeneity we have that
\begin{equation*}
\begin{split}
\int_{\mathcal{A}_{\tau_1}^{\tau_2}} YF \cdot L [ Y \psi ] & \leq \epsilon  \int_{\mathcal{A}_{\tau_1}^{\tau_2}} \left[ (TY \psi )^2 + D^{3/2} (Y^2 \psi )^2 \right] \\ & + \frac{1}{\epsilon} \int_{\mathcal{A}_{\tau_1}^{\tau_2}} ( 1 + D^{1/2} ) |YF|^2 .
\end{split}
\end{equation*}

For the $H_4$ term we have by Cauchy-Schwarz that
\begin{equation*}
\int_{\mathcal{A}_{\tau_1}^{\tau_2}} H_4 (TY \psi ) \cdot (T\psi )  \lesssim  \epsilon \int_{\mathcal{A}_{\tau_1}^{\tau_2}} (TY\psi )^2 + \frac{1}{\epsilon} \int_{\mathcal{A}_{\tau_1}^{\tau_2}} (T\psi )^2 ,
\end{equation*}
and for $\epsilon$ small enough the first term can be absorbed by the $H_1$ term, while the second term can be bounded using the Morawetz estimate \eqref{en1}.

For the $H_5$ term we integrate by parts in $T$ and we have that
\begin{equation*}
\begin{split}
\int_{\mathcal{A}_{\tau_1}^{\tau_2}} H_5 (TY \psi ) \cdot (Y\psi) & = \frac{1}{2} \int_{\Sigma_{\tau_2}} H_5 (Y\psi )^2 - \frac{1}{2} \int_{\Sigma_{\tau_1}} H_5 (Y\psi )^2 \\ & \lesssim \int_{\Sigma_{\tau_2}} J^N [  \psi ] + \int_{\Sigma_{\tau_1}}  J^N [\psi ] .
\end{split}
\end{equation*}

For the $H_6$ term we apply again Cauchy-Schwarz and we have that
\begin{equation*}
\begin{split}
\int_{\mathcal{A}_{\tau_1}^{\tau_2}} H_6 (Y^2 \psi ) \cdot (T \psi ) & \lesssim \epsilon \int_{\mathcal{A}_{\tau_1}^{\tau_2}} D^{3/2} (Y^2 \psi )^2 + \frac{1}{\epsilon} \int_{\mathcal{A}_{\tau_1}^{\tau_2}} D^{1/2} (T\psi )^2 \\ & \lesssim \epsilon \int_{\mathcal{A}_{\tau_1}^{\tau_2}} D^{3/2} (Y^2 \psi )^2 + \frac{1}{\epsilon} \int_{\mathcal{A}_{\tau_1}^{\tau_2}} (T\psi )^2 ,
\end{split}
\end{equation*} 
where for $\epsilon$ small enough the first term can be absorbed by the $H_1$ term, while we can use the Morawetz estimate \eqref{en1} for the second term.

Since we work with a spherically symmetric $\psi$ the terms $H_7$ and $H_8$ vanish (as does the term $H_3$). For the term $H_9$ we have that
\begin{equation*}
\int_{\mathcal{A}_{\tau_1}^{\tau_2}} H_9 (Y^2 \psi ) \cdot (TY\psi ) \lesssim \epsilon \int_{\mathcal{A}_{\tau_1}^{\tau_2}} D^{3/2} (Y^2 \psi )^2 + \frac{1}{\epsilon} \int_{\mathcal{A}_{\tau_1}^{\tau_2}} D^{1/2} (TY \psi )^2 
\end{equation*}
and for $\epsilon$ small enough the first term can be absorbed by the $H_2$ term, while the second term can be absorbed by the $H_1$ term due the factor $D^{1/2}$ which vanishes on the horizon.

Finally for the last term $H_{10}$ we have that
\begin{equation*}
\int_{\mathcal{A}_{\tau_1}^{\tau_2}} H_{10} (Y^2 \psi ) \cdot (Y\psi) \mik \epsilon \int_{\mathcal{A}_{\tau_1}^{\tau_2}} D^{3/2} (Y^2 \psi )^2 \, d\mi_{g_{\mathcal{A}}} + \frac{1}{\epsilon} \int_{\mathcal{A}_{\tau_1}^{\tau_2}} \sqrt{D} (Y\psi )^2 \, d\mi_{g_{\mathcal{A}}} ,
\end{equation*}
where for $\epsilon$ small enough the first term can be absorbed by the $H_1$ term, while the second term can be bounded by the $J^N$ flux and its accompanying terms.
\end{proof}
\begin{rem}
Note that in the estimate \eqref{d32tye1} we did not lose any $T$ derivatives as there is no trapping on the photon if $\psi$ is assumed to be spherically symmetric.
\end{rem}

For a general solution of 
\begin{equation*}
\Box_g \psi = F,
\end{equation*}
by separating $\psi$ into its spherically symmetric part $\psi_0$ and into its non-spherically symmetric part $\psi_{\geq 1}$ and then combining estimate \eqref{d32tye1} with Proposition 4.4.1 of \cite{trapping} and estimate \eqref{en1} we have the following estimate for any $l \in \mathbb{N}$:
\begin{equation}\label{d32tye2}
\begin{split}
\int_{\Sigma_{\tau_2}} \left[ (T^{l+1} Y\psi )^2 + | \slashed{\nabla} T^l Y \psi |^2 \right] \, d\mi_{g_{\Sigma}} & +  \int_{\Sigma_{\tau_2}} \left[ D (T^{l} Y^2 \psi_0 )^2 + \sqrt{D} (T^l Y^2 \psi_{\geq 1} )^2 \right] \, d\mi_{g_{\Sigma}} \\ & +\int_{\mathcal{A}_{\tau_1}^{\tau_2}} \left[  D^{3/2} (Y^2 \psi_0 )^2 + D (Y^2 \psi_{\geq 1} \right] \, d\mi_{g_{\mathcal{A}}} \\  \lesssim & \int_{\Sigma_{\tau_1}} \left[ (T^{l+1} Y\psi )^2 + |\slashed{\nabla} T^l Y \psi |^2 \right] \, d\mi_{g_{\Sigma}} \\ & + \int_{\Sigma_{\tau_1}} \left[ D (T^{l} Y^2 \psi_0 )^2 +\sqrt{D} (T^l Y^2 \psi_{\geq 1} )^2 \right] \, d\mi_{g_{\Sigma}} \\  & + \int_{\Sigma_{\tau_1}} J^N [ T^l \psi ] \, d\mi_{g_{\Sigma}} + \int_{\Sigma_{\tau_1}} J^T [T^{l+1} \psi ] \, d\mi_{g_{\Sigma}} \\ & + \int_{\mathcal{A}_{\tau_1}^{\tau_2}}  | T^l YF|^2 \, d\mi_{g_{\mathcal{A}}} + \int_{\mathcal{C}_{\tau_1}^{\tau_2}} | T^{l+1} YF|^2 \, d\mi_{g_{\mathcal{C}}}  \\ & + \sum_{m=l}^{l+1} \Bigg( \int_{\tau_1}^{\tau_2} \int_{S_{\tau'}} |T^m F |^2 \, d\mi_{g_{\mathcal{R}}}  + \int_{\mathcal{C}_{\tau_1}^{\tau_2}} T^{m+1} F |^2 \, d\mi_{g_{\mathcal{C}}} \\ & + \sup_{\tau' \in [\tau_1 , \tau_2]} \int_{\Sigma_{\tau'} \cap \mathcal{C}} | T^m F |^2 \, d\mi_{g_{\mathcal{C}}} + \int_{\tau_1}^{\tau_2} \int_{\mathcal{N}_{\tau'}} r^{1+\eta} | T^m F|^2 \, d\mi_{g_{\mathcal{N}}} \Bigg) \\ & + \left( \int_{\tau_1}^{\tau_2} \left( \int_{S_{\tau'}} | T^l F |^2 d\mi_{g_S} \right)^{1/2} d\tau' \right)^2 .  
\end{split}
\end{equation}
Note that the above estimate also holds after commuting with any number of angular derivatives $\Omega^k$, $k \in \mathbb{N}$, without the $\psi_0$ terms (this is just the inhomogeneous version of Proposition 4.4.1 of \cite{trapping}).

\section{Local theory}\label{tlwpt}
\begin{thm}\label{lwp}
Equation \eqref{nwg}  is locally well-posed in $X (\sis_0 ) \doteq H^3 \times H^2 (\sis_0 )$ in the sense that if we start with data $( \psi_0 , \psi_1 ) \in H^3 \times H^2 (\widetilde{\Sigma}_0 )$ then there exists some $\mathcal{T} > 0$ such that there exists a unique solution of \eqref{nwg} in $\mathcal{R}(0,\mathcal{T} ) = \cup_{\tau \in [0,\mathcal{T}]} \widetilde{\Sigma}_{\tau}$ for which we have that for each $\tau \in [0, \mathcal{T}]$ it holds that $\left( \psi (\tau) , n_{\sis_{\tau}} \psi (\tau) \right) \in X (\widetilde{\Sigma}_{\tau} ) = H^3 \times H^2 (\sis_{\tau} )$.
\end{thm}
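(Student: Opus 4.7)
The plan is a standard Picard iteration in $X(\sis_\tau) = H^3 \times H^2(\sis_\tau)$, adapted to the extremal Reissner--Nordstr\"om setting with the spacelike foliation $\{\sis_\tau\}$ that crosses the event horizon. I would define iterates $\psi^{(n)}$ by $\psi^{(0)} \equiv 0$ and, for $n \meg 0$, by the linear inhomogeneous problem
\begin{equation*}
\Box_g \psi^{(n+1)} = \sqrt{D} \cdot A(\psi^{(n)}) \cdot g^{\aaa \beta} \partial_\aaa \psi^{(n)} \partial_\beta \psi^{(n)}, \qquad (\psi^{(n+1)}, n_{\sis_0}\psi^{(n+1)})\big|_{\sis_0} = (\psi_0, \psi_1).
\end{equation*}
Each step is solved by standard linear theory for $\Box_g \phi = F^{(n)}$, which is well-posed on $\sis_\tau$ for finite $\tau$ since $\sis_\tau$ is spacelike throughout the domain of outer communications up to and including $\ho$.

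For the linear estimate at the $H^3 \times H^2$ level I would commute $\Box_g$ with the collection $\{T, Y, \Omega^k\}$ up to a total of three derivatives and apply the non-degenerate $N$-energy estimate (as in Proposition \ref{enn}, but specialized to the spacelike foliation $\sis_\tau$ over the bounded time interval $[0,\mathcal{T}]$). The red-shift contribution from $N$ is crucial: it provides an estimate that is uniformly non-degenerate across $\ho$, which is what distinguishes this local theory from the Minkowskian case and what allows $Y\psi$ itself (not merely $\sqrt{D}\cdot Y\psi$) to be controlled on small time scales. Since $\mathcal{T}$ will be small, trapping effects at the photon sphere play no role and a standard Gronwall-type absorption of lower-order terms suffices; in particular the loss of a $T$-derivative near the photon sphere which is essential for the global theory is not needed here.

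The nonlinear step is handled by Moser-type estimates. The Sobolev embedding $H^3(\sis_\tau) \hookrightarrow L^\infty$, and the borderline embedding $H^3 \hookrightarrow C^1$ controlled via the linear estimates for the derivatives, yields $\|A(\psi)\|_{H^2} \lesssim P(\|\psi\|_{L^\infty}) \cdot (1+\|\psi\|_{H^3})$ for a polynomial $P$, using the bound $|A^{(k)}| \mik a_k$. Since $\sqrt{D}$ and $g^{\aaa \beta}$ are smooth and uniformly bounded (together with all their derivatives) on the closure of $\sis_\tau$ in the region of interest, the product estimate gives $\|F(\psi)\|_{H^2} \lesssim P(\|\psi\|_{H^3}) \cdot \|\psi\|_{H^3}^2$. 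Combined with the linear energy estimate this yields, for a ball $B_R \subset X(\sis_\tau)$ of radius $R \sim \|(\psi_0,\psi_1)\|_{X(\sis_0)}$ and sufficiently small $\mathcal{T} = \mathcal{T}(R) > 0$, the stability bound $\sup_{\tau \in [0,\mathcal{T}]}\|\psi^{(n+1)}\|_{X(\sis_\tau)} \mik R$, so the iterates stay in $B_R$. A contraction estimate for the differences $\psi^{(n+1)} - \psi^{(n)}$ in the lower-regularity norm $H^1 \times L^2$ (where the $L^\infty$ bounds on $(\psi^{(n)},\partial \psi^{(n)})$ have already been secured by the bootstrap in $X$) then yields convergence to a limit $\psi \in C([0,\mathcal{T}];H^3(\sis_\tau)) \cap C^1([0,\mathcal{T}];H^2(\sis_\tau))$ solving \eqref{nwg}.

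The main obstacle, though not a deep one at the local level, is ensuring that the energy identity after commutation with $Y$ is properly closed near $\ho$: as the commutator identity \eqref{eq:ycomm} shows, $\Box_g(Y\psi)$ contains a term $D'(Y^2 \psi)$ whose coefficient $D' \sim (r-M)/r^3$ degenerates, together with zeroth-order couplings to $T\psi$, $Y\psi$ and $\slashed{\Delta}\psi$. These are absorbed using the red-shift structure together with Cauchy--Schwarz and a Gronwall argument over the finite time interval, exactly as in the computation of the $H_j$ terms within the proof of Proposition \ref{d32ty}, with the simplification that the $\sqrt{D}$-weighted $Y$-estimate is not needed --- non-degenerate control of $Y\psi$ suffices because the lifespan is short. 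Uniqueness follows from the contraction in $H^1 \times L^2$, and persistence of regularity (should one want $H^s$ for higher $s$) follows by running the commuted estimates one derivative higher.
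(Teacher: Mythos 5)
Your sketch is a correct and reasonably detailed expansion of the argument the paper only alludes to: the authors omit the proof entirely, stating that it ``is standard (it follows by using a Gr\"onwall type estimate),'' and your Picard iteration with $N$-flux energy estimates, Moser bounds for $F=\sqrt{D}\,A(\psi)g^{\alpha\beta}\partial_\alpha\psi\partial_\beta\psi$ in $H^2$, and contraction at $H^1\times L^2$ is exactly that standard route.

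One terminological caution worth flagging. You attribute the non-degenerate control of $Y\psi$ near $\ho$ to ``the red-shift contribution from $N$,'' but in the extremal case the surface gravity vanishes and there is no genuine red-shift: the bulk term degenerates, $K^N[\psi]\sim (T\psi)^2+\sqrt{D}(Y\psi)^2+|\slashed\nabla\psi|^2$ (see the relations around \eqref{defpn}), so it is \emph{not} coercive in $(Y\psi)^2$. What actually saves the local theory is that (i) the \emph{flux} $J^N_\mu n^\mu \sim (T\psi)^2+(Y\psi)^2+|\slashed\nabla\psi|^2$ is non-degenerate, and (ii) $K^N\geq 0$ near $\ho$, so it can simply be dropped from the divergence identity; Gr\"onwall on the finite interval then closes the estimate with constants that grow in $\mathcal{T}$. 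The conclusion you draw is right, but the mechanism is the sign and non-degeneracy of the flux, not a red-shift Morawetz term --- the absence of the latter is precisely the difficulty the rest of the paper is built around, and why the breakdown criterion of Proposition \ref{cont} is ultimately phrased in terms of $\sqrt{D}\,Y\psi$ rather than $Y\psi$. A second minor technical point: $A(\psi)$ need not lie in $L^2(\sis_\tau)$ on an unbounded slice, so the Moser estimate should be applied either to $A(\psi)-A(0)$ or directly to the product $A(\psi)\,g^{\alpha\beta}\partial_\alpha\psi\partial_\beta\psi$ (or justified by restricting to a compact region via finite speed of propagation); your displayed inequality for $\|A(\psi)\|_{H^2}$ should be read in that spirit.
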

\begin{rem}
Recall that if we start with data that is spherically symmetric then our solution of \eqref{nwg} will be spherically symmetric as well. The same holds for solutions of equation \eqref{nwg}.
\end{rem}
The proof of Theorem \ref{lwp} is standard (it follows by using a Gr\"{o}nwall type estimate) and will be omitted.

Next we will state here a condition that allows us to extend a solution beyond the time $\mathcal{T}$ given by the local theory. We have the following continuation criterion.
\begin{prop}[\textbf{Breakdown Criterion}]\label{cont}
Let $\psi$ be a solution of \eqref{nwg} with smooth compactly supported data $\psi [0] = (f , g)$ with finite $X (\sis_0 )$ norm. Denote by $\mathcal{T} = \mathcal{T} (f,g)$ the maximal time of existence for $\psi$ given by Theorem \ref{lwp}. Then we have that either $\mathcal{T} = \infty$ (in which case we say that $\psi$ is globally well-posed) or we have that
$$ T\psi \not\in L^{\infty} (\mathcal{\widetilde{R}} (0,\mathcal{T} )) \mbox{ or/and } \sqrt{D} \cdot Y\psi \not\in L^{\infty} (\mathcal{\widetilde{R}} (0,\mathcal{T} )) \mbox{ or/and } \slashed{\nabla} \psi \not\in L^{\infty} (\mathcal{\widetilde{R}} (0,\mathcal{T} )).  $$ 
\end{prop}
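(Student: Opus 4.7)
The plan is to prove the contrapositive: under the bootstrap assumption that $T\psi$, $\sqrt{D}\cdot Y\psi$, and $\slashed{\nabla}\psi$ all remain in $L^\infty(\widetilde{\mathcal{R}}(0,\mathcal{T}))$, I will propagate a uniform bound on $\|(\psi(\tau),n_{\widetilde{\Sigma}_\tau}\psi(\tau))\|_{X(\widetilde{\Sigma}_\tau)}$ for $\tau\in[0,\mathcal{T})$. Applying Theorem \ref{lwp} at a time $\tau<\mathcal{T}$ sufficiently close to $\mathcal{T}$ then produces an extension past $\mathcal{T}$, contradicting maximality.

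The structural observation is that in ingoing Eddington--Finkelstein coordinates, where $g^{vv}=0$, $g^{vr}=1$, $g^{rr}=D$, one has
\begin{equation*}
\sqrt{D}\cdot g^{\alpha\beta}\partial_\alpha\psi\,\partial_\beta\psi \,=\, 2\,T\psi\cdot(\sqrt{D}\,Y\psi) \,+\, \sqrt{D}\cdot(\sqrt{D}\,Y\psi)^2 \,+\, \sqrt{D}\,|\slashed{\nabla}\psi|^2.
\end{equation*}
Thus the nonlinear forcing $F\doteq\sqrt{D}\,A(\psi)\,g^{\alpha\beta}\partial_\alpha\psi\,\partial_\beta\psi$ is a quadratic expression purely in the three quantities controlled by the breakdown criterion; in particular, no undifferentiated $Y\psi$ appears, which is precisely why the criterion need not ask for $L^\infty$ control of $Y\psi$ itself. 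Under the bootstrap assumption one has the pointwise bound $|F|\lesssim |T\psi|+|\sqrt{D}\,Y\psi|+|\slashed{\nabla}\psi|$ with coefficients bounded by $\|A\|_{L^\infty}\cdot\max\bigl\{\|T\psi\|_{L^\infty},\|\sqrt{D}\,Y\psi\|_{L^\infty},\|\slashed{\nabla}\psi\|_{L^\infty}\bigr\}$, and an analogous schematic bound holds for the commuted nonlinearity after distributing derivatives via the product rule and applying the chain rule to $A(\psi)$ together with the uniform bounds $|A^{(k)}|\leq a_k$.

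The second step is to commute \eqref{nwg} with $\Omega^a T^b Y^c$, $a+b+c$ up to the order needed for $H^3\times H^2$ control, with $c\leq 1$ as in the main theorem, and feed each resulting inhomogeneous equation into the hierarchy of estimates of Section~4: the Morawetz and $T$/$P$/$N$-flux estimates (Propositions \ref{ent}, \ref{3deg}, \ref{enp}, \ref{enn}), the $r^p$-weighted inequality \eqref{rwins} with $p=2$, and the $Y$-commuted estimate \eqref{d32tye2}. Thanks to the schematic structure above, each $\int|F|^2$-type inhomogeneous term on the right-hand side of these estimates is bounded by the corresponding energy times a constant depending only on the $L^\infty$ bootstrap and on $\{a_k\}$; Gr\"onwall's inequality applied to the sum of fluxes then yields a uniform bound of the form $\mathcal{E}(\tau)\leq e^{C\mathcal{T}}\mathcal{E}(0)$ on $[0,\mathcal{T})$, and Sobolev embedding on $\widetilde{\Sigma}_\tau$ upgrades this to the required $X(\widetilde{\Sigma}_\tau)$ control.

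The main obstacle lies at the top-order $Y$-commuted level near $\mathcal{H}^+$, where \eqref{d32tye2} provides $L^2$ control of $Y^2\psi$ only with a degenerate weight $\sqrt{D}$ (respectively $D$ on the spherical mode). The commutator identity \eqref{eq:ycomm} generates a $D'\cdot Y^2\psi$ term, while the differentiated nonlinearity $YF$ produces terms of the form $\sqrt{D}\,T\psi\cdot Y^2\psi$, $\sqrt{D}\,A'(\psi)\,Y\psi\cdot(\text{null form})$, and $Y[\sqrt{D}]\cdot(\text{null form})$. Each of these either carries an explicit $\sqrt{D}$ or $D$ factor multiplying $Y^2\psi$, or can be paired with the $L^\infty$-controlled $T\psi$ and the smooth coefficient $Y[\sqrt{D}]=M/r^2$, so that a Cauchy--Schwarz absorption against the degenerate left-hand side of \eqref{d32tye2} (using $D\leq\sqrt{D}$ near the horizon) absorbs these contributions with an arbitrarily small constant. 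Closing this last estimate completes the uniform $X$-bound on $[0,\mathcal{T})$, so Theorem \ref{lwp} extends $\psi$ beyond $\mathcal{T}$, proving the contrapositive.
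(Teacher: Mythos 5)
The paper gives no proof of Proposition \ref{cont} (it states only ``This is a standard by-product of Theorem \ref{lwp}\dots''), so there is nothing to match against; on its own merits, your argument gets the central idea right but has one genuine gap.

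Your key structural observation is correct and really is the heart of the matter: since $g^{vv}=0$, $g^{vr}=1$, $g^{rr}=D$ in ingoing coordinates, the forcing $\sqrt{D}\,g^{\alpha\beta}\partial_\alpha\psi\,\partial_\beta\psi = 2\,T\psi\cdot(\sqrt{D}\,Y\psi) + \sqrt{D}(\sqrt{D}\,Y\psi)^2 + \sqrt{D}|\slashed{\nabla}\psi|^2$ is a quadratic expression \emph{purely} in the three quantities appearing in the breakdown criterion, and the undifferentiated $Y\psi$ never appears without a compensating $\sqrt{D}$. This is exactly why the criterion does not demand $L^\infty$ control of $Y\psi$ itself, and you computed the bounded coefficient $Y[\sqrt{D}]=M/r^2$ correctly. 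For this much your argument is sound.

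The gap is in the last step. You reduce matters to closing the $Y$-commuted estimate \eqref{d32tye2} and then assert that ``closing this last estimate completes the uniform $X$-bound.'' But \eqref{d32tye2} controls $Y^2\psi$ only with a \emph{degenerate} weight ($\sqrt{D}$, or $D$ for the spherical mode) on $\Sigma_\tau$, whereas the space $X(\widetilde{\Sigma}_\tau)=H^3\times H^2(\widetilde{\Sigma}_\tau)$ is the ordinary unweighted Sobolev space in which Theorem \ref{lwp} re-initializes. A quantity like $Y^2\psi\sim(r-M)^{-1/2-\epsilon}$ near $r=M$ is square-integrable against the weight $\sqrt{D}\sim(r-M)$ but fails to lie in $L^2$, so ``degenerate flux bounded'' does not imply ``$H^3$-norm bounded''; and since the tangent directions to $\widetilde{\Sigma}_\tau$ near the horizon are $Y+T$ and $\slashed{\nabla}$, the $H^3$ norm actually contains $Y^3\psi$, which your commutation scheme (restricted to $c\le1$) never reaches. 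You would either need to produce genuinely non-degenerate top-order estimates up to third order near $\mathcal{H}^+$, or else argue that the local well-posedness space itself should be taken with degenerate $D$-weights on the $Y$-derivatives. As written, the inference from your degenerate flux bounds to the required $X(\widetilde{\Sigma}_\tau)$ bound is not justified.

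A smaller methodological remark: you are importing the full global machinery of Sections 4--5 (Morawetz with photon-sphere loss, $r^p$-hierarchy, etc.), which is tuned for \emph{decay} on $[0,\infty)$ and carries the very degeneracies that create the problem above. For a finite-time continuation argument the natural route is the simpler one underlying Theorem \ref{lwp}: take a strictly timelike multiplier, commute with a generating set of slice-tangential derivatives, use the null-form structure together with the $L^\infty$ bootstrap to bound the commuted forcings by the energy, and apply Gr\"onwall on $[0,\mathcal{T})$; allowing the constant to depend on $\mathcal{T}<\infty$ is then harmless. The $Y$-degeneracy near $\mathcal{H}^+$ still has to be confronted there, which is exactly where your write-up should concentrate rather than on the global hierarchy.
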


This is a standard by-product of Theorem \ref{lwp} and its proof will be omitted as well. 
 
Proposition \ref{cont} tells us essentially that we have to verify that both 
$$ \| T\psi \|_{L^{\infty} (\mathcal{\widetilde{R}} (0,\mathcal{T} ))}, \quad \| \sqrt{D} \cdot Y\psi \|_{L^{\infty} (\mathcal{\widetilde{R}} (0,\mathcal{T} ))} \mbox{ and } \| \slashed{\nabla} \psi \|_{L^{\infty} (\mathcal{\widetilde{R}} (0,\mathcal{T} ))}$$
are finite at any given $\mathcal{T}$ in order to conclude that our solution $\psi$ (as in the assumption of Proposition \ref{cont}) is globally defined.

\section{Bootstrap assumptions}\label{boot}

From this section and for the rest of the paper in order to simplify our presentation we will study the equation:
\begin{equation}\label{nw}
\left\{\begin{aligned}
       \Box \psi = \sqrt{D} \cdot g^{\alpha \beta} \cdot  \partial_{\alpha}  \psi  \cdot \partial_{\beta} \psi ,\\
       \psi |_{\widetilde{\Sigma_0} } = \ee f , \quad n_{\widetilde{\Sigma}_0} \psi |_{\widetilde{\Sigma_0} } = \ee g, \
       \end{aligned} \right.
\end{equation}

We will assume the estimates stated below in all the sections that will follow. We will later prove them using a bootstrap argument. We  let $\ee > 0$, $\aaa > 0$,  and assume that $\tau \meg 0 $, $\tau_1 \meg 0$, $\tau_2 \meg 0$ are any numbers with $\tau_1 < \tau_2$, and that $C$ is a constant. 

\begin{equation}\label{A1}
\sum_{k\mik 5}  \int_{S_{\tau}} \left( |\Omega^k F|^2 + |\Omega^k TF|^2 \right) d\mi_{g_S}  \mik C E_0 \ee^2 (1+\tau)^{-3} \tag{\textbf{A1}},
 \end{equation} 
\begin{equation}\label{A2}
\sum_{k\mik 5} \int_{\tau_1}^{\tau_2} \int_{N_{\tau'}} r^{3-\alpha} \left( |\Omega^k F |^2 + |\Omega^k TF|^2 \right) d\mi_{g_{\nnn}} \mik C E_0 \ee^2 (1+\tau_1 )^{-2} \tag{\textbf{A2}},
\end{equation}
\begin{equation}\label{A3}
\sum_{k\mik 5}   \left( \int_{\tau_1}^{\tau_2} \left( \int_{N_{\tau'}} r^2 \left( |\Omega^k F |^2 + |\Omega^k TF|^2 \right) d\mi_{g_N} \right)^{1/2} d\tau' \right)^2 \mik C E_0 \ee^2 \tag{\textbf{A3}},
 \end{equation}
 \begin{equation}\label{A4}
\sum_{k\mik 5}  \int_{\calc_{\tau_1}^{\tau_2}} \left( |\Omega^k TF |^2 + |\Omega^k T^2 F |^2 \right) d\mi_{g_{\calc}}   \mik C E_0 \ee^2 (1+ \tau_1)^{-2} \tag{\textbf{A4}} , 
\end{equation}
\begin{equation}\label{B1}
 \sum_{k\mik 5} \int_{S_{\tau}} |\Omega^k T^2 F|^2 d\mi_{g_{S}} \mik C E_0 \ee^2 (1+\tau )^{-11/4+\alpha}  \tag{\textbf{B1}} ,
\end{equation}
\begin{equation}\label{B2}
\sum_{k\mik 5} \int_{\tau_1}^{\tau_2} \int_{S_{\tau'}}  |\Omega^k T^2 F |^2  d\mi_{g_{\rrr}} \mik C E_0 \ee^2 (1+\tau )^{-2+\alpha} \tag{\textbf{B2}} ,
\end{equation}
\begin{equation}\label{B3}
\sum_{k\mik 5} \int_{\tau_1}^{\tau_2} \int_{N_{\tau'}} r^{3-\alpha}  |\Omega^k T^2 F |^2  d\mi_{g_{\nnn}} \mik C E_0 \ee^2 (1+\tau )^{-2+\alpha} \tag{\textbf{B3}} ,
\end{equation}
\begin{equation}\label{B4}
\sum_{k\mik 5}  \int_{\calc_{\tau_1}^{\tau_2}} |\Omega^k T^3 F |^2 d\mi_{g_{\calc}}   \mik C E_0 \ee^2 (1+ \tau_1)^{-2+\aaa} \tag{\textbf{B4}} , 
\end{equation}
\begin{equation}\label{C1}
\sum_{k\mik 5} \int_{S_{\tau}}  |\Omega^k T^3 F |^2  d\mi_{g_S} \mik C E_0 \ee^2 (1+\tau )^{-3/2+\alpha} \tag{\textbf{C1}} ,
\end{equation}
\begin{equation}\label{C2}
\sum_{k\mik 5} \int_{\tau_1}^{\tau_2} \int_{S_{\tau'}}  |\Omega^k T^3 F |^2  d\mi_{g_{\rrr}} \mik C E_0 \ee^2 (1+\tau_1 )^{-3/2+\alpha} \tag{\textbf{C2}} ,
\end{equation}
\begin{equation}\label{C3}
\sum_{k\mik 5} \int_{\tau_1}^{\tau_2} \int_{N_{\tau'}} r^{3-\alpha}  |\Omega^k T^3 F |^2  d\mi_{g_{\nnn}} \mik C E_0 \ee^2 (1+\tau )^{-3/2+\alpha} \tag{\textbf{C3}} ,
\end{equation}
\begin{equation}\label{C4}
\sum_{k\mik 5}  \int_{\calc_{\tau_1}^{\tau_2}}|\Omega^k T^4 F |^2  d\mi_{g_{\calc}}   \mik C E_0 \ee^2 (1+ \tau_1)^{-1+\aaa} \tag{\textbf{C4}} , 
\end{equation}
\begin{equation}\label{D1}
\sum_{k\mik 5}  \int_{\tau_1}^{\tau_2} \int_{\si_{\tau'} \cap \cala_{\tau_1}^{\tau_2}}  |\Omega^k T^4 F|^2 d\mi_{g_{\rrr}}  \mik C E_0 \ee^2 (1+\tau_2 )^{\alpha} \tag{\textbf{D1}} ,
\end{equation}
\begin{equation}\label{D2}
\sum_{k\mik 5}  \left( \int_{\tau_1}^{\tau_2} \left( \int_{\si_{\tau'} \cap \{ r \meg 2M -\delta \}}  |\Omega^k T^4 F |^2 d\mi_{g_{\rrr}} \right)^{1/2} d\tau' \right)^2  \mik C E_0 \ee^2 (1+\tau_2 )^{\alpha} \tag{\textbf{D2}} ,
\end{equation}
\begin{equation}\label{E1}
\sum_{k\mik 5 , l\mik 2}  \int_{\tau_1}^{\tau_2} \int_{\si_{\tau'} \cap \cala_{\tau_1}^{\tau_2}}  |\Omega^k T^l Y F|^2 d\mi_{g_{\rrr}}  \mik C E_0 \ee^2  \tag{\textbf{E1}} ,
\end{equation}
\begin{equation}\label{E2}
\sum_{k\mik 5 , l\mik 2}  \int_{\calc_{\tau_1}^{\tau_2}} |\Omega^k T^{l+1} Y F |^2  d\mi_{g_{\calc}}   \mik C E_0 \ee^2  \tag{\textbf{E2}} ,
\end{equation}
for $\delta$ given in \eqref{mcalc}.

\section{A priori energy and pointwise estimates}

\begin{lem}[\textbf{Estimates for degenerate energies}]\label{denp}
Let $\psi$ be a solution of \eqref{nw}, and assume that the bootstrap assumptions \eqref{A1}, \eqref{A2}, \eqref{A3}, \eqref{A4}, \eqref{B1}, \eqref{B2}, \eqref{B3}, \eqref{B4}, \eqref{C1}, \eqref{C2}, \eqref{C3}, \eqref{C4},  \eqref{D1}, \eqref{D2}, \eqref{E1}, \eqref{E2} hold for $\delta > 0$ given in \eqref{mcalc}, for any $\tau_1$, $\tau_2$ with $\tau_1 < \tau_2$, for some $\ee > 0$, for some $\aaa > 0$, and some constant $C$. Then we have that for any $\tau$ the following estimates are true
\begin{equation}\label{denp1}
\sum_{k\mik 5}\int_{\si_{\tau}} J^T_{\mu} [ \Omega^k \psi ] n^{\mu} d\mi_{g_{\si}} \lesssim \dfrac{E_0 \ee^2}{(1+\tau )^{2}} ,
\end{equation}
\begin{equation}\label{denp2}
\sum_{k\mik 5}\int_{\si_{\tau}} J^T_{\mu} [\Omega^k T\psi ] n^{\mu} d\mi_{g_{\si}} \lesssim \dfrac{E_0 \ee^2}{(1+\tau )^{2}} ,
\end{equation}
\begin{equation}\label{denp3}
\sum_{k \mik 5} \int_{\si_{\tau}} J^T_{\mu} [\Omega^k T^2 \psi ] n^{\mu} d\mi_{g_{\si}} \lesssim \dfrac{E_0 \ee^2}{(1+\tau )^{2-\alpha}} ,
\end{equation}
\begin{equation}\label{denp4}
\sum_{k \mik 5} \int_{\si_{\tau}} J^T_{\mu} [\Omega^k T^3 \psi ] n^{\mu} d\mi_{g_{\si}} \lesssim \dfrac{E_0 \ee^2}{( 1+\tau )^{1+\delta}} ,
\end{equation}
\begin{equation}\label{denp5}
\sum_{k\mik 5}\int_{\si_{\tau}} J^T_{\mu} [\Omega^k T^4 \psi ] n^{\mu} d\mi_{g_{\si}} \lesssim E_0 \ee^2 ( 1+\tau )^{\alpha} .
\end{equation}

\end{lem}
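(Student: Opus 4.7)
The plan is to prove the five estimates \eqref{denp1}--\eqref{denp5} together, working from the top of the hierarchy \eqref{denp5} downward. At each level, I lose one $T$--derivative (as dictated by the photon-sphere trapping) but gain faster decay, with the growth budget $(1+\tau)^{\alpha}$ absorbed at the top. The engine throughout is the $r^p$--weighted hierarchy \eqref{rwins}, combined with the degenerate $T$--energy estimate from Proposition \ref{ent}, and converted into decay by dyadic pigeonholing in $\tau$.

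For \eqref{denp5}, I would apply the variant \eqref{3et1h} of the basic $T$--energy estimate to $\Omega^k T^4 \psi$ on $[0,\tau]$. The $\cala$--region contribution of the inhomogeneity is controlled directly by \eqref{D1}, while the contribution from $(\cala_{\tau_1}^{\tau_2})^{c}$ is controlled by \eqref{D2}, and the region near the photon sphere uses the lower bootstraps \eqref{C4} (so that no $T^5 F$ appears). Since the RHS is of size $E_0 \ee^2 (1+\tau)^{\alpha}$, the estimate \eqref{denp5} follows.

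For \eqref{denp4}, \eqref{denp3}, \eqref{denp2} I would use the $r^p$--weighted inequality \eqref{rwins} with $p=2$ on $\Omega^k T^l\psi$, bounding the resulting $r^{p+1}|\Omega^k T^l F|^2$ bulk terms at infinity together with the Morawetz terms (via Propositions \ref{intt1}, \ref{intp1}) against the appropriate bootstrap assumptions: \eqref{C1}--\eqref{C4} for $l=3$, \eqref{B1}--\eqref{B4} for $l=2$, and \eqref{A1}--\eqref{A4} for $l=1,0$. The standard dyadic pigeonhole on intervals $[\tau, 2\tau]$ then extracts $(1+\tau)^{-1}$ from the $p=1$ integrated estimate and $(1+\tau)^{-2}$ from the $p=2$ integrated estimate, at the cost of one additional $T$--derivative (to pay for the loss at the photon sphere via Proposition \ref{ent1}). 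This mechanism is why \eqref{denp4} comes out at $(1+\tau)^{-1-\delta}$, \eqref{denp3} at $(1+\tau)^{-2+\alpha}$, and \eqref{denp1}--\eqref{denp2} at the sharp rate $(1+\tau)^{-2}$; the growth factor $(1+\tau)^{\alpha}$ from \eqref{denp5} is consumed in the pigeonholing one level below.

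The main obstacle is closing the $p=2$ estimates for $\Omega^k T\psi$ and $\Omega^k \psi$ \emph{without} an additional $\alpha$--loss, i.e.\ producing the clean rates in \eqref{denp1} and \eqref{denp2} rather than $(1+\tau)^{-2+\alpha}$. This is exactly the refinement mentioned in the introduction: one needs to exploit that the $T^2 F$ bootstraps \eqref{B1}--\eqref{B4} decay strictly faster than $(1+\tau)^{-2}$ up to $\alpha$--loss, so that after one further application of pigeonholing the induced $r^p$--bulk at the next level \emph{does} close with $p=2$ and no loss. The photon--sphere term in \eqref{rwins} forces one to feed in \eqref{denp3} on the right, which is why the hierarchy must be executed top--down rather than bottom--up.
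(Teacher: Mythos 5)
Your proposal is essentially the paper's intended argument, which the paper itself declines to spell out ("the proof follows the lines of the analogous statement in \cite{rnnonlin1}"). The architecture you describe — working top-down from \eqref{denp5}, using an energy boundedness estimate to close the top rung, then descending via the $r^p$-weighted hierarchy \eqref{rwins} together with the Morawetz estimates of Propositions \ref{intt}, \ref{intp}, \ref{intt1}, \ref{intp1} and a dyadic pigeonhole, paying one $T$-derivative per level for the photon-sphere loss — is exactly what \cite{rnnonlin1} does and what the given propositions are built for; and you correctly single out the $p=2$ closure without $\alpha$-loss (via the extra bootstrap assumptions \eqref{A1}--\eqref{A4}) as the refinement needed to reach the sharp rates in \eqref{denp1}--\eqref{denp2}.

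One small inaccuracy in the top rung: for \eqref{denp5} via the energy boundedness estimate \eqref{3et1h}, the right-hand side involves only $|\Omega^k T^4 F|^2$ split over $\cala$ and its complement — there is no photon-sphere term and no $T^5 F$ to avoid, because \eqref{3et1h} is a pure flux estimate with no Morawetz bulk and hence no trapping degeneracy. The two inhomogeneous contributions are controlled by \eqref{D1} and \eqref{D2} alone, and the invocation of \eqref{C4} "near the photon sphere" at this step is not needed and does not fit the structure of \eqref{3et1h}; \eqref{C4} enters only one level down, when you switch to the Morawetz / $r^p$ estimates for $T^3\psi$, whose bulk term near $\calc$ genuinely costs a $T$-derivative. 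This is a slip in the bookkeeping rather than in the underlying idea.
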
 
for some $\delta > 0$. The proof of the above Lemma follows the lines of the analogous statement in \cite{rnnonlin1}, so it will not be repeated. From this proof we just record here the following three Lemmas that follow from it. The first one deals with the behaviour of the $P$-energies.

\begin{lem}[\textbf{Estimates for $P$-energies}]\label{penp}
Let $\psi$ be a solution of \eqref{nw}, and assume that the bootstrap assumptions \eqref{A1}, \eqref{A2}, \eqref{A3}, \eqref{A4}, \eqref{B1}, \eqref{B2}, \eqref{B3}, \eqref{B4}, \eqref{C1}, \eqref{C2}, \eqref{C3}, \eqref{C4}, \eqref{D1}, \eqref{D2}, \eqref{E1}, \eqref{E2} hold for $\delta > 0$ given in \eqref{mcalc}, for any $\tau_1$, $\tau_2$ with $\tau_1 < \tau_2$, for some $\ee > 0$, for some $\aaa > 0$, and some constant $C$. Then we have that for any $\tau$ the following estimates are true
\begin{equation}\label{penp1}
\sum_{k\mik 5}\int_{\si_{\tau}} J^P_{\mu} [ \Omega^k \psi ] n^{\mu} d\mi_{g_{\si}} \lesssim \dfrac{E_0 \ee^2}{1+\tau  } ,
\end{equation}
\begin{equation}\label{penp2}
\sum_{k\mik 5}\int_{\si_{\tau}} J^P_{\mu} [\Omega^k T\psi ] n^{\mu} d\mi_{g_{\si}} \lesssim \dfrac{E_0 \ee^2}{(1+\tau )^2 } ,
\end{equation}
\begin{equation}\label{penp3}
\sum_{k \mik 5} \int_{\si_{\tau}} J^P_{\mu} [\Omega^k T^2 \psi ] n^{\mu} d\mi_{g_{\si}} \lesssim \dfrac{E_0 \ee^2}{(1+\tau )^{2-\alpha}} ,
\end{equation}
\begin{equation}\label{penp4}
\sum_{k \mik 5} \int_{\si_{\tau}} J^P_{\mu} [\Omega^k T^3 \psi ] n^{\mu} d\mi_{g_{\si}} \lesssim \dfrac{E_0 \ee^2}{(1+\tau )^{1+\delta}} .
\end{equation}

\end{lem}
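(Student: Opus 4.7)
The plan is to combine Proposition \ref{enp} with the $r^p$-weighted hierarchy \eqref{rwins} and a dyadic pigeonhole argument, closely paralleling the derivation of the analogous $T$-energy bounds in Lemma \ref{denp} (whose proof strategy the authors inherit from \cite{rnnonlin1}). The source $F$ arising from $\Omega^k T^l$-commutation of \eqref{nw} has the schematic structure $F = \sqrt{D}\,g^{\alpha\beta}\partial_\alpha\psi\,\partial_\beta\psi + [\Box_g,\Omega^k T^l]\psi$, and the various spacetime integrals of $F$ on the right-hand side of \eqref{en2} will be absorbed by the bootstrap assumptions \eqref{A1}--\eqref{E2}, chosen precisely so that each source contribution is bounded by $E_0\varepsilon^2$ times the required decay factor.

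For $l=0$ the claimed rate $(1+\tau)^{-1}$ cannot come directly from \eqref{en2}; instead one applies \eqref{rwins} at $p=2$ to $\varphi=r\psi$ (after commutation with $\Omega^k$). The $p=2$ bound yields a uniformly bounded $r$-weighted flux on $N_\tau$, modulo source contributions handled by \eqref{A3}. Partitioning $[\tau,2\tau]$ dyadically and applying \eqref{rwins} at $p=1$ on each subinterval, a pigeonhole argument extracts a time $\tau'\in[\tau,2\tau]$ at which the $p=1$ $N$-flux is $\lesssim E_0\varepsilon^2/\tau$; combined with the $T$-flux decay on $S_\tau$ already available from \eqref{denp1} and Proposition \ref{enp} applied on $[\tau',\tau_*]$ for arbitrary $\tau_*>\tau'$, this propagates the $(1+\tau)^{-1}$ rate to the full $P$-energy flux on $\Sigma_\tau$. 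For $l=1,2,3$ the target $P$-energy rates coincide with the $T$-energy rates already established in Lemma \ref{denp}, so one applies \eqref{en2} directly to $\Omega^k T^l\psi$; the dyadic scheme is still needed to pass from averaged integrated decay of $J^P$ (provided jointly by the bulk term in \eqref{en2} near $\mathcal{H}^+$ and by \eqref{rwins} near infinity) to pointwise-in-$\tau$ decay, with the $\alpha$-loss in \eqref{penp3} and \eqref{penp4} tracking the loss of a $T$-derivative at the photon sphere that is built into Proposition \ref{enp}.

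The main obstacle is the bookkeeping of source terms in the dyadic argument: one must verify that every boundary and bulk term on the right-hand side of \eqref{en2} and \eqref{rwins} arising from the nonlinearity and its commutators indeed decays at the required rate under the bootstrap hypotheses, and in particular that terms involving $\Omega^k T^{l+1}F$ (which force us to borrow an extra $T$-derivative to handle trapping) do not destroy the rate — this is the role played by the hierarchy \eqref{B1}--\eqref{D2}. For the $p=2$ estimate used at $l=0$, the source $\int_{\tau_1}^{\tau_2}\int_{N_{\tau'}} r^{p+1}|\Omega^k F|^2$ with $p=2$ is exactly what is assumed in \eqref{A3}, so the estimate closes at the borderline $p=2$ without an $\alpha$-loss; this is the refinement alluded to in Comment 1 following the Main Theorem, and it is what ultimately allows the $(1+\tau)^{-1}$ rate in \eqref{penp1} to be consistent with the rest of the hierarchy.
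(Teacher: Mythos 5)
Your general strategy — dyadic pigeonhole combining the $P$-energy estimate, the $r^p$-weighted hierarchy \eqref{rwins}, and the bootstrap control of source terms — is the right shape, and for the $l=0$ case it essentially matches the scheme imported from \cite{rnnonlin1}. However, for $l=1,2,3$ there is a genuine gap. You write that ``averaged integrated decay of $J^P$'' near the horizon is ``provided by the bulk term in \eqref{en2}.'' That bulk term is $\int_{\tau_1}^{\tau_2}\int_{\si_{\tau'}\cap\cala}J^T_{\mu}[\Omega^k T^l\psi]\,n^{\mu}$, which controls $D(Y\psi)^2$ and is therefore strictly weaker than $J^P$ near $\mathcal{H}^{+}$ (which carries $\sqrt{D}(Y\psi)^2$). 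The integrated $J^P$ flux near the horizon is supplied by the bulk term in \eqref{en3} of Proposition \ref{enn}, and globally by Propositions \ref{intp} and \ref{intp1} (estimates \eqref{ien2} and \eqref{ien21}) — and all of these carry $\int_{\si_{\tau_1}} J^N_{\mu}\,n^{\mu}$ on the right-hand side. Running the dyadic scheme for $J^P$ therefore requires controlling the $J^N$ flux, and that step is missing from your proposal.

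That is precisely what the paper flags as the one substantive point in this lemma: the sentence immediately after its statement says that \eqref{penp2}--\eqref{penp4} inherit the $J^T$ decay rates because the integrated $J^N$ flux can be bounded via \eqref{d32tye}, i.e.\ Lemma \ref{ydenp}, which rests on the $Y$-commuted energy estimate \eqref{d32tye2} and the bootstrap hypotheses \eqref{E1}, \eqref{E2}. Without this input one would at best interpolate $J^P$ between the decaying $J^T$ and the merely bounded $J^N$, which would lose half the target rate for $l\geq 1$. The paper otherwise defers the dyadic machinery to \cite{rnnonlin1}, so in a reconstruction of the argument you must make the $J^N$-boundedness explicit; it is the single ingredient that upgrades the $J^P$ rates to match those of $J^T$, and your proposal does not invoke it.
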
 
for some $\delta > 0$. Note that from the last three estimates we have the same decay for the $J^P$ fluxes as with the $J^T$ fluxes due to the fact that we can use estimate \eqref{d32tye} to bound the integrated $J^N$ flux.

The second Lemma gives us estimates on the various non-degenerate energies.
\begin{lem}[\textbf{Estimates for non-degenerate energies}]\label{nenp}
Let $\psi$ be a solution of \eqref{nw}, and assume that the bootstrap assumptions \eqref{A1}, \eqref{A2}, \eqref{A3}, \eqref{A4}, \eqref{B1}, \eqref{B2}, \eqref{B3}, \eqref{B4}, \eqref{C1}, \eqref{C2}, \eqref{C3}, \eqref{C4}  \eqref{D1}, \eqref{D2}, \eqref{E1}, \eqref{E2} hold for $\delta > 0$ given in \eqref{mcalc}, for any $\tau_1$, $\tau_2$ with $\tau_1 < \tau_2$, for some $\ee > 0$, for some $\aaa > 0$, and some constant $C$. Then we have that for any $\tau$ the following estimates are true
\begin{equation}\label{nenp1}
\sum_{k\mik 5}\int_{\si_{\tau}} J^N_{\mu} [\Omega^k \psi ] n^{\mu} d\mi_{g_{\si}} \lesssim E_0 \ee^2 ,
\end{equation}
\begin{equation}\label{nenp2}
\sum_{k\mik 5}\int_{\si_{\tau}} J^N_{\mu} [\Omega^k T\psi ] n^{\mu} d\mi_{g_{\si}} \lesssim E_0 \ee^2,
\end{equation}
\begin{equation}\label{nenp3}
\sum_{k \mik 5} \int_{\si_{\tau}} J^N_{\mu} [\Omega^k T^2 \psi ] n^{\mu} d\mi_{g_{\si}} \lesssim E_0 \ee^2  ,
\end{equation}
\begin{equation}\label{nenp4}
\sum_{k \mik 5} \int_{\si_{\tau}} J^N_{\mu} [\Omega^k T^3 \psi ] n^{\mu} d\mi_{g_{\si}} \lesssim E_0 \ee^2  .
\end{equation}

\end{lem}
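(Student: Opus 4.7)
The plan is to apply Proposition \ref{enn}, and in particular estimate \eqref{en3}, to the commuted quantities $\Omega^k T^l\psi$ for each $k \leq 5$ and $l \in \{0,1,2,3\}$, and then to dominate every term on the right-hand side by means of the bootstrap assumptions together with the definition of $E_0$. The initial non-degenerate flux $\int_{\Sigma_{0}} J^N_\mu[\Omega^k T^l\psi] n^\mu$ is bounded by $E_0 \ee^2$ thanks to the fact that $E_0$ contains exactly the $Y\psi$ and $Y^2\psi$ summands needed to absorb the transversal derivatives that appear when $N\approx T-Y$ is expanded near $\mathcal{H}^+$.

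For each fixed $l$, the source terms in \eqref{en3} consist of spatial $L^2$-integrals of $\Omega^k T^l F$ and $\Omega^k T^{l+1} F$, integrated against the various volume forms on $S$-, $N$- and $\calc$-regions together with an $L^1_\tau L^2_S$ square term. For $l=0$ the relevant bounds come directly from \eqref{A1}--\eqref{A4}; for $l=1$ one combines \eqref{A2}--\eqref{A4} with \eqref{B1} for the $\calc$ bulk term and \eqref{B2} for the $L^1_\tau L^2_S$ contribution; for $l=2$ one uses \eqref{B1}--\eqref{B4} together with \eqref{C1}, \eqref{C2} for the $\calc$ and $L^1_\tau L^2_S$ contributions respectively; and for $l=3$ one uses \eqref{C1}--\eqref{C4} supplemented by \eqref{D1}--\eqref{D2} for the $T^4F$ contribution, splitting the region of integration through $\cala_{\tau_1}^{\tau_2}$ as in the variant \eqref{3et1h}. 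All the resulting time-integrals and suprema are then immediately seen to be majorised by $E_0 \ee^2$ (after choosing $\alpha > 0$ sufficiently small so the small losses are harmless).

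The main obstacle is the top-order case $l=3$. There the bootstrap bounds \eqref{D1}--\eqref{D2} only provide control of $T^4 F$ with the mild growth $(1+\tau_2)^{\alpha}$, and inside the near-horizon region $\cala$ one cannot afford any further loss of $T$-derivatives. Following the analogous argument in \cite{rnnonlin1}, this is handled by invoking the $Y$-commuted hierarchy \eqref{E1}--\eqref{E2} together with Proposition \ref{d32ty}: these allow us to control the dangerous $Y^2\psi$-type pieces that are implicit in $J^N[\Omega^k T^3 \psi]$ near $\mathcal{H}^+$ by lower-order $T$- and angular-derivative quantities already estimated at the previous step via Lemmas \ref{denp} and \ref{penp}. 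Once the $\cala$-region contribution is handled in this way, the remaining region $\{ r \geq 2M - \delta\}$ is away from the horizon, where $J^N \sim J^T$ and the estimate closes using Propositions \ref{ent}, \ref{intt}. Taking $\alpha > 0$ small enough finally absorbs all factors of $(1+\tau)^\alpha$ into the implicit constant, yielding \eqref{nenp4}; exactly the same scheme, with strictly decaying right-hand sides drawn from the (A)-(C) families, produces \eqref{nenp1}--\eqref{nenp3} with no difficulty.
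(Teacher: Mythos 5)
Your overall strategy is essentially the one the paper leaves implicit (the paper defers the proof of Lemma~\ref{denp} and its corollaries to \cite{rnnonlin1}, and the remark after Lemma~\ref{penp} confirms that estimate~\eqref{d32tye} is the decisive extra input for the $J^N$-flux hierarchy), but the execution of your argument at $l=3$ contains a genuine gap, and the bookkeeping at lower $l$ is shifted.

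On the bookkeeping: in~\eqref{en3} the source terms for a given $l$ involve $\Omega^k T^l F$ in the $L^1_\tau L^2_S$ square, the $S$- and $N$-spacetime integrals, and the supremum, while only the $\calc$-bulk term involves $\Omega^k T^{l+1}F$. For $l=1$ all of these are already covered by~\eqref{A1}--\eqref{A4}, since these contain both $\Omega^k TF$ and $\Omega^k T^2 F$ in the relevant regions; \eqref{B1}--\eqref{B4} first enter at $l=2$, and \eqref{C1}--\eqref{C4} at $l=3$. Your assignments of \eqref{B1}--\eqref{B2} to $l=1$ and \eqref{C1}--\eqref{C2} to $l=2$ are each off by one.

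The more substantive issue is the top-order case. You first claim that \eqref{en3} with $l=3$ can be closed directly using \eqref{C1}--\eqref{C4} and \eqref{D1}--\eqref{D2}, and then separately identify the ``obstacle'' as the $(1+\tau_2)^\alpha$ growth in \eqref{D1}--\eqref{D2}. Neither is accurate. The $\calc$-bulk term for $l=3$ involves $\Omega^k T^4 F$ and is controlled by~\eqref{C4}, not~\eqref{D1}--\eqref{D2}; those latter bounds appear in the $J^T$-flux estimate \eqref{denp5} via~\eqref{3et1h}, not in Proposition~\ref{enn}. The real obstruction to applying Proposition~\ref{enn} at $l=3$ is the $L^1_\tau L^2_S$ term for $\Omega^k T^3 F$: \eqref{C1} gives $\|\Omega^k T^3 F\|^2_{L^2(S_\tau)} \lesssim E_0\ee^2(1+\tau)^{-3/2+\alpha}$, hence $\|\Omega^k T^3 F\|_{L^2(S_\tau)} \lesssim \sqrt{E_0}\,\ee\,(1+\tau)^{-3/4+\alpha/2}$, which is not $\tau$-integrable; no choice of small $\alpha$ repairs this. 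Proposition~\ref{enn} therefore cannot be used at all for $l=3$.

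The correct route, which you gesture at but do not articulate cleanly, is to avoid Proposition~\ref{enn} at top order entirely. Away from the horizon $J^N[\Omega^k T^3\psi] \sim J^T[\Omega^k T^3\psi]$, and~\eqref{denp4} already gives decay; near $\mathcal{H}^+$, $J^N[\Omega^k T^3\psi]$ differs from $J^T[\Omega^k T^3\psi]$ precisely by a multiple of $(\Omega^k T^3 Y\psi)^2$ (a \emph{first}-order transversal derivative of $\psi$, not a ``$Y^2\psi$-type piece'' as you write), and this quantity is uniformly bounded by~\eqref{d32tye} applied with $l=2$, which is where the bootstrap assumptions \eqref{E1}--\eqref{E2} via Proposition~\ref{d32ty} actually enter. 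With these corrections your scheme becomes the intended proof.
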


The third one is about the decay of the $r^p$-weighted energies on the null hypersurfaces $N_{\tau}$.
\begin{lem}[\textbf{Estimates for $r^p$-weighted energies}]\label{rp}
Let $\psi$ be a solution of \eqref{nw}, and assume that the bootstrap assumptions \eqref{A1}, \eqref{A2}, \eqref{A3}, \eqref{A4}, \eqref{B1}, \eqref{B2}, \eqref{B3}, \eqref{B4}, \eqref{C1}, \eqref{C2}, \eqref{C3}, \eqref{C4}  \eqref{D1}, \eqref{D2}, \eqref{E1}, \eqref{E2} hold for $\delta > 0$ given in \eqref{mcalc}, for any $\tau_1$, $\tau_2$ with $\tau_1 < \tau_2$, for some $\ee > 0$, for some $\aaa > 0$, and some constant $C$. Then we have that for any $\tau$ the following estimates are true
\begin{equation}\label{rp2}
\sum_{k\mik 5}\int_{\si_{\tau}} \left( \frac{(\partial_v \Omega^k \ph )^2}{r^2} + \frac{(\partial_v \Omega^k T\ph )^2}{r^2} \right)  d\mi_{N_{\si}} \lesssim \frac{E_0 \ee^2}{(1+\tau )^2} ,
\end{equation}
\begin{equation}\label{rp1}
\sum_{k\mik 5}\int_{\si_{\tau}}\left( \frac{(\partial_v \Omega^k \ph )^2}{r} + \frac{(\partial_v \Omega^k T\ph )^2}{r} \right)  d\mi_{N_{\si}} \lesssim \frac{E_0 \ee^2}{1+\tau } ,
\end{equation}
\begin{equation}\label{rp2tt}
\sum_{k\mik 5}\int_{\si_{\tau}} \frac{(\partial_v \Omega^k T^2\ph )^2}{r^2}  d\mi_{N_{\si}} \lesssim \frac{E_0 \ee^2}{(1+\tau )^{2-\aaa}} ,
\end{equation}
\begin{equation}\label{rp1tt}
\sum_{k\mik 5}\int_{\si_{\tau}} \frac{(\partial_v \Omega^k T^2\ph )^2}{r}  d\mi_{N_{\si}} \lesssim \frac{E_0 \ee^2}{(1+\tau )^{1-\aaa}} ,
\end{equation}
\begin{equation}\label{rp2ttt}
\sum_{k\mik 5}\int_{\si_{\tau}} \frac{(\partial_v \Omega^k T^3\ph )^2}{r^2}  d\mi_{N_{\si}} \lesssim \frac{E_0 \ee^2}{(1+\tau )^{1-\aaa}} ,
\end{equation}
\begin{equation}\label{rp1ttt}
\sum_{k\mik 5}\left( \int_{\si_{\tau}} \frac{(\partial_v \Omega^k T^3\ph )^2}{r}  d\mi_{N_{\si}} + \int_{\si_{\tau}} \frac{(\partial_v \Omega^k T^3\ph )^2}{r^{\alpha}}  d\mi_{N_{\si}} \right) \lesssim E_0 \ee^2 .
\end{equation}

\end{lem}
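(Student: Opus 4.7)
The plan is to derive each of the six estimates by applying the $r^p$-weighted hierarchy \eqref{rwins} at $p=2$ and $p=1$ (and, for \eqref{rp1ttt}, at $p=\alpha$), combining it with the flux decay of Lemmas \ref{denp}, \ref{penp}, \ref{nenp} and with the bootstrap control on the nonlinearity $F = \sqrt{D} \cdot g^{\alpha\beta}\partial_\alpha \psi \partial_\beta \psi$ supplied by \eqref{A1}--\eqref{E2}. The overall structure mirrors \cite{rnnonlin1}; the distinctive feature is the sharp closure of the $p=2$ estimate at the $T^0$ and $T^1$ levels without any $\alpha$-loss.

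First I would handle \eqref{rp2} for $\Omega^k \ph$ and $\Omega^k T\ph$. Apply \eqref{rwins} with $p=2$: the initial data term is $O(E_0 \ee^2)$; the $J^T$-boundary term is $O(E_0 \ee^2 (1+\tau_1)^{-2})$ by Lemma \ref{denp}; the $S_{\tau'}$--, $\calc$-- and $\si\cap\calc$--inhomogeneities are absorbed by \eqref{A1} and \eqref{A3}--\eqref{A4}; the $N_{\tau'}$--inhomogeneity $\int r^3 |\Omega^k F|^2$ is controlled by \eqref{A2} together with the extra $r^{-\alpha}$ of decay present in the null-condition-type nonlinearity (cf.\ the discussion after \eqref{eq:conf_inf}). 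The $r^1$-weighted estimate \eqref{rp1} then follows by a standard Dafermos--Rodnianski pigeonhole: the $p=2$ bulk term yields
$$ \int_{\tau_1}^{\tau_2}\int_{N_{\tau'}} \frac{(\partial_v \Omega^k \ph)^2}{r} \, d\mi_{g_{\nnn}} \lesssim E_0 \ee^2 (1+\tau_1)^{-1}, $$
so that on a dyadic sequence $\{\tau_n\}$ the $r^1$-flux is $O((1+\tau_n)^{-1})$; propagating this bound via the $p=1$ monotonicity of \eqref{rwins} (with inhomogeneities again absorbed by \eqref{A1}--\eqref{A4}) yields the rate at all $\tau$.

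The remaining four estimates \eqref{rp2tt}--\eqref{rp1ttt} for $T^2$ and $T^3$ follow by the same two-step procedure, now fed by the weaker bootstrap assumptions \eqref{B1}--\eqref{C4} and the degraded $J^T$-decay rates $(1+\tau)^{-2+\alpha}$ and $(1+\tau)^{-1-\delta}$ from Lemma \ref{denp}. For the borderline estimate \eqref{rp1ttt} I would run the hierarchy at the critical weight $p = \alpha$, which together with \eqref{C1}--\eqref{C4} delivers only boundedness of the resulting energy by $E_0 \ee^2$, in accordance with the top-order growth permitted in Comment 2 after the main theorem.

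The step I expect to be the main obstacle is precisely the $\alpha$-free closure of the first layer. Naively, the forcing weight required by \eqref{rwins} at $p=2$ is $r^3$, while the bootstrap \eqref{A2} supplies only $r^{3-\alpha}$; closing the estimate therefore depends on extracting an additional $r^{-\alpha}$ from the structure of $F$ itself (the explicit $\sqrt{D}$ prefactor giving a gain at infinity, together with the null-form cancellation of the worst $(\partial_u\psi)^2$ term). Without this cancellation the rates in \eqref{rp2} and \eqref{rp1} would degrade by an $\alpha$-power, which would in turn spoil the sharp pointwise decay asserted in the main theorem.
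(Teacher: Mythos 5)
Your overall plan---run the $r^p$-hierarchy \eqref{rwins} together with the flux-decay Lemmas and the bootstrap assumptions, then pigeonhole down---is the right scaffold and matches what the paper implicitly appeals to (the proof is deferred to the argument of \cite{rnnonlin1}). You have also correctly isolated the genuinely delicate point: the apparent mismatch between the $r^{p+1}|F|^2 = r^3|F|^2$ weight that \eqref{rwins} produces at $p=2$ and the $r^{3-\alpha}$ weight supplied by \eqref{A2}.

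However, your proposed resolution of that mismatch does not work. You claim the extra $r^{-\alpha}$ comes from ``the explicit $\sqrt{D}$ prefactor giving a gain at infinity, together with the null-form cancellation.'' But $D = (1 - M/r)^2 \to 1$ as $r \to \infty$, so $\sqrt{D}$ is bounded above and below near infinity and provides \emph{no} gain there; its degeneration helps only near $\mathcal{H}^+$. Moreover, the null-form cancellation has already been spent in deriving \eqref{A2}: the $r^{3-\alpha}$ weight is sharp for what one can extract from the pointwise decay of $\psi$, $\partial_u\varphi$, $\partial_v\varphi$ (an inspection of the proof of \eqref{A2'} shows a term like $\int_v \sup r^{1-\alpha}\psi^2\,dv$, whose $v$-integrability fails logarithmically if $\alpha=0$). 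So there is no hidden $r^{-\alpha}$ left in $F$ to harvest, and simply inserting \eqref{A2} into the $r^{p+1}|F|^2$ slot of \eqref{rwins} at $p=2$ does \emph{not} close.

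The actual mechanism behind the ``$\alpha$-free closure'' advertised in Comment~1 after the main theorem is different: one does \emph{not} estimate the forcing term by Cauchy--Schwarz into $\epsilon\cdot\text{bulk} + r^{p+1}|F|^2$. Instead, at $p=2$ one writes the forcing as
$\int_{\tau_1}^{\tau_2}\int_{N_{\tau'}} |F|\cdot r \cdot (\partial_v\ph)\,d\mi$, applies Cauchy--Schwarz in $v,\omega$ only, and then treats the $\tau'$-integral in an $L^1_{\tau'}$ (Gr\"onwall) fashion, yielding
\begin{equation*}
\sup_{\tau'} \left(\int_{N_{\tau'}} \frac{(\partial_v\Omega^k\ph)^2}{r^2}\cdot r^2 \, d\mi_{g_N}\right)^{1/2}\cdot \int_{\tau_1}^{\tau_2}\left(\int_{N_{\tau'}} r^2 |\Omega^k F|^2\,d\mi_{g_N}\right)^{1/2} d\tau',
\end{equation*}
and the second factor is precisely the quantity bounded (after squaring) by \eqref{A3}. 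The first factor is then absorbed into the boundary flux via Young's inequality. This is exactly why \eqref{A3}---the $L^1_{\tau}L^2_{N}$ estimate with $r^2$ weight, distinct in structure from the $L^2_{\tau}L^2_{N}$ estimate \eqref{A2}---appears as the ``extra bootstrap assumption'' alongside \eqref{A2}. Without routing the $p=2$ forcing through \eqref{A3}, the rates in \eqref{rp2} and \eqref{rp1}, and downstream the pointwise decay, would indeed suffer an $\alpha$-loss exactly as you feared. As a smaller remark, for \eqref{rp1ttt} the estimate $\int (\partial_v\Omega^k T^3\ph)^2 / r^{\alpha}\,d\mi$ corresponds to the hierarchy weight $p = 2-\alpha$ (not $p=\alpha$), and boundedness at that weight is all that is claimed.
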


Finally we prove he boundedness of the degenerate energies for $Y\psi$.
\begin{lem}[\textbf{Estimates for degenerate energies For $T^l Y\psi$, $l\mik 2$}]\label{ydenp}
Let $\psi$ be a solution of \eqref{nw}, and assume that the bootstrap assumptions \eqref{A1}, \eqref{A2}, \eqref{A3}, \eqref{A4}, \eqref{B1}, \eqref{B2}, \eqref{B3}, \eqref{B4}, \eqref{C1}, \eqref{C2}, \eqref{C3}, \eqref{C4}, \eqref{D1}, \eqref{D2}, \eqref{E1}, \eqref{E2} hold for $\delta > 0$ given in \eqref{mcalc}, for any $\tau_1$, $\tau_2$ with $\tau_1 < \tau_2$, for some $\ee > 0$, for some $\aaa > 0$, and some constant $C$. Then we have that for any $\tau$ the following estimate is true
\begin{equation}\label{d32tye}
\begin{split}
\sum_{k\mik 5 , l \mik 2}\int_{\si_{\tau}} \Big[ & (\Omega^k T^{l+1} Y\psi )^2 +  D(\Omega^k T^l Y^2 \psi )^2 + | \slashed{\nabla} \Omega^k T^l Y \psi |^2 \Big] d\mi_{g_{\si}} \\ & + \sum_{k\mik 5 , l \mik 2}\int_{\cala_{\tau_1}^{\tau_2}} \left[ (\Omega^k T^{l+1} Y\psi )^2 +  D^{3/2} (\Omega^k T^l Y^2 \psi )^2 + | \slashed{\nabla} \Omega^k T^l Y \psi |^2 \right] d\mi_{g_{\cala}}\lesssim E_0 \ee^2 ,
\end{split}
\end{equation}
for any $k\mik 5$ and any $l \mik 2$.
\end{lem}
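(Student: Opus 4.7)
The plan is to apply the key estimate \eqref{d32tye2} after commuting the wave equation \eqref{nw} with $\Omega^k$ (for $k \leq 5$) and $T^l$ (for $l \leq 2$), and then to bound all terms on the resulting right-hand side using the previously established a priori lemmas together with the bootstrap assumptions. Since $\Omega$ and $T$ commute with both $\Box_g$ and with $Y$, the commuted equation $\Box_g(\Omega^k T^l \psi) = \Omega^k T^l F$ retains the same structure, so \eqref{d32tye2} applied to $\Omega^k T^l \psi$ yields exactly the desired left-hand side of Lemma \ref{ydenp}, up to a spherical-harmonic decomposition of $Y^2\psi$ which one handles by writing $D(\Omega^k T^l Y^2\psi)^2 \lesssim D(\Omega^k T^l Y^2\psi_0)^2 + \sqrt{D}(\Omega^k T^l Y^2\psi_{\geq 1})^2$ near the event horizon (since $D \leq \sqrt{D}$ there); the analogous trick with $D^{3/2} \leq D$ converts the $\mathcal{A}$-bulk contribution of \eqref{d32tye2} into the uniformly weighted $\mathcal{A}$-bulk term appearing in \eqref{d32tye}.

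Next I would bound the right-hand side of \eqref{d32tye2} in pieces, setting $\tau_1=0$ (the general $\tau_1 \geq 0$ case of the $\mathcal{A}$-bulk estimate then follows from $\mathcal{A}_{\tau_1}^{\tau_2} \subset \mathcal{A}_0^{\tau_2}$). The initial data contribution on $\Sigma_0$, involving $(\Omega^k T^{l+1}Y\psi)^2$, $D(\Omega^k T^l Y^2\psi_0)^2$, $\sqrt{D}(\Omega^k T^l Y^2\psi_{\geq 1})^2$ and $|\slashed\nabla \Omega^k T^l Y\psi|^2$, is bounded by $\lesssim E_0 \ee^2$ directly from the definition of $E_0$ (which contains the relevant second $Y$-derivative terms on $S_0$ and the relevant $YY$ information on $N_0$). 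The flux terms $J^N[\Omega^k T^l \psi](\Sigma_0)$ and $J^T[\Omega^k T^{l+1}\psi](\Sigma_0)$ are bounded by $\lesssim E_0\ee^2$ using Lemmas \ref{nenp} and \ref{denp}.

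The nonlinear inhomogeneities split into two groups. First, the bulk terms of the form $|\Omega^k T^m F|^2$ with $m \in \{l, l+1\} \subseteq \{0,1,2,3\}$, integrated over $S_{\tau'}$, over $N_{\tau'}$ with weight $r^{1+\eta}$, over $\mathcal{C}_{\tau_1}^{\tau_2}$, or in $L^\infty$ on $\Sigma_{\tau'} \cap \mathcal{C}$: these are bounded by the bootstrap assumptions \eqref{A1}--\eqref{C4}, whose decay rates $(1+\tau)^{-p}$ for $p > 1$ (in the worst case $p = 3/2 - \alpha$) are integrable in $\tau'$ and yield contributions bounded by $E_0\ee^2$ after taking $\alpha$ small. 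Second, the $Y$-commuted terms $|\Omega^k T^l YF|^2$ over $\mathcal{A}_{\tau_1}^{\tau_2}$ and $|\Omega^k T^{l+1}YF|^2$ over $\mathcal{C}_{\tau_1}^{\tau_2}$ are controlled directly and with no further integration by the bootstrap assumptions \eqref{E1} and \eqref{E2}. Summing over $k\leq 5$ and $l \leq 2$ then gives the claimed bound $\lesssim E_0 \ee^2$.

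The main obstacle I anticipate is the very first term $|\Omega^k T^{l+1} Y F|^2$ appearing in the $\mathcal{C}$-integral, which involves up to three $T$-derivatives and one $Y$-derivative of the nonlinearity; this is precisely at the top level of regularity that our hierarchy permits, and the fact that the bootstrap assumption \eqref{E2} carries no decay in $\tau_1$ means we have no slack to lose. The reason this ultimately succeeds is the $\sqrt{D}$ weight built into the nonlinearity $F = \sqrt{D}\, g^{\alpha\beta}\partial_\alpha\psi\,\partial_\beta\psi$ of \eqref{nw}: when $Y$ hits this weight it produces a bounded factor $D^{-1/2}Y(\sqrt{D}) \sim 1$ rather than forcing a genuine $Y^2 \psi$ at the event horizon, and the remaining distribution of derivatives in $\Omega^k T^{l+1} Y (g^{\alpha\beta} \partial_\alpha\psi\, \partial_\beta \psi)$ can be absorbed by a Cauchy--Schwarz split into a factor one controls in $L^\infty$ and a factor one controls by the degenerate energies established in the preceding lemmas.
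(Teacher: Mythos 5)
Your proposal matches the paper's own proof: both apply the $Y$-commuted energy estimate \eqref{d32tye2} (angularly and $T$-commuted as needed) and close the right-hand side by invoking the bootstrap assumptions, with \eqref{E1} and \eqref{E2} handling the $YF$-terms and the remaining $F$-terms handled as in the earlier a priori energy lemmas. The paper states this more tersely, but the decomposition of the work you describe — initial data via $E_0$, linear fluxes via Lemmas~\ref{denp}/\ref{nenp}, nonlinear sources via \eqref{A1}--\eqref{C4} and \eqref{E1}--\eqref{E2} — is exactly the intended argument, and your weight-comparison remarks ($D \mik \sqrt{D}$, $D^{3/2} \mik D$ near $\mathcal{H}^+$) correctly reconcile the split-mode weights of \eqref{d32tye2} with the uniform weights in \eqref{d32tye}.
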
 
\begin{proof}
This is an immediate consequence of the estimates provided by estimate \ref{d32tye2} combined with the bootstrap assumptions \eqref{E1} and \eqref{E2}.
\end{proof}

\begin{thm}[\textbf{Pointwise decay estimates away from the horizon}]\label{dpsigaway}
Let $\psi$ be a solution of \eqref{nw}, and assume that the bootstrap assumptions \eqref{A1}, \eqref{A2}, \eqref{A3}, \eqref{A4}, \eqref{B1}, \eqref{B2}, \eqref{B3}, \eqref{B4}, \eqref{C1}, \eqref{C2}, \eqref{C3}, \eqref{C4}, \eqref{D1}, \eqref{D2}, \eqref{E1}, \eqref{E2} hold for $\delta > 0$ given in \eqref{mcalc}, for any $\tau_1$, $\tau_2$ with $\tau_1 < \tau_2$, for some $\ee > 0$, for some $\aaa > 0$, and some constant $C$. Then we have that for any $\tau$ and any $r$ such that $ r \meg r_0 > M$ for any fixed $r_0 > M$ the following estimates are true with a constant that depends on $r_0$ which goes to infinity as $r_0 \rightarrow M$
\begin{equation}\label{dg1away} 
| r^{1/2} \Omega^k \psi | \lesssim_{r_0} \dfrac{ \sqrt{E_0} \ee}{1+ \tau } \mbox{ for $k \in \{0,1,2,3\}$} ,
\end{equation}
\begin{equation}\label{dg2away} 
\int_{\mathbb{S}^2} r ( \Omega^l \psi )^2 (\tau , r , \omega ) d\omega \lesssim_{r_0} \dfrac{E_0 \ee^2}{ (1+\tau)^2 } \mbox{ for $l \in \{4,5\}$},
\end{equation}
\begin{equation}\label{dg3away} 
| r \Omega^k \psi | \lesssim_{r_0} \dfrac{ \sqrt{E_0} \ee}{(1+ \tau )^{1/2}} \mbox{ for $k \in \{0,1,2,3\}$} ,
\end{equation}
\begin{equation}\label{dg4away} 
\int_{\mathbb{S}^2} r^2 ( \Omega^l \psi )^2 (\tau , r , \omega ) d\omega \lesssim_{r_0} \dfrac{E_0 \ee^2}{1+\tau } \mbox{ for $l \in \{4,5\}$},
\end{equation}
\begin{equation}\label{dgt1away} 
| r^{1/2} \Omega^k T\psi | + | r^{1/2} \Omega^k Y\psi | \lesssim_{r_0} \dfrac{ \sqrt{E_0} \ee}{1+ \tau } \mbox{ for $k \in \{0,1,2,3\}$} ,
\end{equation}
\begin{equation}\label{dgt2away} 
\int_{\mathbb{S}^2} r ( \Omega^l T\psi )^2 (\tau , r , \omega ) d\omega + \int_{\mathbb{S}^2} r ( \Omega^l Y\psi )^2 (\tau , r , \omega ) d\omega \lesssim_{r_0} \dfrac{E_0 \ee^2}{ (1+\tau)^2 } \mbox{ for $l \in \{4,5\}$},
\end{equation}
\begin{equation}\label{dgt3away} 
| r \Omega^k T\psi | + | r \Omega^k Y\psi |  \lesssim_{r_0} \dfrac{ \sqrt{E_0} \ee}{(1+ \tau )^{1/2}} \mbox{ for $k \in \{0,1,2,3\}$} ,
\end{equation}
\begin{equation}\label{dgt4away} 
\int_{\mathbb{S}^2} r^2 ( \Omega^l T\psi )^2 (\tau , r , \omega ) d\omega + \int_{\mathbb{S}^2} r^2 ( \Omega^l Y\psi )^2 (\tau , r , \omega ) d\omega \lesssim_{r_0} \dfrac{E_0 \ee^2}{1+\tau } \mbox{ for $l \in \{4,5\}$},
\end{equation}
\begin{equation}\label{dgtt1away} 
| r^{1/2} \Omega^k T^2\psi | + | r^{1/2} \Omega^k TY\psi | \lesssim_{r_0} \dfrac{ \sqrt{E_0} \ee}{(1+ \tau )^{1-\aaa/2} } \mbox{ for $k \in \{0,1,2,3\}$} ,
\end{equation}
\begin{equation}\label{dgtt2away} 
\int_{\mathbb{S}^2} r ( \Omega^l T^2 \psi )^2 (\tau , r , \omega ) d\omega + \int_{\mathbb{S}^2} r ( \Omega^l TY\psi )^2 (\tau , r , \omega ) d\omega \lesssim_{r_0} \dfrac{E_0 \ee^2}{ (1+\tau)^{2-\aaa} } \mbox{ for $l \in \{4,5\}$},
\end{equation}
\begin{equation}\label{dgtt3away} 
| r^{1-\aaa} \Omega^k T^2\psi | + | r^{1-\aaa} \Omega^k TY\psi |  \lesssim_{r_0} \dfrac{ \sqrt{E_0} \ee}{(1+ \tau )^{1/2 -\aaa/4}} \mbox{ for $k \in \{0,1,2,3\}$} ,
\end{equation}
\begin{equation}\label{dgtt4away} 
\int_{\mathbb{S}^2} r^{2-2\aaa} ( \Omega^l T^2\psi )^2 (\tau , r , \omega ) d\omega + \int_{\mathbb{S}^2} r^{2-2\aaa} ( \Omega^l TY\psi )^2 (\tau , r , \omega ) d\omega \lesssim_{r_0} \dfrac{E_0 \ee^2}{( 1+\tau )^{1-\aaa/2} } \mbox{ for $l \in \{4,5\}$},
\end{equation}
\begin{equation}\label{dgttt1away} 
| r^{1/2} \Omega^k T^3\psi | + | r^{1/2} \Omega^k T^2 Y\psi | \lesssim_{r_0} \dfrac{ \sqrt{E_0} \ee}{(1+ \tau )^{1/2+\beta/2} } \mbox{ for $k \in \{0,1,2,3\}$} ,
\end{equation}
\begin{equation}\label{dgttt2away} 
\int_{\mathbb{S}^2} r ( \Omega^l T^3 \psi )^2 (\tau , r , \omega ) d\omega + \int_{\mathbb{S}^2} r ( \Omega^l T^2 Y\psi )^2 (\tau , r , \omega ) d\omega \lesssim_{r_0} \dfrac{E_0 \ee^2}{ (1+\tau)^{1+\beta} } \mbox{ for $l \in \{4,5\}$},
\end{equation}
\begin{equation}\label{dgttt3away} 
| r^{1-\aaa} \Omega^k T^3 \psi | + | r^{1-\aaa} \Omega^k T^2 Y\psi |  \lesssim_{r_0} \dfrac{ \sqrt{E_0} \ee}{(1+ \tau )^{1/4 +\beta/4}} \mbox{ for $k \in \{0,1,2,3\}$} ,
\end{equation}
\begin{equation}\label{dgttt4away} 
\int_{\mathbb{S}^2} r^{2-2\aaa} ( \Omega^l T^3 \psi )^2 (\tau , r , \omega ) d\omega + \int_{\mathbb{S}^2} r^{2-2\aaa} ( \Omega^l T^2 Y\psi )^2 (\tau , r , \omega ) d\omega \lesssim_{r_0} \dfrac{E_0 \ee^2}{( 1+\tau )^{1/2+\beta/2} } \mbox{ for $l \in \{4,5\}$}.
\end{equation}

\end{thm}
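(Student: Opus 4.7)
The proof proceeds by converting the $L^2$-based energy and $r^p$-weighted estimates of Lemmas \ref{denp}, \ref{penp}, \ref{nenp}, \ref{rp}, and \ref{ydenp} into pointwise bounds in a two-step procedure. First, I would derive the spherical-$L^2$ estimates \eqref{dg2away}, \eqref{dg4away}, \eqref{dgt2away}, \eqref{dgt4away} and their analogues at each point $(\tau, r)$ by integrating along the outgoing null direction on $N_\tau$ starting from its intersection with $S_\tau$. Second, I would apply Sobolev embedding on $\mathbb{S}^2$ to upgrade to the $L^\infty$ estimates \eqref{dg1away}, \eqref{dg3away} and their analogues; this costs two angular derivatives, which explains why the pointwise estimates are stated for $k \leq 3$ while the $L^2_\omega$ estimates extend to $l \leq 5$.

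For the first step, set $\varphi = r\psi$ and use the fundamental theorem of calculus along $\partial_v$ on $N_\tau$,
$$\int_{\mathbb{S}^2} (\Omega^l \varphi)^2(\tau, v, \omega) \, d\omega = \int_{\mathbb{S}^2} (\Omega^l \varphi)^2(\tau, v_0, \omega) \, d\omega + 2 \int_{v_0}^v \int_{\mathbb{S}^2} \Omega^l \varphi \cdot \partial_v \Omega^l \varphi \, d\omega \, dv',$$
where $v_0$ is the $v$-coordinate of the intersection $N_\tau \cap S_\tau$. The boundary term at $v_0$ is controlled by a trace inequality on $S_\tau$ via the $T$- or $P$-flux of $\Omega^l \psi$ from Lemmas \ref{denp} and \ref{penp}. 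The bulk term is handled by Cauchy--Schwarz against the weight $r^{p-2}$, combined with the $r^p$-weighted estimates of Lemma \ref{rp}: the choice $p=2$ yields the fast $\tau$-decay estimates of type \eqref{dg2away}, while the choice $p=1$ together with a Hardy-type inequality yields the slower-$\tau$ but better-$r$ estimates of type \eqref{dg4away}. Dividing by $r^2$ translates the bounds from $\varphi$ to $\psi$.

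The $Y$ derivative estimates \eqref{dgt1away}--\eqref{dgt4away} and their $T^l$-commuted variants are produced by the same template applied to $Y\psi$, drawing on Lemma \ref{ydenp} in place of the $P$- and $N$-flux bounds; away from the horizon the weight $D$ is uniformly bounded below, so $Y$ may be rewritten as a combination of $T$ and $\partial_v$ modulo bounded factors, allowing the required $r^p$ hierarchy for $r \cdot Y\psi$ to be derived from the hierarchy for $\varphi$ itself. Commuting with additional factors of $T$ does not change the structure of the argument; it only changes the specific input from Lemmas \ref{rp} and \ref{ydenp}, and the hierarchy of $\tau$-decay rates across \eqref{dg1away}--\eqref{dgttt4away} simply mirrors the hierarchy already recorded there.

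The main technical obstacle I anticipate is the bookkeeping of the competing $r$-weights and $\tau$-decay rates in the higher-$T$ estimates \eqref{dgtt3away} and \eqref{dgttt3away}, where an $r^{1-\alpha}$ weight replaces the natural $r$ weight and must be matched against the $\alpha$-losses already present in Lemmas \ref{rp} and \ref{penp} for $T^2\psi$ and $T^3\psi$. This requires interpolating between the available $r^p$-weighted estimates at two different values of $p$, with the interpolation exponent tuned so that the final powers of $r$ and $(1+\tau)$ come out precisely as claimed and so that the $\alpha$-loss is absorbed by the $r$-weight rather than passed to the $\tau$-decay. Once this interpolation is arranged, the remainder is a routine accounting exercise combining the stated energy hierarchy with Sobolev embedding on $\mathbb{S}^2$.
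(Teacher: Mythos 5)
Your overall template is the same as the paper's: a Hardy/trace inequality gives the spherical $L^2$ bounds in terms of $T$-fluxes on $\Sigma_\tau$ plus a Cauchy--Schwarz term against the $r^p$-weighted $\partial_v\ph$ fluxes, and Sobolev on $\mathbb{S}^2$ upgrades to pointwise bounds at the cost of two angular derivatives (hence $k\leq 3$ for pointwise versus $l\leq 5$ for $L^2_\omega$). Your discussion of the two regimes (higher $r$-weight at the price of slower $\tau$-decay) and of the role of the $r^{-4\aaa}$-weighted flux for the $r^{1-\aaa}$ estimates is consistent with the paper's basic estimates \eqref{basic1}--\eqref{basic3}.

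The gap is in your treatment of the $Y\psi$ pointwise decay. To obtain, e.g., $|r^{1/2}\Omega^k Y\psi|\lesssim\sqrt{E_0}\ee(1+\tau)^{-1}$ in \eqref{dgt1away}, the template requires a flux quantity comparable to $J^T_\mu[Y\psi]n^\mu\sim (TY\psi)^2+D(Y^2\psi)^2+|\slashed{\nabla}Y\psi|^2$ that \emph{decays} like $(1+\tau)^{-2}$. You propose to draw on Lemma \ref{ydenp}, but estimate \eqref{d32tye} is only a \emph{boundedness} statement for exactly that quantity; feeding it into the trace/Cauchy--Schwarz scheme yields $|r^{1/2}Y\psi|\lesssim\sqrt{E_0}\ee$ with no $\tau$-gain. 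Your alternative suggestion --- that away from the horizon $Y$ can be "rewritten as a combination of $T$ and $\partial_v$" --- is also not quite right as written: in the ingoing Eddington--Finkelstein coordinates used here $T=\partial_v$, and the null decomposition gives $Y$ as a multiple of the \emph{ingoing} derivative $\partial_u$, whose control on $\Sigma_\tau$ is precisely what is at issue. The paper instead invokes the elliptic estimate of Appendix \ref{appendix}, which uses the wave equation to bound $\int_{\Sigma_\tau\cap\{r\geq r_0\}}(\partial_a\partial_b\psi)^2$ by $J^T[\psi]+J^T[T\psi]+\int|F|^2$; since $J^T[\psi]$ and $J^T[T\psi]$ decay like $(1+\tau)^{-2}$ by \eqref{denp1}, \eqref{denp2} and $\int|F|^2$ decays even faster under the bootstrap assumptions, this converts time-derivative decay into the needed decay of the $Y$-commuted flux away from the horizon. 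Without this step (or a substitute that genuinely transfers $T$-decay to $Y$-decay via the equation), the $Y\psi$, $TY\psi$, $T^2Y\psi$ entries in \eqref{dgt1away}--\eqref{dgttt4away} do not follow.

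A secondary, more cosmetic point: the paper integrates in $r$ starting from the fixed radius $r_0$ on $\Sigma_\tau$ (see \eqref{basic2}, \eqref{basic3}), not from the junction $S_\tau\cap N_\tau$; phrasing the argument entirely on $N_\tau$ as you do would leave the strip $r_0\leq r\leq R_0$ of $S_\tau$ uncovered, though this is easily patched by a trace inequality there since $D$ is uniformly bounded below.
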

\begin{proof}
First we state some basic inequalities that give us the proofs of the above estimates. For $r^{1/2} (\Omega^l T^m \psi)$ we have that
\begin{equation}\label{basic1}
\int_{\mathbb{S}^2} (\Omega^l T^m \psi )^2 (\tau , r , \omega) d\omega \lesssim_{r_0} \frac{1}{r} \int_{\Sigma_{\tau}} J^T_{\mu} [\Omega^l T^m \psi ] n^{\mu} d\mi_{g_{\Sigma}} ,
\end{equation}
for $l \in \{0,1,2,3,4,5\}$ and $m \in \{0,1,2,3 \}$.

For $r (\Omega^l T^m \psi)$ we have that
$$ \int_{\mathbb{S}^2} r^2 (\Omega^l T^m \psi )^2 (\tau , r , \omega ) d\omega = \int_{\mathbb{S}^2} r_0^2 (\Omega^l T^m \psi )^2 (\tau , r_0 , \omega ) d\omega + 2 \int_{\mathbb{S}^2} \int_{r_0}^r \frac{\psi}{\rho} \partial_{\rho} (\rho \psi) \rho^2 d\rho d\omega \Rightarrow $$
\begin{equation}\label{basic2}
\int_{\mathbb{S}^2} r^2 (\Omega^l T^m \psi )^2 (\tau , r , \omega ) d\omega \lesssim_{r_0} \int_{\Sigma_{\tau}} J^T_{\mu} [\Omega^l T^m \psi ] n^{\mu} d\mi_{g_{\Sigma}} + 
\end{equation}
$$ + \left( \int_{\Sigma_{\tau}} J^T_{\mu} [\Omega^l T^m \psi ] n^{\mu} d\mi_{g_{\Sigma}} \right)^{1/2} \cdot \left( \int_{\Sigma_{\tau}} J^T_{\mu} [\Omega^l T^m \psi ] n^{\mu} d\mi_{g_{\Sigma}} + \int_{N_{\tau}} ( \partial_v \Omega^l T^m \ph )^2 d\mi_{g_N} \right)^{1/2} , $$
for $l \in \{0,1,2,3,4,5\}$ and $m \in \{0,1,2,3 \}$.

For $r^{1-\alpha} (\Omega^l T^m \psi)$ we have that
\begin{equation}\label{basic3}
\int_{\mathbb{S}^2} r^{2-2\alpha} (\Omega^l T^m \psi )^2 (\tau , r , \omega ) d\omega \lesssim_{r_0} \int_{\mathbb{S}^2} r_0^{2-2\alpha} (\Omega^l T^m \psi )^2 (\tau , r_0 , \omega ) d\omega + 
\end{equation}
$$ + \left( \int_{\Sigma_{\tau}} J^T_{\mu} [\Omega^l T^m \psi ] n^{\mu} d\mi_{g_{\Sigma}} \right)^{1/2} \cdot \left( \int_{N_{\tau}} \frac{( \partial_v \Omega^l T^m \ph )^2}{r^{4\alpha}} d\mi_{g_N} \right)^{1/2} , $$
for $l \in \{0,1,2,3,4,5\}$ and $m \in \{0,1,2,3 \}$.
Estimates \eqref{dg1away} and \eqref{dg2away} follow from estimate \eqref{basic1} and estimate \eqref{denp1} of Lemma \ref{denp}, noting that for \eqref{dg1away} we additionally use Sobolev's inequality.

Estimates \eqref{dg3away} and \eqref{dg4away} follow from estimate \eqref{basic2}, estimate \eqref{denp1} of Lemma \ref{denp} and the boundedness of $\int_{N_{\tau}} (\partial_v \Omega^l \ph )^2 d\mi_{g_N}$, noting that for \eqref{dg3away} we additionally use Sobolev's inequality.

Estimates \eqref{dgt1away} and \eqref{dgt2away} follow from estimate \eqref{basic1} and estimate \eqref{denp2} of Lemma \ref{denp}, noting that for \eqref{dgt1away} we additionally use Sobolev's inequality, and that for the term involving $Y\psi$ we use the elliptic estimates of the Appendix \ref{appendix}.

Estimates \eqref{dgt3away} and \eqref{dgt4away} follow from estimate \eqref{basic2}, estimate \eqref{denp2} of Lemma \ref{denp} and the boundedness of $\int_{N_{\tau}} (\partial_v \Omega^l T\psi )^2 d\mi_{g_N}$, noting that for \eqref{dgt3away} we additionally use Sobolev's inequality, and that for the term involving $Y\psi$ we use the elliptic estimates of the Appendix \ref{appendix}.

Estimates \eqref{dgtt1away} and \eqref{dgtt2away} follow from estimate \eqref{basic1} and estimate \eqref{denp3} of Lemma \ref{denp}, noting that for \eqref{dgtt1away} we additionally use Sobolev's inequality, and that for the term involving $Y\psi$ we use the elliptic estimates of the Appendix \ref{appendix}.

Estimates \eqref{dgtt3away} and \eqref{dgtt4away} follow from estimate \eqref{basic3}, estimate \eqref{denp3} of Lemma \ref{denp} and the boundedness of $\int_{N_{\tau}} \frac{(\partial_v \Omega^l T^2 \psi )^2}{r^{4\alpha}} d\mi_{g_N}$, noting that for \eqref{dgtt3away} we additionally use Sobolev's inequality, and that for the term involving $Y\psi$ we use the elliptic estimates of the Appendix \ref{appendix}.

Estimates \eqref{dgttt1away} and \eqref{dgttt2away} follow from estimate \eqref{basic1} and estimate \eqref{denp4} of Lemma \ref{denp}, noting that for \eqref{dgttt1away} we additionally use Sobolev's inequality, and that for the term involving $Y\psi$ we use the elliptic estimates of the Appendix \ref{appendix}.

Estimates \eqref{dgttt3away} and \eqref{dgttt4away} follow from estimate \eqref{basic3}, estimate \eqref{denp4} of Lemma \ref{denp} and the boundedness of $\int_{N_{\tau}} \frac{(\partial_v \Omega^l T^3 \psi )^2}{r^{4\alpha}} d\mi_{g_N}$, noting that for \eqref{dgtt3away} we additionally use Sobolev's inequality, and that for the term involving $Y\psi$ we use the elliptic estimates of the Appendix \ref{appendix}.
\end{proof}

We conclude this section by proving pointwise estimates that are valid in the entirety of the domain of outer communications.
\begin{thm}[\textbf{Global pointwise decay estimates}]\label{dpsig}
Let $\psi$ be a solution of \eqref{nw}, and assume that the bootstrap assumptions \eqref{A1}, \eqref{A2}, \eqref{A3}, \eqref{A4}, \eqref{B1}, \eqref{B2}, \eqref{B3}, \eqref{B4}, \eqref{C1}, \eqref{C2}, \eqref{C3}, \eqref{C4}, \eqref{D1}, \eqref{D2}, \eqref{E1}, \eqref{E2} hold for $\delta > 0$ given in \eqref{mcalc}, for any $\tau_1$, $\tau_2$ with $\tau_1 < \tau_2$, for some $\ee > 0$, for some $\aaa > 0$, and some constant $C$. Then we have that for any $\tau$ and any $r \meg M$ the following estimates are true for some $\beta > 0$
\begin{equation}\label{dg} 
\|\Omega^k \psi \|_{L^{\infty} (\Sigma_{\tau})} \lesssim\dfrac{ \sqrt{E_0} \ee}{(1+ \tau )^{1/2}} \mbox{ for $k \in \{0,1,2,3\}$} ,
\end{equation}
\begin{equation}\label{dg1} 
\int_{\mathbb{S}^2} ( \Omega^l \psi )^2 (\tau , r , \omega ) d\omega \lesssim \dfrac{E_0 \ee^2}{1+\tau } \mbox{ for $l \in \{4,5\}$},
\end{equation}
\begin{equation}\label{dgt} 
\|\Omega^k T\psi \|_{L^{\infty} (\Sigma_{\tau})} \lesssim\dfrac{ \sqrt{E_0} \ee}{(1+ \tau )^{1/2+\beta/2}} \mbox{ for $k \in \{0,1,2,3\}$} ,
\end{equation}
\begin{equation}\label{dgt1} 
\int_{\mathbb{S}^2} ( \Omega^l T\psi )^2 (\tau , r , \omega ) d\omega \lesssim \dfrac{E_0 \ee^2}{(1+\tau )^{1+\beta} } \mbox{ for $l \in \{4,5\}$},
\end{equation}
\begin{equation}\label{dgtt} 
\|\Omega^k T^2 \psi \|_{L^{\infty} (\Sigma_{\tau})} \lesssim\dfrac{ \sqrt{E_0} \ee}{(1+ \tau )^{1/2+\beta/2}} \mbox{ for $k \in \{0,1,2,3\}$} ,
\end{equation}
\begin{equation}\label{dgtt1} 
\int_{\mathbb{S}^2} ( \Omega^l T^2 \psi )^2 (\tau , r , \omega ) d\omega \lesssim \dfrac{E_0 \ee^2}{(1+\tau )^{1+\beta} } \mbox{ for $l \in \{4,5\}$},
\end{equation}
\begin{equation}\label{d2gtt} 
\sqrt{D} |\Omega^k T^2 \psi | (\tau , r , \omega ) \lesssim\dfrac{ \sqrt{E_0} \ee}{(1+ \tau )^{1 - \aaa/2}} \mbox{ for $k \in \{0,1,2,3\}$} ,
\end{equation}
\begin{equation}\label{d2gtt1} 
\int_{\mathbb{S}^2} D ( \Omega^l T^2 \psi )^2 (\tau , r , \omega ) d\omega \lesssim \dfrac{E_0 \ee^2}{(1+\tau )^{2-\aaa} } \mbox{ for $l \in \{4,5\}$},
\end{equation}
\begin{equation}\label{dgttt} 
\sqrt{D} |\Omega^k T^3 \psi | (\tau , r , \omega ) \lesssim\dfrac{ \sqrt{E_0} \ee}{(1+ \tau )^{1/2 + \beta/2}} \mbox{ for $k \in \{0,1,2,3\}$} ,
\end{equation}
\begin{equation}\label{dgttt1} 
\int_{\mathbb{S}^2} D ( \Omega^l T^3 \psi )^2 (\tau , r , \omega ) d\omega \lesssim \dfrac{E_0 \ee^2}{(1+\tau )^{1 + \beta}} \mbox{ for $l \in \{4,5\}$},
\end{equation}
\begin{equation}\label{dgttt2} 
 |\Omega^k T^3 \psi | (\tau , r , \omega ) \lesssim\dfrac{ \sqrt{E_0} \ee}{(1+ \tau )^{1/2 + \beta/2}} \mbox{ for $k \in \{0,1,2,3\}$} ,
\end{equation}
\begin{equation}\label{dgttt3} 
\int_{\mathbb{S}^2} ( \Omega^l T^3 \psi )^2 (\tau , r , \omega ) d\omega \lesssim \dfrac{E_0 \ee^2}{(1+\tau )^{1 + \beta}} \mbox{ for $l \in \{4,5\}$},
\end{equation}
\begin{equation}\label{dgtttt} 
 \sqrt{D} |\Omega^k T^4 \psi | (\tau , r , \omega ) \lesssim \sqrt{E_0} \ee (1+ \tau )^{\aaa/2} \mbox{ for $k \in \{0,1,2,3\}$} ,
\end{equation}
\begin{equation}\label{dgtttt1} 
\int_{\mathbb{S}^2}  D ( \Omega^l T^4 \psi )^2 (\tau , r , \omega ) d\omega \lesssim E_0 \ee^2 (1+\tau )^{\aaa} \mbox{ for $l \in \{4,5\}$},
\end{equation}
\begin{equation}\label{dyg} 
\sqrt{D} \cdot |\Omega^k Y \psi | (\tau , r ,\omega) \lesssim\dfrac{ \sqrt{E_0} \ee}{(1+ \tau )^{1/4}} \mbox{ for $k \in \{0,1,2,3\}$} ,
\end{equation}
\begin{equation}\label{dyg1} 
\int_{\mathbb{S}^2} D ( \Omega^l Y\psi )^2 (\tau , r , \omega ) d\omega \lesssim \dfrac{E_0 \ee^2}{(1+\tau )^{1/2} } \mbox{ for $l \in \{4,5\}$},
\end{equation}
\begin{equation}\label{dtyg} 
\sqrt{D} \cdot |\Omega^k TY \psi | (\tau , r , \omega) \lesssim\dfrac{ \sqrt{E_0} \ee}{(1+ \tau )^{1/4}} \mbox{ for $k \in \{0,1,2,3\}$} ,
\end{equation}
\begin{equation}\label{dtyg1} 
\int_{\mathbb{S}^2} D ( \Omega^l TY\psi )^2 (\tau , r , \omega ) d\omega \lesssim \dfrac{E_0 \ee^2}{( 1+\tau )^{1/2} } \mbox{ for $l \in \{4,5\}$},
\end{equation}
\begin{equation}\label{dttyg} 
\sqrt{D} \cdot |\Omega^k T^2 Y \psi | (\tau , r , \omega) \lesssim\dfrac{ \sqrt{E_0} \ee}{(1+ \tau )^{1/4 - \aaa/4}} \mbox{ for $k \in \{0,1,2,3\}$} ,
\end{equation}
\begin{equation}\label{dttyg1} 
\int_{\mathbb{S}^2} D ( \Omega^l T^2 Y\psi )^2 (\tau , r , \omega ) d\omega \lesssim \dfrac{E_0 \ee^2}{( 1+\tau )^{1/2-\aaa/2} } \mbox{ for $l \in \{4,5\}$},
\end{equation}
\begin{equation}\label{d2yg} 
D \cdot |\Omega^k Y \psi | (\tau , r ,\omega) \lesssim\dfrac{ \sqrt{E_0} \ee}{(1+ \tau )^{1/2}} \mbox{ for $k \in \{0,1,2,3\}$} ,
\end{equation}
\begin{equation}\label{d2yg1} 
\int_{\mathbb{S}^2} D^2 ( \Omega^l Y\psi )^2 (\tau , r , \omega ) d\omega \lesssim \dfrac{E_0 \ee^2}{1+\tau } \mbox{ for $l \in \{4,5\}$},
\end{equation}
\begin{equation}\label{d2tyg} 
D \cdot |\Omega^k TY \psi | (\tau , r , \omega) \lesssim\dfrac{ \sqrt{E_0} \ee}{(1+ \tau )^{1/2}} \mbox{ for $k \in \{0,1,2,3\}$} ,
\end{equation}
\begin{equation}\label{d2tyg1} 
\int_{\mathbb{S}^2} D^2 ( \Omega^l TY\psi )^2 (\tau , r , \omega ) d\omega \lesssim \dfrac{E_0 \ee^2}{1+\tau } \mbox{ for $l \in \{4,5\}$},
\end{equation}
\begin{equation}\label{d2ttyg} 
D \cdot |\Omega^k T^2 Y \psi | (\tau , r , \omega) \lesssim\dfrac{ \sqrt{E_0} \ee}{(1+ \tau )^{1/2 - \aaa/4}} \mbox{ for $k \in \{0,1,2,3\}$} ,
\end{equation}
\begin{equation}\label{d2ttyg1} 
\int_{\mathbb{S}^2} D^2 ( \Omega^l T^2 Y\psi )^2 (\tau , r , \omega ) d\omega \lesssim \dfrac{E_0 \ee^2}{( 1+\tau )^{1-\aaa/2} } \mbox{ for $l \in \{4,5\}$}.
\end{equation}
\end{thm}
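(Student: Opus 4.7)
The plan is to reduce every pointwise statement to its $L^2(\mathbb{S}^2)$ counterpart via Sobolev embedding on $\mathbb{S}^2$ (which costs two angular derivatives, hence the shift from $k \leq 3$ to $l \in \{4,5\}$), and to establish each $L^2(\mathbb{S}^2)$ statement by combining Theorem \ref{dpsigaway} on $\{r \geq r_0\}$ with the fundamental theorem of calculus in the $Y$ direction for $\{M \leq r \leq r_0\}$. Here $r_0 > M$ is any fixed radius in the region where Theorem \ref{dpsigaway} applies; on the bounded collar $\{M \leq r \leq r_0\}$ all geometric weights $r, r^{1-\alpha}$ are comparable to constants, so only the weights $\sqrt{D}$ and $D$ (which degenerate on $\mathcal{H}^+$) need to be tracked explicitly.

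The first step is the following identity: for any derivative combination $X \in \{1, T, T^2, T^3, T^4, Y, TY, T^2Y\}$, angular commutator level $l \in \{4,5\}$, and weight $w \in \{1, \sqrt{D}, D\}$,
\begin{equation*}
\int_{\mathbb{S}^2} w(r)(\Omega^l X\psi)^2(\tau,r,\omega)\,d\omega \leq \int_{\mathbb{S}^2} w(r_0)(\Omega^l X\psi)^2(\tau,r_0,\omega)\,d\omega + \int_r^{r_0}\!\!\int_{\mathbb{S}^2}\bigl|\partial_{r'}[w(\Omega^l X\psi)^2]\bigr|\,d\omega\,dr'.
\end{equation*}
The boundary term is handled by Theorem \ref{dpsigaway}. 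For the bulk term, I would expand $\partial_{r'}[w(\Omega^l X\psi)^2] = w'(\Omega^l X\psi)^2 + 2w(\Omega^l X\psi)(Y\Omega^l X\psi)$ and apply Cauchy-Schwarz, which reduces each contribution to a product of two integrals on $\Sigma_\tau$: one involving $(\Omega^l X\psi)^2$, $(\sqrt{D}\,\Omega^l X\psi)^2$ or $(D\,\Omega^l X\psi)^2$, bounded respectively by $J^N$, $J^P$ or $J^T$ fluxes of the appropriate derivative of $\psi$ (using Hardy's inequality \eqref{hardy} when $X$ is the identity), and one involving $(Y\Omega^l X\psi)^2$, bounded by either $J^N$ of the appropriate commutator or by the transversal estimate \eqref{d32tye} of Lemma \ref{ydenp}. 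The sharply decaying factor comes from Lemmas \ref{denp}, \ref{penp}, \ref{rp} and the slowly decaying (or merely bounded) factor comes from Lemmas \ref{nenp}, \ref{ydenp}; the geometric mean of the two rates matches the exponent claimed in the statement.

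For the transversal estimates \eqref{dyg}--\eqref{d2ttyg1} on $Y\psi$, $TY\psi$, $T^2Y\psi$, the approach is the same but the integrated quantity is itself $Y$-differentiated: the $\sqrt{D}$-weighted $L^2(\Sigma_\tau)$-norm is controlled by $J^P$ of the relevant commutator (giving the $(1+\tau)^{-1}$ rate when $X = 1$), the $D$-weighted version by $J^T$ (giving $(1+\tau)^{-2}$), and the unweighted version by Lemma \ref{ydenp} (no decay). Combining these three with the fundamental theorem of calculus in the $Y$ direction and the Sobolev embedding on $\mathbb{S}^2$ yields \eqref{dyg}--\eqref{d2ttyg1}; the fractional exponents $(1+\tau)^{-1/4}$ and $(1+\tau)^{-1/2}$ are precisely the geometric means appearing after this two-scale interpolation.

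The main bookkeeping challenge will be to keep the parameter $\beta > 0$ consistent across all the estimates \eqref{dgt}--\eqref{dgttt3}: the $T^2$- and $T^3$-level energies in Lemmas \ref{denp}, \ref{penp} suffer an $\alpha$-loss in decay (see \eqref{denp3}, \eqref{denp4}), while the $T^4$-level energy \eqref{denp5} actually grows in $\tau$. The exponent $\beta$ must therefore be chosen small enough to accommodate every interpolation exponent that arises in the hierarchy, but the freedom to take $\alpha$ arbitrarily small in the bootstrap assumptions guarantees that some uniform $\beta > 0$ works. Once this calibration is fixed, the remainder of the proof consists of systematically enumerating the cases according to the template above, in close analogy with the corresponding spherically symmetric argument from \cite{rnnonlin1}.
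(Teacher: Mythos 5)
Your overall framework — Sobolev on $\mathbb{S}^2$ (shifting $k\leq 3$ to $l\in\{4,5\}$), then FTC in the $Y$ direction combined with Cauchy--Schwarz to reduce each sphere integral to a geometric mean of energy fluxes, plugging in Lemmas~\ref{denp}--\ref{ydenp} — is the same as the paper's, and for the estimates involving explicit $\sqrt{D}$ or $D$ weights on $T^mY\psi$ (\eqref{dyg}--\eqref{d2ttyg1}) your outline would in fact close, since there the two Cauchy--Schwarz factors naturally land on $J^P$ or $J^T$ of the \emph{lower} $T$-commutator paired with the $D^{3/2}(Y^2\psi)^2$ bound of Lemma~\ref{ydenp}, giving the stated geometric means. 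The minor presentational difference is that the paper does not split into a far region handled by Theorem~\ref{dpsigaway} plus a near collar; it states the basic inequalities \eqref{basic11}--\eqref{basic15} directly on all of $r\geq M$, but this is functionally the same construction.

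However, for the unweighted sphere integrals $\int_{\mathbb{S}^2}(\Omega^lT^m\psi)^2$ with $m=1,2,3$, the mechanism you describe does not produce the claimed rate, and this is a genuine gap rather than bookkeeping. Expanding $\partial_{r'}[(\Omega^lT\psi)^2]=2(\Omega^lT\psi)(Y\Omega^lT\psi)$ and applying Cauchy--Schwarz as you propose gives $(\int(\Omega^lT\psi)^2)^{1/2}(\int(Y\Omega^lT\psi)^2)^{1/2}\lesssim (J^T[\Omega^lT\psi])^{1/2}(J^N[\Omega^lT\psi])^{1/2}\lesssim (1+\tau)^{-1}\cdot(1+\tau)^{0}$, i.e.\ decay exactly $(1+\tau)^{-1}$, not $(1+\tau)^{-1-\beta}$; for $T^2\psi$ the analogous split yields $(1+\tau)^{-1+\alpha/2}$, which never reaches $(1+\tau)^{-1-\beta}$ no matter how small $\alpha$ is taken; for $T^3\psi$ it yields roughly $(1+\tau)^{-1/2}$, far short of the target. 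The missing ingredient is the paper's estimate \eqref{basic121}, in which the bulk integral is split with \emph{fractional} $D$-weights $D^{1/2\pm\bar\delta}$ and each factor is then interpolated — the $D^{1/2+\bar\delta}$ factor between the $J^T$ and $J^P$ fluxes, the $D^{1/2-\bar\delta}$ factor between $J^P$ and $J^N$ — so that the quadratic $J^P$-decay of $T\psi,T^2\psi,T^3\psi$ enters the product with weight nearly $1$, producing a rate close to $(1+\tau)^{-2+2\bar\delta}$. Likewise, your description of the factor bounded by $J^N$ should (for the weighted versions \eqref{d2gtt1}, \eqref{dgttt1}, \eqref{dgtttt1}) be replaced by $J^T$ of the lower $T$-commutator $T^{m-1}\psi$ as in \eqref{basic13}, which again exploits faster decay at lower order. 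Without these two refinements the induction on $T$-derivatives loses a power each time and the claimed hierarchy of rates does not close.
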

\begin{proof}
First we state again some auxiliary estimates. For $\Omega^l T^m \psi$ we have that
\begin{equation}\label{basic11}
\int_{\mathbb{S}^2} (\Omega^l T^m \psi )^2 (\tau , r, \omega ) d\omega \lesssim \int_{\si_{\tau}} J^T_{\mu} [\Omega^l T^m \psi ] n^{\mu} d\mi_{g_{\si}} + 
\end{equation}
$$ + \left( \int_{\si_{\tau}} J^T_{\mu} [\Omega^l T^m \psi ] n^{\mu} d\mi_{g_{\si}} \right)^{1/2} \cdot \left( \int_{\si_{\tau}} J^N_{\mu} [\Omega^l T^m \psi ] n^{\mu} d\mi_{g_{\si}} \right)^{1/2} , $$
for any $r\meg M$ and for $l \in \{0,1,2,3,4,5\}$ and $m \in \{0,1,2,3,4 \}$. We will use the above estimate \eqref{basic11} for $m=0$. We have for $\Omega^l T^m \psi$ that
\begin{equation}\label{basic12}
\int_{\mathbb{S}^2} (\Omega^l T^m \psi )^2 (\tau , r, \omega ) d\omega \lesssim \int_{\si_{\tau}} J^T_{\mu} [\Omega^l T^m \psi ] n^{\mu} d\mi_{g_{\si}} + 
\end{equation}
$$ + \left( \int_{\si_{\tau}} J^T_{\mu} [\Omega^l T^{m-1} \psi ] n^{\mu} d\mi_{g_{\si}} \right)^{1/2} \cdot \left( \int_{\si_{\tau}} J^N_{\mu} [\Omega^l T^m \psi ] n^{\mu} d\mi_{g_{\si}} \right)^{1/2} , $$
for any $r\meg M$ and for $l \in \{0,1,2,3,4,5\}$ and $m \in \{1,2,3,4 \}$, and we also have that
\begin{equation}\label{basic121}
\int_{\mathbb{S}^2} (\Omega^l T^m \psi )^2 (\tau , r, \omega ) d\omega \lesssim \int_{\si_{\tau}} J^T_{\mu} [\Omega^l T^m \psi ] n^{\mu} d\mi_{g_{\si}} + 
\end{equation}
$$ + \left( \int_{\si_{\tau}}D^{1/2 + \bar{\delta}} ( T^m Y\psi )^2 \, d\mi_{g_{\si}} \right)^{1/2} \cdot \left( \int_{\si_{\tau}} D^{1/2 - \bar{\delta}} ( T^m Y\psi )^2 \,  d\mi_{g_{\si}} \right)^{1/2} , $$
for some $\bar{\delta} > 0$, again for any $r\meg M$ and for $l \in \{0,1,2,3,4,5\}$ and $m \in \{1,2,3,4 \}$, which can be used to prove the decay estimates for $T^k \psi$, $k=1,2,3$, after a standard interpolation argument between the $J^T$ and $J^P$ fluxes for the term with $D^{1/2 + \bar{\delta}}$ and between the $J^P$ and $J^N$ fluxes for the term with $D^{1/2 - \bar{\delta}}$. On the other hand for $\sqrt{D} | \Omega^l T^m \psi |$ we have that
\begin{equation}\label{basic13}
\int_{\mathbb{S}^2} D (\Omega^l T^m \psi )^2 (\tau , r, \omega ) d\omega \lesssim \int_{\si_{\tau}} J^T_{\mu} [\Omega^l T^m \psi ] n^{\mu} d\mi_{g_{\si}} + \int_{\si_{\tau}} J^P_{\mu} [\Omega^l T^{m-1} \psi ] n^{\mu} d\mi_{g_{\si}} +
\end{equation}
$$ + \left( \int_{\si_{\tau}} J^T_{\mu} [\Omega^l T^{m-1} \psi ] n^{\mu} d\mi_{g_{\si}} \right)^{1/2} \cdot \left( \int_{\si_{\tau}} J^T_{\mu} [\Omega^l T^m \psi ] n^{\mu} d\mi_{g_{\si}} \right)^{1/2} , $$
for any $r\meg M$ and for $l \in \{0,1,2,3,4,5\}$ and $m \in \{1,2,3,4 \}$.

Finally for $\sqrt{D} |\Omega^l T^k Y\psi|$ we have that
\begin{equation}\label{basic14}
\int_{\mathbb{S}^2} D (\Omega^l T^k Y\psi )^2 (\tau , r, \omega ) d\omega \lesssim \int_{\si_{\tau}} J^T_{\mu} [\Omega^l T^{k+1} \psi ] n^{\mu} d\mi_{g_{\si}} + \int_{\si_{\tau}} J^P_{\mu} [\Omega^l T^{k} \psi ] n^{\mu} d\mi_{g_{\si}} +
\end{equation}
$$ + \left( \int_{\si_{\tau}} J^P_{\mu} [\Omega^l T^{k} \psi ] n^{\mu} d\mi_{g_{\si}} \right)^{1/2} \cdot \left( \int_{\si_{\tau}} D^{3/2} (\Omega^l T^k Y^2 \psi )^2 d\mi_{g_{\si}} \right)^{1/2} , $$
for any $r\meg M$ and for $l \in \{0,1,2,3,4,5\}$ and $k \in \{0,1,2\}$.

For $D | \Omega^l T^k Y\psi |$ we have that
\begin{equation}\label{basic15}
\int_{\mathbb{S}^2} D^2 (\Omega^l T^k Y\psi )^2 (\tau , r, \omega ) d\omega \lesssim \sum_{m = k , k+1} \int_{\si_{\tau}} J^T_{\mu} [\Omega^l T^{m} \psi ] n^{\mu} d\mi_{g_{\si}} + 
\end{equation}
$$ + \left( \int_{\si_{\tau}} J^T_{\mu} [\Omega^l T^{k} \psi ] n^{\mu} d\mi_{g_{\si}} \right)^{1/2} \cdot \left( \int_{\si_{\tau}} D^{3/2} (\Omega^l T^k Y^2 \psi )^2 d\mi_{g_{\si}} \right)^{1/2} , $$
for any $r\meg M$ and for $l \in \{0,1,2,3,4,5\}$ and $k \in \{0,1,2\}$.

\end{proof}

\section{Closing the bootstrap assumptions}
\begin{thm}[\textbf{Bootstrap results}]\label{bootthm}
Let $\psi$ be a solution of \eqref{nw}, and assume that the bootstrap assumptions \eqref{A1}, \eqref{A2}, \eqref{A3}, \eqref{A4}, \eqref{B1}, \eqref{B2}, \eqref{B3}, \eqref{B4}, \eqref{C1}, \eqref{C2}, \eqref{C3}, \eqref{C4}, \eqref{D1}, \eqref{D2}, \eqref{E1}, \eqref{E2} hold for $\delta > 0$ given by \eqref{mcalc}, for some $\alpha > 0$, for any $\tau$, $\tau_1$, $\tau_2$ with $\tau_1 < \tau_2$, for some $\ee > 0$, and some constant $C$. Then we have that 
\begin{equation}\label{A1'}
\sum_{k\mik 5}   \int_{S_{\tau}} \left( |\Omega^k F|^2 + |\Omega^k TF|^2 \right) d\mi_{g_S}  \lesssim E_0^2 \ee^4 (1+\tau )^{-3} \tag{\textbf{A1'}},
 \end{equation} 
\begin{equation}\label{A2'}
\sum_{k\mik 5} \int_{\tau_1}^{\tau_2} \int_{N_{\tau'}} r^{3-\alpha} \left( |\Omega^k F |^2 + |\Omega^k TF|^2 \right) d\mi_{g_{\nnn}} \lesssim E_0^2 \ee^4 (1+\tau_1 )^{-2} \tag{\textbf{A2'}},
\end{equation}
\begin{equation}\label{A3'}
\sum_{k\mik 5}   \left( \int_{\tau_1}^{\tau_2} \left( \int_{N_{\tau'}} r^2 \left( |\Omega^k F |^2 + |\Omega^k TF|^2 \right) d\mi_{g_N} \right)^{1/2} d\tau' \right)^2 \lesssim E_0^2 \ee^4 \tag{\textbf{A3'}},
 \end{equation}
 \begin{equation}\label{A4'}
\sum_{k\mik 5}  \int_{\calc_{\tau_1}^{\tau_2}} \left( |\Omega^k TF |^2 + |\Omega^k T^2 F |^2 \right) d\mi_{g_{\calc}}   \lesssim E_0^2 \ee^4 (1+ \tau_1)^{-2} \tag{\textbf{A4'}} , 
\end{equation}
\begin{equation}\label{B1'}
 \sum_{k\mik 5} \int_{S_{\tau}} |\Omega^k T^2 F|^2 d\mi_{g_{S}} \lesssim E_0^2 \ee^4 (1+\tau )^{-11/4+\alpha}  \tag{\textbf{B1'}} ,
\end{equation}
\begin{equation}\label{B2'}
\sum_{k\mik 5} \int_{\tau_1}^{\tau_2} \int_{S_{\tau'}}  |\Omega^k T^2 F |^2  d\mi_{g_{\rrr}} \lesssim E_0^2 \ee^4 (1+\tau )^{-2+\alpha} \tag{\textbf{B2'}} ,
\end{equation}
\begin{equation}\label{B3'}
\sum_{k\mik 5} \int_{\tau_1}^{\tau_2} \int_{N_{\tau'}} r^{3-\alpha}  |\Omega^k T^2 F |^2  d\mi_{g_{\nnn}} \lesssim E_0^2 \ee^4 (1+\tau )^{-2+\alpha} \tag{\textbf{B3'}} ,
\end{equation}
\begin{equation}\label{B4'}
\sum_{k\mik 5}  \int_{\calc_{\tau_1}^{\tau_2}} |\Omega^k T^3 F |^2 d\mi_{g_{\calc}}   \lesssim E_0^2 \ee^4 (1+ \tau_1)^{-2+\aaa} \tag{\textbf{B4'}} , 
\end{equation}
\begin{equation}\label{C1'}
\sum_{k\mik 5} \int_{S_{\tau}}  |\Omega^k T^3 F |^2  d\mi_{g_S} \lesssim E_0^2 \ee^4 (1+\tau )^{-3/2+\alpha} \tag{\textbf{C1'}} ,
\end{equation}
\begin{equation}\label{C2'}
\sum_{k\mik 5} \int_{\tau_1}^{\tau_2} \int_{S_{\tau'}}  |\Omega^k T^3 F |^2  d\mi_{g_{\rrr}} \lesssim E_0^2 \ee^4 (1+\tau_1 )^{-1+\alpha} \tag{\textbf{C2'}} ,
\end{equation}
\begin{equation}\label{C3'}
\sum_{k\mik 5} \int_{\tau_1}^{\tau_2} \int_{N_{\tau'}} r^{3-\alpha}  |\Omega^k T^3 F |^2  d\mi_{g_{\nnn}} \lesssim E_0^2 \ee^4 (1+\tau )^{-1+\alpha} \tag{\textbf{C3'}} ,
\end{equation}
\begin{equation}\label{C4'}
\sum_{k\mik 5}  \int_{\calc_{\tau_1}^{\tau_2}}|\Omega^k T^4 F |^2  d\mi_{g_{\calc}}   \lesssim E_0^2 \ee^4 (1+ \tau_1)^{-1+\aaa} \tag{\textbf{C4'}} , 
\end{equation}
\begin{equation}\label{D1'}
\sum_{k\mik 5}  \int_{\tau_1}^{\tau_2} \int_{\si_{\tau'} \cap \cala_{\tau_1}^{\tau_2}}  |\Omega^k T^4 F|^2 d\mi_{g_{\rrr}}  \lesssim E_0^2 \ee^4 (1+\tau_2 )^{\alpha} \tag{\textbf{D1'}} ,
\end{equation}
\begin{equation}\label{D2'}
\sum_{k\mik 5}  \left( \int_{\tau_1}^{\tau_2} \left( \int_{\si_{\tau'} \cap \{ r \meg 2M -\delta \}}  |\Omega^k T^4 F |^2 d\mi_{g_{\rrr}} \right)^{1/2} d\tau' \right)^2  \lesssim E_0^2 \ee^4 (1+\tau_2 )^{\alpha} \tag{\textbf{D2'}} ,
\end{equation}
\begin{equation}\label{E1'}
\sum_{k\mik 5 , l\mik 2}  \int_{\tau_1}^{\tau_2} \int_{\si_{\tau'} \cap \cala_{\tau_1}^{\tau_2} } D  |\Omega^k T^l Y F|^2 d\mi_{g_{\rrr}} \lesssim E_0^2 \ee^4  \tag{\textbf{E1'}} ,
\end{equation}
\begin{equation}\label{E2'}
\sum_{k\mik 5 , l\mik 2}  \int_{\calc_{\tau_1}^{\tau_2}} D |\Omega^k T^{l+1} Y F |^2  d\mi_{g_{\calc}}   \lesssim E_0^2 \ee^4  \tag{\textbf{E2'}} .
\end{equation}

\end{thm}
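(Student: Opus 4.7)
The plan is to verify each primed estimate by directly expanding the nonlinearity and applying the decay bounds proved in the previous section. In ingoing Eddington--Finkelstein coordinates the null form reads
\begin{equation*}
F = \sqrt{D}\cdot A(\psi)\cdot\left[\, 2\,T\psi\cdot Y\psi + D\,(Y\psi)^2 + |\slashed{\nabla}\psi|^2\,\right],
\end{equation*}
so after commuting with $\Omega^k T^l$ (and, for \eqref{E1'}--\eqref{E2'}, also with $Y$) one obtains a schematic sum of products of the form $\sqrt{D}\cdot(\Omega^{k_1}T^{l_1}\partial\psi)\cdot(\Omega^{k_2}T^{l_2}\partial\psi)$, with $k_1+k_2\leq k$, $l_1+l_2\leq l$, together with lower-order contributions coming from $A^{(j)}(\psi)$, from commutators of the vector fields with $\sqrt{D}$, and, in the $Y$-commuted case, from the bulk term on the right hand side of \eqref{eq:ycomm}. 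All such remainder contributions behave at least as well as the leading products.

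For every such product the strategy is the standard one: place the factor carrying at most three angular commutations and few $T$-derivatives in $L^\infty$ by appealing to the pointwise bounds of Theorem \ref{dpsig}, or to the sharper $r$-weighted versions of Theorem \ref{dpsigaway} on the far region $N_\tau$ whenever $r$-weights appear on the left (as in \eqref{A2'}, \eqref{A3'}, \eqref{B3'}, \eqref{C3'}); and place the remaining factor in $L^2$ using the degenerate, $P$-, $N$- and $r^p$-weighted energy estimates of Lemmas \ref{denp}--\ref{rp}. The $\sqrt{D}$ weight in $F$ is essential for coupling the degenerate pointwise bounds on $\sqrt{D}Y\psi$ and $D\cdot Y\psi$ from \eqref{dyg}--\eqref{d2yg} with integrated energies that degenerate near the horizon: schematically one writes $\sqrt{D}\,T\psi\cdot Y\psi$ as $T\psi\cdot(\sqrt{D}\,Y\psi)$, and $\sqrt{D}\cdot D(Y\psi)^2$ as $(D\,Y\psi)\cdot(\sqrt{D}\,Y\psi)$, so that after pairing with energies each factor uses only control that is actually available. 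Angular indices are distributed so that at most one factor carries more than three $\Omega$'s, and for the space-time estimates \eqref{A3'}, \eqref{D2'} one applies Cauchy--Schwarz in $\tau$ to factor the $\tau$-integral into a pointwise-in-$\tau$ decaying $L^2_{S_\tau}$ bound against an $L^1_\tau L^2_{S_\tau}$ bound controlled by the $r^p$-weighted estimates of Lemma \ref{rp}.

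The high-$T$-order estimates \eqref{B1'}--\eqref{D2'} are the most delicate on the rate side, since the top-order energies of $T^3\psi$ and $T^4\psi$ are bounded only up to $(1+\tau)^{\alpha}$ growth; here one must arrange the pairing so that at most one factor in each product carries the top-order $T$-derivative, and exploit the faster decay of $\sqrt{D}\,T^k\psi$ from \eqref{d2gtt}--\eqref{dgttt}. The main obstacle, however, is the $Y$-commuted pair \eqref{E1'}--\eqref{E2'}. Differentiating $F=\sqrt{D}(\ldots)$ with $Y$ produces genuine $Y^2\psi$-factors as well as a term $Y(\sqrt D)/\sqrt D \sim 1/r$ which is in fact regular; the $D$-weight on the left hand side of \eqref{E1'}--\eqref{E2'} is there to absorb the extra factor generated by the bulk of the $Y$-commuted wave equation \eqref{eq:ycomm}. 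Each remaining product is schematically $D\cdot(\text{lower order})\cdot(Y\psi)(Y^2\psi)$, and is controlled by placing $\sqrt{D}\,Y\psi$, $\sqrt{D}\,TY\psi$, $\sqrt{D}\,T^2 Y\psi$ in $L^\infty$ via \eqref{dyg}--\eqref{dttyg} and pairing with $\sqrt{D}\,Y^2\psi$ in $L^2$, which is exactly what Lemma \ref{ydenp} supplies; the spherically-symmetric versus non-spherical splitting built into \eqref{d32tye2} is used to account for the different available $D$-weights for the two components. Since $\cala$ and $\calc$ are compactly contained in $r$, no $r^p$-weights enter these particular estimates.

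Once every product has been bounded in this fashion, the right hand side of each primed estimate comes out as $E_0^2\ee^4$ times the same $\tau$-rate appearing in the corresponding unprimed bootstrap hypothesis, because two $\sqrt{E_0}\,\ee$-small factors have been multiplied against each other. Choosing $\ee$ so that $E_0\ee^2 < 1/C$ then gives strict improvement over the bootstrap assumption, and the standard continuity argument closes the full bootstrap.
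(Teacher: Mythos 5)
Your overall strategy — expand the nonlinearity, distribute $\Omega$'s so that at most one factor carries more than three, and pair the low-order factor in $L^\infty$ against the high-order factor in $L^2$ using the decay of the degenerate/$P$/$N$/$r^p$-weighted energies — is precisely the strategy carried out (term by term, sixteen times over) in the paper. The description of the $\sqrt{D}$-distribution for the leading products (e.g.\ writing $\sqrt{D}\,T\psi\,Y\psi$ as $T\psi\cdot(\sqrt{D}\,Y\psi)$) also matches what the paper does, and your remarks about the top-order $T$-energies and the choice of $\ee$ are correct.

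However, there is a genuine error in your treatment of \eqref{E1'}--\eqref{E2'}, and it concerns precisely the one place where the remainder term is \emph{worse} than the leading products. Your blanket claim that ``all such remainder contributions behave at least as well as the leading products'' fails for the term generated by hitting the $\sqrt{D}$ prefactor with $Y$. Writing $F = \sqrt{D}\cdot G$ with $G = g^{\alpha\beta}\partial_\alpha\psi\,\partial_\beta\psi$, one has $YF = Y(\sqrt{D})\cdot G + \sqrt{D}\,YG$, and $Y(\sqrt{D}) = M/r^2$, so the first term is $\frac{M}{r^2}\,G$. This is indeed a smooth (finite) quantity, but it has \emph{lost} the $\sqrt{D}$ weight that $F$ carries. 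The pointwise and integrated bounds you have available near the horizon all control $D$-weighted quantities — $\sqrt{D}\,Y\psi$, $D\,Y\psi$, $J^T$-Morawetz with $D^{5/2}(Y\psi)^2$, and so on — so that one controls $\int_{\cala} D\,|G|^2 = \int_\cala |F|^2$ but \emph{not} $\int_\cala |G|^2 = \int_\cala |F|^2/D$. It is for exactly this reason that \eqref{E1'}--\eqref{E2'} carry the $D$-weight on the left: one then needs $\int_\cala D\cdot\frac{M^2}{r^4}|G|^2 \lesssim \int_\cala |F|^2$, which \eqref{A1'} supplies. Your stated explanation — that the $D$-weight is there ``to absorb the extra factor generated by the bulk of the $Y$-commuted wave equation \eqref{eq:ycomm}'' — is incorrect: the linear commutator terms $D'\,Y^2\psi$, $\tfrac{2}{r^2}T\psi$, etc.\ from \eqref{eq:ycomm} are handled at the level of the linear estimate (Lemma \ref{ydenp} / Proposition \ref{d32ty}), not inside the bounds on $YF$. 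Also note that the quantity $Y(\sqrt{D})/\sqrt{D} = M/\big(r(r-M)\big)$ is \emph{not} regular at the horizon (it blows up as $r\to M$); it is the product $Y(\sqrt{D})\cdot G$, not the quotient, that is bounded. Without correcting this, your schematic treatment of E1'--E2' does not account for the worst term, and a careful reader cannot verify that the estimate closes.

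Beyond this, there are some unaddressed technicalities in the $T^2$--$T^4$ cases where a single term (such as $\int_S D\,(T^3\psi)^2\,(Y\psi)^2$ in \eqref{B2'}, or the $(T^5\psi)^2(Y\psi)^2$ term in \eqref{D1'}) does not decay fast enough slice-by-slice and must instead be integrated in spacetime and fed into the Morawetz estimate; your abstract description does not rule this out, but it also does not demonstrate that you would catch these borderline products.
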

\begin{proof}
For the purpose of this proof, we will use the following notation: whenever we apply $\Omega^k$ angular derivatives to $T\psi$, $Y\psi$ or $\slashed{\nabla} \psi$ we will denote this term by $T\psi_k$, $Y\psi_k$ or $\slashed{\nabla} \psi_k$ respectively. We also note that we will use $S$ for the hypersurface $S_{\tau}$ for some $\tau$, and $N$ for the hypersurface $N_{\tau}$ for some $\tau$.

\eqref{A1'}: We have that
$$ \Omega^k F = \sum_{k_1 + k_2 = k} D^{3/2} ( Y\psi_{k_1} ) \cdot ( Y\psi_{k_2} ) + 2 \sqrt{D} ( T\psi_{k_1} ) \cdot ( Y\psi_{k_2} ) + \sqrt{D} \langle \slashed{\nabla} \psi_{k_1} , \slashed{\nabla} \psi_{k_2} \rangle , $$
which gives us that
$$ \int_S |\Omega^k F |^2 d\mi_{g_S} \lesssim $$ $$ \lesssim \sum_{k_1 + k_2 = k} \left( \int_S D^3 (Y\psi_{k_1} )^2 (Y\psi_{k_2} )^2  d\mi_{g_S} + \int_S D (T\psi_{k_1} )^2 (Y\psi_{k_2} )^2  d\mi_{g_S}
+ \int_S D | \slashed{\nabla} \psi_{k_1} |^2 | \slashed{\nabla} \psi_{k_2} |^2  d\mi_{g_S} \right) . $$
We examine separately the terms of the last line. We also look at the case where $k=5$ since all other cases can be treated in the same way and are additionally simpler. 

$$ \sum_{k_1 + k_2 = 5} \int_S D^3 (Y\psi_{k_1} )^2 (Y\psi_{k_2} )^2  d\mi_{g_S} = $$ $$ = \sum_{k_1 + k_2 = 5, k_1 , k_2 \mik 3} \int_S D^3 (Y\psi_{k_1} )^2 (Y\psi_{k_2} )^2  d\mi_{g_S} + \sum_{k_1 + k_2 = 5, k_1 = 4,5 \mbox{ or } k_2 = 4,5} \int_S D^3 (Y\psi_{k_1} )^2 (Y\psi_{k_2} )^2  d\mi_{g_S} \doteq $$ $$ \doteq I_{A1}a + II_{A1}a . $$
Now we have that
$$ I_{A1}a \lesssim \sum_{i , m_i \mik 3 } \frac{\ee^2}{1+\tau} \int_S D (Y\psi_{m_i} )^2 d\mi_{g_S} \lesssim \frac{E_0 \ee^4}{(1+ \tau )^3} , $$
where we used \eqref{d2yg} and \eqref{denp1}.

We also have that
$$ II_{A1}a =  \sum_{k_1 + k_2 = 5 , k_1 > k_2} \int_S D^3 (Y\psi_{k_1} )^2 (Y\psi_{k_2} )^2  d\mi_{g_S} \lesssim \sum_{k_1 + k_2 = 5 , k_1 > k_2}  \int_S D (Y\psi_{k_1} )^2 \cdot \left( \int_{\mathbb{S}^2} D^2 (Y\psi_{k_2+2} )^2 d\omega \right)  d\mi_{g_S} \lesssim $$ $$ \lesssim \frac{E_0 \ee^2}{1+\tau} \sum_{i , m_i \mik 3 }  \int_S D (Y\psi_{m_i} )^2 d\mi_{g_S} \lesssim \frac{E_0 \ee^4}{(1+ \tau )^3} , $$
where we used \eqref{d2yg1} and \eqref{denp1}.

Now we look at
$$ \sum_{k_1 + k_2 = 5} \int_S D (T\psi_{k_1} )^2 (Y\psi_{k_2} )^2 d\mi_{g_S} = $$ $$ = \sum_{k_1 + k_2 = 5 , k_1 , k_2 \mik 3} \int_S D (T\psi_{k_1} )^2 (Y\psi_{k_2} )^2 d\mi_{g_S} + \sum_{k_1 + k_2 = 5 , k_1 = 4,5} \int_S D (T\psi_{k_1} )^2 (Y\psi_{k_2} )^2 d\mi_{g_S} + $$ $$ + \sum_{k_1 + k_2 = 5 , k_2 = 4,5} \int_S D (T\psi_{k_1} )^2 (Y\psi_{k_2} )^2 d\mi_{g_S}\doteq I_{A1}b + II_{A1}b + III_{A1}b . $$
We have that
$$ I_{A1}b \lesssim \sum_{i , m_i \mik 3} \frac{\ee^2}{1+\tau} \int_S D (Y\psi_{m_i} )^2 d\mi_{g_S} \lesssim \frac{E_0^2\ee^4}{(1+\tau)^3} , $$
where we used \eqref{dgt} and \eqref{denp1}.

Now we have that
$$ II_{A1}b \lesssim \sum_{k_1 + k_2 = 5, k_1 = 4,5 } \int_S (T\psi_{k_1} )^2 \cdot \left( \int_{\mathbb{S}^2} D (Y\psi_{k_2+2} )^2 d\omega \right) d\mi_{g_S} \lesssim \frac{E_0 \ee^2}{1+\tau} \sum_{i , m_i \mik 4} \int_S D (Y\psi_{m_i} )^2 d\mi_{g_S} \lesssim \frac{E_0^2\ee^4}{(1+\tau )^3} ,$$
where we used \eqref{dgt1} and \eqref{denp1}. 

We also have that
$$ III_{A1}b \lesssim \sum_{k_1 + k_2 = 5, k_2 = 4,5 } \int_S D(Y\psi_{k_1} )^2 \cdot \left( \int_{\mathbb{S}^2}  (T\psi_{k_2+2} )^2 d\omega \right) d\mi_{g_S} \lesssim \frac{E_0 \ee^2}{1+\tau} \sum_{i , m_i \mik 3} \int_S D (Y\psi_{m_i} )^2 d\mi_{g_S} \lesssim \frac{E_0^2\ee^4}{(1+\tau )^3} ,$$
where we used \eqref{dgt}, \eqref{dgt1} and \eqref{denp1}.  

Finally we look at
$$ \sum_{k_1 + k_2 = 5} \int_S D | \slashed{\nabla} \psi_{k_1} |^2 | \slashed{\nabla} \psi_{k_2} |^2  d\mi_{g_S} = $$ $$ =\sum_{k_1 + k_2 = 5 , k_1 , k_2 \mik 3} \int_S D | \slashed{\nabla} \psi_{k_1} |^2 | \slashed{\nabla} \psi_{k_2} |^2  d\mi_{g_S} + \sum_{k_1 + k_2 = 5, k_1 = 4,5 \mbox{ or } k_2 = 4,5} \int_S D | \slashed{\nabla} \psi_{k_1} |^2 | \slashed{\nabla} \psi_{k_2} |^2  d\mi_{g_S} \doteq I_{A1}c + II_{A1}c .$$
We have that
$$ I_{A1}c \lesssim \sum_{i , m_i \mik 3} \frac{E_0 \ee^2}{1+\tau} \int_S D |\slashed{\nabla} \psi_{m_i} |^2 d\mi_{g_S} \lesssim \frac{E_0^2\ee^4}{(1+\tau )^3} , $$
where we used \eqref{dg} and \eqref{denp1}.

We also have that
$$ II_{A1}c \lesssim \sum_{k_1 + k_2 = 5 , k_1 > k_2} \int_S D |\slashed{\nabla} \psi_{k_1} |^2 \cdot \left( \int_{\mathbb{S}^2} |\slashed{\nabla} \psi_{k_2 +2} |^2 d\omega \right) d\mi_{g_S} \lesssim \frac{E_0 \ee^2}{1+\tau} \sum_{i , m_i \meg 4} \int_S D |\slashed{\nabla} \psi_{m_i} |^2 d\mi_{g_S} \lesssim \frac{E_0^2\ee^4}{(1+\tau)^3} , $$
where we used \eqref{dg1} and \eqref{denp1}.

All the previous computations gathered together imply that
\begin{equation}\label{A1FS}
 \int_S |\Omega^k F|^2 d\mi_{g_S} \lesssim \frac{E_0^2\ee^4}{(1+\tau )^3}  \mbox{ for any $k \mik 5$},
 \end{equation}
which in turn implies that
\begin{equation}\label{A1F}
\left( \int_{\tau_1}^{\tau_2} \left( \int_{S_{\tau'}} |\Omega^k F|^2 d\mi_{g_S} \right)^{1/2} d\tau' \right)^2 \lesssim \frac{E_0^2\ee^4}{1+\tau_1} , \mbox{ for any $k\mik 5$ and any $\tau_1$, $\tau_2$},
\end{equation}
and
\begin{equation}\label{A1FI}
\int_{\tau_1}^{\tau_2}  \int_{S_{\tau'}} |\Omega^k F|^2 d\mi_{g_{\rrr}}  \lesssim \frac{E_0^2\ee^4}{( 1+\tau_1 )^2} , \mbox{ for any $k\mik 5$ and any $\tau_1$, $\tau_2$}.
\end{equation}

For $TF$ we have that
$$ \Omega^k TF = \sum_{k_1 + k_2 = k} 2 D^{3/2} (TY\psi_{k_1} ) \cdot (Y\psi_{k_2} ) + \sqrt{D} (TT\psi_{k_1} ) \cdot (Y\psi_{k_2} ) + \sqrt{D} (T\psi_{k_1})\cdot (TY\psi_{k_2} ) + 2 \sqrt{D} \langle T\slashed{\nabla} \psi_{k_1} , \slashed{\nabla} \psi_{k_2} \rangle , $$
which implies that
$$\int_S |\Omega^k TF|^2 d\mi_{g_S} \lesssim \sum_{k_1 + k_2 =k} \left( \int_S D^3 (TY\psi_{k_1} )^2 (Y\psi_{k_2} )^2 d\mi_{g_S} + \right. $$ $$ \left. + \int_S D (T^2 \psi_{k_1} )^2 (Y\psi_{k_2} )^2 d\mi_{g_S} + \int_S D (T\psi_{k_1} )^2 (TY\psi_{k_2} )^2 d\mi_{g_S} + \int_S D |T\slashed{\nabla} \psi_{k_1} |^2 | \slashed{\nabla} \psi_{k_2} |^2 d\mi_{g_S} \right) . $$
Once more, we examine separately the terms of the last line for $k=5$ since these estimates are actually easier for all other $k$.
$$ \sum_{k_1 + k_2 = 5} \int_S D^3 (TY\psi_{k_1} )^2 (Y\psi_{k_2} )^2 d\mi_{g_S} = $$ $$ = \sum_{k_1 + k_2 = 5 , k_1 , k_2 \mik 3} \int_S D^3 (TY\psi_{k_1} )^2 (Y\psi_{k_2} )^2 d\mi_{g_S} + \sum_{k_1 + k_2 = 5 , k_1 = 4,5 \mbox{ or } k_2 = 4,5} \int_S D^3 (TY\psi_{k_1} )^2 (Y\psi_{k_2} )^2 d\mi_{g_S} \doteq $$ $$ \doteq I_{A1}d + II_{A1}d . $$

We have that
$$ I_{A1}d \lesssim \frac{E_0 \ee^2}{1+\tau} \sum_{i , m_i \mik 3} \int_S D (TY\psi_{m_i} )^2 d\mi_{g_S} \lesssim \frac{E_0^2\ee^4}{(1+\tau)^3} , $$
where we used \eqref{d2yg} and \eqref{denp2}.

We also have that
$$ II_{A1}d \lesssim \sum_{k_1 + k_2 = 5 , k_1 > k_2} \int_S D (TY\psi_{k_1} )^2  \cdot \left( \int_{\mathbb{S}^2} D^2 (Y\psi_{k_2 +2} )^2 d\omega \right) d\mi_{g_S} + $$ $$ + \sum_{k_1 + k_2 = 5 , k_2 > k_1} \int_S D (Y\psi_{k_2} )^2 \cdot \left( \int_{\mathbb{S}^2} D^2 (TY\psi_{k_1 +2} )^2 d\omega \right) d\mi_{g_S} \lesssim  \frac{E_0^2\ee^4}{(1+\tau)^3} , $$
where we used \eqref{denp2}, \eqref{d2yg1} for the first term, and \eqref{denp1}, \eqref{d2tyg1} for the second one.

Now we look at the term
$$ \sum_{k_1 + k_2 = 5} \int_S D (T^2 \psi_{k_1} )^2 (Y\psi_{k_2} )^2 d\mi_{g_S} = $$ $$ =  \sum_{k_1 + k_2 = 5 , k_1 , k_2 \mik 3 } \int_S D (T^2 \psi_{k_1} )^2 (Y\psi_{k_2} )^2 d\mi_{g_S} + \sum_{k_1 + k_2 = 5, k_1 = 4,5 \mbox{ or } k_2 = 4,5} \int_S D (T^2 \psi_{k_1} )^2 (Y\psi_{k_2} )^2 d\mi_{g_S} \doteq $$ $$\doteq I_{A1}e + II_{A1}e . $$

We have that
$$ I_{A1}e \lesssim \frac{E_0 \ee^2}{1+\tau} \sum_{i , m_i \mik 3} \int_S D (Y\psi_{m_i} )^2 d\mi_{g_S} \lesssim \frac{E_0^2 \ee^4}{(1+\tau )^3} , $$
where we used \eqref{dgtt} and \eqref{denp1}.

We also have that
$$ II_{A1}e \lesssim \sum_{k_1 + k_2 = 5 , k_1 > k_2} \int_S  (T^2 \psi_{k_1} )^2  \cdot \left( \int_{\mathbb{S}^2} D (Y\psi_{k_2 +2} )^2 d\omega \right) d\mi_{g_S} + $$ $$ + \sum_{k_1 + k_2 = 5 , k_2 > k_1} \int_S D (Y\psi_{k_2} )^2 \cdot \left( \int_{\mathbb{S}^2} (T^2 \psi_{k_1 +2} )^2 d\omega \right) d\mi_{g_S} \lesssim \frac{E_0^2 \ee^4}{(1+\tau)^{3}}  , $$
where we used \eqref{denp1} and \eqref{dgtt1}.

Next we look at
$$ \sum_{k_1 + k_2 = 5} \int_S D (T\psi_{k_1} )^2 (TY\psi_{k_2} )^2 d\mi_{g_S} = $$ $$ =  \sum_{k_1 + k_2 = 5 , k_1 , k_2 \mik 3 } \int_S D (T\psi_{k_1} )^2 (TY\psi_{k_2} )^2 d\mi_{g_S} + \sum_{k_1 + k_2 = 5, k_1 = 4,5 \mbox{ or } k_2 = 4,5} \int_S D (T\psi_{k_1} )^2 (TY\psi_{k_2} )^2 d\mi_{g_S} \doteq $$ $$\doteq I_{A1}f + II_{A1}f . $$

We have that
$$ I_{A1}f \lesssim \frac{E_0 \ee^2}{1+\tau} \sum_{i , m_i \mik 3} \int_S D (TY\psi_{m_i} )^2 d\mi_{g_S} \lesssim \frac{E_0^2\ee^4}{(1+\tau)^3} , $$
where we used \eqref{dgt} and \eqref{denp2}.

We also have that
$$ II_{A1}f \lesssim \sum_{k_1 + k_2 = 5 , k_1 > k_2} \int_S D (TY\psi_{k_1} )^2  \cdot \left( \int_{\mathbb{S}^2} (T\psi_{k_2 +2} )^2 d\omega \right) d\mi_{g_S} + $$ $$ + \sum_{k_1 + k_2 = 5 , k_2 > k_1} \int_S  (T\psi_{k_2} )^2 \cdot \left( \int_{\mathbb{S}^2} D (TY\psi_{k_1 +2} )^2 d\omega \right) d\mi_{g_S} \lesssim \frac{E_0^2\ee^4}{(1+\tau)^{3}} , $$
where we used \eqref{denp2} and \eqref{dgt1}.

Finally we look at
$$ \sum_{k_1 + k_2 = 5} \int_S D | T\slashed{\nabla} \psi_{k_1} |^2 | \slashed{\nabla} \psi_{k_2} |^2  d\mi_{g_S} = $$ $$ =\sum_{k_1 + k_2 = 5 , k_1 , k_2 \mik 3} \int_S D | T\slashed{\nabla} \psi_{k_1} |^2 | \slashed{\nabla} \psi_{k_2} |^2  d\mi_{g_S} + \sum_{k_1 + k_2 = 5, k_1 = 4,5 \mbox{ or } k_2 = 4,5} \int_S D | T\slashed{\nabla} \psi_{k_1} |^2 | \slashed{\nabla} \psi_{k_2} |^2  d\mi_{g_S} \doteq $$ $$ \doteq I_{A1}g + II_{A1}g .$$

We have that:
$$ I_{A1}g \lesssim \sum_{i , m_i \mik 3} \frac{E_0 \ee^2}{1+\tau} \int_S D |T\slashed{\nabla} \psi_{m_i} |^2 d\mi_{g_S} \lesssim \frac{E_0^2\ee^4}{(1+\tau )^3} , $$
where we used \eqref{dg} and \eqref{denp2}.

We also have that:
$$ II_{A1}g \lesssim \sum_{k_1 + k_2 = 5 , k_1 > k_2} \int_S D |\slashed{\nabla} \psi_{k_1} |^2 \cdot \left( \int_{\mathbb{S}^2} |\slashed{\nabla} \psi_{k_2 +2} |^2 d\omega \right) d\mi_{g_S} \lesssim $$ $$ \lesssim \frac{E_0 \ee^2}{1+\tau} \sum_{i , m_i \meg 3} \int_S D |\slashed{\nabla} \psi_{m_i} |^2 d\mi_{g_S} +  \frac{E_0 \ee^2}{1+\tau} \sum_{i , m_i \meg 3} \int_S D |T \slashed{\nabla} \psi_{m_i} |^2 d\mi_{g_S}\lesssim \frac{E_0^2\ee^4}{(1+\tau)^3} , $$
where we used \eqref{dg1}, \eqref{dgt1}, \eqref{denp1} and \eqref{denp2}.

Gathering together all the estimates for $\Omega^k TF$ we have that
\begin{equation}\label{A1TFS}
 \int_S |\Omega^k TF|^2 d\mi_{g_S} \lesssim \frac{E_0^2 \ee^4}{(1+\tau)^{3}}  \mbox{ for any $k\mik 5$} , 
 \end{equation}
which along with \eqref{A1FS} closes the \eqref{A1'} bootstrap, and which in turn implies that
\begin{equation}\label{A1TF}
\left( \int_{\tau_1}^{\tau_2} \left( \int_{S_{\tau'}} |\Omega^k TF|^2 d\mi_{g_S} \right)^{1/2} d\tau' \right)^2 \lesssim \frac{E_0^2\ee^4}{1+\tau_1} , \mbox{ for any $k\mik 5$ and any $\tau_1$, $\tau_2$},
\end{equation}
and
\begin{equation}\label{A1TFI}
 \int_{\tau_1}^{\tau_2} \int_{S_{\tau'}} |\Omega^k TF|^2 d\mi_{g_{\rrr}}  \lesssim \frac{E_0^2\ee^4}{(1+\tau_1)^2} , \mbox{ for any $k\mik 5$ and any $\tau_1$, $\tau_2$}.
\end{equation}

\eqref{A2'}: We have that in the hypersurface $N$ the following holds true for $\ph = r\psi$ in the $(u,v)$ null coordinates
$$ |\Omega^k F|^2 \lesssim \sum_{k_1 + k_2 = k} \frac{1}{r^4} \left( (\partial_v \ph_{k_1} )^2 (\partial_u \ph_{k_2} )^2 + \psi_{k_1}^2 (\partial_u \ph_{k_2} )^2 + \psi_{k_1}^2 (\partial_v \ph_{k_2} )^2 + \psi_{k_1}^2 \psi_{k_2}^2 \right) + |\slashed{\nabla} \psi_{k_1} |^2 |\slashed{\nabla} \psi_{k_2} |^2 . $$
We have for $\Omega^k F$ using the previous pointwise inequality for it that
$$ \sum_{k_1 + k_2 \mik 5} \int_{\tau_1}^{\tau_2} \int_{N_{\tau'}} r^{3-\aaa} |\Omega^k F|^2 d\mi_{g_{\nnn}} \lesssim \sum_{k_1 + k_2 \mik 5} \int_{u_{\tau_1}}^{u_{\tau_2}} \int_{v_{R_0}}^{\infty} \int_{\mathbb{S}^2}  r^{5-\aaa} |\Omega^k F|^2 d\omega dv du \lesssim $$ $$ \lesssim I_{A2}a + II_{A2}a + III_{A2}a + IV_{A2}a + V_{A2}a . $$

For $IV_{A2}a$ we have that
$$ IV_{A2}a = \sum_{k_1 + k_2 \mik 5}\int_{u_{\tau_1}}^{u_{\tau_2}} \int_{v_{R_0}}^{\infty} \int_{\mathbb{S}^2} r^{1-\aaa} \psi_{k_1}^2 \psi_{k_2}^2  d\omega dv du  \lesssim  \frac{E_0 \ee^2}{ ( 1+\tau_1 )^2} \sum_{i , m_i \mik 5}\int_{u_{\tau_1}}^{u_{\tau_2}} \int_{v_{R_0}}^{\infty} \int_{\mathbb{S}^2} r^{-\aaa}  \psi_{m_i}^2  d\omega dv du \lesssim $$ $$ \lesssim \frac{E_0 \ee^2}{ ( 1+\tau_1 )^2} \sum_{i , m_i \mik 5} \int_{\tau_1}^{\tau_2} \left( \int_{\si_{\tau'}} J^T_{\mu} [\psi_{m_i} ] n^{\mu} d\mi_{g_{\si}} \right) d\tau' \lesssim \frac{E_0^2 \ee^4}{ (1+\tau_1 )^3}, $$
where we used Sobolev when needed, \eqref{dg1away}, \eqref{dg2away}, Hardy's inequality \eqref{hardy} and \eqref{denp1}. The term $V_{A2}a$ can be treated in the same way.

For $III_{A2}a$ we have that
$$ III_{A2}a = \sum_{k_1 + k_2 \mik 5}\int_{u_{\tau_1}}^{u_{\tau_2}} \int_{v_{R_0}}^{\infty} \int_{\mathbb{S}^2} r^{1-\aaa} \psi_{k_1}^2 (\partial_v \varphi_{k_2} )^2  d\omega dv du  \lesssim $$ $$ \lesssim  \left( \sum_{i , l_i \mik 3} \| r^{1-\aaa} \psi_{l_i}^2 \|_{L^{\infty} (\nnn(\tau_1 , \tau_2 ) } +  \sum_{i , m_i = 4, 5} \left\| \int_{\mathbb{S}^2} r^{1-\aaa} \psi_{m_i}^2 d\omega \right\|_{L^{\infty} (\nnn(\tau_1 , \tau_2 ) } \right) \times $$ $$ \times  \sum_{i , n_i \mik 5}\int_{\tau_1}^{\tau_2} \int_{N_{\tau'}} \frac{(\partial_v \ph_{n_i} )^2}{r^2}  d\mi_{g_{\nnn}} \lesssim \frac{E_0^2 \ee^4}{(1+\tau_1 )^4} , $$
where we used Sobolev when needed, \eqref{dg1away}, \eqref{dg2away} and \eqref{rp2}.

For $II_{A2}a$ we have that
$$ II_{A3}a = \sum_{k_1 + k_2 \mik 5}\int_{u_{\tau_1}}^{u_{\tau_2}} \int_{v_{R_0}}^{\infty} \int_{\mathbb{S}^2} r^{1-\aaa} \psi_{k_1}^2 (\partial_u \varphi_{k_2} )^2  d\omega dv du  \lesssim $$ $$ \lesssim  \left( \sum_{i , l_i \mik 3} \int_{v_{R_0}}^{\infty} \sup_{u , \omega} r^{1-\aaa} \psi_{l_i}^2 dv +  \sum_{i , m_i = 4, 5} \int_{v_{R_0}}^{\infty} \int_{\mathbb{S}^2}  \sup_u r^{1-\aaa} \psi_{m_i}^2 d\omega dv  \right) \times $$ $$ \times  \sum_{i , n_i \mik 5}\sup_v \left( \int_{u_{\tau_1}}^{u_{\tau_2}} \int_{\mathbb{S}^2} (\partial_u \ph_{n_i} )^2  d\omega du \right) \lesssim \frac{E_0^2 \ee^4}{(1+\tau_1 )^3} , $$
where we used Sobolev when needed, \eqref{dg3away}, \eqref{dg4away} and \eqref{denp1} along with the bootstrap assumptions \eqref{A1} and \eqref{A2}, since we have the estimate
\begin{equation}\label{tfluxu}
 \int_{u_{\tau_1}}^{u_{\tau_2}} \int_{\mathbb{S}^2} (\partial_u \ph_{n_i} )^2  d\omega du \lesssim \int_{\si_{\tau_1}} J^T_{\mu} [\psi_k ] n^{\mu} d\mi_{g_{\si}} + \int_{\tau_1}^{\tau_2} \int_{S_{\tau'}} |\Omega^k F |^2 d\mi_{g_{\rrr}} + \int_{\tau_1}^{\tau_2} \int_{S_{\tau'}} r^{1+\eta} |\Omega^k F |^2 d\mi_{g_{\rrr}} + 
\end{equation} 
 $$ + \int_{\calc_{\tau_1}^{\tau_2}} |\Omega^k T F|^2 d\mi_{g_{\nnn}} + \sup_{\tau' \in [\tau_1 , \tau_2 ]} \int_{\si_{\tau'} \cap \calc_{\tau_1}^{\tau_2}} |\Omega^k F |^2 d\mi_{g_{\si}} ,$$
for any $\eta > 0$.

For $I_{A2}a$ we have that
$$ I_{A2}a = \sum_{k_1 + k_2 \mik 5}\int_{u_{\tau_1}}^{u_{\tau_2}} \int_{v_{R_0}}^{\infty} \int_{\mathbb{S}^2} r^{1-\aaa} (\partial_v \ph_{k_1})^2 (\partial_u \psi_{k_2} )^2  d\omega dv du  \lesssim $$ $$ \lesssim    \sum_{l_1 + l_2 \mik 5 , l_2 \mik 2}\int_{u_{\tau_1}}^{u_{\tau_2}}  \int_{\mathbb{S}^2} (\partial_u \ph_{l_1} )^2  d\omega du dv \cdot \int_{v_{R_0}}^{\infty} \sup_{u , \omega} r^{1-\aaa} (\partial_v \ph_{l_2} )^2 dv + $$ $$ + \sum_{m_1 + m_2 \mik 5 , m_2 \mik 2 }\int_{\tau_1}^{\tau_2} \int_{N_{\tau'}} r (\partial_v \ph_{m_1} )^2 \cdot \left( \int_{\mathbb{S}^2} r^{-\aaa} (\partial_u \ph_{m_2 +2 } )^2 d\omega' \right) d\mi_{g_N} \doteq I_{A2}b + II_{A2}b .  $$

We have that
$$ I_{A2}b = \sum_{l_1 + l_2 \mik 5 , l_2 \mik 2}\int_{u_{\tau_1}}^{u_{\tau_2}} \int_{\mathbb{S}^2} (\partial_u \ph_{l_1} )^2  d\omega du \cdot \int_{v_{R_0}}^{\infty}\sup_{u , \omega} r^{1-\aaa} (\partial_v \ph_{l_2} )^2 dv \lesssim \frac{E_0 \ee^2}{(1+\tau_1 )^2} \sum_{m_i \mik 2}\int_{v_{R_0}}^{\infty} \sup_{u , \omega} r^{1-\aaa} (\partial_v \ph_{m_i})^2 dv , $$
where we used \eqref{tfluxu}. But we also have that
$$ \sum_{m_i \mik 2}\int_{v_{R_0}}^{\infty} \sup_{u , \omega} r^{1-\aaa} (\partial_v \ph_{m_i})^2 dv \lesssim \sum_{i , m_i \mik 2}\int_{N_{\tau_1}} \frac{(\partial_v \ph_{m_i + 2} )^2}{r^{1+\aaa}} d\mi_{g_N} + $$ $$ + \int_{\tau_1}^{\tau_2} \int_{N_{\tau'}} \left( \frac{(\partial_v \ph_{m_i + 2} )^2}{r^{1+\aaa}} + \frac{|\slashed{\nabla} \psi_{m_i +3} |^2}{r^{1+\aaa}} + r^{1-\aaa} |\Omega^{m_i +2 } F |^2 \right)d\mi_{g_{\nnn}} \lesssim E_0 \ee^2 , $$
where we used Sobolev, the fundamental Theorem of Calculus, the equation, Proposition \ref{ent}, assumption \eqref{A2} and \eqref{rp1}. So finally we have that
$$  I_{A2}b \lesssim \frac{E_0^2 \ee^4}{(1+\tau_1 )^2} . $$

For $II_{A2}b$ we have that
$$ II_{A3}b = \sum_{m_1 + m_2 \mik 5 , m_2 \mik 2 }\int_{\tau_1}^{\tau_2} \int_{N_{\tau'}} r (\partial_v \ph_{m_1} )^2 \cdot \left( \int_{\mathbb{S}^2} r^{-\aaa} (\partial_u \ph_{m_2 +2 } )^2 d\omega' \right) d\omega dv d\tau'  \lesssim $$ $$ \lesssim \frac{E_0 \ee^2}{1+\tau_1} \sum_{i , m_i \mik 2}\int_{\tau_1}^{\tau_2} \sup_v r^{-\aaa} \int_{\mathbb{S}^2} (\partial_u \ph_{m_i +2} )^2 d\omega d\tau' ,$$
where we used \eqref{rp1}. We also have that
$$ \sum_{i , m_i \mik 2}\int_{\tau_1}^{\tau_2} \sup_v r^{-\aaa} \int_{\mathbb{S}^2} (\partial_u \ph_{m_i +2} )^2 d\omega \lesssim \sum_{i , m_i \mik 2}\int_{\tau_1}^{\tau_2}   \int_{\mathbb{S}^2} \left. r^{-\aaa} (\partial_u \ph_{m_i +2} )^2 \right|_{v = u_{\tau'}} d\omega d\tau' + $$ $$ + \int_{\tau_1}^{\tau_2} \int_{N_{\tau'}} \left( \frac{(\partial_u \ph_{m_i +2 } )^2}{r^{3+\aaa}} + \frac{|\slashed{\nabla} \psi_{m_i + 3} |^2}{r^{1+\aaa}} + r^{1-\aaa} |\Omega^{m_i +2} F |^2 \right) d\mi_{g_{\nnn}} \lesssim \frac{E_0 \ee^2}{(1+\tau_1 )^2} , $$
where we used the fundamental Theorem of Calculus, the equation, Proposition \ref{ent}, assumption \eqref{A2} and \eqref{tfluxu}. In the end we have that
$$ II_{A2}b \lesssim \frac{E_0^2 \ee^4}{(1+\tau_1 )^3} , $$
which finally gives us that
$$ I_{A2}a \lesssim \frac{E_0^2 \ee^4}{(1+\tau_1 )^2} . $$
The term with $\Omega^k TF$ can be treated in a similar fashion.

\eqref{A3'}: With the pointwise estimates from \eqref{A2'} (where the nonlinearity $F$ was written with respect to $\varphi = r \psi$) we use the following notation
$$ \sum_{k_1 + k_2 \mik 5} \int_{N} r^2 |\Omega^k F|^2 d\mi_{g_N} \lesssim  I_{A3} + II_{A3} + III_{A3} + IV_{A3}+ V_{A3} . $$
For $IV_{A3}$ we have that
$$ IV_{A3} = \sum_{k_1 + k_2 \mik 5} \int_{N} \frac{\psi_{k_1}^2 \psi_{k_2}^2}{r^2} d\mi_{g_N} \lesssim \frac{E_0 \ee^2}{(1+\tau )^2} \sum_{i , m_i \mik 5} \int_{N} J^T_{\mu} [\psi_{m_i} ] n^{\mu} d\mi_{g_N}  \lesssim \frac{E_0^2 \ee^4}{(1+\tau )^4} , $$
where we used Sobolev when needed, \eqref{dg}, \eqref{dg1}, Hardy's inequality \eqref{hardy} and \eqref{rp1}. $V_{A3}$ can be treated similarly.

For $III_{A3}$ we have that
$$ III_{A3} = \sum_{k_1 + k_2 \mik 5} \int_{N} \psi_{k_1}^2 \frac{(\partial_v \ph_{k_2} )^2}{r^2} d\mi_{g_N} \lesssim \frac{E_0 \ee^2}{(1+\tau )^2} \sum_{i , m_i \mik 5} \int_{N} \frac{(\partial_v \ph_{m_i} )^2}{r^2} d\mi_{g_N} \lesssim \frac{E_0^2 \ee^4}{(1+\tau )^4}  , $$
where we used Sobolev when needed, \eqref{dg}, \eqref{dg1} and \eqref{rp2}.

For $II_{A3}$ we have that
$$ II_{A3} = \sum_{k_1 + k_2 \mik 5} \int_{N} \frac{\psi_{k_1}^2}{r^2} (\partial_u \ph_{k_2} )^2 d\mi_{g_N} \lesssim \frac{E_0 \ee^2}{1+\tau } \sum_{i , m_i \mik 5} \int_{N} J^T_{\mu} [\psi_{m_i} ] n^{\mu} d\mi_{g_N} \lesssim \frac{E_0^2 \ee^4}{(1+\tau )^3}  , $$
where we used Sobolev when needed, \eqref{dgt3away}, \eqref{dgt4away} and Hardy's inequality \eqref{hardy}.

For $I_{A3}$ we have that
$$ I_{A4} = \sum_{k_1 + k_2 \mik 5} \int_{N} \frac{( \partial_v \ph_{k_1} )^2}{r^2} (\partial_u \ph_{k_2} )^2 d\mi_{g_N} \lesssim \frac{E_0 \ee^2}{1+\tau } \sum_{i , m_i \mik 5} \int_{N} \frac{(\partial_v \ph_{m_i} )^2}{r^2} d\mi_{g_N}  \lesssim \frac{E_0^2 \ee^4}{(1+\tau )^3}  , $$
where we used Sobolev when needed, \eqref{dgt3away}, \eqref{dgt4away} and \eqref{rp2}.

So in the end we have that
$$ \sum_{k \mik 5} \left( \int_{\tau_1}^{\tau_2} \left( \int_{N_{\tau'}} r^2 |\Omega^k F|^2 d\mi_{g_N} \right)^{1/2} d\tau' \right)^2 \lesssim \frac{E_0^2 \ee^4}{1+\tau_1} . $$

\eqref{A4'}: We start by noticing that the part that involves $\Omega^k TF$ satisfies the desired estimate by \eqref{A2'} (in $\calc$ we have actually something even stronger than what is needed). For $\Omega^k T^2F$ we have that
$$ \Omega^k T^2 F = \sum_{k_1 + k_2 = k} 2 D^{3/2} (T^2 Y\psi_{k_1} ) \cdot (Y\psi_{k_2} ) + 2D^{3/2} (TY\psi_{k_1} ) \cdot (TY\psi_{k_2} ) + \sqrt{D} (T^3 \psi_{k_1} ) \cdot (Y\psi_{k_2} ) + $$ $$ + 2 \sqrt{D} (T^2 \psi_{k_1} ) \cdot (TY\psi_{k_2} ) + \sqrt{D} (T\psi_{k_1} ) \cdot (T^2 Y\psi_{k_2} ) + 2 \sqrt{D} \langle T^2 \slashed{\nabla} \psi_{k_1} , \slashed{\nabla} \psi_{k_2} \rangle + 2 \sqrt{D} \langle T\slashed{\nabla} \psi_{k_1} , T\slashed{\nabla} \psi_{k_2} \rangle , $$
which implies that
$$ |\Omega^k T^2 F|^2 \lesssim \sum_{k_1 + k_2 = k} D^3 (T^2 Y\psi_{k_1} )^2 (Y\psi_{k_2} )^2  +D^{3} (TY\psi_{k_1} )^2  (TY\psi_{k_2} )^2+ D (T^3 \psi_{k_1} )^2 (Y\psi_{k_2} )^2 + $$ $$ + D (T^2 \psi_{k_1} )^2  (TY\psi_{k_2} )^2 + D (T\psi_{k_1} )^2  (T^2 Y\psi_{k_2} )^2 + D | T^2 \slashed{\nabla} \psi_{k_1} |^2  |\slashed{\nabla} \psi_{k_2} |^2 + D | T\slashed{\nabla} \psi_{k_1} |^2 | T\slashed{\nabla} \psi_{k_2} |^2 . $$
We denote the terms above as follows after integrating them over $S \cap \calc$
$$ \int_{S_{\tau} \cap \calc_{\tau_1}^{\tau_2}}  |\Omega^k TF|^2 d\mi_{g_S} \lesssim I_{A4} + II_{A4} + III_{A4} + IV_{A4} + V_{A4} + VI_{A4} + VII_{A4} . $$
Now we have that
$$ I_{A4} = \sum_{k_1 + k_2 \mik 5 } \int_{S_{\tau} \cap \calc_{\tau_1}^{\tau_2}} D^3 (T^2 Y\psi_{k_1} )^2 (Y\psi_{k_2} )^2  d\mi_{g_S} \lesssim $$ $$ \lesssim \frac{E_0 \ee^2}{(1+\tau )^2} \sum_{i , m_i \mik 5} \int_{S_{\tau} \cap \calc_{\tau_1}^{\tau_2}} (T^2 Y\psi_{m_i} )^2 d\mi_{g_S} \lesssim \frac{E_0 \ee^2}{(1+\tau )^2} \sum_{i , m_i \mik 5} \int_{S_{\tau}} J^T_{\mu} [T^2 \psi_{m_i} ] n^{\mu} d\mi_{g_S} \lesssim \frac{E_0^2 \ee^4}{(1+\tau )^{4-\aaa}} , $$
where we used Sobolev when needed, \eqref{dgt1away}, \eqref{dgt2away} and \eqref{denp3}.

For $II_{A4}$ we have that
$$ II_{A4} = \sum_{k_1 + k_2 \mik 5 } \int_{S_{\tau} \cap \calc_{\tau_1}^{\tau_2}} D^{3} (TY\psi_{k_1} )^2  (TY\psi_{k_2} )^2  d\mi_{g_S} \lesssim $$ $$\lesssim \frac{E_0 \ee^2}{(1+\tau )^{2-\aaa}} \sum_{i , m_i \mik 5} \int_{S_{\tau} \cap \calc_{\tau_1}^{\tau_2}} (T Y\psi_{m_i} )^2 d\mi_{g_S} \lesssim \frac{E_0 \ee^2}{(1+\tau )^{2-\aaa}} \sum_{i , m_i \mik 5} \int_{S_{\tau}} J^T_{\mu} [T \psi_{m_i} ] n^{\mu} d\mi_{g_S} \lesssim \frac{E_0^2 \ee^4}{(1+\tau )^{4-\aaa}} , $$
where we used Sobolev when needed, \eqref{dgtt1away}, \eqref{dgtt2away} and \eqref{denp2}.

For $III_{A4}$ we have that
$$ III_{A4} = \sum_{k_1 + k_2 \mik 5 } \int_{S_{\tau} \cap \calc_{\tau_1}^{\tau_2}} D (T^3 \psi_{k_1} )^2 (Y\psi_{k_2} )^2 d\mi_{g_S} \lesssim $$ $$\lesssim \frac{E_0 \ee^2}{(1+\tau )^2} \sum_{i , m_i \mik 5} \int_{S_{\tau} \cap \calc_{\tau_1}^{\tau_2}} (T^3 \psi_{m_i} )^2 d\mi_{g_S} \lesssim \frac{E_0 \ee^2}{(1+\tau )^2} \sum_{i , m_i \mik 5} \int_{S_{\tau}} J^T_{\mu} [T^2 \psi_{m_i} ] n^{\mu} d\mi_{g_S} \lesssim \frac{E_0^2 \ee^4}{(1+\tau )^{4-\aaa}} , $$
where we used Sobolev when needed, \eqref{dgt1away}, \eqref{dgt2away} and \eqref{denp3}.

For $IV_{A4}$ we have that
$$ IV_{A4} = \sum_{k_1 + k_2 \mik 5 } \int_{S_{\tau} \cap \calc_{\tau_1}^{\tau_2}}  D (T^2 \psi_{k_1} )^2  (TY\psi_{k_2} )^2 d\mi_{g_S} \lesssim $$ $$ \lesssim \frac{\ee^2}{(1+\tau )^{2-\aaa}} \sum_{i , m_i \mik 5} \int_{S_{\tau} \cap \calc_{\tau_1}^{\tau_2}} (T Y\psi_{m_i} )^2 d\mi_{g_S} \lesssim \frac{\ee^2}{(1+\tau )^{2-\aaa}} \sum_{i , m_i \mik 5} \int_{S_{\tau}} J^T_{\mu} [T \psi_{m_i} ] n^{\mu} d\mi_{g_S} \lesssim \frac{\ee^4}{(1+\tau )^{4-\aaa}} , $$
where we used Sobolev when needed, \eqref{dgtt1away}, \eqref{dgtt2away} and \eqref{denp2}.

For $V_{A4}$ we have that
$$ V_{A4} = \sum_{k_1 + k_2 \mik 5 } \int_{S_{\tau} \cap \calc_{\tau_1}^{\tau_2}} D (T\psi_{k_1} )^2  (T^2 Y\psi_{k_2} )^2 d\mi_{g_S} \lesssim $$ $$ \lesssim\frac{E_0 \ee^2}{(1+\tau )^2} \sum_{i , m_i \mik 5} \int_{S_{\tau} \cap \calc_{\tau_1}^{\tau_2}} (T^2 Y \psi_{m_i} )^2 d\mi_{g_S} \lesssim \frac{E_0 \ee^2}{(1+\tau )^2} \sum_{i , m_i \mik 5} \int_{S_{\tau}} J^T_{\mu} [T^2 \psi_{m_i} ] n^{\mu} d\mi_{g_S} \lesssim \frac{E_0^2 \ee^4}{(1+\tau )^{4-\aaa}} , $$
where we used Sobolev when needed, \eqref{dgt1away}, \eqref{dgt2away} and \eqref{denp3}.

For $VI_{A4}$ we have that
$$ VI_{A4} = \sum_{k_1 + k_2 \mik 5 } \int_{S_{\tau} \cap \calc_{\tau_1}^{\tau_2}} D | T^2 \slashed{\nabla} \psi_{k_1} |^2  |\slashed{\nabla} \psi_{k_2} |^2 d\mi_{g_S} \lesssim $$ $$ \lesssim \sum_{i , m_i \mik 5} \frac{E_0 \ee^2}{(1+\tau )^2} \int_{S_{\tau} \cap \calc_{\tau_1}^{\tau_2}} D | T^2 \slashed{\nabla} \psi_{m_i} |^2 d\mi_{g_S} + \sum_{i , m_i \mik 5} \frac{E_0 \ee^2}{(1+\tau )^{2-\aaa}} \int_{S_{\tau} \cap \calc_{\tau_1}^{\tau_2}} D |  \slashed{\nabla} \psi_{m_i} |^2 d\mi_{g_S} \lesssim \frac{E_0^2 \ee^4}{(1+\tau )^{4-\aaa}} , $$
where we used Sobolev when needed, \eqref{dg1away}, \eqref{dg2away}, \eqref{dgtt1away}, \eqref{dgtt2away}, \eqref{denp3} and \eqref{denp1}.

Finally for $VII_{A4}$ we also have that
$$ VII_{A4} = \sum_{k_1 + k_2 \mik 5 } \int_{S_{\tau} \cap \calc_{\tau_1}^{\tau_2}} D | T \slashed{\nabla} \psi_{k_1} |^2  |T\slashed{\nabla} \psi_{k_2} |^2 d\mi_{g_S} \lesssim $$ $$ \lesssim  \frac{E_0 \ee^2}{(1+\tau)^{2-\aaa}} \sum_{i , m_i \mik 5}  \int_{S_{\tau} \cap \calc_{\tau_1}^{\tau_2}} | T\slashed{\nabla} \psi_{m_i} |^2 d\mi_{g_S} \lesssim \frac{E_0^2 \ee^4}{(1+\tau )^{4-\aaa}} , 
$$ 
where we used Sobolev when needed, \eqref{dgt1away}, \eqref{dgt2away} and \eqref{denp2}.

Gathering all the above estimate we get that for any $k \mik 5$ we have that
$$ \int_{S_{\tau} \cap \calc_{\tau_1}^{\tau_2}} |\Omega^k T^2 F|^2 d\mi_{g_S} \lesssim \frac{E_0^2 \ee^4}{(1+\tau )^{4-\aaa}} , $$
which implies the better than required estimate
$$ \int_{\calc_{\tau_1}^{\tau_2}} |\Omega^k TF|^2 d\mi_{g_{\calc}} \lesssim \frac{E_0^2 \ee^4}{(1+\tau_1 )^{3-\aaa}} . $$

\eqref{B1'}: We deal again with the seven terms that we dealt with  in \eqref{A4'}, but now after integrating them over $S$. We have that
$$I_{B1} = \sum_{k_1 + k_2 \mik 5 } \int_S D^3 (T^2 Y\psi_{k_1} )^2 (Y\psi_{k_2} )^2 d\mi_{g_S} \lesssim \frac{E_0 \ee^2}{1+\tau} \sum_{i , m_i \mik 5} \int_S D (T^2 Y \psi_{m_i} )^2 d\mi_{g_S} \lesssim \frac{E_0^2 \ee^4}{(1+\tau )^{3-\aaa}} , $$
where we used Sobolev when needed, \eqref{d2yg}, \eqref{d2yg1} and \eqref{denp3}.

For $II_{B1}$ we have that
$$ II_{B1} = \sum_{k_1 + k_2 \mik 5 } \int_S D^3 (T Y\psi_{k_1} )^2 (TY\psi_{k_2} )^2 d\mi_{g_S} \lesssim \frac{E_0 \ee^2}{1+\tau} \sum_{i , m_i \mik 5} \int_S D (T Y \psi_{m_i} )^2 d\mi_{g_S} \lesssim \frac{E_0^2 \ee^4}{(1+\tau )^{3}} , $$
where we used Sobolev when needed, \eqref{d2tyg}, \eqref{d2tyg1} and \eqref{denp2}.

For $III_{B1}$ we have that
$$ III_{B1} = \sum_{k_1 + k_2 \mik 5 } \int_S D (T^3 \psi_{k_1} )^2 (Y\psi_{k_2} )^2 d\mi_{g_S} \lesssim\frac{E_0 \ee^2}{(1+\tau)^{3/4-\aaa}} \sum_{i , m_i \mik 5} \int_S D (Y \psi_{m_i} )^2 d\mi_{g_S} \lesssim \frac{E_0^2 \ee^4}{(1+\tau )^{11/4-\aaa}} , $$
where we used Sobolev when needed, \eqref{dyg}, \eqref{dyg1} and \eqref{denp3}.

For $IV_{B1}$ we have that
$$ IV_{B1} = \sum_{k_1 + k_2 \mik 5 } \int_S D (T^2 \psi_{k_1} )^2 (TY\psi_{k_2} )^2 d\mi_{g_S} \lesssim\frac{E_0 \ee^2}{(1+\tau )^{1-\aaa}} \sum_{i , m_i \mik 5} \int_S D (TY \psi_{m_i} )^2 d\mi_{g_S} \lesssim \frac{E_0^2 \ee^4}{(1+\tau )^{3-\aaa}} , $$
where we used Sobolev when needed, \eqref{dgtt}, \eqref{dgtt1} and \eqref{denp2}. 

For $V_{B1}$ we have that
$$ V_{B1} = \sum_{k_1 + k_2 \mik 5 } \int_S D (T \psi_{k_1} )^2 (T^2 Y\psi_{k_2} )^2 d\mi_{g_S} \lesssim\frac{\ee^2}{1+\tau} \sum_{i , m_i \mik 5} \int_S D (T^2 Y \psi_{m_i} )^2 d\mi_{g_S} \lesssim \frac{\ee^4}{(1+\tau )^{3-\aaa}} , $$
where we used Sobolev when needed, \eqref{dgt}, \eqref{dgt1} and \eqref{denp3}. 

For $VI_{B1}$ we have that
$$ VI_{B1} = \sum_{k_1 + k_2 \mik 5 } \int_S D |T^2 \slashed{\nabla} \psi_{k_1} |^2 | \slashed{\nabla} \psi_{k_2} |^2 d\mi_{g_S} \lesssim $$ $$ \lesssim \frac{E_0 \ee^2}{1+\tau} \sum_{i , m_i \mik 5} \int_S D | T^2 \slashed{\nabla} \psi_{m_i} |^2 d\mi_{g_S} + \frac{E_0 \ee^2}{(1+\tau )^{2-\aaa}} \sum_{i , m_i \mik 5} \int_S | \slashed{\nabla} \psi_{m_i} |^2 d\mi_{g_S} \lesssim \frac{E_0^2 \ee^4}{(1+\tau )^{3}} , $$
where we used Sobolev when needed, \eqref{dg}, \eqref{dg1}, \eqref{d2gtt}, \eqref{d2gtt1}, \eqref{denp1} and \eqref{denp3}. 

Finally for $VII_{B1}$ we have that
$$ VII_{B1} = \sum_{k_1 + k_2 \mik 5 } \int_S D |T \slashed{\nabla} \psi_{k_1} |^2 | T\slashed{\nabla} \psi_{k_2} |^2 d\mi_{g_S} \lesssim $$ $$ \lesssim\frac{E_0 \ee^2}{(1+\tau )^{1-\aaa}} \sum_{i , m_i \mik 5} \int_S | T\slashed{\nabla} \psi_{m_i} |^2 d\mi_{g_S} \lesssim \frac{E_0^2 \ee^4}{(1+\tau )^{3-\aaa}} , $$
where we used Sobolev when needed, \eqref{dgt}, \eqref{dgt1} and \eqref{denp2}. 

\eqref{B2'}: All terms in the proof of \eqref{B1'} decay with rate $-3+\aaa$ (which is enough to prove \eqref{B2'}), apart from the term $\sum_{k_1 + k_2 \mik 5 } \int_S D (T^3 \psi_{k_1} )^2 (Y\psi_{k_2} )^2 d\mi_{g_S}$. We will treat this differently using Morawetz, and get the improved estimates that is required here. We have that
$$ \sum_{k_1 + k_2 \mik 5 } \int_{\tau_1}^{\tau_2} \int_{S_{\tau'}} D (T^3 \psi_{k_1} )^2 (Y\psi_{k_2} )^2 d\mi_{g_{\rrr}} = $$ $$ = \sum_{k_1 + k_2 \mik 5 } \int_{\cala_{\tau_1}^{\tau_2}} D (T^3 \psi_{k_1} )^2 (Y\psi_{k_2} )^2 d\mi_{g_{\cala}} + \sum_{k_1 + k_2 \mik 5 } \int_{\tau_1}^{\tau_2} \int_{S_{\tau'} \cap (\cala_{\tau_1}^{\tau_2} )^c} D (T^3 \psi_{k_1} )^2 (Y\psi_{k_2} )^2 d\mi_{g_{\rrr}} . $$
For the term $\sum_{k_1 + k_2 \mik 5 } \int_{\tau_1}^{\tau_2} \int_{S_{\tau'} \cap (\cala_{\tau_1}^{\tau_2} )^c} D (T^3 \psi_{k_1} )^2 (Y\psi_{k_2} )^2 d\mi_{g_{\rrr}}$ we have that it decays with the rate $-2$ by the proof of \eqref{A4'}. For the term $\sum_{k_1 + k_2 \mik 5 } \int_{\cala_{\tau_1}^{\tau_2}} D (T^3 \psi_{k_1} )^2 (Y\psi_{k_2} )^2 d\mi_{g_{\cala}}$ we have that
$$\sum_{k_1 + k_2 \mik 5 } \int_{\cala_{\tau_1}^{\tau_2}} D (T^3 \psi_{k_1} )^2 (Y\psi_{k_2} )^2 d\mi_{g_{\cala}} \lesssim \frac{E_0 \ee^2}{(1+\tau_1 )^{1/2}} \int_{\tau_1}^{\tau_2} \int_{S_{\tau'}} (T^3 \psi_{k_1} )^2 d\mi_{g_{\rrr}} \lesssim \frac{E_0^2 \ee^4}{(1+\tau_1 )^{5/2-\aaa}} ,$$
where we used Sobolev when needed, \eqref{dyg}, \eqref{dyg1} and the estimate
$$ \int_{\cala_{\tau_1}^{\tau_2}} (T^3 \psi_{k} )^2 d\mi_{g_{\cala}} \lesssim \frac{E_0 \ee^2}{(1+\tau_1 )^{2-\aaa}} \mbox{ for any $k \mik 5$} , $$
which is a consequence of \eqref{denp3}.

\eqref{B3'}: The proof is very similar to that of \eqref{A3'} and hence will not be repeated.

\eqref{B4'}: We have that
$$ \Omega^k T^3 F = \sum_{k_1 + k_2 = k } 2 D^{3/2} (T^3 Y\psi_{k_1} ) \cdot (Y\psi_{k_2} ) + 6 D^{3/2} (T^2 Y\psi_{k_1} ) \cdot (TY\psi_{k_2} ) + \sqrt{D} (T^4 \psi_{k_1} ) \cdot (Y\psi_{k_2} ) + $$ $$ + 3\sqrt{D} (T^3 \psi_{k_1} ) \cdot (TY\psi_{k_2} ) + 3 \sqrt{D} (T^2 \psi_{k_1} ) \cdot (T^2 Y\psi_{k_2} ) +  \sqrt{D} (T\psi_{k_1} ) \cdot (T^3 Y\psi_{k_2} ) + $$ $$ + 2\sqrt{D} \langle T^3 \slashed{\nabla} \psi_{k_1} , \slashed{\nabla} \psi_{k_2} \rangle + 6\sqrt{D} \langle T^2 \slashed{\nabla} \psi_{k_1} , T\slashed{\nabla} \psi_{k_2} \rangle , $$
which implies that
$$ |\Omega^k T^3 F|^2 \lesssim \sum_{k_1 + k_2 = k } D^3 (T^3 Y\psi_{k_1} )^2 (Y\psi_{k_2} )^2 + D^3 (T^2 Y\psi_{k_1} )^2 (TY\psi_{k_2} )^2 + D(T^4 \psi_{k_1} )^2 (Y\psi_{k_2} )^2 + $$ $$ + D (T^3 \psi_{k_1} )^2 (TY\psi_{k_2} )^2 + D (T^2 \psi_{k_1} )^2 (T^2 Y\psi_{k_2} )^2  + D (T\psi_{k_1} )^2 (T^3 Y\psi_{k_2} )^2 + $$ $$ + D | T^3 \slashed{\nabla} \psi_{k_1}|^2  |\slashed{\nabla} \psi_{k_2}|^2 + D | T^2 \slashed{\nabla} \psi_{k_1}|^2 |T\slashed{\nabla} \psi_{k_2} |^2 . $$
We denote as follows the terms of the above pointwise inequality after integrating them over $S_{\tau} \cap \calc_{\tau_1}^{\tau_2}$
$$ \sum_{k \mik 5}\int_{S_{\tau}\cap \calc_{\tau_1}^{\tau_2}}  |\Omega^k T^3 F|^2 d\mi_{g_S} \lesssim I_{B4} + II_{B4} + III_{B4} + IV_{B4} + V_{B4} + VI_{B4} + VII_{B4} + VIII_{B4} . $$

We have that
$$ I_{B4} = \sum_{k_1 + k_2 \mik 5} \int_{S_{\tau}\cap \calc_{\tau_1}^{\tau_2}}  D^3 (T^3 Y \psi_{k_1} )^2 (Y\psi_{k_2} )^2 d\mi_{g_S} \lesssim \frac{\ee^2}{(1+\tau )^2} \int_{S_{\tau}\cap \calc_{\tau_1}^{\tau_2}}  D^3 (T^3 Y \psi_{k_1} )^2 d\mi_{g_S} \lesssim $$ $$ \lesssim \frac{E_0 \ee^2}{(1+\tau )^2} \sum_{i , m_i \mik 5} \int_{S_{\tau}\cap \calc_{\tau_1}^{\tau_2}}  J^T_{\mu} [T^3 \psi_{m_i} ] n^{\mu} d\mi_{g_S} \lesssim \frac{E_0^2 \ee^4}{(1+\tau )^{3-\aaa}} , $$
where we used Sobolev when needed, \eqref{dgt1away}, \eqref{dgt2away} and \eqref{denp4}.

For $II_{B4}$ we have that
$$ II_{B4} = \sum_{k_1 + k_2 \mik 5} \int_{S_{\tau}\cap \calc_{\tau_1}^{\tau_2}}  D^3 (T^2 Y \psi_{k_1} )^2 (TY\psi_{k_2} )^2 d\mi_{g_S} \lesssim \frac{E_0 \ee^2}{(1+\tau )^{2-\aaa}} \sum_{i , m_i \mik 5} \int_{S_{\tau}\cap \calc_{\tau_1}^{\tau_2}}  D^3 (T^2 Y \psi_{m_i} )^2 d\mi_{g_S} \lesssim $$ $$ \lesssim \frac{E_0 \ee^2}{(1+\tau )^{2-\aaa}} \sum_{i , m_i \mik 5} \int_{S_{\tau}\cap \calc_{\tau_1}^{\tau_2}}  J^T_{\mu} [T^2 \psi_{m_i} ] n^{\mu} d\mi_{g_S} \lesssim \frac{E_0^2 \ee^4}{(1+\tau )^{4-2\aaa}} , $$
where we used Sobolev when needed, \eqref{dgtt1away}, \eqref{dgtt2away} and \eqref{denp3}.

For $III_{B4}$ we have that
$$ III_{B4} = \sum_{k_1 + k_2 \mik 5} \int_{S_{\tau}\cap \calc_{\tau_1}^{\tau_2}}  D (T^4 \psi_{k_1} )^2 (Y\psi_{k_2} )^2 d\mi_{g_S} \lesssim \frac{E_0 \ee^2}{(1+\tau )^{2}} \sum_{i , m_i \mik 5} \int_{S_{\tau}\cap \calc_{\tau_1}^{\tau_2}}  D (T^4  \psi_{m_i} )^2 d\mi_{g_S} \lesssim $$ $$ \lesssim \frac{E_0 \ee^2}{(1+\tau )^{2}} \sum_{i , m_i \mik 5} \int_{S_{\tau}\cap \calc_{\tau_1}^{\tau_2}}  J^T_{\mu} [T^3 \psi_{m_i} ] n^{\mu} d\mi_{g_S} \lesssim \frac{E_0^2 \ee^4}{(1+\tau )^{3-\aaa}} , $$
where we used Sobolev when needed, \eqref{dgt1away}, \eqref{dgt2away} and \eqref{denp4}.

For $IV_{B4}$ we have that
$$ IV_{B4} = \sum_{k_1 + k_2 \mik 5} \int_{S_{\tau}\cap \calc_{\tau_1}^{\tau_2}}  D (T^3 \psi_{k_1} )^2 (TY\psi_{k_2} )^2 d\mi_{g_S} \lesssim \frac{E_0 \ee^2}{(1+\tau )^{2-\aaa}} \sum_{i , m_i \mik 5} \int_{S_{\tau}\cap \calc_{\tau_1}^{\tau_2}}  D (T^3 \psi_{m_i} )^2 d\mi_{g_S} \lesssim $$ $$ \lesssim \frac{E_0 \ee^2}{(1+\tau )^{2-\aaa}} \sum_{i , m_i \mik 5} \int_{S_{\tau}\cap \calc_{\tau_1}^{\tau_2}}  J^T_{\mu} [T^2 \psi_{m_i} ] n^{\mu} d\mi_{g_S} \lesssim \frac{E_0^2 \ee^4}{(1+\tau )^{4-2\aaa}} , $$
where we used Sobolev when needed, \eqref{dgtt1away}, \eqref{dgtt2away} and \eqref{denp3}.

For $V_{B4}$ we have that
$$ V_{B4} = \sum_{k_1 + k_2 \mik 5} \int_{S_{\tau}\cap \calc_{\tau_1}^{\tau_2}}  D (T^2 \psi_{k_1} )^2 (T^2 Y\psi_{k_2} )^2 d\mi_{g_S} \lesssim \frac{E_0 \ee^2}{(1+\tau )^{2-\aaa}} \sum_{i , m_i \mik 5} \int_{S_{\tau}\cap \calc_{\tau_1}^{\tau_2}}  D (T^2 Y \psi_{m_i} )^2 d\mi_{g_S} \lesssim $$ $$ \lesssim \frac{E_0 \ee^2}{(1+\tau )^{2-\aaa}} \sum_{i , m_i \mik 5} \int_{S_{\tau}\cap \calc_{\tau_1}^{\tau_2}}  J^T_{\mu} [T^2 \psi_{m_i} ] n^{\mu} d\mi_{g_S} \lesssim \frac{E_0^2 \ee^4}{(1+\tau )^{4-2\aaa}} , $$
where we used Sobolev when needed, \eqref{dgtt1away}, \eqref{dgtt2away} and \eqref{denp3}.

For $VI_{B4}$ we have that
$$ VI_{B4} = \sum_{k_1 + k_2 \mik 5} \int_{S_{\tau}\cap \calc_{\tau_1}^{\tau_2}}  D (T \psi_{k_1} )^2 (T^3 Y\psi_{k_2} )^2 d\mi_{g_S} \lesssim \frac{E_0 \ee^2}{(1+\tau )^{2}} \sum_{i , m_i \mik 5} \int_{S_{\tau}\cap \calc_{\tau_1}^{\tau_2}}  D (T^3 Y \psi_{m_i} )^2 d\mi_{g_S} \lesssim $$ $$ \lesssim \frac{E_0 \ee^2}{(1+\tau )^{2}} \sum_{i , m_i \mik 5} \int_{S_{\tau}\cap \calc_{\tau_1}^{\tau_2}}  J^T_{\mu} [T^3 \psi_{m_i} ] n^{\mu} d\mi_{g_S} \lesssim \frac{E_0^2 \ee^4}{(1+\tau )^{3-\aaa}} , $$
where we used Sobolev when needed, \eqref{dgt1away}, \eqref{dgt2away} and \eqref{denp4}.

For $VII_{B4}$ we have that
$$ VII_{B4} = \sum_{k_1 + k_2 \mik 5} \int_{S_{\tau}\cap \calc_{\tau_1}^{\tau_2}}  D |T^3 \slashed{\nabla} \psi_{k_1} |^2 |\slashed{\nabla}\psi_{k_2} |^2 d\mi_{g_S} \lesssim $$ $$ \lesssim \frac{E_0 \ee^2}{(1+\tau )^{2}} \sum_{i , m_i \mik 5} \int_{S_{\tau}\cap \calc_{\tau_1}^{\tau_2}}  D |T^3 \slashed{\nabla} \psi_{m_i} |^2 d\mi_{g_S} + \frac{E_0 \ee^2}{(1+\tau )^{1-\aaa}} \sum_{i , m_i \mik 5}\int_{S_{\tau}\cap \calc_{\tau_1}^{\tau_2}}  D | \slashed{\nabla} \psi_{m_i} |^2 d\mi_{g_S} \lesssim $$ $$ \lesssim \frac{E_0 \ee^2}{(1+\tau )^{2}} \sum_{i , m_i \mik 5} \int_{S_{\tau}\cap \calc_{\tau_1}^{\tau_2}}  J^T_{\mu} [T^3 \psi_{m_i} ] n^{\mu} d\mi_{g_S} + \frac{E_0 \ee^2}{(1+\tau )^{1-\alpha}} \sum_{i , m_i \mik 5} \int_{S_{\tau}\cap \calc_{\tau_1}^{\tau_2}}  J^T_{\mu} [ \psi_{m_i} ] n^{\mu} d\mi_{g_S}  \lesssim \frac{E_0^2 \ee^4}{(1+\tau )^{3-\aaa}} , $$
where we used Sobolev when needed, \eqref{dg1away}, \eqref{dg2away}, \eqref{dgttt1away}, \eqref{dgttt2away}, \eqref{denp1} and \eqref{denp4}.

For $VIII_{B4}$ we have that
$$ VIII_{B4} = \sum_{k_1 + k_2 \mik 5} \int_{S_{\tau}\cap \calc_{\tau_1}^{\tau_2}}  D |T^2 \slashed{\nabla} \psi_{k_1} |^2 |T\slashed{\nabla}\psi_{k_2} |^2 d\mi_{g_S} \lesssim $$ $$ \lesssim \frac{E_0 \ee^2}{(1+\tau )^{2}} \sum_{i , m_i \mik 5} \int_{S_{\tau}\cap \calc_{\tau_1}^{\tau_2}}  D |T^2 \slashed{\nabla} \psi_{m_i} |^2 d\mi_{g_S} + \frac{E_0 \ee^2}{(1+\tau )^{1-\aaa}} \sum_{i , m_i \mik 5}\int_{S_{\tau}\cap \calc_{\tau_1}^{\tau_2}}  D | T\slashed{\nabla} \psi_{m_i} |^2 d\mi_{g_S} \lesssim $$ $$ \lesssim \frac{E_0 \ee^2}{(1+\tau )^{2}} \sum_{i , m_i \mik 5} \int_{S_{\tau}\cap \calc_{\tau_1}^{\tau_2}}  J^T_{\mu} [T^2 \psi_{m_i} ] n^{\mu} d\mi_{g_S} + \frac{E_0 \ee^2}{(1+\tau )^{1-\aaa}} \sum_{i , m_i \mik 5} \int_{S_{\tau}\cap \calc_{\tau_1}^{\tau_2}}  J^T_{\mu} [ T\psi_{m_i} ] n^{\mu} d\mi_{g_S}  \lesssim \frac{E_0^2 \ee^4}{(1+\tau )^{3-\aaa}} , $$
where we used Sobolev when needed, \eqref{dgt1away}, \eqref{dgt2away}, \eqref{dgtt1away}, \eqref{dgtt2away}, \eqref{denp2} and \eqref{denp3}.

Gathering together all the previous estimates, we get that in the end
$$ \int_{S_{\tau}\cap \calc_{\tau_1}^{\tau_2}} |\Omega^k T^3 F|^2 d\mi_{g_S} \lesssim \frac{E_0^2 \ee^4}{(1+\tau )^{3-\aaa}} \mbox{ for any $k \mik 5$} , $$
which implies the desired estimate \eqref{B4'}.

\eqref{C1'}: We deal again the same terms from \eqref{B4'}, but this time we integrate them on $S$. We have that
$$ I_{C1} = \sum_{k_1 + k_2 \mik 5} \int_{S}  D^3 (T^3 Y \psi_{k_1} )^2 (Y\psi_{k_2} )^2 d\mi_{g_S} \lesssim \frac{E_0 \ee^2}{1+\tau } \int_S D (T^3 Y \psi_{k_1} )^2 d\mi_{g_S} \lesssim $$ $$ \lesssim \frac{E_0 \ee^2}{1+\tau } \sum_{i , m_i \mik 5} \int_S  J^T_{\mu} [T^3 \psi_{m_i} ] n^{\mu} d\mi_{g_S} \lesssim \frac{E_0^2 \ee^4}{(1+\tau )^{2-\aaa}} , $$
where we used Sobolev when needed, \eqref{d2yg}, \eqref{d2yg1} and \eqref{denp4}.

For $II_{C1}$ we have that
$$ II_{C1} = \sum_{k_1 + k_2 \mik 5} \int_S  D^3 (T^2 Y \psi_{k_1} )^2 (TY\psi_{k_2} )^2 d\mi_{g_S} \lesssim \frac{E_0 \ee^2}{1+\tau } \sum_{i , m_i \mik 5} \int_S  D (T^2 Y \psi_{m_i} )^2 d\mi_{g_S} \lesssim $$ $$ \lesssim \frac{E_0 \ee^2}{1+\tau } \sum_{i , m_i \mik 5} \int_S J^T_{\mu} [T^2 \psi_{m_i} ] n^{\mu} d\mi_{g_S} \lesssim \frac{E_0^2 \ee^4}{(1+\tau )^{3-\aaa}} , $$
where we used Sobolev when needed, \eqref{d2tyg}, \eqref{d2tyg1} and \eqref{denp3}.

For $III_{C1}$ we have that
$$ III_{C1} = \sum_{k_1 + k_2 \mik 5} \int_S  D (T^4 \psi_{k_1} )^2 (Y\psi_{k_2} )^2 d\mi_{g_S} \lesssim \frac{E_0 \ee^2}{ (1+\tau )^{1/2}} \sum_{i , m_i \mik 5} \int_S (T^4  \psi_{m_i} )^2 d\mi_{g_S} \lesssim $$ $$ \lesssim \frac{E_0 \ee^2}{ (1+\tau )^{1/2}} \sum_{i , m_i \mik 5} \int_S  J^T_{\mu} [T^3 \psi_{m_i} ] n^{\mu} d\mi_{g_S} \lesssim \frac{E_0^2 \ee^4}{(1+\tau )^{3/2-\aaa}} , $$
where we used Sobolev when needed, \eqref{dyg}, \eqref{dyg1} and \eqref{denp4}.

For $IV_{C1}$ we have that
$$ IV_{C1} = \sum_{k_1 + k_2 \mik 5} \int_S  D (T^3 \psi_{k_1} )^2 (TY\psi_{k_2} )^2 d\mi_{g_S} \lesssim \frac{E_0 \ee^2}{(1+\tau )^{1/2}} \sum_{i , m_i \mik 5} \int_S   (T^3 \psi_{m_i} )^2 d\mi_{g_S} \lesssim $$ $$ \lesssim \frac{E_0 \ee^2}{(1+\tau )^{1/2}} \sum_{i , m_i \mik 5} \int_S J^T_{\mu} [T^2 \psi_{m_i} ] n^{\mu} d\mi_{g_S} \lesssim \frac{E_0^2 \ee^4}{(1+\tau )^{5/2-\aaa}} , $$
where we used Sobolev when needed, \eqref{dtyg}, \eqref{dtyg1} and \eqref{denp3}.

For $V_{C1}$ we have that
$$ V_{C1} = \sum_{k_1 + k_2 \mik 5} \int_S  D (T^2 \psi_{k_1} )^2 (T^2 Y\psi_{k_2} )^2 d\mi_{g_S} \lesssim \frac{E_0 \ee^2}{(1+\tau )^{1-\aaa} } \sum_{i , m_i \mik 5} \int_S  D (T^2 Y \psi_{m_i} )^2 d\mi_{g_S} \lesssim $$ $$ \lesssim \frac{E_0 \ee^2}{1+\tau} \sum_{i , m_i \mik 5} \int_S  J^T_{\mu} [T^2 \psi_{m_i} ] n^{\mu} d\mi_{g_S} \lesssim \frac{E_0^2 \ee^4}{(1+\tau )^{3-2\aaa}} , $$
where we used Sobolev when needed, \eqref{dgtt}, \eqref{dgtt1} and \eqref{denp3}.

For $VI_{C1}$ we have that
$$ VI_{C1} = \sum_{k_1 + k_2 \mik 5} \int_S  D (T \psi_{k_1} )^2 (T^3 Y\psi_{k_2} )^2 d\mi_{g_S} \lesssim \frac{E_0 \ee^2}{1+\tau } \sum_{i , m_i \mik 5} \int_S D (T^3 Y \psi_{m_i} )^2 d\mi_{g_S} \lesssim $$ $$ \lesssim \frac{E_0 \ee^2}{1+\tau } \sum_{i , m_i \mik 5} \int_S  J^T_{\mu} [T^3 \psi_{m_i} ] n^{\mu} d\mi_{g_S} \lesssim \frac{E_0^2 \ee^4}{(1+\tau )^{2-\aaa}} , $$
where we used Sobolev when needed, \eqref{dgt1away}, \eqref{dgt2away} and \eqref{denp4}.

For $VII_{C1}$ we have that
$$ VII_{C1} = \sum_{k_1 + k_2 \mik 5} \int_S D |T^3 \slashed{\nabla} \psi_{k_1} |^2 |\slashed{\nabla}\psi_{k_2} |^2 d\mi_{g_S} \lesssim $$ $$ \lesssim \frac{E_0 \ee^2}{1+\tau } \sum_{i , m_i \mik 5} \int_S  D |T^3 \slashed{\nabla} \psi_{m_i} |^2 d\mi_{g_S} + \frac{E_0 \ee^2}{(1+\tau )^{1-\aaa}} \sum_{i , m_i \mik 5}\int_S   | \slashed{\nabla} \psi_{m_i} |^2 d\mi_{g_S} \lesssim $$ $$ \lesssim \frac{E_0 \ee^2}{1+\tau } \sum_{i , m_i \mik 5} \int_S  J^T_{\mu} [T^3 \psi_{m_i} ] n^{\mu} d\mi_{g_S} + \frac{E_0 \ee^2}{(1+\tau )^{1-\aaa}} \sum_{i , m_i \mik 5} \int_S  J^T_{\mu} [ \psi_{m_i} ] n^{\mu} d\mi_{g_S}  \lesssim \frac{E_0^2 \ee^4}{(1+\tau )^{2-\aaa}} , $$
where we used Sobolev when needed, \eqref{dg}, \eqref{dg1}, \eqref{dgttt}, \eqref{dgttt1}, \eqref{denp1} and \eqref{denp4}.

For $VIII_{C1}$ we have that
$$ VIII_{C1} = \sum_{k_1 + k_2 \mik 5} \int_S  D |T^2 \slashed{\nabla} \psi_{k_1} |^2 |T\slashed{\nabla}\psi_{k_2} |^2 d\mi_{g_S} \lesssim $$ $$ \lesssim \frac{E_0 \ee^2}{1+\tau } \sum_{i , m_i \mik 5} \int_S  D |T^2 \slashed{\nabla} \psi_{m_i} |^2 d\mi_{g_S} + \frac{E_0 \ee^2}{ (1+\tau )^{2-\aaa} } \sum_{i , m_i \mik 5}\int_S   | T\slashed{\nabla} \psi_{m_i} |^2 d\mi_{g_S} \lesssim $$ $$ \lesssim \frac{E_0 \ee^2}{1+\tau } \sum_{i , m_i \mik 5} \int_S  J^T_{\mu} [T^2 \psi_{m_i} ] n^{\mu} d\mi_{g_S} + \frac{E_0 \ee^2}{(1+\tau )^{2-\aaa} } \sum_{i , m_i \mik 5} \int_S  J^T_{\mu} [ T\psi_{m_i} ] n^{\mu} d\mi_{g_S}  \lesssim \frac{E_0^2 \ee^4}{(1+\tau )^{3-\aaa}} , $$
where we used Sobolev when needed, \eqref{dgt}, \eqref{dgt1}, \eqref{d2gtt}, \eqref{d2gtt1}, \eqref{denp2} and \eqref{denp3}.

All the above estimates imply that
$$ \int_S |\Omega^k T^3 F |^2 d\mi_{g_S} \lesssim \frac{E_0^2 \ee^4}{(1+\tau )^{2-\aaa}} \mbox{ for any $k \mik 5$}. $$

\eqref{C2'}: Apart from the term $\sum_{k_1 + k_2 \mik 5} \int_S  D (T^4 \psi_{k_1} )^2 (Y\psi_{k_2} )^2 d\mi_{g_S}$ all the other terms in the proof of \eqref{C1'} have decay of rate $-2+\aaa$, which enough in order to prove \eqref{C2'}. But for the aforementioned problematic term we have that
$$ \sum_{k_1 + k_2 \mik 5} \int_{\tau_1}^{\tau_2} \int_{S_{\tau'}}  D (T^4 \psi_{k_1} )^2 (Y\psi_{k_2} )^2 d\mi_{g_{\rrr}} = $$ $$ = \sum_{k_1 + k_2 \mik 5} \int_{\cala_{\tau_1}^{\tau_2}} D (T^4 \psi_{k_1} )^2 (Y\psi_{k_2} )^2 d\mi_{g_{\cala}} + \sum_{k_1 + k_2 \mik 5} \int_{\tau_1}^{\tau_2} \int_{S_{\tau'} \cap (\cala_{\tau_1}^{\tau_2} )^c}  D (T^4 \psi_{k_1} )^2 (Y\psi_{k_2} )^2 d\mi_{g_{\rrr}} . $$
The second term $ \sum_{k_1 + k_2 \mik 5} \int_{\tau_1}^{\tau_2} \int_{S_{\tau'} \cap (\cala_{\tau_1}^{\tau_2} )^c}  D (T^4 \psi_{k_1} )^2 (Y\psi_{k_2} )^2 d\mi_{g_{\rrr}}$ has decay of rate $-2+\aaa$ by the proof of \eqref{B4'}. For $\sum_{k_1 + k_2 \mik 5} \int_{\cala_{\tau_1}^{\tau_2}} D (T^4 \psi_{k_1} )^2 (Y\psi_{k_2} )^2 d\mi_{g_{\cala}}$ we have that
$$ \sum_{k_1 + k_2 \mik 5} \int_{\cala_{\tau_1}^{\tau_2}} D (T^4 \psi_{k_1} )^2 (Y\psi_{k_2} )^2 d\mi_{g_{\cala}}  \lesssim \frac{E_0 \ee^2}{(1+\tau_1 )^{1/2}} \sum_{i , m_i \mik 5} \int_{\cala_{\tau_1}^{\tau_2}} (T^4 \psi_{m_i} )^2  d\mi_{g_{\cala}} \lesssim \frac{E_0^2 \ee^4}{(1+\tau_1 )^{3/2-\aaa}} , $$
where we used Sobolev when needed, \eqref{dyg}, \eqref{dyg1} and the estimate 
$$ \int_{\cala_{\tau_1}^{\tau_2}} (T^4 \psi_{k} )^2  d\mi_{g_{\cala}} \lesssim \frac{E_0 \ee^2}{(1+\tau_1 )^{1-\aaa}} \mbox{ for any $k\mik 5$}, $$
which is a consequence of \eqref{denp4}.

\eqref{C3'}: Again, the proof is very similar to the proofs of \textbf{A2'} and \textbf{B3'} and will not be repeated.

\eqref{C4'}: We have that
$$ \Omega^k T^4 F = \sum_{k_1 + k_2 = k } 2 D^{3/2} (T^4 Y\psi_{k_1} ) \cdot (Y\psi_{k_2} ) + 8D^{3/2} (T^3 Y\psi_{k_1} ) \cdot (TY\psi_{k_2} ) + 6D^{3/2} (T^2 Y\psi_{k_1} ) \cdot (T^2 Y\psi_{k_2} ) + $$ $$ + \sqrt{D} (T^5 \psi_{k_1} ) \cdot (Y\psi_{k_2} ) + 4\sqrt{D} (T^4 \psi_{k_1} ) \cdot (TY\psi_{k_2} ) + 6 \sqrt{D} (T^3 \psi_{k_1} ) \cdot (T^2 Y\psi_{k_2} ) + 4\sqrt{D} (T^2 \psi_{k_1} ) \cdot (T^3 Y\psi_{k_2} ) + $$ $$ + \sqrt{D} (T\psi_{k_1} ) \cdot (T^4 Y\psi_{k_2} ) + 2\sqrt{D} \langle T^4 \slashed{\nabla} \psi_{k_1} , \slashed{\nabla} \psi_{k_2} \rangle + 8\sqrt{D} \langle T^3 \slashed{\nabla} \psi_{k_1} , T\slashed{\nabla} \psi_{k_2} \rangle  + 6\sqrt{D} \langle T^2 \slashed{\nabla} \psi_{k_1} , T^2 \slashed{\nabla} \psi_{k_2} \rangle , $$
which implies that
$$ |\Omega^k T^4 F|^2 \lesssim \sum_{k_1 + k_2 = k }  D^3 (T^4 Y\psi_{k_1} )^2 (Y\psi_{k_2} )^2 + D^3 (T^3 Y\psi_{k_1} )^2 (TY\psi_{k_2} )^2 + D^3 (T^2 Y\psi_{k_1} )^2 (T^2 Y\psi_{k_2} )^2 + $$ $$ + D (T^5 \psi_{k_1} )^2 (Y\psi_{k_2} )^2 +D (T^4 \psi_{k_1} )^2 (TY\psi_{k_2} )^2 + D (T^3 \psi_{k_1} )^2 (T^2 Y\psi_{k_2} )^2 + D (T^2 \psi_{k_1} )^2 (T^3 Y\psi_{k_2} )^2 + $$ $$ + D(T\psi_{k_1} )^2 (T^4 Y\psi_{k_2} )^2 + D | T^4 \slashed{\nabla} \psi_{k_1}|^2 | \slashed{\nabla} \psi_{k_2} |^2 + D |T^3 \slashed{\nabla} \psi_{k_1}|^2 |T\slashed{\nabla} \psi_{k_2}|^2  + D| T^2 \slashed{\nabla} \psi_{k_1}|^2 |T^2 \slashed{\nabla} \psi_{k_2} |^2 . $$
After integrating over $S \cap \calc$ we denote the above terms as follows
$$ \sum_{k \mik 5} \int_{S_{\tau} \cap \calc_{\tau_1}^{\tau_2}} |\Omega^k T^4 F|^2 d\mi_{g_S} \lesssim I_{C4} + II_{C4} + III_{C4} + IV_{C4} + V_{C4} + VI_{C4} + VII_{C4} + VIII_{C4} + IX_{C4} + X_{C4} + XI_{C4} . $$
We have that
$$ I_{C4} = \sum_{k_1 + k_2 \mik 5} \int_{S_{\tau} \cap \calc_{\tau_1}^{\tau_2}} D^3 (T^4 Y \psi_{k_1} )^2 (Y\psi_{k_2} )^2  d\mi_{g_S}  \lesssim \frac{E_0 \ee^2}{(1+\tau )^2} \sum_{i , m_i \mik 5} \int_{S_{\tau} \cap \calc_{\tau_1}^{\tau_2}} D^3 (T^4 Y \psi_{m_i} )^2  d\mi_{g_S} \lesssim $$ $$ \lesssim \frac{E_0 \ee^2}{(1+\tau )^2} \int_{S_{\tau} \cap \calc_{\tau_1}^{\tau_2}} J^T_{\mu} [T^4 \psi_{m_i} ] n^{\mu}  d\mi_{g_S} \lesssim \frac{E_0^2 \ee^4}{(1+\tau )^{2-\aaa}} , $$
where we used Sobolev when needed, \eqref{dgt1away}, \eqref{dgt2away} and \eqref{denp5}.

For $II_{C4}$ we have that
$$ II_{C4} = \sum_{k_1 + k_2 \mik 5} \int_{S_{\tau} \cap \calc_{\tau_1}^{\tau_2}} D^3 (T^3 Y \psi_{k_1} )^2 (TY\psi_{k_2} )^2  d\mi_{g_S}  \lesssim \frac{E_0 \ee^2}{(1+\tau )^{2-\aaa}} \sum_{i , m_i \mik 5} \int_{S_{\tau} \cap \calc_{\tau_1}^{\tau_2}} D^3 (T^3 Y \psi_{m_i} )^2  d\mi_{g_S} \lesssim $$ $$ \lesssim \frac{E_0 \ee^2}{(1+\tau )^{2-\aaa}} \int_{S_{\tau} \cap \calc_{\tau_1}^{\tau_2}} J^T_{\mu} [T^3 \psi_{m_i} ] n^{\mu}  d\mi_{g_S} \lesssim \frac{E_0^2 \ee^4}{(1+\tau )^{3-2\aaa}} , $$
where we used Sobolev when needed, \eqref{dgtt1away}, \eqref{dgtt2away} and \eqref{denp4}.

For $III_{C4}$ we have that
$$ III_{C4} = \sum_{k_1 + k_2 \mik 5} \int_{S_{\tau} \cap \calc_{\tau_1}^{\tau_2}} D^3 (T^2 Y \psi_{k_1} )^2 (T^2 Y\psi_{k_2} )^2  d\mi_{g_S}  \lesssim \frac{\ee^2}{(1+\tau )^{1-\aaa}} \sum_{i , m_i \mik 5} \int_{S_{\tau} \cap \calc_{\tau_1}^{\tau_2}} D^3 (T^2 Y \psi_{m_i} )^2  d\mi_{g_S} \lesssim $$ $$ \lesssim \frac{\ee^2}{(1+\tau )^{1-\aaa}} \int_{S_{\tau} \cap \calc_{\tau_1}^{\tau_2}} J^T_{\mu} [T^2 \psi_{m_i} ] n^{\mu}  d\mi_{g_S} \lesssim \frac{\ee^4}{(1+\tau )^{3-2\aaa}} , $$
where we used Sobolev when needed, \eqref{dgttt1away}, \eqref{dgttt2away} and \eqref{denp3}.

For $IV_{C4}$ we have that
$$ IV_{C4} = \sum_{k_1 + k_2 \mik 5} \int_{S_{\tau} \cap \calc_{\tau_1}^{\tau_2}} D (T^5 \psi_{k_1} )^2 ( Y\psi_{k_2} )^2  d\mi_{g_S}  \lesssim \frac{E_0 \ee^2}{(1+\tau )^{2}} \sum_{i , m_i \mik 5} \int_{S_{\tau} \cap \calc_{\tau_1}^{\tau_2}} D (T^5  \psi_{m_i} )^2  d\mi_{g_S} \lesssim $$ $$ \lesssim \frac{E_0 \ee^2}{(1+\tau )^{2}} \int_{S_{\tau} \cap \calc_{\tau_1}^{\tau_2}} J^T_{\mu} [T^4 \psi_{m_i} ] n^{\mu}  d\mi_{g_S} \lesssim \frac{E_0^2 \ee^4}{(1+\tau )^{2-\aaa}} , $$
where we used Sobolev when needed, \eqref{dgt1away}, \eqref{dgt2away} and \eqref{denp5}.

For $V_{C4}$ we have that
$$ V_{C4} = \sum_{k_1 + k_2 \mik 5} \int_{S_{\tau} \cap \calc_{\tau_1}^{\tau_2}} D (T^4 \psi_{k_1} )^2 ( TY\psi_{k_2} )^2  d\mi_{g_S}  \lesssim \frac{E_0 \ee^2}{(1+\tau )^{2-\aaa}} \sum_{i , m_i \mik 5} \int_{S_{\tau} \cap \calc_{\tau_1}^{\tau_2}} D (T^4  \psi_{m_i} )^2  d\mi_{g_S} \lesssim $$ $$ \lesssim \frac{E_0 \ee^2}{(1+\tau )^{2-\aaa}} \int_{S_{\tau} \cap \calc_{\tau_1}^{\tau_2}} J^T_{\mu} [T^3 \psi_{m_i} ] n^{\mu}  d\mi_{g_S} \lesssim \frac{E_0^2 \ee^4}{(1+\tau )^{3-2\aaa}} , $$
where we used Sobolev when needed, \eqref{dgtt1away}, \eqref{dgtt2away} and \eqref{denp4}.

For $VI_{C4}$ we have that
$$ VI_{C4} = \sum_{k_1 + k_2 \mik 5} \int_{S_{\tau} \cap \calc_{\tau_1}^{\tau_2}} D (T^3 \psi_{k_1} )^2 ( T^2 Y\psi_{k_2} )^2  d\mi_{g_S}  \lesssim \frac{E_0 \ee^2}{(1+\tau )^{1-\aaa}} \sum_{i , m_i \mik 5} \int_{S_{\tau} \cap \calc_{\tau_1}^{\tau_2}} D (T^2 Y \psi_{m_i} )^2  d\mi_{g_S} \lesssim $$ $$ \lesssim \frac{E_0 \ee^2}{(1+\tau )^{1-\aaa}} \int_{S_{\tau} \cap \calc_{\tau_1}^{\tau_2}} J^T_{\mu} [T^2 \psi_{m_i} ] n^{\mu}  d\mi_{g_S} \lesssim \frac{E_0^2 \ee^4}{(1+\tau )^{3-2\aaa}} , $$
where we used Sobolev when needed, \eqref{dgttt1away}, \eqref{dgttt2away} and \eqref{denp3}.

For $VII_{C4}$ we have that
$$ VII_{C4} = \sum_{k_1 + k_2 \mik 5} \int_{S_{\tau} \cap \calc_{\tau_1}^{\tau_2}} D (T^2 \psi_{k_1} )^2 ( T^3 Y\psi_{k_2} )^2  d\mi_{g_S}  \lesssim \frac{E_0 \ee^2}{(1+\tau )^{2-\aaa}} \sum_{i , m_i \mik 5} \int_{S_{\tau} \cap \calc_{\tau_1}^{\tau_2}} D (T^3 Y \psi_{m_i} )^2  d\mi_{g_S} \lesssim $$ $$ \lesssim \frac{E_0 \ee^2}{(1+\tau )^{2-\aaa}} \int_{S_{\tau} \cap \calc_{\tau_1}^{\tau_2}} J^T_{\mu} [T^3 \psi_{m_i} ] n^{\mu}  d\mi_{g_S} \lesssim \frac{E_0^2 \ee^4}{(1+\tau )^{3-2\aaa}} , $$
where we used Sobolev when needed, \eqref{dgtt1away}, \eqref{dgtt2away} and \eqref{denp4}.

For $VIII_{C4}$ we have that
$$ VIII_{C4} = \sum_{k_1 + k_2 \mik 5} \int_{S_{\tau} \cap \calc_{\tau_1}^{\tau_2}} D (T \psi_{k_1} )^2 ( T^4 Y\psi_{k_2} )^2  d\mi_{g_S}  \lesssim \frac{E_0 \ee^2}{(1+\tau )^{2}} \sum_{i , m_i \mik 5} \int_{S_{\tau} \cap \calc_{\tau_1}^{\tau_2}} D (T^4 Y \psi_{m_i} )^2  d\mi_{g_S} \lesssim $$ $$ \lesssim \frac{E_0 \ee^2}{(1+\tau )^{2}} \int_{S_{\tau} \cap \calc_{\tau_1}^{\tau_2}} J^T_{\mu} [T^4 \psi_{m_i} ] n^{\mu}  d\mi_{g_S} \lesssim \frac{E_0^2 \ee^4}{(1+\tau )^{2-\aaa}} , $$
where we used Sobolev when needed, \eqref{dgt1away}, \eqref{dgt2away} and \eqref{denp5}.

For $IX_{C4}$ we have that
$$ IX_{C4} = \sum_{k_1 + k_2 \mik 5} \int_{S_{\tau} \cap \calc_{\tau_1}^{\tau_2}} D |T^4 \slashed{\nabla} \psi_{k_1} |^2 | \slashed{\nabla} \psi_{k_2} |^2  d\mi_{g_S}  \lesssim $$ $$ \lesssim \frac{E_0 \ee^2}{(1+\tau )^{2}} \sum_{i , m_i \mik 5} \int_{S_{\tau} \cap \calc_{\tau_1}^{\tau_2}} D |T^4 \slashed{\nabla} \psi_{m_i} |^2  d\mi_{g_S} + E_0 \ee^2 (1+\tau )^{\aaa} \sum_{i , m_i \mik 5} \int_{S_{\tau} \cap \calc_{\tau_1}^{\tau_2}} D |\slashed{\nabla} \psi_{m_i} |^2  d\mi_{g_S} \lesssim $$ $$ \lesssim \frac{E_0 \ee^2}{(1+\tau )^{2}} \sum_{i , m_i \mik 5} \int_{S_{\tau} \cap \calc_{\tau_1}^{\tau_2}} J^T_{\mu} [T^4 \psi_{m_i} ] n^{\mu}  d\mi_{g_S} + E_0 \ee^2 (1+\tau )^{\aaa} \sum_{i , m_i \mik 5} \int_{S_{\tau} \cap \calc_{\tau_1}^{\tau_2}} J^T_{\mu} [\psi_{m_i} ] n^{\mu}  d\mi_{g_S} \lesssim \frac{E_0^2 \ee^4}{(1+\tau )^{2-\aaa}} , $$
where we used Sobolev when needed, \eqref{dg1away}, \eqref{dg2away}, \eqref{dgtttt}, \eqref{dgtttt1}, \eqref{denp1} and \eqref{denp5}.

For $X_{C4}$ we have that
$$ X_{C4} = \sum_{k_1 + k_2 \mik 5} \int_{S_{\tau} \cap \calc_{\tau_1}^{\tau_2}} D |T^3 \slashed{\nabla} \psi_{k_1} |^2 | T\slashed{\nabla} \psi_{k_2} |^2  d\mi_{g_S}  \lesssim $$ $$ \lesssim \frac{E_0 \ee^2}{(1+\tau )^{2}} \sum_{i , m_i \mik 5} \int_{S_{\tau} \cap \calc_{\tau_1}^{\tau_2}} D |T^3 \slashed{\nabla} \psi_{m_i} |^2  d\mi_{g_S} + \frac{E_0 \ee^2}{ (1+\tau )^{1-\aaa}} \sum_{i , m_i \mik 5} \int_{S_{\tau} \cap \calc_{\tau_1}^{\tau_2}} D |T \slashed{\nabla} \psi_{m_i} |^2  d\mi_{g_S} \lesssim $$ $$ \lesssim \frac{E_0 \ee^2}{(1+\tau )^{2}} \sum_{i , m_i \mik 5} \int_{S_{\tau} \cap \calc_{\tau_1}^{\tau_2}} J^T_{\mu} [T^3 \psi_{m_i} ] n^{\mu}  d\mi_{g_S} + \frac{E_0 \ee^2}{ (1+\tau )^{1-\aaa}} \sum_{i , m_i \mik 5} \int_{S_{\tau} \cap \calc_{\tau_1}^{\tau_2}} J^T_{\mu} [T\psi_{m_i} ] n^{\mu}  d\mi_{g_S} \lesssim \frac{E_0^2 \ee^4}{(1+\tau )^{3-\aaa}} , $$
where we used Sobolev when needed, \eqref{dgt1away}, \eqref{dgt2away}, \eqref{dgttt1away}, \eqref{dgttt2away}, \eqref{denp2} and \eqref{denp4}.

For $XI_{C4}$ we have that
$$ XI_{C4} = \sum_{k_1 + k_2 \mik 5} \int_{S_{\tau} \cap \calc_{\tau_1}^{\tau_2}} D |T^2 \slashed{\nabla} \psi_{k_1} |^2 | T^2 \slashed{\nabla} \psi_{k_2} |^2  d\mi_{g_S}  \lesssim $$ $$ \lesssim \frac{E_0 \ee^2}{( 1+\tau )^{2-\aaa} } \sum_{i , m_i \mik 5} \int_{S_{\tau} \cap \calc_{\tau_1}^{\tau_2}} D |T^2 \slashed{\nabla} \psi_{m_i} |^2  d\mi_{g_S} \lesssim \frac{E_0 \ee^2}{(1+\tau )^{2-\aaa}} \sum_{i , m_i \mik 5} \int_{S_{\tau} \cap \calc_{\tau_1}^{\tau_2}} J^T_{\mu} [T^2 \psi_{m_i} ] n^{\mu}  d\mi_{g_S} \lesssim \frac{E_0^2 \ee^4}{(1+\tau )^{4-2\aaa}} , $$
where we used Sobolev when needed, \eqref{dgt1away}, \eqref{dgt2away}, \eqref{dgtt1away}, \eqref{dgtt2away}, \eqref{denp2} and \eqref{denp4}.

Gathering all the above estimates we get that
$$ \int_{S_{\tau} \cap \calc_{\tau_1}^{\tau_2}}  | \Omega^k T^4 F |^2  d\mi_{g_S} \lesssim \frac{E_0^2 \ee^4}{(1+\tau )^{2-\aaa}} \mbox{ for any $k\mik 5$}, $$
which implies \eqref{C4'}.

\eqref{D1'}: We use the same pointwise estimate from the proof of \eqref{C4'}, but now we integrate just over $S$. We have that
$$ I_{D1} = \sum_{k_1 + k_2 \mik 5} \int_S D^3 (T^4 Y \psi_{k_1} )^2 (Y\psi_{k_2} )^2  d\mi_{g_S}  \lesssim \frac{E_0 \ee^2}{1+\tau } \sum_{i , m_i \mik 5} \int_S D (T^4 Y \psi_{m_i} )^2  d\mi_{g_S} \lesssim $$ $$ \lesssim \frac{E_0 \ee^2}{1+\tau } \int_S J^T_{\mu} [T^4 \psi_{m_i} ] n^{\mu}  d\mi_{g_S} \lesssim \frac{E_0^2 \ee^4}{(1+\tau )^{1-\aaa}} , $$
where we used Sobolev when needed, \eqref{d2yg}, \eqref{d2yg1} and \eqref{denp5}.

For $II_{D1}$ we have that
$$ II_{D1} = \sum_{k_1 + k_2 \mik 5} \int_S D^3 (T^3 Y \psi_{k_1} )^2 (TY\psi_{k_2} )^2  d\mi_{g_S}  \lesssim \frac{E_0 \ee^2}{1+\tau } \sum_{i , m_i \mik 5} \int_S D (T^3 Y \psi_{m_i} )^2  d\mi_{g_S} \lesssim $$ $$ \lesssim \frac{E_0 \ee^2}{1+\tau } \int_S J^T_{\mu} [T^3 \psi_{m_i} ] n^{\mu}  d\mi_{g_S} \lesssim \frac{E_0^2 \ee^4}{(1+\tau )^{2-\aaa}} , $$
where we used Sobolev when needed, \eqref{d2tyg}, \eqref{d2tyg1} and \eqref{denp4}.

For $III_{D1}$ we have that
$$ III_{D1} = \sum_{k_1 + k_2 \mik 5} \int_S D^3 (T^2 Y \psi_{k_1} )^2 (T^2 Y\psi_{k_2} )^2  d\mi_{g_S}  \lesssim \frac{E_0 \ee^2}{(1+\tau )^{1-\aaa/2}} \sum_{i , m_i \mik 5} \int_S D^3 (T^2 Y \psi_{m_i} )^2  d\mi_{g_S} \lesssim $$ $$ \lesssim \frac{E_0 \ee^2}{(1+\tau )^{1-\aaa/2}} \int_S J^T_{\mu} [T^2 \psi_{m_i} ] n^{\mu}  d\mi_{g_S} \lesssim \frac{E_0^2 \ee^4}{(1+\tau )^{3-3\aaa/2}} , $$
where we used Sobolev when needed, \eqref{d2ttyg}, \eqref{d2ttyg1} and \eqref{denp3}.

For $IV_{D1}$ we do not get enough decay just by integrating over $S$, we take the spacetime integral instead and we have that
$$ \int_{\tau_1}^{\tau_2} IV_{D1} d\tau' = \sum_{k_1 + k_2 \mik 5} \int_{\tau_1}^{\tau_2} \int_{S_{\tau'}} D (T^5 \psi_{k_1} )^2 ( Y\psi_{k_2} )^2  d\mi_{g_{\rrr}}  =$$ $$ = \int_{\cala_{\tau_1}^{\tau_2}}D (T^5 \psi_{k_1} )^2 ( Y\psi_{k_2} )^2  d\mi_{g_{\cala}} + \int_{\tau_1}^{\tau_2} \int_{S_{\tau'} \cap (\cala_{\tau_1}^{\tau_2} )^c} D (T^5 \psi_{k_1} )^2 ( Y\psi_{k_2} )^2  d\mi_{g_{\rrr}} .$$ 
For the second integral of the last line we notice that
$$ \sum_{k_1 + k_2 \mik 5} \int_{S_{\tau} \cap (\cala_{\tau_1}^{\tau_2} )^c} D (T^5 \psi_{k_1} )^2 ( Y\psi_{k_2} )^2  d\mi_{g_S} \lesssim \frac{E_0 \ee^2}{(1+\tau )^2} \sum_{i , m_i \mik 5} \int_{S_{\tau} \cap (\cala_{\tau_1}^{\tau_2} )^c} D (T^5 \psi_{k_1} )^2 d\mi_{g_S} \lesssim \frac{E_0^2 \ee^4}{(1+\tau )^{2-\aaa}} \Rightarrow $$ $$ \Rightarrow \int_{\tau_1}^{\tau_2} \int_{S_{\tau'} \cap (\cala_{\tau_1}^{\tau_2} )^c} D (T^5 \psi_{k_1} )^2 ( Y\psi_{k_2} )^2  d\mi_{g_{\rrr}} \lesssim \frac{E_0^2 \ee^4}{(1+\tau_1 )^{1-\aaa}} , $$
where we used Sobolev when needed, \eqref{dgt1away}, \eqref{dgt2away} and \eqref{denp5}.
For $\int_{\cala_{\tau_1}^{\tau_2}}D (T^5 \psi_{k_1} )^2 ( Y\psi_{k_2} )^2  d\mi_{g_{\cala}}$ we have that
$$ \int_{\cala_{\tau_1}^{\tau_2}} D (T^5 \psi_{k_1} )^2 ( Y\psi_{k_2} )^2  d\mi_{g_{\cala}}\lesssim \frac{E_0 \ee^2}{(1+\tau_1 )^{1/2}} \sum_{i , m_i \mik 5} \int_{\cala_{\tau_1}^{\tau_2}} (T^5  \psi_{m_i} )^2  d\mi_{g_S} \lesssim \frac{\ee^4 (1+\tau_2 )^{\aaa}}{(1+\tau_1 )^{1/2}} , $$
where we used Sobolev when needed, \eqref{dyg}, \eqref{dyg1} and 
$$ \sum_{i , m_i \mik 5} \int_{\cala_{\tau_1}^{\tau_2}} (T^5 \psi_{m_i} )^2 d\mi_{g_{\cala}} \lesssim E_0 \ee^2 (1+\tau_2 )^{\aaa} , $$
which is a consequence of \eqref{denp5}. So in the end we get that
\begin{equation}\label{IVD1}
 \int_{\tau_1}^{\tau_2} IV_{D1} d\tau' \lesssim \frac{E_0^2 \ee^4 (1+\tau_2 )^{\aaa}}{(1+\tau_1 )^{1/2}} \lesssim E_0^2 \ee^4 (1+\tau_2 )^{\aaa}. 
\end{equation}

For $V_{D1}$ we have that
$$ V_{D1} = \sum_{k_1 + k_2 \mik 5} \int_S D (T^4 \psi_{k_1} )^2 ( TY\psi_{k_2} )^2  d\mi_{g_S}  \lesssim \frac{E_0 \ee^2}{(1+\tau )^{1/2}} \sum_{i , m_i \mik 5} \int_S (T^4  \psi_{m_i} )^2  d\mi_{g_S} \lesssim $$ $$ \lesssim \frac{E_0 \ee^2}{(1+\tau )^{1/2}} \int_{S_{\tau} \cap \calc_{\tau_1}^{\tau_2}} J^T_{\mu} [T^3 \psi_{m_i} ] n^{\mu}  d\mi_{g_S} \lesssim \frac{E_0^2 \ee^4}{(1+\tau )^{3/2-\aaa}} , $$
where we used Sobolev when needed, \eqref{dtyg}, \eqref{dtyg1} and \eqref{denp4}.

For $VI_{D1}$ we have that
$$ VI_{D1} = \sum_{k_1 + k_2 \mik 5} \int_S D (T^3 \psi_{k_1} )^2 ( T^2 Y\psi_{k_2} )^2  d\mi_{g_S}  \lesssim \frac{E_0 \ee^2}{(1+\tau )^{1/2-\aaa/2}} \sum_{i , m_i \mik 5} \int_S  (T^3 \psi_{m_i} )^2  d\mi_{g_S} \lesssim $$ $$ \lesssim \frac{E_0 \ee^2}{(1+\tau )^{1/2-\aaa/2}} \int_S J^T_{\mu} [T^2 \psi_{m_i} ] n^{\mu}  d\mi_{g_S} \lesssim \frac{E_0^2 \ee^4}{(1+\tau )^{5/2-3\aaa/2}} , $$
where we used Sobolev when needed, \eqref{dttyg}, \eqref{dttyg1} and \eqref{denp3}.

For $VII_{D1}$ we have that
$$ VII_{D1} = \sum_{k_1 + k_2 \mik 5} \int_S D (T^2 \psi_{k_1} )^2 ( T^3 Y\psi_{k_2} )^2  d\mi_{g_S}  \lesssim \frac{E_0 \ee^2}{(1+\tau )^{1-\aaa}} \sum_{i , m_i \mik 5} \int_S D (T^3 Y \psi_{m_i} )^2  d\mi_{g_S} \lesssim $$ $$ \lesssim \frac{E_0 \ee^2}{(1+\tau )^{1-\aaa}} \int_S J^T_{\mu} [T^3 \psi_{m_i} ] n^{\mu}  d\mi_{g_S} \lesssim \frac{E_0^2 \ee^4}{(1+\tau )^{2-2\aaa}} , $$
where we used Sobolev when needed, \eqref{dgtt}, \eqref{dgtt1} and \eqref{denp4}.

For $VIII_{D1}$ we have that
$$ VIII_{D1} = \sum_{k_1 + k_2 \mik 5} \int_S D (T \psi_{k_1} )^2 ( T^4 Y\psi_{k_2} )^2  d\mi_{g_S}  \lesssim \frac{\ee^2}{1+\tau } \sum_{i , m_i \mik 5} \int_S D (T^4 Y \psi_{m_i} )^2  d\mi_{g_S} \lesssim $$ $$ \lesssim \frac{\ee^2}{1+\tau } \int_S J^T_{\mu} [T^4 \psi_{m_i} ] n^{\mu}  d\mi_{g_S} \lesssim \frac{\ee^4}{(1+\tau )^{1-\aaa}} , $$
where we used Sobolev when needed, \eqref{dgt}, \eqref{dgt1} and \eqref{denp5}.

For $IX_{D1}$ we have that
$$ IX_{D1} = \sum_{k_1 + k_2 \mik 5} \int_{S_{\tau} } D |T^4 \slashed{\nabla} \psi_{k_1} |^2 | \slashed{\nabla} \psi_{k_2} |^2  d\mi_{g_S}  \lesssim $$ $$ \lesssim \frac{E_0 \ee^2}{1+\tau } \sum_{i , m_i \mik 5} \int_{S_{\tau} } D |T^4 \slashed{\nabla} \psi_{m_i} |^2  d\mi_{g_S} + E_0 \ee^2 (1+\tau )^{\aaa} \sum_{i , m_i \mik 5} \int_{S_{\tau} }  |\slashed{\nabla} \psi_{m_i} |^2  d\mi_{g_S} \lesssim $$ $$ \lesssim \frac{E_0 \ee^2}{1+\tau } \sum_{i , m_i \mik 5} \int_{S_{\tau} } J^T_{\mu} [T^4 \psi_{m_i} ] n^{\mu}  d\mi_{g_S} + E_0 \ee^2 (1+\tau )^{\aaa} \sum_{i , m_i \mik 5} \int_{S_{\tau} \cap \calc_{\tau_1}^{\tau_2}} J^T_{\mu} [\psi_{m_i} ] n^{\mu}  d\mi_{g_S} \lesssim \frac{E_0^2 \ee^4}{(1+\tau )^{1-\aaa}} , $$
where we used Sobolev when needed, \eqref{dg1away}, \eqref{dg2away}, \eqref{dgtttt}, \eqref{dgtttt1}, \eqref{denp1} and \eqref{denp5}.

For $X_{D1}$ we have that
$$ X_{D1} = \sum_{k_1 + k_2 \mik 5} \int_{S_{\tau} } D |T^3 \slashed{\nabla} \psi_{k_1} |^2 | T\slashed{\nabla} \psi_{k_2} |^2  d\mi_{g_S}  \lesssim $$ $$ \lesssim \frac{E_0 \ee^2}{1+\tau } \sum_{i , m_i \mik 5} \int_{S_{\tau} } D |T^3 \slashed{\nabla} \psi_{m_i} |^2  d\mi_{g_S} + \frac{E_0 \ee^2}{ (1+\tau )^{1-\aaa}} \sum_{i , m_i \mik 5} \int_{S_{\tau} }  |T \slashed{\nabla} \psi_{m_i} |^2  d\mi_{g_S} \lesssim $$ $$ \lesssim \frac{E_0 \ee^2}{1+\tau } \sum_{i , m_i \mik 5} \int_{S_{\tau} } J^T_{\mu} [T^3 \psi_{m_i} ] n^{\mu}  d\mi_{g_S} + \frac{E_0 \ee^2}{ (1+\tau )^{1-\aaa}} \sum_{i , m_i \mik 5} \int_{S_{\tau}} J^T_{\mu} [T\psi_{m_i} ] n^{\mu}  d\mi_{g_S} \lesssim \frac{E_0^2 \ee^4}{(1+\tau )^{2-\aaa}} , $$
where we used Sobolev when needed, \eqref{dgt1away}, \eqref{dgt2away}, \eqref{dgttt1away}, \eqref{dgttt2away}, \eqref{denp2} and \eqref{denp4}.

For $XI_{D1}$ we have that
$$ XI_{D1} = \sum_{k_1 + k_2 \mik 5} \int_{S_{\tau} } D |T^2 \slashed{\nabla} \psi_{k_1} |^2 | T^2 \slashed{\nabla} \psi_{k_2} |^2  d\mi_{g_S}  \lesssim $$ $$ \lesssim \frac{E_0 \ee^2}{(1+\tau )^{1-\aaa} } \sum_{i , m_i \mik 5} \int_{S_{\tau}} D |T^2 \slashed{\nabla} \psi_{m_i} |^2  d\mi_{g_S} \lesssim \frac{E_0 \ee^2}{(1+\tau )^{1-\aaa}} \sum_{i , m_i \mik 5} \int_{S_{\tau} } J^T_{\mu} [T^2 \psi_{m_i} ] n^{\mu}  d\mi_{g_S} \lesssim \frac{E_0^2 \ee^4}{(1+\tau )^{2-2\aaa}} , $$
where we used Sobolev when needed, \eqref{dgt1away}, \eqref{dgt2away}, \eqref{dgttt1away}, \eqref{dgttt2away}, \eqref{denp2} and \eqref{denp4}.

Gathering all the above estimates we note that $|\Omega^k T^4 F |^2$ integrated on $S$ decays with rate $-1+\aaa$ (which is enough for the proof of \eqref{D1'}), apart from the term given in \eqref{IVD1} which was shown though to satisfy the necessary estimate. 

\eqref{D2'}: The same estimates that were obtained in the proof of \eqref{C4'} imply also that
$$ \int_{\si_{\tau} \cap \{ r \meg 2M - \delta \}} |\Omega^k T^4 F|^2 d\mi_{g_S} \lesssim \frac{\ee^4}{(1+\tau )^{2-\aaa}} \mbox{ for any $k\mik 5$}, $$
which in turn implies \eqref{D2'}.

\eqref{E1'}: We have that
$$ \Omega^k YF = \sum_{k_1 + k_2 = k} D^{1/2} D' (Y\psi_{k_1} ) \cdot (Y\psi_{k_2} ) + 2 D^{3/2} (Y\psi_{k_1} ) \cdot (Y^2 \psi_{k_2} ) + 2 \sqrt{D} (TY\psi_{k_1} ) \cdot (Y\psi_{k_2} ) + $$ $$ + 2\sqrt{D} (T\psi_{k_1} ) \cdot (Y^2 \psi_{k_2} ) + 2\sqrt{D} \langle Y\slashed{\nabla} \psi_{k_1} , \slashed{\nabla} \psi_{k_2} \rangle + \frac{M}{r^2} \cdot g^{\beta \gamma} \cdot \partial_{\beta} \psi_{k_1} \cdot \partial_{\gamma} \psi_{k_2} , $$
and after commuting $YF$ once with $T$ we get that
$$ \Omega^k TYF = \sum_{k_1 + k_2 = k} 2 D^{1/2} D' (TY\psi_{k_1} ) \cdot (Y\psi_{k_2} ) + 2 D^{3/2} (TY\psi_{k_1} ) \cdot (Y^2 \psi_{k_2} ) + 2 D^{3/2} (Y\psi_{k_1} ) \cdot (TY^2 \psi_{k_2} )+$$ $$ + 2 \sqrt{D} (T^2 Y\psi_{k_1} ) \cdot (Y\psi_{k_2} ) +2 \sqrt{D} (T Y\psi_{k_1} ) \cdot (TY\psi_{k_2} )  + 2\sqrt{D} (T^2 \psi_{k_1} ) \cdot (Y^2 \psi_{k_2} ) + $$ $$ + 2\sqrt{D} (T \psi_{k_1} ) \cdot (TY^2 \psi_{k_2} ) + 2\sqrt{D} \langle TY\slashed{\nabla} \psi_{k_1} , \slashed{\nabla} \psi_{k_2} \rangle +2\sqrt{D} \langle Y\slashed{\nabla} \psi_{k_1} , T\slashed{\nabla} \psi_{k_2} \rangle +\frac{M}{r^2} \cdot g^{\beta \gamma} \cdot \partial_{\beta} T\psi_{k_1} \cdot \partial_{\gamma} \psi_{k_2} , $$
Finally we commute once more with $T$
$$ \Omega^k T^2 YF =  \sum_{k_1 + k_2 = k} 2 D^{1/2} D' (T^2 Y\psi_{k_1} ) \cdot (Y\psi_{k_2} ) + 2 D^{1/2} D' (TY\psi_{k_1} ) \cdot (TY\psi_{k_2} ) + 2 D^{3/2} (T^2 Y\psi_{k_1} ) \cdot (Y^2 \psi_{k_2} ) + $$ $$ + 4 D^{3/2} (TY\psi_{k_1} ) \cdot (T Y^2 \psi_{k_2} ) +  2 D^{3/2} (Y\psi_{k_1} ) \cdot (T^2 Y^2 \psi_{k_2} ) + 2 \sqrt{D} (T^3 Y\psi_{k_1} ) \cdot (Y\psi_{k_2} )  + 6 \sqrt{D} (T^2 Y\psi_{k_1} ) \cdot (TY\psi_{k_2} )   + $$ $$ + 2\sqrt{D} (T^3 \psi_{k_1} ) \cdot (Y^2 \psi_{k_2} )  + 4\sqrt{D} (T^2 \psi_{k_1} ) \cdot (TY^2 \psi_{k_2} ) + 2\sqrt{D} (T \psi_{k_1} ) \cdot (T^2Y^2 \psi_{k_2} ) + 2\sqrt{D} \langle T^2 Y\slashed{\nabla} \psi_{k_1} , \slashed{\nabla} \psi_{k_2} \rangle + $$ $$  + 4\sqrt{D} \langle TY\slashed{\nabla} \psi_{k_1} , T\slashed{\nabla} \psi_{k_2} \rangle + 2\sqrt{D} \langle Y\slashed{\nabla} \psi_{k_1} , T^2 \slashed{\nabla} \psi_{k_2} \rangle  + \frac{M}{r^2} \cdot g^{\beta \gamma} \cdot \partial_{\beta} T^2 \psi_{k_1} \cdot \partial_{\gamma} \psi_{k_2} + \frac{M}{r^2} \cdot g^{\beta \gamma} \cdot \partial_{\beta} T \psi_{k_1} \cdot \partial_{\gamma} T\psi_{k_2} . $$
We note that the following inequalities hold true
$$ \sum_{k_1 + k_2 \mik 5} \int_{\cala_{\tau_1}^{\tau_2}}  \cdot \frac{M^2}{r^4} \left( g^{\beta \gamma} \cdot \partial_{\beta} \psi_{k_1} \cdot \partial_{\gamma} \psi_{k_2} \right)^2 d\mi_{g_{\cala}} \lesssim \sum_{k \mik 5} \int_{\cala_{\tau_1}^{\tau_2}} | \Omega^k F |^2 d\mi_{g_{\cala}} \lesssim \frac{E_0^2 \ee^4}{(1+\tau_1)^2} \mbox{ by \eqref{A1'}}, $$
 $$ \sum_{k_1 + k_2 \mik 5} \int_{\cala_{\tau_1}^{\tau_2}}  \cdot \frac{M^2}{r^4} \left( g^{\beta \gamma} \cdot \partial_{\beta} T\psi_{k_1} \cdot \partial_{\gamma} \psi_{k_2} \right)^2 d\mi_{g_{\cala}} \lesssim \sum_{k \mik 5} \int_{\cala_{\tau_1}^{\tau_2}} | \Omega^k TF |^2 d\mi_{g_{\cala}} \lesssim \frac{E_0^2 \ee^4}{(1+\tau_1)^2} \mbox{ by \eqref{A1'}}, $$
 and
 $$ \sum_{k_1 + k_2 \mik 5} \int_{\cala_{\tau_1}^{\tau_2}}  \cdot \frac{M^2}{r^4} \left( \left( g^{\beta \gamma} \cdot \partial_{\beta} T^2 \psi_{k_1} \cdot \partial_{\gamma} \psi_{k_2} \right) + \left( g^{\beta \gamma} \cdot \partial_{\beta} T \psi_{k_1} \cdot \partial_{\gamma} T\psi_{k_2} \right) \right)^2 d\mi_{g_{\cala}} \lesssim $$ $$ \lesssim \sum_{k \mik 5} \int_{\cala_{\tau_1}^{\tau_2}} | \Omega^k TF |^2 d\mi_{g_{\cala}} \lesssim \frac{E_0^2 \ee^4}{(1+\tau_1)^{2-\aaa}} \mbox{ by \eqref{B2'}}. $$
 So for $\Omega^k YF$, $\Omega^k TYF$ and $\Omega^k T^2 YF$ we focus on all the other terms.
 
For $\Omega^k YF$ we have that
$$ \sum_{k \mik 5} \int_{\cala_{\tau_1}^{\tau_2}}  |\Omega^k YF |^2 d\mi_{g_{\cala}} \lesssim I_{E1}a + II_{E1}a + III_{E1}a + IV_{E1}a + V_{E1}a + VI_{E_1}a+ \frac{E_0^2 \ee^4}{(1+\tau_1)^2} . $$

For $I_{E1}a$ we have that
$$ I_{E1}a = \sum_{k_1 + k_2 \mik 5} \int_{\cala_{\tau_1}^{\tau_2}} D^2 (Y\psi_{k_1} )^2 (Y\psi_{k_2} )^2 d\mi_{g_{\cala}} \Rightarrow $$ $$ \Rightarrow \sum_{k_1 + k_2 \mik 5} \int_{S_{\tau} \cap\cala_{\tau_1}^{\tau_2}} D^3 (Y\psi_{k_1} )^2 (Y\psi_{k_2} )^2 d\mi_{g_S} \lesssim \frac{E_0 \ee^2}{(1+\tau )^{1/2}} \sum_{i , m_i \mik 5} \int_{S_{\tau} \cap\cala_{\tau_1}^{\tau_2}} D (Y\psi_{m_i} )^2 d\mi_{g_S} \lesssim \frac{E_0^2 \ee^4}{(1+\tau )^{5/2}} ,$$
where we used Sobolev when needed, \eqref{dyg}, \eqref{dyg1} and \eqref{denp1}. As a consequence we get that
$$ I_{E1}a \lesssim \frac{E_0^2 \ee^4}{(1+\tau_1 )^{3/2}} . $$

For $II_{E1}a$ we have that
$$ II_{E1}a = \sum_{k_1 + k_2 \mik 5} \int_{\cala_{\tau_1}^{\tau_2}} D^3 (Y\psi_{k_1} )^2 (Y^2 \psi_{k_2} )^2 d\mi_{g_{\cala}} \lesssim \frac{E_0 \ee^2}{( 1+\tau_1 )^{1/2} } \sum_{i , m_i \mik 5}  \int_{\cala_{\tau_1}^{\tau_2}} D^2 (Y^2 \psi_{m_i} )^2 d\mi_{g_{\cala}} \Rightarrow $$ $$ II_{E_1}a \lesssim \frac{E_0^2 \ee^4}{( 1+\tau_1 )^{1/2}} , $$
where we used Sobolev when needed, \eqref{dyg}, \eqref{dyg1} and the estimate for $l=0$
\begin{equation}\label{ydenp2}
\int_{\cala_{\tau_1}^{\tau_2}} \left( D^{3/2} (\Omega^k T^l Y^2 \psi )^2 + |\slashed{\nabla} \Omega^k T^l Y \psi |^2 \right) \, d\mi_{g_{\cala}} \lesssim E_0 \ee^2 \mbox{ for any $k \mik 5$, any $l\mik 2$ and any $\tau_1$, $\tau_2$ with $\tau_1 < \tau_2$},
\end{equation}
which is a consequence of \eqref{ydenp1}.

For $III_{E1}a$ we have that
$$ \sum_{k_1 + k_2 \mik 5} \int_{\cala_{\tau_1}^{\tau_2}} D (Y\psi_{k_1} )^2 (TY\psi_{k_2} )^2 d\mi_{g_{\cala}} \lesssim \frac{E_0 \ee^2}{(1+\tau_1 )^{1/2}} \int_{\cala_{\tau_1}^{\tau_2}} (TY\psi_{k_2} )^2 d\mi_{g_{\cala}} \lesssim \frac{E_0^2 \ee^4}{(1+\tau_1 )^{1/2}},$$
where we used Sobolev when needed, \eqref{dyg}, \eqref{dyg1} and \eqref{d32tye}. 

For $IV_{E1}a$ we have that
$$ IV_{E1}a = \sum_{k_1 + k_2 \mik 5} \int_{\cala_{\tau_1}^{\tau_2}} D (T\psi_{k_1} )^2 (Y^2 \psi_{k_2} )^2 d\mi_{g_{\cala}} \Rightarrow $$ $$ \sum_{k_1 + k_2 \mik 5} \int_{S_{\tau} \cap \cala_{\tau_1}^{\tau_2}}D (T\psi_{k_1} )^2 (Y^2 \psi_{k_2} )^2 d\mi_{g_S} \lesssim \frac{E_0 \ee^2}{(1+\tau )^{1+\beta} } \sum_{i , m_i \mik 5}  \int_{S_{\tau} \cap \cala_{\tau_1}^{\tau_2}} D (Y^2 \psi_{m_i} )^2 d\mi_{g_S} \lesssim \frac{E_0^2 \ee^4}{( 1+\tau )^{1+\beta}} , $$
where we used Sobolev when needed, \eqref{dyg}, \eqref{dyg1} and the estimate\eqref{d32tye} for $l=0$. The above then implies that
$$  IV_{E1}a \lesssim E_0^2 \ee^4 . $$

For $V_{E1}a$ we have that
$$ V_{E1}a = \sum_{k_1 + k_2 \mik 5} \int_{\cala_{\tau_1}^{\tau_2}} (T \psi_{k_1} )^2 (Y \psi_{k_2} )^2 d \mi_{g_{\cala}} \Rightarrow $$ $$ \sum_{k_1 + k_2 \mik 5} \int_{S_{\tau} \cap \cala_{\tau_1}^{\tau_2}} (T \psi_{k_1} )^2 (Y \psi_{k_2} )^2 d \mi_{g_S} \lesssim \frac{E_0 \ee^2}{(1+\tau )^{1+\beta}} \sum_{i , m_i \mik 5}  \int_{S_{\tau} \cap \cala_{\tau_1}^{\tau_2}} (Y \psi_{m_i} )^2 d\mi_{g_S} \lesssim \frac{E_0^2 \ee^4}{(1+\tau )^{1+\beta}} \Rightarrow $$ $$ V_{E1}a \lesssim E_0^2 \ee^4 . $$

For $VI_{E1}a$ we have that
$$ VI_{E1}a = \sum_{k_1 + k_2 \mik 5} \int_{\cala_{\tau_1}^{\tau_2}} D |Y \slashed{\nabla}\psi_{k_1} |^2 | \slashed{\nabla} \psi_{k_2} |^2 d\mi_{g_{\cala}} \lesssim $$  $$ \lesssim \frac{E_0 \ee^2}{( 1+\tau_1 )^{1/2}} \sum_{i , m_i \mik 5} \int_{\cala_{\tau_1}^{\tau_2}} | \slashed{\nabla} \psi_{m_i} |^2 d\mi_{g_{\cala}} + \frac{E_0 \ee^2}{1+\tau_1 } \sum_{i , m_i \mik 5}  \int_{\cala_{\tau_1}^{\tau_2}} D | \slashed{\nabla} Y \psi_{m_i} |^2 d\mi_{g_{\cala}} \lesssim \frac{\ee^4}{1+\tau_1} \lesssim $$
$$ \lesssim \frac{E_0 \ee^2}{(1+\tau_1 )^{1/2} } \sum_{i , m_i \mik 5} \int_{\tau_1}^{\tau_2} \int_{S_{\tau} \cap \cala_{\tau_1}^{\tau_2}} J^T_{\mu} [\psi_{m_i} ] n^{\mu} d\mi_{g_{S}} d\tau + \frac{E_0 \ee^2}{1+\tau_1 } \sum_{i , m_i \mik 5}  \int_{\cala_{\tau_1}^{\tau_2}} D | \slashed{\nabla} Y \psi_{m_i} |^2 d\mi_{g_{\cala}} \lesssim \frac{E_0^2 \ee^4}{( 1+\tau_1 )^{1/2}} , $$
where we used Sobolev when needed, \eqref{dg}, \eqref{dg1}, \eqref{dyg}, \eqref{dyg1}, \eqref{denp1} and estimate\eqref{ydenp2} for $l=0$.

Now we look at $\Omega^k TYF$ and we have that
$$ \sum_{k \mik 5} \int_{\cala_{\tau_1}^{\tau_2}} |\Omega^k TYF |^2 d\mi_{g_{\cala}} \lesssim I_{E1}b + II_{E1}b + III_{E1}b + IV_{E1}b + V_{E1}b + $$ $$+ VI_{E1}b + VII_{E1}b + VIII_{E1}b +IX_{E1}b + X_{E1}b + \frac{E_0^2 \ee^4}{(1+\tau_1)^2} . $$

For $I_{E1}b$ we have that
$$ I_{E1}b = \sum_{k_1 + k_2 \mik 5}\int_{\cala_{\tau_1}^{\tau_2}} D^2 (TY\psi_{k_1} )^2 (Y\psi_{k_2} )^2 d\mi_{g_{\cala}} \Rightarrow  $$ $$ \Rightarrow \sum_{k_1 + k_2 \mik 5} \int_S D^3 (TY\psi_{k_1} )^2 (Y\psi_{k_2} )^2 d\mi_{g_S} \lesssim \frac{E_0 \ee^2}{(1+\tau )^{1/2}} \sum_{i , m_i \mik 5} \int_S D (TY\psi_{m_i} )^2 d\mi_{g_S} \lesssim \frac{E_0^2 \ee^4}{(1+\tau )^{5/2}} \Rightarrow $$ $$ \Rightarrow I_{E_1}b \lesssim \frac{E_0^2 \ee^4}{(1+\tau_1 )^{3/2}} , 
$$
where we used Sobolev when needed, \eqref{dyg}, \eqref{dyg1} and \eqref{denp2}.

For $II_{E1}b$ we have that
$$ II_{E1}b = \sum_{k_1 + k_2 \mik 5}\int_{\cala_{\tau_1}^{\tau_2}} D^3 (TY\psi_{k_1} )^2 (Y^2 \psi_{k_2} )^2 d\mi_{g_{\cala}} \lesssim \frac{E_0 \ee^2}{( 1+\tau_1 )^{1/2}} \sum_{i , m_i \mik 5} \int_{\cala_{\tau_1}^{\tau_2}} D^2 (Y^2 \psi_{m_i} )^2 d\mi_{g_{\cala}} \Rightarrow $$ $$ II_{E1}b  \lesssim \frac{E_0^2 \ee^4}{( 1+\tau_1 )^{1/2}} , $$
where we used Sobolev when needed, \eqref{dtyg}, \eqref{dtyg1} and estimate \eqref{d32tye} for $l=0$.

For $III_{E1}b$ we have that
$$ III_{E1}b = \sum_{k_1 + k_2 \mik 5}\int_{\cala_{\tau_1}^{\tau_2}} D^3 (Y\psi_{k_1} )^2 (TY^2 \psi_{k_2} )^2 d\mi_{g_{\cala}} \lesssim \frac{E_0 \ee^2}{( 1+\tau_1 )^{1/2}} \sum_{i , m_i \mik 5} \int_{\cala_{\tau_1}^{\tau_2}} D^2 (TY^2 \psi_{m_i} )^2 d\mi_{g_{\cala}} \Rightarrow $$ $$III_{E1}b  \lesssim \frac{E_0^2 \ee^4}{(1+\tau_1 )^{1/2}} , $$
where we used Sobolev when needed, \eqref{dyg}, \eqref{dyg1} and estimate \eqref{d32tye} for $l=1$.

For $IV_{E1}b$ we have that
$$ IV_{E1}b = \sum_{k_1 + k_2 \mik 5}\int_{\cala_{\tau_1}^{\tau_2}}  D (T^2 Y \psi_{k_1} )^2 (Y\psi_{k_2} )^2 d\mi_{g_{\cala}} \lesssim  $$ $$ \lesssim  \frac{E_0 \ee^2}{( 1+\tau )^{1/2}} \sum_{i , m_i \mik 5} \int_{\cala_{\tau_1}^{\tau_2}} ( T^2 Y\psi_{m_i} )^2 d\mi_{g_{\cala}} \lesssim \frac{E_0^2 \ee^4}{(1+\tau_1  )^{1/2}}  , 
$$
where we used Sobolev when needed, \eqref{dgtt}, \eqref{dgtt1} and \eqref{d32tye} for $l=1$.

For $V_{E1}b$ we have that
$$ V_{E1}b = \sum_{k_1 + k_2 \mik 5}\int_{\cala_{\tau_1}^{\tau_2}} D (T Y\psi_{k_1} )^2 (T Y\psi_{k_2} )^2 d\mi_{g_{\cala}} \lesssim  $$ $$ \lesssim \sum_{k_1 + k_2 \mik 5} \int_{\cala_{\tau_1}^{\tau_2}} D (T Y\psi_{k_1} )^2 (T Y\psi_{k_2} )^2 d\mi_{g_{\cala}} \lesssim \frac{E_0 \ee^2}{( 1+\tau_1 )^{1/2}} \sum_{i , m_i \mik 5} \int_{\cala_{\tau_1}^{\tau_2}} (T Y\psi_{m_i} )^2 d\mi_{g_S} \lesssim \frac{E_0^2 \ee^4}{(1+\tau_1 )^{1/2}} \Rightarrow $$ $$ \Rightarrow I_{E_1}b \lesssim \frac{E_0^2 \ee^4}{(1+\tau_1 )^{1/2}} , 
$$
where we used Sobolev when needed, \eqref{dtyg}, \eqref{dtyg1} and \eqref{d32tye} for $l=0$.

For $VI_{E1}b$ we have that
$$ VI_{E1}b = \sum_{k_1 + k_2 \mik 5}\int_{\cala_{\tau_1}^{\tau_2}} D (T^2\psi_{k_1} )^2 (Y^2 \psi_{k_2} )^2 d\mi_{g_{\cala}} \Rightarrow $$ $$\sum_{k_1 + k_2 \mik 5} \int_{S_{\tau} \cap \cala_{\tau_1}^{\tau_2}} D (T^2\psi_{k_1} )^2 (Y^2 \psi_{k_2} )^2 d\mi_{g_S} \lesssim \frac{E_0 \ee^2}{(1+\tau)^{1+\beta}} \sum_{i , m_i \mik 5} \int_{\cala_{\tau_1}^{\tau_2}} D (Y^2 \psi_{m_i} )^2 d\mi_{g_{\cala}} \lesssim \frac{E_0^2 \ee^4}{( 1+\tau )^{1+\beta}} , $$
where we used Sobolev when needed, \eqref{dgtt}, \eqref{dgtt1} and estimate \eqref{ydenp2} for $l=0$. This implies that
$$ VI_{E1}b \lesssim E_0^2 \ee^4 . $$

For $VII_{E1}b$ we have that
$$ VII_{E1}b = \sum_{k_1 + k_2 \mik 5}\int_{\cala_{\tau_1}^{\tau_2}} D (T\psi_{k_1} )^2 (TY^2 \psi_{k_2} )^2 d\mi_{g_{\cala}} \Rightarrow $$ $$ \sum_{k_1 + k_2 \mik 5}\int_{S_{\tau} \cap \cala_{\tau_1}^{\tau_2}} D (T\psi_{k_1} )^2 (TY^2 \psi_{k_2} )^2 d\mi_{g_S} \frac{E_0 \ee^2}{( 1+\tau )^{1+\beta}} \sum_{i , m_i \mik 5} \int_{\cala_{\tau_1}^{\tau_2}} D (TY^2 \psi_{m_i} )^2 d\mi_{g_{\cala}} \lesssim \frac{E_0^2 \ee^4}{( 1+\tau )^{1+\beta}} , $$
where we used Sobolev when needed, \eqref{dgt}, \eqref{dgt1} and estimate \eqref{d32tye} for $l=1$. This implies that
$$ VIII_{E1}b \lesssim E_0^2 \ee^4 . $$

For $VIII_{E1}b$ we have that
$$ VIII_{E1}b = \sum_{k_1 + k_2 \mik 5}\int_{\cala_{\tau_1}^{\tau_2}} D |TY \slashed{\nabla} \psi_{k_1} |^2 |\slashed{\nabla} \psi_{k_2} |^2 d\mi_{g_{\cala}} \lesssim $$ $$ \lesssim  \sum_{k_1 + k_2 \mik 5}\int_{\cala_{\tau_1}^{\tau_2}} D |\slashed{\nabla} T\psi_{k_1} |^2 |\slashed{\nabla} \psi_{k_2} |^2 d\mi_{g_{\cala}}  +  \sum_{k_1 + k_2 \mik 5}\int_{\cala_{\tau_1}^{\tau_2}} D | \slashed{\nabla} TY\psi_{k_1} |^2 |\slashed{\nabla} \psi_{k_2} |^2 d\mi_{g_{\cala}} \lesssim $$ $$ \lesssim \frac{E_0 \ee^2}{1+\tau_1} \sum_{i , m_i \mik 5} \int_{\cala_{\tau_1}^{\tau_2}} |\slashed{\nabla} T\psi_{m_i} |^2 d\mi_{g_{\cala}}+\frac{E_0 \ee^2}{1+\tau_1 } \sum_{i , m_i \mik 5} \int_{\cala_{\tau_1}^{\tau_2}} |\slashed{\nabla} \psi_{m_i} |^2 d\mi_{g_{\cala}} + $$ $$+ \frac{E_0 \ee^2}{1 + \tau_1} \sum_{i , m_i \mik 5} \int_{\cala_{\tau_1}^{\tau_2}} D |\slashed{\nabla} TY \psi_{m_i} |^2 d\mi_{g_{\cala}} \lesssim \frac{E_0^2 \ee^4}{1+\tau_1} , $$
where we used Sobolev when needed, \eqref{dg}, \eqref{dg1}, \eqref{dgt}, \eqref{dgt1}, \eqref{d2tyg}, \eqref{d2tyg1}, \eqref{denp1}, \eqref{denp2} and estimate \eqref{ydenp2} for $l=1$.

For $IX_{E1}b$ we have that
$$ IX_{E1}b = \sum_{k_1 + k_2 \mik 5}\int_{\cala_{\tau_1}^{\tau_2}} D |Y \slashed{\nabla} \psi_{k_1} |^2 |T\slashed{\nabla} \psi_{k_2} |^2 d\mi_{g_{\cala}} \lesssim $$ $$ \lesssim  \sum_{k_1 + k_2 \mik 5}\int_{\cala_{\tau_1}^{\tau_2}} D |\slashed{\nabla} \psi_{k_1} |^2 |\slashed{\nabla} T\psi_{k_2} |^2 d\mi_{g_{\cala}}  +  \sum_{k_1 + k_2 \mik 5}\int_{\cala_{\tau_1}^{\tau_2}} D | \slashed{\nabla} Y\psi_{k_1} |^2 |\slashed{\nabla} T\psi_{k_2} |^2 d\mi_{g_{\cala}} \lesssim $$ $$ \lesssim \frac{E_0 \ee^2}{1+\tau_1} \sum_{i , m_i \mik 5} \int_{\cala_{\tau_1}^{\tau_2}} |\slashed{\nabla} \psi_{m_i} |^2 d\mi_{g_{\cala}} + \frac{E_0 \ee^2}{(1+\tau_1 )^{1/2}} \sum_{i , m_i \mik 5} \int_{\cala_{\tau_1}^{\tau_2}} |\slashed{\nabla} T \psi_{m_i} |^2 d\mi_{g_{\cala}} + $$ $$ +\frac{E_0 \ee^2}{1+\tau_1} \sum_{i , m_i \mik 5} \int_{\cala_{\tau_1}^{\tau_2}} D |\slashed{\nabla} Y \psi_{m_i} |^2 d\mi_{g_{\cala}} \lesssim \frac{E_0^2 \ee^4}{(1+\tau_1 )^{1/2}} , $$
where we used Sobolev when needed, \eqref{dg}, \eqref{dg1}, \eqref{dgt}, \eqref{dgt1}, \eqref{d2yg}, \eqref{d2yg1}, \eqref{denp1}, \eqref{denp2} and estimate \eqref{ydenp2} for $l=0$.

Finally we look at $\Omega^k T^2 YF$ and we have that
$$  \sum_{k \mik 5} \int_{\cala_{\tau_1}^{\tau_2}} |\Omega^k T^2 YF |^2 d\mi_{g_{\cala}} \lesssim I_{E1}c + II_{E1}c + III_{E1}c + IV_{E1}c + V_{E1}c + $$ $$ + VI_{E1}c + VII_{E1}c + VIII_{E1}c +IX_{E1}c + X_{E1}c + XI_{E1}c + $$ $$ + XII_{E1}c +XIII_{E1}c  + \frac{E_0^2 \ee^4}{(1+\tau_1)^{2-\aaa}} . $$

For $I_{E1}c$ we note that it can be treated in the same way as $I_{E1}b$, where we use \eqref{denp3} instead of \eqref{denp2}.

For $II_{E1}c$ we note that $II_{E1}c \lesssim II_{E1}b$. For $III_{E1}c$ we have that
\begin{equation*}
\begin{split}
 III_{E1}c =  \sum_{k \mik 5} \int_{\cala_{\tau_1}^{\tau_2}} D^3 & ( T^2 Y\psi_{k_1} )^2 (Y^2 \psi_{k_2} )^2 d\mi_{g_{\cala}} \\ \lesssim & \frac{E_0 \ee^2}{(1+\tau_1 )^{1/2-\aaa/2}} \sum_{i , m_i \mik 5} \int_{\cala_{\tau_1}^{\tau_2}} D^2 (Y^2 \psi_{m_i} )^2 d\mi_{g_{\cala}} \lesssim \frac{E_0^2 \ee^4}{(1+\tau_1 )^{1/2-\aaa/2}} ,
\end{split}
\end{equation*}
where we used Sobolev when needed, \eqref{dttyg}, \eqref{dttyg1} and estimate \eqref{d32tye} for $l=0$.

For $IV_{E1}c$ we have that
$$ IV_{E1}c =  \sum_{k \mik 5} \int_{\cala_{\tau_1}^{\tau_2}} D^3 ( T Y\psi_{k_1} )^2 (TY^2 \psi_{k_2} )^2 d\mi_{g_{\cala}} \lesssim \frac{E_0 \ee^2}{( 1+\tau_1 )^{1/2}} \sum_{i , m_i \mik 5} \int_{\cala_{\tau_1}^{\tau_2}} D^2 (TY^2 \psi_{m_i} )^2 d\mi_{g_{\cala}} \lesssim \frac{E_0^2 \ee^4}{( 1+\tau_1 )^{1/2} } ,$$
where we used Sobolev when needed, \eqref{d2tyg}, \eqref{d2tyg1} and estimate \eqref{d32tye} for $l=1$.

For $V_{E1}c$ we have that
$$ V_{E1}c =  \sum_{k \mik 5} \int_{\cala_{\tau_1}^{\tau_2}} D^3 ( Y\psi_{k_1} )^2 (T^2 Y^2 \psi_{k_2} )^2 d\mi_{g_{\cala}} \lesssim \frac{E_0 \ee^2}{( 1+\tau_1 )^{1/2}} \sum_{i , m_i \mik 5} \int_{\cala_{\tau_1}^{\tau_2}} D^2 (T^2 Y^2 \psi_{m_i} )^2 d\mi_{g_{\cala}} \lesssim \frac{E_0^2 \ee^4}{( 1+\tau_1 )^{1/2} } ,$$
where we used Sobolev when needed, \eqref{d2yg}, \eqref{d2yg1} and estimate \eqref{d32tye} for $l=2$.

For $VI_{E1}c$ we have that
$$ VI_{E1}c =  \sum_{k \mik 5} \int_{\cala_{\tau_1}^{\tau_2}} D ( T^3 Y\psi_{k_1} )^2 (Y \psi_{k_2} )^2 d\mi_{g_{\cala}} \lesssim $$ $$ \lesssim \frac{E_0 \ee^2}{( 1+\tau_1 )^{1/2}} \sum_{i , m_i \mik 5} \int_{\cala_{\tau_1}^{\tau_2}}  (T^3 Y \psi_{m_i} )^2 d\mi_{g_S} \lesssim \frac{E_0^2 \ee^4}{( 1+\tau )^{1/2} } , $$ 
where we used Sobolev when needed, \eqref{dyg}, \eqref{dyg1} and \eqref{d32tye} for $l=2$.

For $VII_{E1}c$ we have that
$$ VII_{E1}c =  \sum_{k_1 + k_2  \mik 5} \int_{\cala_{\tau_1}^{\tau_2}} D ( T^2 Y\psi_{k_1} )^2 (TY \psi_{k_2} )^2 d\mi_{g_{\cala}} \lesssim $$ $$ \lesssim \frac{E_0 \ee^2}{( 1+\tau_1 )^{1/2}} \sum_{i , m_i \mik 5} \int_{\cala_{\tau_1}^{\tau_2}} (T^2 Y \psi_{m_i} )^2 d\mi_{g_S} \lesssim \frac{E_0^2 \ee^4}{( 1+\tau_1 )^{1/2} } \Rightarrow $$ $$ \Rightarrow VII_{E1}c \lesssim \frac{E_0^2 \ee^4}{(1+\tau_1 )^{1/2}}, $$
where we used Sobolev when needed, \eqref{dtyg}, \eqref{dtyg1} and \eqref{d32tye} for $l=1$.

For $VIII_{E1}c$ we have that
$$ VIII_{E1}c =  \sum_{k_1 + k_2 \mik 5} \int_{\cala_{\tau_1}^{\tau_2}} D ( T^3 \psi_{k_1} )^2 (Y^2 \psi_{k_2} )^2 d\mi_{g_{\cala}} \Rightarrow $$ $$ \sum_{k_1 + k_2 \mik 5} \int_{S_{\tau} \cap \cala_{\tau_1}^{\tau_2}} D ( T^3 \psi_{k_1} )^2 (Y^2 \psi_{k_2} )^2 d\mi_{g_S}  \lesssim $$ $$ \lesssim \frac{E_0 \ee^2}{( 1+\tau_1 )^{1 +\beta}} \sum_{i , m_i \mik 5} \int_{\cala_{\tau_1}^{\tau_2}} D ( Y^2 \psi_{m_i} )^2 d\mi_{g_{\cala}} \lesssim \frac{E_0^2 \ee^4}{( 1+\tau )^{1 + \beta} } ,$$
where we used Sobolev when needed, \eqref{dgttt2}, \eqref{dgttt3} and estimate \eqref{ydenp2} for $l=0$. This implies that
$$ VIII_{E1}c \lesssim E_0^2 \ee^4 . $$

For $IX_{E1}c$ we have that
$$ IX_{E1}c =  \sum_{k_1 + k_2 \mik 5} \int_{\cala_{\tau_1}^{\tau_2}} D ( T^2 \psi_{k_1} )^2 (TY^2 \psi_{k_2} )^2 d\mi_{g_{\cala}} \Rightarrow $$ $$  \sum_{k_1 + k_2 \mik 5} \int_{S_{\tau} \cap \cala_{\tau_1}^{\tau_2}} D ( T^2 \psi_{k_1} )^2 (TY^2 \psi_{k_2} )^2 d\mi_{g_S}  \lesssim \frac{E_0 \ee^2}{ (1+\tau )^{1+\beta} } \sum_{i , m_i \mik 5} \int_{\cala_{\tau_1}^{\tau_2}} D ( TY^2 \psi_{m_i} )^2 d\mi_{g_{\cala}} $$ $$ \lesssim \frac{E_0^2 \ee^4}{ (1+\tau_1 )^{1+\beta} } ,$$
where we used Sobolev when needed, \eqref{dgtt}, \eqref{dgtt1} and estimate \eqref{d32tye} for $l=1$. This implies that
$$ IX_{E1}c \lesssim E_0^2 \ee^4 . $$

For $X_{E1}c$ we have that
$$ X_{E1}c =  \sum_{k_1 + k_2 \mik 5} \int_{\cala_{\tau_1}^{\tau_2}} D ( T \psi_{k_1} )^2 (T^2 Y^2 \psi_{k_2} )^2 d\mi_{g_{\cala}} \Rightarrow $$ $$ \sum_{k_1 + k_2 \mik 5} \int_{S_{\tau} \cap \cala_{\tau_1}^{\tau_2}} D ( T \psi_{k_1} )^2 (T^2 Y^2 \psi_{k_2} )^2 d\mi_{g_S}  \lesssim \frac{E_0 \ee^2}{ (1+\tau )^{1+\beta} } \sum_{i , m_i \mik 5} \int_{\cala_{\tau_1}^{\tau_2}} D ( T^2 Y^2 \psi_{m_i} )^2 d\mi_{g_{\cala}} \lesssim \frac{E_0^2 \ee^4}{ ( 1+\tau )^{1+\beta} } ,$$
where we used Sobolev when needed, \eqref{dgt}, \eqref{dgt1} and estimate \eqref{d32tye} for $l=2$. This implies that
$$ X_{E1}c \lesssim E_0^2 \ee^4 . $$

For $XI_{E1}c$ we have that
$$ XI_{E1}c = \sum_{k_1 + k_2  \mik 5} \int_{\cala_{\tau_1}^{\tau_2}} D | T^2 Y \slashed{\nabla} \psi_{k_1} |^2 | \slashed{\nabla}  \psi_{k_2} |^2 d\mi_{g_{\cala}} \lesssim \sum_{k_1 + k_2 \mik 5} \int_{\cala_{\tau_1}^{\tau_2}} D |  \slashed{\nabla} T^2 \psi_{k_1} |^2 | \slashed{\nabla}  \psi_{k_2} |^2 d\mi_{g_{\cala}} + $$ $$ + \sum_{k_1 + k_2 \mik 5} \int_{\cala_{\tau_1}^{\tau_2}} D |  \slashed{\nabla} T^2 Y \psi_{k_1} |^2 | \slashed{\nabla}  \psi_{k_2} |^2 d\mi_{g_{\cala}}  \lesssim \frac{E_0 \ee^2}{( 1+\tau_1 )^{1/2-\aaa/2} } \sum_{i , m_i \mik 5}  \int_{\cala_{\tau_1}^{\tau_2}}  | \slashed{\nabla}  \psi_{m_i} |^2 d\mi_{g_{\cala}} + $$ $$ + \frac{E_0 \ee^2}{1+\tau_1 } \sum_{i , m_i \mik 5}  \int_{\cala_{\tau_1}^{\tau_2}}    | \slashed{\nabla}  T^2 \psi_{m_i} |^2 d\mi_{g_{\cala}}+ \frac{E_0 \ee^2}{1+\tau_1} \sum_{i , m_i \mik 5}  \int_{\cala_{\tau_1}^{\tau_2}}   | \slashed{\nabla}  T^2 Y\psi_{m_i} |^2 d\mi_{g_{\cala}} \lesssim \frac{E_0^2 \ee^4}{(1+\tau_1 )^{1/2-\aaa/2}},$$
where we used Sobolev when needed, \eqref{dg}, \eqref{dg1}, \eqref{dgtt}, \eqref{dgtt1}, \eqref{d2ttyg}, \eqref{d2ttyg1}, \eqref{denp1}, \eqref{denp3}  and estimate \eqref{d32tye} for $l=2$.

For $XII_{E1}c$ we have that
$$ XII_{E1}c = \sum_{k_1 + k_2  \mik 5} \int_{\cala_{\tau_1}^{\tau_2}} D | T Y \slashed{\nabla} \psi_{k_1} |^2 | T\slashed{\nabla}  \psi_{k_2} |^2 d\mi_{g_{\cala}} \lesssim \sum_{k_1 + k_2 \mik 5} \int_{\cala_{\tau_1}^{\tau_2}} D |  \slashed{\nabla} T \psi_{k_1} |^2 | \slashed{\nabla}  T\psi_{k_2} |^2 d\mi_{g_{\cala}} + $$ $$ + \sum_{k_1 + k_2 \mik 5} \int_{\cala_{\tau_1}^{\tau_2}} D |  \slashed{\nabla} T Y \psi_{k_1} |^2 | \slashed{\nabla}  T\psi_{k_2} |^2 d\mi_{g_{\cala}}  \lesssim \frac{E_0 \ee^2}{( 1+\tau_1 )^{1/2} } \sum_{i , m_i \mik 5}  \int_{\cala_{\tau_1}^{\tau_2}}  | \slashed{\nabla}  T\psi_{m_i} |^2 d\mi_{g_{\cala}} + $$ $$ + \frac{E_0 \ee^2}{1+\tau_1} \sum_{i , m_i \mik 5}  \int_{\cala_{\tau_1}^{\tau_2}}   | \slashed{\nabla}  T^2 Y\psi_{m_i} |^2 d\mi_{g_{\cala}} \lesssim \frac{E_0^2 \ee^4}{( 1+\tau_1 )^{1/2}} , $$
where we used Sobolev when needed, \eqref{dgt}, \eqref{dgt1},  \eqref{dtyg}, \eqref{dtyg1}, \eqref{denp2} and estimate \eqref{d32tye} for $l=1$.

Finally, for $XIII_{E1}c$ we have that
$$ XIII_{E1}c = \sum_{k_1 + k_2  \mik 5} \int_{\cala_{\tau_1}^{\tau_2}} D |  Y \slashed{\nabla} \psi_{k_1} |^2 | \slashed{\nabla}  T^2 \psi_{k_2} |^2 d\mi_{g_{\cala}} \lesssim \sum_{k_1 + k_2 \mik 5} \int_{\cala_{\tau_1}^{\tau_2}} D |  \slashed{\nabla}  \psi_{k_1} |^2 | \slashed{\nabla}  T^2 \psi_{k_2} |^2 d\mi_{g_{\cala}} + $$ $$ + \sum_{k_1 + k_2 \mik 5} \int_{\cala_{\tau_1}^{\tau_2}} D |  \slashed{\nabla}  Y \psi_{k_1} |^2 | \slashed{\nabla} T^2 \psi_{k_2} |^2 d\mi_{g_{\cala}}  \lesssim \frac{E_0 \ee^2}{( 1+\tau_1 )^{1/2}} \sum_{i , m_i \mik 5}  \int_{\cala_{\tau_1}^{\tau_2}}  | \slashed{\nabla}  \psi_{m_i} |^2 d\mi_{g_{\cala}} + $$ $$ + \frac{E_0 \ee^2}{1+\tau_1 } \sum_{i , m_i \mik 5}  \int_{\cala_{\tau_1}^{\tau_2}}    | \slashed{\nabla}  T^2 \psi_{m_i} |^2 d\mi_{g_{\cala}}+ \frac{E_0 \ee^2}{1+\tau_1} \sum_{i , m_i \mik 5}  \int_{\cala_{\tau_1}^{\tau_2}} D^2  | \slashed{\nabla}  T Y\psi_{m_i} |^2 d\mi_{g_{\cala}} \lesssim \frac{E_0^2 \ee^4}{( 1+\tau_1 )^{1/2} },$$
where we used Sobolev when needed, \eqref{dg}, \eqref{dg1}, \eqref{dgtt}, \eqref{dgtt1}, \eqref{dyg}, \eqref{dyg1}, \eqref{denp1}, \eqref{denp3}  and estimate \eqref{d32tye} for $l=0$.

\eqref{E2'}: This term can be treated similarly with the term \eqref{C4'} by using the elliptic estimates of Appendix \ref{appendix}.

\end{proof}
\subsection{Proof of global well-posedness}
Using now Theorem \ref{bootthm}, we can choose $\epsilon$ such that
$$ \bar{C} E_0^2 \epsilon^4 \mik C E_0 \epsilon ^2 , $$
for $\bar{C}$ being the largest constant that shows up in the estimates of Theorem \ref{bootthm}, and for $C$ given by the assumptions in Section \ref{boot}, in order to close all the bootstrap estimates. Global well-posedness for smooth and compactly supported data of size $\epsilon$ can be proved in a quite standard way (see the relevant sections in \cite{rnnonlin1} and \cite{shiwu}).

\section{Conservation laws and asymptotic instabilities}
We consider the spherically mean of our nonlinear wave $\psi$ which we recall that is denoted by $\psi_0 = \int_{\mathbb{S}^2} \psi \, d\omega$. We have the following two basic facts.
\begin{thm}[\textbf{Conservation law on the event horizon}]
Let $\psi$ be a global solution of \eqref{nw}. Then for its spherical mean $\psi_0$ we have that the following quantity is conserved along the event horizon
\begin{equation}\label{h0_nonlin}
H_0 \doteq \left. \left( \partial_r \psi_0 + \frac{1}{M} \psi_0 \right)  \right|_{\mathcal{H}^{+}} (v) = \left. \left( \partial_r \psi_0 + \frac{1}{M} \psi_0  \right)  \right|_{\mathcal{H}^{+}} (0) \mbox{         $\forall v \meg 0$.} 
\end{equation}
\end{thm}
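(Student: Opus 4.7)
The plan is to derive the conservation law by evaluating the wave equation \eqref{nw} on the event horizon after projecting onto the spherically symmetric part. In ingoing Eddington--Finkelstein coordinates $(v,r,\theta,\phi)$ a direct computation yields
$$\Box_g \psi = 2\partial_v Y\psi + \frac{2}{r}T\psi + D\cdot Y^2\psi + \left(D' + \frac{2D}{r}\right)Y\psi + \frac{1}{r^2}\slashed{\Delta}\psi.$$
I would take the average over $\mathbb{S}^2$ to obtain the analogous identity for $\psi_0$, noting that $\int_{\mathbb{S}^2}\slashed{\Delta}\psi\, d\omega = 0$.

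Next, I would restrict this spherically-averaged identity to $\mathcal{H}^{+} = \{r=M\}$. The extremality property $D = ((r-M)/r)^2$ gives $D|_{r=M} = D'|_{r=M} = 0$, so the three terms $D\cdot Y^2\psi_0$, $D'\,Y\psi_0$, and $(2D/r)Y\psi_0$ all vanish on the horizon. For the nonlinear side, in the ingoing coordinates
$$g^{\alpha\beta}\partial_\alpha\psi\partial_\beta\psi = 2T\psi\cdot Y\psi + D(Y\psi)^2 + |\slashed{\nabla}\psi|^2,$$
and multiplying by the degenerating weight $\sqrt{D}$ produces only terms carrying at least one power of $\sqrt{D}$. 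Since the global solution $\psi$ furnished by the main theorem is $H^s$ with $s>20$ and hence smooth with $T\psi$, $Y\psi$, $\slashed{\nabla}\psi$ pointwise finite on $\mathcal{H}^{+}$, the entire nonlinearity vanishes on $\{r=M\}$. The equation on $\mathcal{H}^{+}$ therefore collapses to
$$2\,\partial_v Y\psi_0 + \frac{2}{M}\,T\psi_0 = 0,$$
which is exactly $\partial_v\bigl(Y\psi_0 + \tfrac{1}{M}\psi_0\bigr) = 0$ along the horizon. Integration in $v$ yields \eqref{h0_nonlin}.

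I do not expect a substantive obstacle here: the argument is a direct extension of Aretakis's linear conservation law on extremal Reissner--Nordstr\"om, and the only new ingredient is the observation that the $\sqrt{D}$ weight in the nonlinearity --- dictated by the Couch--Torrence covariance discussed in Section \ref{motivation} --- is precisely what kills the nonlinear contribution on $\mathcal{H}^{+}$. The mildest point to verify is the pointwise finiteness of $Y\psi_0$ on the horizon, which is immediate from the regularity obtained during the bootstrap argument.
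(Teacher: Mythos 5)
Your proof is correct and follows essentially the same route as the paper, which states only that the result ``follows directly from the equation after we evaluate it on $r=M$'' and, in the remark immediately after, makes precisely your observation that the $\sqrt{D}$ weight in the nonlinearity vanishes on $\mathcal{H}^{+}$ and renders the equation effectively linear there. Your explicit computation of $\Box_g$ in ingoing coordinates and the step-by-step verification that all degenerate terms drop out at $r=M$ fill in the details the paper leaves implicit.
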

The proof follows directly from the equation after we evaluate it on $r=M$. We note that the conserved quantity for a nonlinear wave equation of the form \eqref{nw} for the spherically symmetric part of the wave is identical to that we have for a linear wave. The reason for this is that an equation of the form \eqref{nw} is identical to the linear wave equation on the horizon due to the weight $\sqrt{D}$ in front of the nonlinearity, which does not play a role in this situation, as it does for nonlinear wave equation that satisfies the classical null condition (compare with the situation in \cite{}).

On the other hand, higher derivatives in $r$ blow-up asymptotically on $\mathcal{H}^{+}$.

\begin{thm}[\textbf{Asymptotic blow-up on the event horizon}]\label{asym_bu}
Let $\psi$ be a solution of \eqref{nw} that emanates for sufficiently small initial data $(\epsilon f, \epsilon g)$ of size $\epsilon > 0$ and assume that additionally we have that:
$$ \int_{\mathbb{S}^2} f \, d\omega > 0 \mbox{  and  } Y\psi_0 (0,M) > 0 , $$
then for any $k > 2$ we have that
\begin{equation}\label{asyminst}
| Y^k \psi_0 | (v,M) \rightarrow \infty \mbox{  as $v \rightarrow \infty$.}
\end{equation}
\end{thm}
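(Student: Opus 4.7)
The plan is to perform an Aretakis-style asymptotic analysis on $\mathcal{H}^+$: the conservation law combined with the pointwise decay of $\psi_0$ forces $Y\psi_0 \to H_0 > 0$, and then an ODE derived by commuting \eqref{nw} with powers of $Y$ and restricting to $r=M$ shows that $Y^k\psi_0$ must grow polynomially in $v$. The essential geometric observation is that $\sqrt{D}$ vanishes on $\mathcal{H}^+$, so at every order the nonlinear contribution produces only bounded coefficients (from $Y^j\sqrt{D}|_{\mathcal{H}^+}$ for $j \geq 1$), and the leading-order structure of the horizon ODE is identical to that of the linear case.

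For the base case $k=2$, commute \eqref{nw} with $Y$, restrict to $r=M$, and use $D|_{\mathcal{H}^+}=D'|_{\mathcal{H}^+}=0$, $D''|_{\mathcal{H}^+}=2/M^2$, $Y\sqrt{D}|_{\mathcal{H}^+}=1/M$. The spherical mean of the resulting identity,
\begin{equation*}
2\, TY^2\psi_0 + \tfrac{2}{M}\, TY\psi_0 - \tfrac{2}{M^2}\, T\psi_0 + \tfrac{2}{M^2}\, Y\psi_0 = \tfrac{1}{M}\int_{\mathbb{S}^2}\bigl(2\, T\psi\cdot Y\psi + |\slashed{\nabla}\psi|^2\bigr)\,d\omega,
\end{equation*}
combined with the conservation law $TY\psi_0 = -\tfrac{1}{M}T\psi_0$, yields the horizon ODE $T(Y^2\psi_0) = \tfrac{2}{M^2}T\psi_0 - \tfrac{1}{M^2}Y\psi_0 + N_2(v)$. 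By the pointwise decay of the main theorem, $\psi_0(v,M)\to 0$, hence the conservation law gives $Y\psi_0(v,M) \to H_0 > 0$. Integrating over $[0,v]$: the $T\psi_0$ contribution is bounded, $-\tfrac{1}{M^2}\int_0^v Y\psi_0\,dv'\sim -\tfrac{H_0}{M^2}v$, and the nonlinear remainder satisfies $\int_0^v |N_2|\,dv' = O(\epsilon' v^{1/2})$, using $\|T\psi\|_{L^\infty},\|\slashed{\nabla}\psi\|_{L^\infty} \lesssim \epsilon'(1+v)^{-1/2}$ together with the uniform boundedness of $Y\psi$ on $\mathcal{H}^+$ from the non-degenerate energy estimates. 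Hence $|Y^2\psi_0|(v,M)\to\infty$ at a linear rate.

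For $k \geq 3$, commute \eqref{nw} with $Y^{k-1}$, restrict to $r=M$, and take the spherical mean. Since $\sqrt{D}|_{\mathcal{H}^+}=0$, the nonlinearity contributes only via $\sum_{j=1}^{k-1}\binom{k-1}{j}(Y^j\sqrt{D}|_{\mathcal{H}^+})\,(Y^{k-1-j}F_{\mathrm{nl}}|_{\mathcal{H}^+})$, so the highest $Y$-derivative of $\psi$ that can appear is $Y^{k-1}\psi$. Using the lower-order horizon identities derived at previous induction steps to eliminate $TY^j\psi_0$ for $j\le k-1$, one arrives at an ODE of the form $T(Y^k\psi_0) = c_k\, Y^{k-1}\psi_0 + \Phi_{k-1}(v) + N_k(v)$, with an explicit nonzero constant $c_k$ and $\Phi_{k-1}$ a polynomial in previously controlled horizon quantities that integrates to $o(v^{k-1})$. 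Under the inductive hypothesis $Y^{k-1}\psi_0(v,M) \sim d_{k-1}\, v^{k-2}$ with $d_{k-1}\neq 0$, integration produces $Y^k\psi_0(v,M)\sim \tfrac{c_k d_{k-1}}{k-1}\, v^{k-1}$, completing the induction.

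The main obstacle is controlling $N_k$ uniformly across the induction. Although $N_k$ contains the growing factor $Y^{k-1}\psi$, by the structure of $F_{\mathrm{nl}} = 2\, T\psi\cdot Y\psi + D(Y\psi)^2 + |\slashed{\nabla}\psi|^2$ this is always paired either with a smallness constant of order $\epsilon'$ or with a decaying quantity ($\|T\psi\|_{L^\infty}, \|\slashed{\nabla}\psi\|_{L^\infty}\lesssim \epsilon'(1+v)^{-1/2}$, or a $\sqrt{D}$-weighted factor away from the horizon). Careful bookkeeping then gives $\int_0^v |N_k|\,dv' = O(\epsilon'\, v^{k-3/2})$, which is dominated by the linear growth $\sim v^{k-1}$ once $\epsilon'$ is small enough. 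This quantitative smallness, precisely the one guaranteed by the global well-posedness argument of the preceding section, is what allows the nonlinear horizon ODE to inherit the Aretakis-type blow-up of its linearization, producing \eqref{asyminst}.
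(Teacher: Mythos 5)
Your proposal takes essentially the same route as the paper: commute \eqref{nw} with $Y$, restrict to $r=M$ where $D=D'=0$, take the spherical mean, use the conservation law to express the first-order horizon terms in $\psi_0$, and then integrate along $\mathcal{H}^+$ to pit the linear growth $-\frac{1}{M^2}\int_0^v Y\psi_0\sim -H_0 v$ against a sublinear nonlinear remainder; the induction for $k\geq 3$ mirrors the paper's "argue similarly". The only cosmetic difference is in the estimate of the nonlinear remainder — you claim $O(\epsilon' v^{1/2})$ where the paper asserts $O(E_0\epsilon^2\log(1+v))$ — but either bound is $o(v)$, which is all the argument requires.
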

\begin{proof}
We consider first the equation that the spherical mean $\psi_0$ satisfies which is given by
$$ \Box_g \psi_0 = \left( \sqrt{D} \cdot g^{\mu \nu} \partial_{\mu} \psi \partial_{\nu} \psi \right)_0 , $$
where $ \left( \sqrt{D} \cdot g^{\mu \nu} \partial_{\mu} \psi \partial_{\nu} \psi \right)_0$ means that we are considering the spherical mean of the above expression. We differentiate the equation above with respect to $Y$ and after evaluating it on the horizon we have that
\begin{equation}\label{y2}
 TY^2 \psi_0 + \frac{1}{M} TY\psi_0 - \frac{1}{M^2} T\psi_0 + \frac{1}{M^2} Y\psi_0 = \frac{1}{2M} \sum_{(1) , (2)} T\psi^{(1)} \cdot Y\psi^{(2)} + \frac{1}{2M} \sum_{(1) , (2)}\langle \slashed{\nabla} \psi^{(1)} , \slashed{\nabla} \psi^{(2)} \rangle , 
 \end{equation}
where by $\sum_{(1) , (2)}$ and the related superscripts on $\psi$ we mean that we add over the appropriate angular frequency localizations that were introduced by taking the spherical mean of the nonlinearity, where the localizations are at the 0-th angular frequency and at the rest (so the sum is finite). Integrating the last equation \eqref{y2} along $\mathcal{H}^{+}$ we have that
$$ Y^2 \psi_0 (v,M) \mik Y^2 \psi_0 (0,M) + \frac{1}{M} Y\psi_0 (0,M) - \frac{1}{M} H_0 - \frac{1}{M^2} \psi_0 (0,M) + \frac{1}{M^2} \psi_0 (v,M) - \frac{1}{2M^2} \int_0^v Y\psi_0 + $$ $$ +\int_0^v \left( \frac{1}{2M} \sum_{(1) , (2)} T\psi^{(1)} \cdot Y\psi^{(2)} + \frac{1}{2M} \sum_{(1) , (2)}\langle \slashed{\nabla} \psi^{(1)} , \slashed{\nabla} \psi^{(2)} \rangle \right). $$
For the last term we have the estimate
$$ \left| \int_0^v \left( \frac{1}{2M} \sum_{(1) , (2)} T\psi^{(1)} \cdot Y\psi^{(2)} + \frac{1}{2M} \sum_{(1) , (2)}\langle \slashed{\nabla} \psi^{(1)} , \slashed{\nabla} \psi^{(2)} \rangle \right) \right| \mik C E_0 \epsilon^2 \log (1+v) . $$
Using the decay of $\psi$, it is easy to show that
$$ \frac{1}{2M^2} \int_0^v Y\psi_0 \meg C H_0 v , $$
where we recall that $H_0 > 0$ by our assumption. Using now the last two estimates, we have that
$$  Y^2 \psi_0 (v,M) \mik Y^2 \psi_0 (0,M) + \frac{1}{M} Y\psi_0 (0,M) - \frac{1}{M} H_0 - C H_0 v + C \frac{\sqrt{E_0} \epsilon}{v^{1/2}} +  C E_0 \epsilon^2 \log (1+v) \mik - c H_0 v , $$
if $v$ is large enough, for some constant $c$, which gives us the desired result. For $k > 3$ we can argue in a similar way.

\end{proof}
\begin{rem}
The positivity condition on the data in the last Theorem \ref{asym_bu} was important for obtaining asymptotic blow-up along $\mathcal{H}^{+}$. If $H_0 = 0$, then it is expected that $Y^2 \psi_0$ remains bounded, while we have asymptotic blow-up along $\mathcal{H}^{+}$ for all quantities $Y^k \psi_0$, $k > 3$. This was shown for the linear case in \cite{aretakis2013}, and it should hold also in for the nonlinear waves of this paper.
\end{rem}

\section{Acknowledgments}

We would like to thank Georgios Moschidis for several useful discussion concerning this work. The second author (S.A.) acknowledges support through NSF grant DMS-1600643 and a Sloan Research Fellowship. The third author (D.G.) acknowledges support by the European Research Council grant no. ERC-2011-StG 279363-HiDGR.

\appendix
\section{Elliptic estimates}\label{appendix}
The following elliptic-type estimates are considered to be quite standard. We have that for $\psi$ a solution of $\Box_g \psi = F$ the following holds true
$$ \int_{\si_{\tau} \cap \{ r\meg r_0 > M \}} (\partial_a \partial_b \psi )^2 d\mi_{g_{\si}} \lesssim  \int_{\si_{\tau} \cap \{ r\meg r_0 \}} J^T_{\mu} [\psi ] n^{\mu} d\mi_{g_{\si}} + \int_{\si_{\tau} \cap \{ r\meg r_0 \}} J^T_{\mu} [T\psi ] n^{\mu} d\mi_{g_{\si}} + \int_{\si_{\tau} \cap \{ r\meg r_0 \}} |F|^2 d\mi_{g_{\si}} , $$
for any fixed $r_0 > M$, and any $\partial_a , \partial_b \in \{ \partial_v , \partial_r , \partial_{\theta}, \partial_{\sigma} \}.$

Department of Mathematics, University of California, Los Angeles, CA, USA, 90095, \\
\textit{email}: yannis@math.ucla.edu

\bigskip

Department of Mathematics, Princeton University, Princeton, NJ, USA, 08544, \\
\textit{email}: aretakis@math.princeton.edu

\bigskip

Department of Mathematics, University of Toronto Scarborough 1265 Military Trail, Toronto, ON, M1C 1A4, Canada, \\ \textit{email}: aretakis@math.toronto.edu

\bigskip

Department of Mathematics, University of Toronto, 40 St George Street, Toronto, ON, Canada, \\
\textit{email}: aretakis@math.toronto.edu

\bigskip

Department of Mathematics, Imperial College London, London, United Kingdom, SW7 2AZ, \\
\textit{email}: dejan.gajic@imperial.ac.uk

\bigskip

Department of Applied Mathematics and Theoretical Physics, University of Cambridge, Wilberforce Road, Cambridge CB3 0WA, United Kingdom, \\
\textit{email}: dg405@cam.ac.uk
\end{document}